\newcommand{\+}{\protect\nobreakdash-}
\renewcommand{\:}{\colon}
\newcommand{\rarrow}{\longrightarrow}
\newcommand{\ot}{\otimes}
\newcommand{\la}{\leftarrow}
\newcommand{\lrarrow}{\mskip.5\thinmuskip\relbar\joinrel\relbar\joinrel
 \rightarrow\mskip.5\thinmuskip\relax}
\DeclareMathOperator{\Hom}{Hom}
\DeclareMathOperator{\coker}{coker}
\DeclareMathOperator{\im}{im}
\DeclareMathOperator{\coim}{coim}
\newcommand{\Sets}{\mathsf{Sets}}
\newcommand{\Top}{\mathsf{Top}}
\newcommand{\Ab}{\mathsf{Ab}}
\newcommand{\Vect}{\mathsf{Vect}}
\newcommand{\Pro}{\mathsf{Pro}}
\newcommand{\plim}
 {\mathop{\text{\normalfont``$\varprojlim$''\!\!}}\nolimits}
\renewcommand{\cot}{\mathbin{\widehat\ot}}
\newcommand{\s}{\mathsf s}
\newcommand{\scc}{\mathsf{sc}}
\newcommand{\su}{\mathsf{su}}
\newcommand{\lie}{\mathsf{le}}
\newcommand{\se}{\mathsf{se}}
\newcommand{\jsm}{\mathsf{jsm}}
\newcommand{\sA}{\mathsf A}
\newcommand{\sB}{\mathsf B}
\newcommand{\sC}{\mathsf C}
\newcommand{\sE}{\mathsf E}
\newcommand{\sF}{\mathsf F}
\newcommand{\sG}{\mathsf G}
\newcommand{\fA}{\mathfrak A}
\newcommand{\fB}{\mathfrak B}
\newcommand{\fC}{\mathfrak C}
\newcommand{\fD}{\mathfrak D}
\newcommand{\fK}{\mathfrak K}
\newcommand{\fL}{\mathfrak L}
\newcommand{\fP}{\mathfrak P}
\newcommand{\fQ}{\mathfrak Q}
\newcommand{\fR}{\mathfrak R}
\newcommand{\fU}{\mathfrak U}
\newcommand{\fV}{\mathfrak V}
\newcommand{\fW}{\mathfrak W}
\newcommand{\cB}{\mathcal B}
\newcommand{\cF}{\mathcal F}
\newcommand{\boZ}{\mathbb Z}
\theoremstyle{plain}
\newtheorem{thm}{Theorem}[section]
\newtheorem{prop}[thm]{Proposition}
\newtheorem{lem}[thm]{Lemma}
\newtheorem{cor}[thm]{Corollary}
\theoremstyle{definition}
\newtheorem{qst}[thm]{Question}
\newtheorem{conc}[thm]{Conclusion}
\newtheorem{rem}[thm]{Remark}
\newtheorem{ex}[thm]{Example}
\newtheorem{exs}[thm]{Examples}
\newcommand{\Section}[1]{\bigskip\section{#1}\medskip}
\begin{document}

\title{Exact categories of topological vector spaces \\
with linear topology}

\author{Leonid Positselski}

\address{Institute of Mathematics of the Czech Academy of Sciences \\
\v Zitn\'a~25, 115~67 Praha~1 (Czech Republic); and
\newline\indent Laboratory of Algebra and Number Theory \\
Institute for Information Transmission Problems \\
Moscow 127051 (Russia)}

\email{positselski@math.cas.cz}

\begin{abstract}
 We explain why the na\"\i ve definition of a natural exact category
structure on complete, separated topological vector spaces with linear
topology fails.
 In particular, contrary to~\cite{Beil}, the category of such
topological vector spaces is not quasi-abelian.
 We present a corrected definition of exact category structure
which works~OK\@.
 Then we explain that the corrected definition still has a shortcoming
in that a natural tensor product functor is not exact in it, and discuss
ways to refine the exact category structure so as to make the tensor
product functors exact. 
\end{abstract}

\maketitle

\tableofcontents

\section*{Introduction}
\medskip

 Topological algebra is a treacherous ground.
 So is general topology, of which topological algebra is a part.
 Well-behaved classes of topological algebraic structures are few
and far between, and hard to come by.
 The aim of this paper is to warn the reader about some of
the dangers.

 Topological vector spaces as a subject have a functional analysis
flavour.
 In this paper we consider the most ``algebraic'' class of topological
vector spaces, viz., topological vector spaces with linear topology
(VSLTs).
 Hence the results we discuss tend to be true or false irrespectively
of the ground field~$k$ (which is discrete).

 An optimistic vision of topological vector spaces with linear topology
was expressed in Beilinson's paper~\cite{Beil}.
 In the present paper we explain that some of most basic assertions
in~\cite{Beil} are actually not true.
 The construction of counterexamples which we operate with is taken
from the book~\cite{AGM} by Arnautov \emph{et al.}; in fact, it goes
back to the much earlier book of Roelcke and Dierolf~\cite{RD}.

 The present author became aware of the failure of straightforward
attempts to prove some assertions from~\cite{Beil} back in May~2008,
when working on the book~\cite{Psemi}.
 So I~tried to exercise extra caution in~\cite[Section~D.1]{Psemi},
in order to avoid the traps.
 The conceptual and terminological system of  the July~2018
preprint~\cite[Sections~1\+-10]{BP0} reflected this understanding.

 Still we did not know whether the problematic assertions in~\cite{Beil}
were true or not true.
 It was only in October~2018 that I~learned about the book~\cite{AGM}
and the counterexamples in~\cite[Theorem~4.1.48]{AGM}.
 Subsequently the reference to~\cite[Theorem~4.1.48]{AGM} appeared in
the July~2019 version of~\cite{Pproperf}.
 In fact, the counterexamples in~\cite[Problem~20D]{KN} are already
sufficient for some purposes. 

 Surprisingly, the categories of incomplete (arbitrary or Hausdorff)
topological vector spaces have better exactness properties than
the category of complete, separated topological vector spaces.
 In particular, the categories of incomplete topological vector
spaces are quasi-abelian (unlike the category of complete ones).

 Weakening the axioms of abelian category in order to develop
homological functional analysis is a natural idea, which was tried,
e.~g., in the long paper~\cite{Pal}.
 The observation that the categories of incomplete topological vector
spaces are quasi-abelian, while the category of complete ones is not,
was elaborated upon (in the context of locally convex spaces) in
the 2000 paper~\cite{Pros2}.

 The problem is that \emph{the quotient space of a complete topological
vector space by a closed subspace need not be complete}.
 In the functional analysis context, this observation seems to go back
to K\"othe's 1947 paper~\cite{Koeth},
see also~\cite[\S\S19.5 and~31.6]{Koeth2}.
 Moreover, in the same context Dierolf proved that \emph{any}
topological vector space is a quotient of a complete
one~\cite{Die1,Die2}, \cite[Section~2.1]{FW}.
 This result was generalized to topological abelian groups of quite
general nature in the 1981 book~\cite[Proposition~11.1]{RD};
the same construction is presented in~\cite[Theorem~4.1.48]{AGM}.

 Many a mathematician unfamiliar with the subject would guess nowadays
that the embedding of topological vector spaces with linear topology
into the abelian category of pro-vector spaces resolves many problems.
 We discuss the idea in detail, and the conclusion is that this is
rather not the case.
 The category of complete, separated topological vector spaces with
linear topology, viewed as a full subcategory in the abelian category of
pro-vector spaces, is not closed under extensions, and does not inherit
an exact category structure.

 In the final sections of the paper we study Beilinson's constructions
of three tensor product operations on topological vector spaces with
linear topology.
 These topological tensor product functors have a number of good
properties which we verify, but once again we show that there are
subtle issues involved.
 The problem is that, even when a quotient space $\fC$ of a complete
vector space $\fV$ is complete, it is, generally speaking, impossible
to lift a given zero-convergent sequence or family of elements in $\fC$
to a zero-convergent family of elements in~$\fV$.

 All in all, it is not obvious from our discussion (as well as from
the one in~\cite{Pros2}) that the language of exact categories provides
the most suitable category-theoretic point of view on topological
algebra, particularly if one is interested in \emph{complete}
topological abelian groups or vector spaces (which is usually the case).
 Nevertheless, we hope that the present paper will serve as a useful
basic reference source on topological algebraic structures with linear
topology and their categorical properties.

 Let us say a few words about the \emph{linear topology} terminology
used throughout this paper starting from the title.
 It may be unfamiliar to some people in functional analysis
(even though in appears in K\"othe's book~\cite[\S10]{Koeth2}).
 This terminology, going back to Lefschetz'
classics~\cite[Definitions~II.1.2 and~II.25.1]{Lef}, is a standard
nomenclature in algebra.
 See, e.~g., Fuchs' book~\cite[Section~7 in Chapter~I]{Fuchs}, or
the book of Beilinson and Drinfeld~\cite[Section~3.6.1]{BeDr}, or
the present author's book~\cite[Section~D.1.1]{Psemi}.

 One speaks about linear topologies on abelian groups or vector spaces,
meaning topologies with a base of neighborhoods of zero formed by
subgroups/subspaces; linear topologies on modules over rings, meaning
topologies with a base of neighborhoods of zero formed by submodules;
left and right linear ring topologies (or left and right linear
topological rings), meaning topological rings with a base of
neighborhoods of zero consisting of left/right ideals, etc.
 Topological vector spaces with linear topology form a natural class
of topological vector spaces \emph{over discrete fields}, analogous to
the class of locally convex topological vector spaces over the normed
fields of real or complex numbers in functional analysis.

 Let us describe the content of the paper section by section.
 An introductory discussion of topological algebraic groups with linear
topology is presented in Section~\ref{top-abelian-secn}.
 We proceed to describe the main construction of counterexamples to
completeness of quotients, which will play a key role in the rest of
the paper, in Section~\ref{counterex-secn}.
 Topological vector spaces with linear topology are introduced and
the basic properties of them discussed in
Section~\ref{top-vector-secn}.

 The language of exact categories in Quillen's sense, which forms
the category-theoretic background for the discussion in the rest of
the paper, is elaborated upon in Section~\ref{exact-categories-secn}.
 Two important particular cases, namely the quasi-abelian categories
and the maximal exact category structures (on weakly idempotent-complete
additive categories) are considered in
Sections~\ref{quasi-abelian-secn} and~\ref{maximal-exact-struct-secn}.

 The properties of the categories of topological abelian groups and
vector spaces with linear topology, in the context of the general
theory of additive categories, are discussed
in Sections~\ref{incomplete-VSLTs-secn}
and~\ref{maximal-exact-VSLTs-secn}.
 The reader can find a brief summary in
Conclusion~\ref{maximal-exact-conclusion}.

 The interpretation of complete, separated topological vector spaces
as objects of the abelian category of pro-vector spaces is
discussed in Section~\ref{pro-vector-spaces-secn}.
 For incomplete topological vector spaces, what we call
\emph{supplemented} pro-vector spaces are needed, and we consider
these in Section~\ref{suppl-pro-secn}.
 The theory which we develop in these sections provides a kind of
category-theoretic generalization of topological algebra.
 See Conclusion~\ref{proobjects-conclusion} for a brief summary.

 The construction of the abelian group/vector space of zero-convergent
families of elements, together with the related notions of strongly
surjective maps and the strong exact category structure, are presented
in Section~\ref{strong-exact-structure-secn}.
 These play an important role in the theory of contramodules over
topological rings, as developed in our recent
works~\cite{Pweak,PR,PS,Pcoun} and particularly~\cite{Pproperf,PS3}.
 Besides, this construction occurs as a particular case of some
of Beilinson's topological tensor products (with a discrete vector
space), and serves as a source of our counterexamples in
this context.

 The uncompleted versions of Beilinson's topological tensor products
(for incomplete topological vector spaces) are studied in
Section~\ref{tensor-product-yoga-secn}, and the completed topological
tensor products (of complete VSLTs) are discussed in
Section~\ref{refined-exact-struct-secn}.
 Some natural questions arising in this context we were unable to
answer; they are formulated as open questions in
Section~\ref{refined-exact-struct-secn}.
 The reader can find the final summary in
Conclusion~\ref{final-conclusion}.

\subsection*{Acknowledgment}
 I~am grateful to Jan Trlifaj for showing me the book~\cite{AVM}
in October~2018, which helped me to discover the book~\cite{AGM}.
 This made the present research possible.
 I~learned about the book~\cite{RD} from Jochen Wengenroth during
the Zoom conference ``Additive categories between algebra and
functional analysis'' in March~2021; I would like to thank both him
and the organizers of the conference.
 I~also wish to thank Jan \v St\!'ov\'\i\v cek for helpful discussions.
 The author is supported by the GA\v CR project 20-13778S and
research plan RVO:~67985840.

\Section{Abelian Groups with Linear Topology}  \label{top-abelian-secn}

 A \emph{topological abelian group} $A$ is a topological space with
an abelian group structure such that the summation map ${+}\,\:A\times A
\rarrow A$ is continuous (as a function of two variables) and
the inverse element map ${-}\,\:A\rarrow A$ is also continuous.
 A topological abelian group $A$ is said to have \emph{linear topology}
if open subgroups form a base of neighborhoods of zero in~$A$.
 In this paper, all the ``topological abelian groups'' will be presumed
to have linear topology.

 A linear topology on an abelian group $A$ is determined by
the collection of all open subgroups in~$A$.
 A collection of subgroups in an abelian group $A$ is the collection
of all open subgroups in a (linear) topology if and only if it is
a filter, i.~e., the following conditions are satisfied:
\begin{itemize}
\item $A$ is an open subgroup in itself;
\item if $U'\subset U''\subset A$ are subgroups in $A$ and $U'$ is
an open subgroup, then so is~$U''$;
\item the intersection of any two open subgroups in $A$ is open.
\end{itemize}

 A collection of subgroups $\cB$ in $A$ is a base of neighborhoods of
zero in some linear topology on $A$ if and only if
\begin{itemize}
\item $\cB$ is nonempty; and
\item for any $U'$, $U''\in\cB$ there exists $U\in\cB$ such that
$U\subset U'\cap U''$.
\end{itemize}
 In this case, a subgroup in $A$ is open if and only if it contains
a subgroup belonging to~$\cB$.

 The \emph{morphisms} of topological abelian groups $f\:A\rarrow B$ are
the continuous additive maps, that is, the abelian group homomorphisms
such that $f^{-1}(U)$ is an open subgroup in $A$ for every open subgroup
$U\subset B$.

 The \emph{completion} of a topological abelian group $A$ is defined
as the projective limit
$$
 \fA=A\sphat\,=\varprojlim\nolimits_{U\subset A}A/U,
$$
where $U$ ranges over the poset of all open subgroups in $A$ (or
equivalently, any base of open subgroups in~$A$).
 The projective limit is taken in the category of abelian groups.
 The abelian group $\fA$ is endowed with the topology in which
the open subgroups $\fU\subset\fA$ are precisely the kernels of
the projection maps $\fU=\ker(\fA\to A/U)$, where $U$ ranges over
the open subgroups in~$A$.
 When $U\in\cB$ ranges over a base of open subgroups in $A$,
the related subgroups $\fU$ form a base of open subgroups in~$\fA$.

 The collection of projection maps $A\rarrow A/U$ defines a natural
continuous abelian group homomorphism $\lambda_A\:A\rarrow\fA$,
which is called the \emph{completion map}.
 A topological abelian group $A$ is said to be \emph{separated}
(or \emph{Hausdorff}) if the map~$\lambda_A$ is injective.
 Equivalently, this means that the intersection of all open subgroups
in $A$ is the zero subgroup.
 A topological abelian group $A$ is said to be \emph{complete} if
the map~$\lambda_A$ is surjective.
 A topological abelian group $A$ is separated and complete if and only
if $\lambda_A$~is an isomorphism of topological abelian groups.

 It is clear from the definitions that the completion $\fA=A\sphat$
of any topological abelian group $A$ is a complete, separated
topological abelian group.

 Let $A$ be a topological abelian group and $K\subset A$ be a subgroup.
 Then the \emph{induced topology} on $K$ is defined by the rule that
the open subgroups in $K$ are the intersections $K\cap U\subset K$,
where $U\subset A$ ranges over the open subgroups of~$A$.

 Let $A$ be a topological abelian group and $p\:A\rarrow C$ be
a surjective homomorphism of abelian groups.
 Then the \emph{quotient topology} on $C$ is defined by the rule
that a subgroup $W\subset C$ is open in $C$ if and only if its
preimage $p^{-1}(W)\subset A$ is an open subgroup in~$A$.
 Given a surjective homomorphism of topological abelian groups
$p\:A\rarrow C$, the topology on $C$ is the quotient topology of
the topology on $A$ if and only if $p$~is a continuous open map.
 The latter condition means that, for any open subgroup $U\subset A$,
its image $p(U)\subset C$ is an open subgroup in~$C$.

 Let $A$ be a topological abelian group.
 A subgroup $K\subset A$ is closed in the topology of $A$ if and only
if the set-theoretical complement $A\setminus K$ is a union of cosets
with respect to open subgroups in~$A$.
 Equivalently, a subgroup in $A$ is closed if and only if it is
an intersection of open subgroups.
 In particular, $A$ is separated if and only if the zero subgroup is
closed in~$A$.
 The quotient group $A/K$ is separated in the quotient topology if
and only if $K$ is a closed subgroup in~$A$.
 The topological closure $\overline{K}_A\subset A$ of a subgroup
$K\subset A$ is a subgroup equal to the intersection of all the open
subgroups in $A$ containing~$K$.

 Given an injective homomorphism of topological abelian groups
$i\:K\rarrow A$, the subgroup $i(K)\subset A$ is closed in $A$
\emph{and} the topology on $K$ is induced by the topology of $A$
(via the embedding~$i$) if and only if $i$~is a continuous closed map.
 The latter condition means that, for any closed \emph{subset}
$Z\subset K$, its image $i(Z)\subset A$ is a closed subset in~$A$.
 It suffices to consider closed subsets of the form $Z=K\setminus W$,
where $W$ ranges over the open subgroups of~$K$.

 The following lemma explains the connection between closures
and completions.
 Notice that any subgroup of a separated abelian group is separated
in the induced topology.
 As a particular case of the lemma, one obtains the assertion that
any closed subgroup of a complete, separated abelian group is
complete in the induced topology.

\begin{lem} \label{closure-completion-lemma}
 Let\/ $\fB$ be a complete, separated topological abelian group,
and let $A\subset\fB$ be a subgroup, endowed with the induced
topology.
 Then the morphism of completions $A\sphat\,\rarrow\fB\sphat=\fB$
induced by the inclusion $A\rarrow\fB$ identifies the topological
abelian group $A\sphat\,$ with the closure $\overline{A}_\fB\subset
\fB$ of the subgroup $A$ in\/ $\fB$, where $\overline{A}_\fB$ is
endowed with its induced topology as a subgroup in\/~$\fB$.
\end{lem}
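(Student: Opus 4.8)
The plan is to realize $A\sphat$ explicitly as a subgroup of $\fB=\fB\sphat$ and then to compute both this subgroup and the topology it inherits. Since $A$ carries the topology induced from $\fB$, its open subgroups are exactly the intersections $A\cap V$ with $V$ an open subgroup of $\fB$ (equivalently, one may let $V$ range over any base of open subgroups of $\fB$). For each such $V$, the second isomorphism theorem provides a natural \emph{injective} homomorphism $A/(A\cap V)\rarrow\fB/V$ whose image is the open subgroup $(A+V)/V\subset\fB/V$; these maps are compatible with the transition maps of the two projective systems.

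First I would pass to the projective limits. Since $\varprojlim$ is a left exact functor on abelian groups, the compatible family of injections just described produces an injective homomorphism $A\sphat=\varprojlim\nolimits_V A/(A\cap V)\hookrightarrow\varprojlim\nolimits_V\fB/V=\fB\sphat=\fB$, where the last identification uses that $\fB$ is complete and separated, so that $\lambda_\fB$ is an isomorphism of topological abelian groups. By the very construction of the induced map on completions, the $V$\+component $A\sphat\rarrow\fB/V$ of this embedding factors as $A\sphat\rarrow A/(A\cap V)\rarrow\fB/V$, and it also coincides with the composite of the embedding $A\sphat\hookrightarrow\fB$ with the projection $\fB\rarrow\fB/V$.

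Next I would identify the image. An element $b\in\fB$ lies in the image of $A\sphat$ if and only if $b+V\in(A+V)/V$ for every open subgroup $V\subset\fB$, that is, if and only if $b\in\bigcap_V(A+V)$. Each $A+V$ is an open subgroup of $\fB$ containing $A$; conversely, every open subgroup $V'\subset\fB$ with $A\subseteq V'$ occurs among these (take $V=V'$, so that $A+V'=V'$). Hence $\bigcap_V(A+V)$ equals the intersection of all open subgroups of $\fB$ containing $A$, which by the description of topological closure recalled above is the closure $\overline{A}_\fB$. Thus the embedding identifies the group $A\sphat$ with the subgroup $\overline{A}_\fB\subset\fB$.

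Finally I would check that this identification is a homeomorphism. A base of open subgroups of $A\sphat$ consists of the kernels $\ker(A\sphat\rarrow A/(A\cap V))$; since $A/(A\cap V)\rarrow\fB/V$ is injective, such a kernel equals $\ker(A\sphat\rarrow\fB/V)$, which under the embedding into $\fB$ is $A\sphat\cap V=\overline{A}_\fB\cap V$. As $V$ ranges over a base of open subgroups of $\fB$, the subgroups $\overline{A}_\fB\cap V$ form a base of the topology on $\overline{A}_\fB$ induced from $\fB$, so $A\sphat\rarrow\fB$ is a topological isomorphism onto $\overline{A}_\fB$ with its induced topology, as claimed. The only point that requires a little care is this last comparison of topologies --- matching the canonical base of neighborhoods of zero in the projective limit $A\sphat$ with the induced-topology base on $\overline{A}_\fB$ --- but with the embedding and the factorization of its components in hand, it is routine.
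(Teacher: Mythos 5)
Your proof is correct, and it is in fact more self-contained than the one in the paper. The paper reduces the lemma to four auxiliary assertions stated without proof: (a) $A\sphat\rarrow\fB$ is injective, (b) its image is closed, (c) the two topologies on $A\sphat$ agree, and (d) $A$ is dense in $A\sphat$; the identification with $\overline{A}_\fB$ then comes from sandwiching (closed and containing $A$, hence containing the closure; with $A$ dense, hence contained in the closure). You instead compute the image of $A\sphat=\varprojlim_V A/(A\cap V)$ in $\fB=\varprojlim_V\fB/V$ explicitly as $\bigcap_V(A+V)$ and observe that this intersection is exactly the intersection of all open subgroups of $\fB$ containing $A$, i.e.\ the closure as defined in Section~1 --- so closedness of the image and density of $A$ come out of a single formula rather than being established separately. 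Your treatment of the topology (matching $\ker(A\sphat\to A/(A\cap V))=\overline{A}_\fB\cap V$ via injectivity of $A/(A\cap V)\rarrow\fB/V$) is the paper's item~(c) made explicit. The only points you pass over lightly are routine: the reindexing of $A\sphat$ by open subgroups $V\subset\fB$ rather than by open subgroups of $A$ is legitimate because $V\longmapsto A\cap V$ is a surjective, hence cofinal, map of directed posets; and the identification of the image of the injection of limits with $\varprojlim_V(A+V)/V$ uses that a projective limit of subobjects is the subgroup of compatible families with components in those subobjects. Both are standard and consistent with how the paper itself argues elsewhere.
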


\begin{proof}
 The following assertions are straightforward to check from
the definitions:
\begin{enumerate}
\renewcommand{\theenumi}{\alph{enumi}}
\item for any topological abelian group $B$ and a subgroup
$A\subset B$ endowed with the induced topology, the map of
completions $A\sphat\,\rarrow B\sphat\,$ induced by
the injective morphism $A\rarrow B$ is also injective;
\item moreover, in the context of~(a), \,$A\sphat\,$ is a closed
subgroup in~$B\sphat\,$;
\item moreover, in the context of~(a), the topology on $A\sphat\,$
as the completion of $A$ coincides with the topology induced on
$A\sphat\,$ as a subgroup in $B\sphat\,$;
\item for any topological abelian group $A$, the image of
the completion morphism $\lambda_A\:A\rarrow A\sphat\,$ is
dense in $A\sphat\,$, that is, the closure of $\lambda_A(A)$
in $A\sphat\,$ coincides with $A\sphat\,$.
\end{enumerate}

 In the situation at hand, according to~(a), the map $A\sphat\,
\rarrow\fB$ is injective.
 Following~(b), \,$A\sphat\,$ is a closed subgroup in $\fB$; hence
$\overline{A}_\fB\subset A\sphat\,\subset\fB$.
 According to~(d), \,$A$ is dense in $A\sphat\,$; so
$A\sphat\,\subset\overline{A}_\fB\subset\fB$.
 Finally, (c)~tells us that the topology on $A\sphat\,$ coincides
with the one induced from~$\fB$.
 (Cf.~\cite[Lemma~2.1]{Pproperf}.)
\end{proof}

 Let $(A_i)_{i\in I}$ be a family of topological abelian groups.
 Then the \emph{product topology} (also called the \emph{Tychonoff
topology}) on the direct product $\prod_{i\in I} A_i$ of the abelian
groups $A_i$ is defined as follows.
 A base of open subgroups in $\prod_{i\in I} A_i$ is formed by
the subgroups of the form $\prod_{j\in J} U_j\times
\prod_{l\in I\setminus J}A_l\subset\prod_{i\in I}A_i$, where
$J\subset I$ is a finite subset of indices and $U_j\subset A_j$
are open subgroups in~$A_j$.

 Let $(A_\gamma)_{\gamma\in\Gamma}$ be a projective system of
topological abelian groups, indexed by a directed poset~$\Gamma$.
 Then the \emph{projective limit topology} on the projective limit
$\varprojlim_{\gamma\in\Gamma}A_\gamma$ of the abelian groups
$A_\gamma$ is defined as follows.
 A base of open subgroups in $\varprojlim_{\gamma\in\Gamma}A_\gamma$
is formed by the full preimages of open subgroups $U_\delta\subset
A_\delta$, \ $\delta\in\Gamma$, under the projection maps
$\varprojlim_{\gamma\in\Gamma}A_\gamma\rarrow A_\delta$.

 One can also construct the \emph{coproduct topology} on the direct
sum of abelian groups $A=\bigoplus_{i\in I} A_i$ in the following way.
 A subgroup $U\subset\bigoplus_{i\in I} A_i$ is open if and only if,
for every $j\in I$, the full preimage $U_j\subset A_j$ of the subgroup
$U$ under the natural inclusion map $A_j\rarrow\bigoplus_{i\in I} A_i$
is an open subgroup in~$A_j$.
 Equivalently, one can say that a base of open subgroups in
$\bigoplus_{i\in I}A_i$ is formed by the subgroups of the form
$\bigoplus_{i\in I}U_i\subset\bigoplus_{i\in I}A_i$, where
$U_i\subset A_i$ are open subgroups in~$A_i$.

 Denote by $\Top_\boZ$ the category of topological abelian groups and
continuous additive maps.
 As usually, we denote by $\Ab$ the category of abelian groups.
 Then $\Top_\boZ$ is an additive category with kernels and cokernels,
as well as set-indexed products and coproducts (so $\Top_\boZ$
actually has all set-indexed limits and colimits).
 The forgetful functor $\Top_\boZ\rarrow\Ab$ preserves all
the limits and colimits.

 Specifically, for any morphism $f\:A\rarrow B$ in $\Top_\boZ$,
the kernel of~$f$ in $\Top_\boZ$ is the kernel of~$f$ in $\Ab$
endowed with the induced topology as a subgroup in~$A$.
 The cokernel of~$f$ in $\Top_\boZ$ is the cokernel of~$f$ in $\Ab$
endowed with the quotient topology as an epimorphic image of~$B$.
 The products, projective limits, and coproducts in $\Top_\boZ$ are,
respectively, the products, projective limits, and coproducts in $\Ab$
endowed with the product, projective limit, and coproduct topologies,
as described above.

 Let $\Top^\s_\boZ$ denote the full subcategory of separated
topological abelian groups in $\Top_\boZ$.
 The full subcategory $\Top^\s_\boZ\subset\Top_\boZ$ is closed under
kernels and products (hence under all limits), and also under
coproducts (see below).
 However, $\Top^\s_\boZ$ is not closed under cokernels in $\Top_\boZ$.
 In fact, $\Top^\s_\boZ$ is a reflective full subcategory in
$\Top_\boZ$, i.~e., the inclusion functor $\Top^\s_\boZ\rarrow
\Top_\boZ$ has a left adjoint functor (the reflector).
 The latter functor assigns to any topological abelian group $A$ its
quotient group by the closure of zero subgroup in $A$, considered
in the quotient topology, that is $A\longmapsto A/\overline{\{0\}}_A$.

 The cokernels (and all colimits) in $\Top^\s_\boZ$ can be computed
by applying the reflector to the cokernel (resp., colimit) of
the same morphism/diagram taken in $\Top^\s_\boZ$.
 In other words, the cokernel of a morphism $f\:A\rarrow B$ in
$\Top^\s_\boZ$ is the quotient group $B/\overline{f(A)}_B$ of $B$
by the closure of the image of the morphism~$f$, where the quotient
group is endowed with the quotient topology.

 Let $\Top^\scc_\boZ$ denote the full subcategory of complete,
separated topological abelian groups in $\Top_\boZ$.
 The full subcategory $\Top^\scc_\boZ\subset\Top_\boZ$ is closed under
kernels and products (hence under all limits).
 In fact, $\Top^\scc_\boZ$ is a reflective full subcategory in
$\Top_\boZ$.
 The reflector (i.~e., the functor left adjoint to the inclusion
$\Top^\scc_\boZ\rarrow\Top_\boZ$) assigns to any topological abelian
group $A$ its completion $\fA=A\sphat\,$.
 The full subcategory $\Top^\scc_\boZ$ is also closed under coproducts
in $\Top_\boZ$, as the following lemma shows.

\begin{lem} \label{coproduct-topology-lemma}
\textup{(a)} The direct sum of a family of separated topological
abelian groups is separated in the coproduct topology. \par
\textup{(b)} The direct sum of a family of complete topological
abelian groups is complete in the coproduct topology.
\end{lem}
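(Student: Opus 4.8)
The plan is to compute the completion $\widehat A$ of $A=\bigoplus_{i\in I}A_i$, taken in the coproduct topology, explicitly enough to read off that the completion map $\lambda_A$ is injective in case~(a) and surjective in case~(b). Part~(a) is then immediate: if $a=(a_i)_{i\in I}\in A$ is nonzero, choose $i_0$ with $a_{i_0}\ne0$ and, using that $A_{i_0}$ is separated, an open subgroup $U_{i_0}\subset A_{i_0}$ with $a_{i_0}\notin U_{i_0}$; then $U_{i_0}\oplus\bigoplus_{i\ne i_0}A_i$ is an open subgroup of $A$ not containing~$a$. So the intersection of all open subgroups of $A$ is zero, and $A$ is separated.

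For part~(b), I would use that the subgroups $\bigoplus_{i\in I}U_i$, with each $U_i\subset A_i$ open, form a base of open subgroups of $A$, and that $A\big/\bigoplus_i U_i=\bigoplus_i(A_i/U_i)$ algebraically. Hence
$$
 \widehat A \;=\; \varprojlim_{(U_i)}\ \bigoplus_{i\in I}(A_i/U_i),
$$
the limit being taken over the directed poset $\prod_{i\in I}\cB_i$ (a product of directed posets, hence directed), where $\cB_i$ is the set of open subgroups of $A_i$ ordered by reverse inclusion, and the transition maps are the coordinatewise projections.

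The first step is to observe that an element $\xi=(b^{(U)})_U$ of this limit is determined coordinatewise: for fixed $i$, comparing two index families $U$ and $U'$ that agree in the $i$-th coordinate through their (coordinatewise) intersection shows that $b^{(U)}_i\in A_i/U_i$ depends only on $U_i$; so the family $(b^{(U)}_i)_{U_i}$ assembles into a well-defined element $c_i\in\varprojlim_{U_i}A_i/U_i=\widehat{A_i}$.

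The main point — the step I expect to require the most care — is that $c_i=0$ for all but finitely many $i\in I$. Indeed, if $c_i\ne0$, pick an open subgroup $V_i\subset A_i$ for which the image of $c_i$ in $A_i/V_i$ is nonzero, and set $V_i=A_i$ for the remaining indices; were there infinitely many such $i$, the element $b^{(V)}$ of $\bigoplus_i(A_i/V_i)$ would have infinite support, a contradiction. Only here does completeness of the $A_i$ enter, and only through surjectivity of $\lambda_{A_i}$: choose $a_i\in A_i$ with $\lambda_{A_i}(a_i)=c_i$, taking $a_i=0$ whenever $c_i=0$. Then $a=(a_i)_{i\in I}$ has finite support, so $a\in A$, and checking on the base of open subgroups — where $\lambda_A(a)$ and $\xi$ have the same $U$-component $(a_i+U_i)_i\in\bigoplus_i(A_i/U_i)$ — gives $\lambda_A(a)=\xi$. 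Thus $\lambda_A$ is surjective and $A$ is complete. I would add the remark that, since only surjectivity of the $\lambda_{A_i}$ is used, the argument in~(b) needs no separatedness hypothesis, and that the same computation in fact yields $\widehat{\bigoplus_{i\in I}A_i}=\bigoplus_{i\in I}\widehat{A_i}$ for an arbitrary family $(A_i)_{i\in I}$, of which~(b) is the special case where each $\widehat{A_i}=A_i$.
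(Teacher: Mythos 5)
Your proof is correct and follows essentially the same route as the paper's: part~(a) is identical, and in part~(b) both arguments hinge on the observation that the image of an element of the completion in each discrete quotient $\bigoplus_{i}A_i/U_i$ has finite support (the paper phrases this via the induced maps $q_i\:\fA\rarrow A_i$ and the identity $\fA=A+\fU$, you via the explicit projective-limit description), after which the coordinates are assembled into an element of the direct sum. A minor bonus of your formulation is that it treats the non-separated case directly through surjectivity of the $\lambda_{A_i}$, where the paper invokes a reduction to the separated case.
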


\begin{proof}
 Part~(a): let $(A_i)_{i\in I}$ be a family of separated topological
abelian groups.
 In order to show that the group $A=\bigoplus_{i\in I} A_i$ is
separated it suffices to find, for any nonzero element $a\in A$,
an open subgroup $U\subset A$ such that $a\notin u$.
 Indeed, the element~$a$ can be viewed as a collection of elements
$(a_i\in A_i)_{i\in I}$ such that $a_i=0$ for all but a finite
subset of indices $i\in I$.
 Since $a\ne0$, there exists an index $j\in I$ such that $a_j\ne0$.
 Since the topological group $A_j$ is separated, there exists an open
subgroup $U_j\subset A_j$ such that $a_j\notin U_j$.
 Now $U=U_j\oplus\bigoplus_{l\in I}^{l\ne j}A_l\subset
\bigoplus_{i\in I}A_i$ is an open subgroup such that $a\notin U$.

 Part~(b): we will consider the case of a family of complete,
separated topological abelian groups $A_i$; the general case can be
easily reduced to that.
 Put $A=\bigoplus_{i\in I}A_i$ and $\fA=A\sphat\,$.
 The projection map $A\rarrow A_i$ is continuous, so it induces
a continuous map $q_i\:\fA\rarrow A_i\sphat\,=A_i$.
 Let $b\in\fA$ be an element; we want to show that $q_i(b)=0$
for all but a finite subset of indices $i\in I$.

 Choose an open subgroup $U_i\subset A_i$ for every $i\in I$ such that
$U_i=A_i$ if $q_i(b)=0$ and $q_i(b)\notin U_i$ if $q_i(b)\ne0$.
 Then $U=\bigoplus_{i\in I}U_i$ is an open subgroup in
$\bigoplus_{i\in I}A_i$.
 Let $\fU=U\sphat\,$ be the related open subgroup in $\fA$,
that is, the kernel of the projection map $\fA\rarrow A/U$
(so $A/U\simeq\fA/\fU$).
 Then, viewing $A$ as a subgroup in $\fA$, we have $\fA=A+\fU$.
 Let $a\in A$ be an element such that $b-a\in\fU$.
 The subset $J\subset I$ of all $j\in I$ such that $q_j(a)\ne0$
is finite.
 The square diagram of projections $\fA\rarrow A/U\rarrow A_i/U_i$,
\ $\fA\rarrow A_i\rarrow A_i/U_i$ is commutative.
 Hence we have $q_i(\fU)\subset U_i\subset A_i$ and therefore
$q_i(b)-q_i(a)\in U_i$ for every $i\in I$. 
 For all $i\notin J$, it follows that $q_i(b)\in U_i$, thus
$q_i(b)=0$.

 We have shown that the collection of elements
$(q_i(b)\in A_i)_{i\in I}$ defines an element of
$A=\bigoplus_{i\in I}A_i$.
 Denote this element by $c\in A$.
 Let us show that $b=c$ in $\fA$.
 It suffices to check that, for every open subgroup $U\subset A$,
the image of $b-c$ under the projection map $\fA\rarrow A/U$
vanishes.
 Let $U_i\subset A_i$ denote the full preimage of $U$ under
the inclusion map $A_i\rarrow A$.
 Then $U'=\bigoplus_{i\in I} U_i\subset\bigoplus_{i\in I}A_i$ is
an open subgroup and $U'\subset U$.
 So it suffices to check that the image of $b-c$ in $A/U'=
\bigoplus_{i\in I}A_i/U_i$ vanishes.
 This follows from the fact that $q_i(b)=q_i(c)$ for every $i\in I$.
\end{proof}

 The cokernels (and all colimits) in $\Top^\scc_\boZ$ can be computed
by applying the completion functor $A\longmapsto A\sphat\,$ (i.e.,
the reflector) to the cokernel (resp., colimit) of the same
morphism/diagram computed in $\Top_\boZ$.
 In other words, the cokernel of a morphism $f\:A\rarrow B$ in
$\Top^\scc_\boZ$ is the completion $(B/f(A))\sphat\,$ of
the quotient group $B/f(A)$ in its quotient topology.
 The most important fact which we discuss in this paper is that
\emph{the quotient group of a complete, separated topological abelian
group by a closed subgroup need not be complete in the quotient
topology}.
 The construction of counterexamples, following~\cite{RD}
and~\cite{AGM}, will be presented in the next section.

\begin{rem}
 Let $\fA$ be a complete, separated topological abelian group, and
let $\fK\subset\fA$ be a closed subgroup.
 Denote by $Q=\fA/\fK$ the quotient group, endowed with the quotient
topology.
 What does it mean that the topological abelian group $Q$ is (or is
not) complete?

 Denote by $\fQ=Q\sphat\,$ the completion of the topological abelian
group~$Q$.
 By the definition, one has $\fQ=\varprojlim_{W\subset Q}Q/W$, where
$W$ ranges over all the open subgroups in~$Q$.
 Let $p\:\fA\rarrow Q$ denote the projection map.
 Then the map $W\longmapsto p^{-1}(W)=\fU$ is an ordered isomorphism
between the poset of all open subgroups $W\subset Q$ and the poset of
all open subgroups $\fU\subset\fA$ such that $\fK\subset\fU$.
 Thus the composition $\fA\rarrow Q\rarrow\fQ$ can be interpreted as
the natural map of projective limits
\begin{equation} \label{quotient-completion-map}
 \varprojlim\nolimits_{\fU\subset\fA}\fA/\fU\lrarrow
 \varprojlim\nolimits_{\fK\subset\fU\subset\fA}\fA/\fU.
\end{equation}
 Here the left-hand side is the projective limit of the quotient
groups $\fA/\fU$ taken over the poset of all open subgroups
$\fU\subset\fA$.
 The right-hand side is the projective limit of the same quotient
groups taken over the subposet of all open subgroups $\fU\subset\fA$
containing~$\fK$.

 Why should the map~\eqref{quotient-completion-map} be surjective?
 An element of the right-hand side is a compatible family of elements
$b=(\bar a_\fU\in\fA/\fU)_{\fK\subset\fU\subset\fA}$ specified for
all the open subgroups $\fU$ in $\fA$ containing~$\fK$.
 To find a preimage of~$b$ in the left-hand side means to extend
the given family of elements $\bar a_\fU$ to a similar family of
elements defined for all the open subgroups $\fU\subset\fA$ (where
$\fU$ no longer necessarily contains~$\fK$).
 How to produce the missing elements $\bar a_\fU$, simultaneously for
all $\fU\not\supset\fK$ and in a compatible way?
 There is no apparent way to do it, and indeed we will see in
the next section that this cannot be done (generally speaking).
\end{rem}

 A positive assertion in the desired direction holds under
a countability assumption.

\begin{prop} \label{countable-base-kernel}
 Let\/ $\fA$ be a complete, separated topological abelian group, and
let\/ $\fK\subset\fA$ be a closed subgroup.
 Assume that the topological abelian group\/ $\fK$ has a countable
base of neighborhoods of zero.
 Then the quotient group\/ $\fA/\fK$ is (separated and) complete in
the quotient topology.
\end{prop}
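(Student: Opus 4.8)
\emph{The plan.}
Separatedness of $Q:=\fA/\fK$ is immediate from $\fK$ being closed (Section~\ref{top-abelian-secn}), so only completeness remains, i.e., surjectivity of the completion map $\lambda_Q\colon Q\rarrow\widehat Q$. Since the projection $q\colon\fA\rarrow Q$ is surjective, it suffices to show that the composite $\fA\rarrow Q\rarrow\widehat Q$ (the projection followed by $\lambda_Q$) is surjective. By the discussion in the Remark above, this composite is the natural map~\eqref{quotient-completion-map}, whose source equals $\fA$ (as $\fA$ is complete and separated) and whose target equals $\widehat Q=\varprojlim\nolimits_{\fK\subset\fU\subset\fA}\fA/\fU$; so the task is to prove~\eqref{quotient-completion-map} is surjective.

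I would deduce this from a $\varprojlim^1$-vanishing. Let $P$ be the set of all open subgroups $\fV\subset\fA$. For each $\fV\in P$ the second isomorphism theorem gives a short exact sequence of discrete abelian groups
\[
 0\rarrow\fK/(\fK\cap\fV)\rarrow\fA/\fV\rarrow\fA/(\fK+\fV)\rarrow0 ,
\]
natural in $\fV$, hence a short exact sequence of inverse systems over $P$. Passing to $\varprojlim$ over $P$, I would identify the three terms: \ $\varprojlim_\fV\fA/\fV=\fA$; \ $\varprojlim_\fV\fA/(\fK+\fV)=\varprojlim_{\fK\subset\fU}\fA/\fU=\widehat Q$, because $\fV\mapsto\fK+\fV$ is cofinal onto the set of open subgroups of $\fA$ containing $\fK$; and $\varprojlim_\fV\fK/(\fK\cap\fV)=\widehat\fK=\fK$, because $\fV\mapsto\fK\cap\fV$ is cofinal onto the set $P_\fK$ of open subgroups of $\fK$, and because a closed subgroup of a complete separated group is complete and separated in the induced topology (Lemma~\ref{closure-completion-lemma} and the remark preceding it). The resulting exact sequence of projective limits begins
\[
 0\rarrow\fK\rarrow\fA\rarrow\widehat Q\rarrow\varprojlim\nolimits^1_{\fV\in P}\bigl(\fK/(\fK\cap\fV)\bigr)\rarrow\dotsb ,
\]
so~\eqref{quotient-completion-map} is surjective provided $\varprojlim\nolimits^1_{\fV\in P}\bigl(\fK/(\fK\cap\fV)\bigr)=0$.

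The final step — the only one using the countability hypothesis — is the vanishing of this $\varprojlim^1$. The inverse system $\bigl(\fK/(\fK\cap\fV)\bigr)_{\fV\in P}$ is the pullback along the cofinal map $\fV\mapsto\fK\cap\fV$ of the inverse system $\bigl(\fK/\fV'\bigr)_{\fV'\in P_\fK}$, and (the fibres and comma categories involved being closed under finite intersection, hence cofiltered and contractible) the two systems have the same $\varprojlim$ and $\varprojlim^1$. Now $\fK$ has a countable base of neighbourhoods of zero, i.e., $P_\fK$ has a countable cofinal chain $\fV'_1\supseteq\fV'_2\supseteq\dotsb$; passing to this chain, which again leaves $\varprojlim^1$ unchanged, reduces us to $\varprojlim^1$ of the tower $\dotsb\twoheadrightarrow\fK/\fV'_2\twoheadrightarrow\fK/\fV'_1$ with surjective transition maps, which vanishes by the Mittag-Leffler criterion. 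That would complete the proof.

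I expect the main work to be bookkeeping rather than any single hard computation: identifying the three inverse limits above, and cutting the kernel system down from $P$ to $P_\fK$ and then to a countable chain, for which one must check that each monotone surjection of directed sets in play is cofinal (equivalently, that the relevant fibres and comma categories are closed under finite intersection). The actual content is the Mittag-Leffler vanishing at the end, fed by the countability of the base of $\fK$. I do not see a comparably clean \emph{direct} argument: the natural attempt — lifting a Cauchy filter on $Q$ to a family $y_n\in\fA$ with $y_{n+1}-y_n\in\fU_n$ along a countable base $\{\fK\cap\fU_n\}$ of $\fK$ — need not produce a convergent family, since the open subgroups $\fU_n$ of $\fA$ need not be cofinal among \emph{all} open subgroups of $\fA$.
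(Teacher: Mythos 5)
Your proposal is correct and follows essentially the same route as the paper: the short exact sequence of projective systems $0\to\fK/(\fK\cap\fU)\to\fA/\fU\to\fA/(\fK+\fU)\to0$ over the open subgroups of $\fA$, followed by the vanishing of $\varprojlim^1$ of the kernel system, which is obtained exactly as you describe — by pulling back along the cofinal map $\fU\mapsto\fK\cap\fU$ (the paper justifies the invariance of $\varprojlim^1$ under such cofinal inverse images via weakly flabby systems, citing Jensen) and then applying the Mittag--Leffler vanishing over a countable cofinal chain in the poset of open subgroups of $\fK$.
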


\begin{proof}
 This is a generalization of~\cite[Lemma~2.2(b)]{Pproperf}, provable
in a similar way with the following additional argument.
 Let $\Delta$ denote the poset of all open subgroups in $\fA$ with
respect to the inverse inclusion, and let $\Gamma$ be the similar
poset of all open subgroups in~$\fK$.
 Let $\psi\:\Delta\rarrow\Gamma$ be the map taking any open
subgroup $\fU\subset\fA$ to the open subgroup $\psi(\fU)=\fK\cap\fU
\subset\fK$.
 Then $\psi$ is a cofinal map of posets, and $\Gamma$ has a cofinal
countable subposet.
 Over a countable directed poset, the derived projective limit
$\varprojlim^1$ of any projective system of surjective maps of
abelian groups vanishes.

 We need to check that
$\varprojlim^1_{\fU\in\Delta}(\fK/(\fK\cap\fU))=0$.
 The key observation is that the inverse image with respect to
a cofinal map of directed poset preserves the derived projective limits
of projective systems of abelian groups, that is
$\varprojlim^n_{\gamma\in\Gamma} K_\gamma=
\varprojlim^n_{\delta\in\Delta} K_{\psi(\delta)}$ for any cofinal map
of directed posets $\psi\:\Delta\rarrow\Gamma$, every projective system
of abelian groups $(K_\gamma)_{\gamma\in\Gamma}$,  and all $n\ge0$.
 This is provable using the notion of a weakly flabby (faiblement
flasque) projective system, see~\cite[Th\'eor\`eme~1.8]{Jen}.
 Moreover, the derived projective limit of a directed projective system
in an abelian category with exact products is an invariant of
the related pro-object~\cite[Corollary~7.3.7]{Pros1}.
\end{proof}

\Section{The Construction of Counterexamples} \label{counterex-secn}

 This section is based on~\cite[Theorem~4.1.48]{AGM} (see
\cite[Proposition~11.1]{RD} for the original exposition).
 The exposition in~\cite{RD} and~\cite{AGM} is very general;
we specialize it to the particular case of abelian groups
with linear topology.
  In the context of functional analysis, for topological vector spaces
over the field of real numbers with its real topology, counterexamples
to completeness of quotients were found in \cite{Koeth},
\cite[\S\S19.5 and~31.6]{Koeth2} and mentioned
in~\cite[Exercise~IV.4.10(b)]{Bour}
and~\cite[Proposition~11.2]{Pal}.
 For another rather general counterexample,
see~\cite[Problem~D to Section~20]{KN}.

 The construction of the coproduct topology on a direct sum of
topological abelian groups in Section~\ref{top-abelian-secn} is
related to the \emph{box topology} on a product of topological
abelian groups (cf.~\cite[Proposition~4.1.46]{AGM}).
 Let $(A_i)_{i\in I}$ be a family of topological abelian groups.
 The box topology on the abelian group $\prod_{i\in I}A_i$ is
defined as the topology with a base of neighborhoods of zero consisting
of the subgroups of the form $\prod_{i\in I}U_i\subset
\prod_{i\in I}A_i$, where $U_i\subset A_i$
are open subgroups in the topology of~$A_i$.
 The coproduct topology on $\bigoplus_{i\in I}A_i$ is induced from
the box topology on $\prod_{i\in I}A_i$ via the natural embedding
$\bigoplus_{i\in I}A_i\rarrow\prod_{i\in I}A_i$.

\begin{lem} \label{box-topology-lemma}
\textup{(a)} The direct product of a family of separated topological
abelian groups is separated in the box topology. \par
\textup{(b)} The direct product of a family of complete topological
abelian groups is complete in the box topology.
\end{lem}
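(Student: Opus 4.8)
The plan is to prove the two parts largely in parallel with Lemma~\ref{coproduct-topology-lemma}, observing that the box-topology statement is in fact the more transparent of the two: there is no finiteness-of-support restriction to worry about, and one can argue directly that \emph{passing to the completion commutes with forming products equipped with the box topology}.

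For part~(a), given a family $(A_i)_{i\in I}$ of separated topological abelian groups, I would observe that in the box topology the intersection of all open subgroups of $\prod_{i\in I}A_i$ equals $\prod_{i\in I}\bigl(\bigcap_{U_i}U_i\bigr)$, where $U_i$ ranges over the open subgroups of~$A_i$; since each $A_i$ is separated, this is $\prod_{i\in I}\{0\}=0$, so the zero subgroup of $\prod_{i\in I}A_i$ is closed and the product is separated. Concretely: a nonzero element $a$ has a nonzero coordinate $a_j$, and, choosing an open subgroup $U_j\subset A_j$ with $a_j\notin U_j$, the basic open subgroup $U_j\times\prod_{i\ne j}A_i$ of the box topology avoids~$a$. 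This is the exact analogue of Lemma~\ref{coproduct-topology-lemma}(a).

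For part~(b), write $A=\prod_{i\in I}A_i$ with the box topology. The subgroups $U=\prod_{i\in I}U_i$ with $U_i\subset A_i$ open form a base of open subgroups of~$A$, and $A/U\cong\prod_{i\in I}(A_i/U_i)$ canonically. Since projective limits commute with products and, for each $j$, the projection of the product poset $\prod_{i\in I}\mathcal O_i$ (with $\mathcal O_i$ the poset of open subgroups of $A_i$ under reverse inclusion) onto the factor $\mathcal O_j$ is a cofinal map of directed posets, there is a chain of natural isomorphisms
$$
 A\sphat\,=\varprojlim\nolimits_{(U_i)}\,\prod\nolimits_{i\in I}(A_i/U_i)
 =\prod\nolimits_{i\in I}\,\varprojlim\nolimits_{U_i}(A_i/U_i)
 =\prod\nolimits_{i\in I}A_i\sphat\,.
$$
Tracing an element through these identifications shows that under this isomorphism the completion map $\lambda_A$ corresponds to the product map $\prod_{i\in I}\lambda_{A_i}\:\prod_{i\in I}A_i\rarrow\prod_{i\in I}A_i\sphat$. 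Hence if each $A_i$ is complete, every $\lambda_{A_i}$ is surjective, so is $\prod_{i\in I}\lambda_{A_i}$, and so is $\lambda_A$; thus $A$ is complete. Note that this argument needs no reduction to the separated case. (One could instead mimic the more hands-on argument in the proof of Lemma~\ref{coproduct-topology-lemma}(b); it simplifies here, since there is no finiteness-of-support condition to verify.)

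All the verifications involved are routine. The one point deserving genuine care is that the isomorphism $A\sphat\cong\prod_{i\in I}A_i\sphat$ above is the ``correct'' one --- that the cofinality and limit-interchange identifications are compatible with the structure maps, so that $\lambda_A$ really does correspond to $\prod_{i\in I}\lambda_{A_i}$ under it. I would settle this by chasing the commuting squares assembled from the projections $A\to A_j$ and $A\to A/U$ together with the quotient maps $A_j\to A_j/U_j$ and $A/U\to A_j/U_j$.
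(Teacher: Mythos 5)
Your argument is correct. The paper itself gives essentially no proof here: it only remarks that the argument is ``similar to, but much simpler than'' the proof of Lemma~\ref{coproduct-topology-lemma}, i.e.\ it tacitly suggests the same element-chasing strategy (reduce to the separated case, use the induced maps $q_i\:\fA\rarrow A_i\sphat\,=A_i$ to assemble a candidate preimage $c=(q_i(b))_i$, and check $b=\lambda_A(c)$ on a base of open subgroups), noting that the finiteness-of-support verification simply disappears. Your part~(a) is exactly that expected argument. Your part~(b) takes a genuinely cleaner route: you prove the stronger structural statement that completion commutes with box products, via $A/\bigl(\prod_iU_i\bigr)\cong\prod_i(A_i/U_i)$, the commutation of limits with products, and the cofinality of the projections $\prod_i\mathcal O_i\rarrow\mathcal O_j$. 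This buys you two things the paper's sketch does not make explicit: the identification $A\sphat\,\cong\prod_iA_i\sphat\,$ itself, and a treatment that needs no reduction to the separated case (the paper's proof of Lemma~\ref{coproduct-topology-lemma}(b) explicitly makes that reduction). The one point you rightly flag --- that $\lambda_A$ corresponds to $\prod_i\lambda_{A_i}$ under the identification --- is indeed the only place requiring care, and the diagram chase you describe settles it; everything else is routine.
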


\begin{proof}
 Similar to, but much simpler than, the proof of
Lemma~\ref{coproduct-topology-lemma}.
 For a more general result (which is also an ``if and only if'' result)
applicable to both the Tychonoff and box topologies, as well as to
a family of intermediate topologies between these indexed by cardinal
numbers, see~\cite[Corollary~4.1.44 and Proposition~4.1.47]{AGM}.
\end{proof}

 Let $(A_i)_{i\in I}$ be a family of topological abelian groups.
 The \emph{modified box topology} on the abelian group
$\prod_{i\in I}A_i$ is defined as follows.
 A base of neighborhoods of zero in the modified box topology is
formed by the subgroups of the form $\prod_{j\in J}\{0\}\times
\prod_{l\in I\setminus J} U_l\subset\prod_{i\in I} A_i$,
where $J\subset I$ is a finite subset of indices and $U_l\subset A_l$
are open subgroups.
 For example, when the set of indices $I$ is finite, the modified
box topology on $\prod_{i\in I}A_i$ is discrete.

\begin{lem} \label{modified-box-topology-lemma}
 For any family of topological abelian groups $(A_i)_{i\in I}$,
the direct product of abelian groups\/ $\prod_{i\in I}A_i$ is
separated and complete in the modified box topology.
\end{lem}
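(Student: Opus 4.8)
The plan is to establish separatedness first---this is easy and essentially the same as Lemma~\ref{coproduct-topology-lemma}(a)---and then to prove completeness by exhibiting, for an arbitrary compatible family of cosets, an element of the product representing it. Write $P=\prod_{i\in I}A_i$, and for a finite subset $J\subset I$ together with open subgroups $U_l\subset A_l$ chosen for $l\in I\setminus J$, let $V_{J,(U_l)}=\prod_{j\in J}\{0\}\times\prod_{l\in I\setminus J}U_l\subset P$ be the corresponding basic open subgroup. The intersection of two such subgroups is again one of them, so they form a downward directed base of open subgroups of $P$. For separatedness: if $a=(a_i)_{i\in I}\in P$ is nonzero, pick $j\in I$ with $a_j\ne0$; then $a\notin V_{\{j\},(A_l)}=\{0\}\times\prod_{l\ne j}A_l$. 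Hence $\bigcap V_{J,(U_l)}=0$, i.e.\ the completion map $\lambda_P\:P\rarrow\widehat P$ is injective.

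It remains to show $\lambda_P$ is surjective. As the subgroups $V_{J,(U_l)}$ form a base of open subgroups, $\widehat P=\varprojlim_{J,(U_l)}P/V_{J,(U_l)}$, so an element of $\widehat P$ is a compatible family $b=(\bar a_{J,(U_l)}\in P/V_{J,(U_l)})$. The key step is to use, for each $i\in I$, the particular basic open subgroup $V_i:=V_{\{i\},(A_l)}=\{0\}\times\prod_{l\ne i}A_l$: the projection $P\rarrow P/V_i$ identifies $P/V_i$ with $A_i$, so the component $\bar a_{V_i}$ of $b$ becomes an element $c_i\in A_i$, and I assemble these into $c=(c_i)_{i\in I}\in P$. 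It is essential here that $P$ is a \emph{product}, so that no finiteness condition is imposed on $c$; this is exactly where the analogous statement for the coproduct topology on a direct sum would fail.

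It then remains to verify that $\lambda_P(c)=b$, i.e.\ that the image of $c$ in $P/V_{J,(U_l)}$ equals $\bar a_{J,(U_l)}$ for every $J$ and $(U_l)$. Using the canonical isomorphism $P/V_{J,(U_l)}\cong\prod_{j\in J}A_j\times\prod_{l\in I\setminus J}A_l/U_l$, one compares components. For $j\in J$: the inclusion $V_{J,(U_l)}\subset V_j$ together with the compatibility of $b$ shows that the $A_j$-component of $\bar a_{J,(U_l)}$ equals $\bar a_{V_j}=c_j$. For $l\in I\setminus J$: pass through the smaller basic open subgroup $V'$ obtained from $V_{J,(U_l)}$ by moving $l$ into the finite index set (that is, replacing the factor $U_l$ by $\{0\}$); then $V'\subset V_l$ and $V'\subset V_{J,(U_l)}$, so compatibility of $b$ along the first inclusion identifies the $A_l$-component of $\bar a_{V'}$ with $\bar a_{V_l}=c_l$, and mapping along the second inclusion (where the $A_l$-coordinate is sent to $A_l/U_l$ by reduction) shows that the $A_l/U_l$-component of $\bar a_{J,(U_l)}$ equals $c_l+U_l$. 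Since the image of $c$ in $P/V_{J,(U_l)}$ is precisely $\bigl((c_j)_{j\in J},(c_l+U_l)_{l\in I\setminus J}\bigr)$, this gives $\lambda_P(c)=b$, and $P$ is complete.

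There is no genuine obstacle in this argument: the only point demanding attention is the bookkeeping of the directed poset of basic open subgroups $V_{J,(U_l)}$ and of the elementary coordinatewise inclusions $V_{J,(U_l)}\subset V_j$ (for $j\in J$), $V'\subset V_l$, and $V'\subset V_{J,(U_l)}$ used above, each of which is immediate once the subgroups in question are written out coordinate by coordinate. Note also that neither separatedness nor completeness of the individual $A_i$ is needed anywhere: the modified box topology makes $P$ separated and complete for an arbitrary family $(A_i)_{i\in I}$.
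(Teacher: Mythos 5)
Your proof is correct and follows essentially the same route as the paper's: separatedness via the basic open subgroup $\{0\}_j\times\prod_{l\ne j}A_l$, and completeness by defining $c_i$ as the image of the given element of the completion under the projection onto $A_i$ (your $\bar a_{V_i}$ is exactly the paper's $q_i(b)$, since $P/V_i\cong A_i$ carries the discrete topology) and then verifying the identity componentwise in $P/V_{J,(U_l)}\cong\prod_{j\in J}A_j\times\prod_{l\in I\setminus J}A_l/U_l$. The only difference is notational—you work with compatible families of cosets where the paper works with the induced continuous maps $q_i\:\fA\rarrow A_i$—and your closing remark that no hypothesis on the $A_i$ is needed matches the statement.
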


\begin{proof}
 To show that the modified box topology on $\prod_{i\in I}A_i$
is separated, it suffices to check that the intersection of all
open subgroups is zero.
 Indeed, let $a=(a_i)_{i\in I}$ be a nonzero element in
$\prod_{i\in I}A_i$.
 Then there exists $j\in I$ such that $a_j\ne 0$ in~$A_j$.
 Put $J=\{j\}$ and $U_l=A_l$ for all $l\in I$, \,$l\ne j$.
 Then the open subgroup $\prod_{j\in J}\{0\}\times
\prod_{l\in I\setminus J}U_l\subset\prod_{i\in I}A_i$ does not
contain~$a$.

 To prove that the group $A=\prod_{i\in I}A_i$ is complete in
the modified box topology, notice that, for every $s\in I$,
the projection map $p_s\:\prod_{i\in I}A_i \rarrow A_s$ is continuous
with respect to the modified box topology on $\prod_{i\in I}A_i$
and the discrete topology on~$A_s$.
 Put $\fA=A\sphat\,$; then we have a continuous homomorphism of
the completions $q_s\:\fA\rarrow A_s$ induced by the projection
map~$p_s$.
 Given an element $b\in\fA$, consider the element
$c=(q_i(b))_{i\in I}\in A$.
 We need to show that $b=\lambda_A(c)$ in~$\fA$.
 
 For this purpose, it suffices to check that, for any open subgroup
$U\subset A$ belonging to a chosen base of open subgroups in~$A$,
the images of $b$ and~$c$ are equal in~$A/U$.
 We can assume that $U=\prod_{j\in J}\{0\}\times
\prod_{l\in I\setminus J} U_l$, where $J\subset I$ is a finite subset
and $U_l\subset A_l$ are open subgroups.
 Put $U_j=0$ for $j\in J$.
 Then the square diagram of projections $\fA\rarrow A/U\rarrow A_i/U_i$,
\ $\fA\rarrow A_i\rarrow A_i/U_i$ is commutative for every $i\in I$.
 So is the square diagram of projections $A\rarrow A/U\rarrow A_i/U_i$,
\ $A\rarrow A_i\rarrow A_i/U_i$.
 The images of both $b$ and~$c$ in $A_i/U_i$ are equal to
the coset $q_i(b)+U_i$.
 Hence the images of $b$ and~$c$ coincide in
$A/U=\prod_{i\in I}A_i/U_i$.
\end{proof}

\begin{lem} \label{modified-top-sum-product-lemma}
 For any family of separated topological abelian groups
$(A_i)_{i\in I}$, the subgroup\/ $\bigoplus_{i\in I}A_i\subset
\prod_{i\in I}A_i$ is closed in the modified box topology
on\/ $\prod_{i\in I}A_i$.
\end{lem}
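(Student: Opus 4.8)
The plan is to use the criterion recalled in Section~\ref{top-abelian-secn} that a subgroup is closed if and only if its complement is a union of cosets of open subgroups. Thus, given an arbitrary element $a=(a_i)_{i\in I}\in\prod_{i\in I}A_i$ with $a\notin\bigoplus_{i\in I}A_i$, I want to produce an open subgroup $U\subset\prod_{i\in I}A_i$ in the modified box topology such that the whole coset $a+U$ is disjoint from $\bigoplus_{i\in I}A_i$.

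The observation that gets things going is that $a\notin\bigoplus_{i\in I}A_i$ means exactly that the support $S=\{\,i\in I: a_i\ne0\,\}$ is infinite. Now I would use separatedness of the groups $A_i$: for each $i\in S$, choose an open subgroup $U_i\subset A_i$ with $a_i\notin U_i$, and for $i\in I\setminus S$ set $U_i=A_i$. Taking $J=\emptyset$ in the description of a base of neighborhoods of zero in the modified box topology, we see that $U=\prod_{i\in I}U_i$ is an open subgroup of $\prod_{i\in I}A_i$.

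It then remains to verify that $a+U$ avoids $\bigoplus_{i\in I}A_i$. Given $u=(u_i)_{i\in I}\in U$ and $i\in S$, the component $a_i+u_i$ is nonzero: were it zero, we would have $u_i=-a_i$, and since $U_i$ is a subgroup this would give $a_i\in U_i$, contradicting the choice of $U_i$. Hence every element of $a+U$ has infinite support and therefore does not lie in $\bigoplus_{i\in I}A_i$. Since $a$ was an arbitrary point of $\prod_{i\in I}A_i\setminus\bigoplus_{i\in I}A_i$, this exhibits that complement as a union of cosets of open subgroups, which finishes the proof.

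I do not expect a genuine obstacle here; the argument is a direct unwinding of the definitions. The one point worth flagging is where the hypothesis enters — separatedness of the $A_i$ is precisely what makes the open subgroups $U_i$ available — and that the ``$\{0\}$-blocks'' $\prod_{j\in J}\{0\}$ permitted in the modified box topology are not actually used, so the same proof shows that $\bigoplus_{i\in I}A_i$ is in fact already closed in the (coarser) box topology on $\prod_{i\in I}A_i$.
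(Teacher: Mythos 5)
Your argument is correct and is essentially the proof the paper has in mind: the paper only sketches it by remarking that $\bigoplus_{i\in I}A_i$ is already closed in the (coarser) box topology and pointing to the analogous choice of subgroups $U_i$ in the proof of Lemma~\ref{coproduct-topology-lemma}(b), which is exactly your construction. Your closing observation that the $\prod_{j\in J}\{0\}$ blocks are never needed reproduces precisely the paper's own remark.
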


\begin{proof}
 In fact, $\bigoplus_{i\in I}A_i$ is closed already in the box
topology on $\prod_{i\in I}A_i$ (hence in the modified box
topology, which is finer than the box topology).
 Cf.\ the middle paragraph of the proof of
Lemma~\ref{coproduct-topology-lemma}(b).
 An even stronger and more general result can be found
in~\cite[Proposition~4.1.46]{AGM}.
\end{proof}

 Let $(A_i)_{i\in I}$ be a family of topological abelian groups.
 The \emph{modified coproduct topology} on the direct sum of
abelian groups $\bigoplus_{i\in I}A_i$ is defined as follows.
 A base of neighborhoods of zero in the modified coproduct topology
is formed by the subgroups of the form $\bigoplus_{j\in J}\{0\}
\oplus\bigoplus_{l\in I\setminus J}U_l\subset\bigoplus_{i\in I}A_i$,
where $J\subset I$ is a finite subset of indices and $U_l\subset A_l$
are open subgroups.

\begin{cor} \label{modified-coproduct-cor}
 For any family of separated topological abelian groups
$(A_i)_{i\in I}$, the direct sum of abelian groups
$\bigoplus_{i\in I}A_i$ is separated and complete in the modified
coproduct topology.
\end{cor}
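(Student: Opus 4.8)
The plan is to reduce the statement to Lemmas~\ref{modified-box-topology-lemma} and~\ref{modified-top-sum-product-lemma} via Lemma~\ref{closure-completion-lemma}, in complete parallel with the way the corresponding assertion for the (unmodified) coproduct topology in Section~\ref{top-abelian-secn} follows from the box-topology picture. The first step is to check that the modified coproduct topology on $\bigoplus_{i\in I}A_i$ is precisely the topology induced from the modified box topology on $\prod_{i\in I}A_i$ under the natural embedding $\bigoplus_{i\in I}A_i\rarrow\prod_{i\in I}A_i$. For this one intersects a basic open subgroup $\prod_{j\in J}\{0\}\times\prod_{l\in I\setminus J}U_l$ of the modified box topology (with $J\subset I$ finite and $U_l\subset A_l$ open) with the subgroup $\bigoplus_{i\in I}A_i$, obtaining exactly $\bigoplus_{j\in J}\{0\}\oplus\bigoplus_{l\in I\setminus J}U_l$; and these subgroups, by the very definition given above, form a base of neighborhoods of zero in the modified coproduct topology.

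Once this identification is in place, the proof is immediate. By Lemma~\ref{modified-box-topology-lemma}, the abelian group $\prod_{i\in I}A_i$ is separated and complete in the modified box topology. By Lemma~\ref{modified-top-sum-product-lemma}, the subgroup $\bigoplus_{i\in I}A_i$ is closed in $\prod_{i\in I}A_i$ in that topology. Hence, by the particular case of Lemma~\ref{closure-completion-lemma} recorded just before its statement --- a closed subgroup of a complete, separated topological abelian group is complete and separated in the induced topology --- the group $\bigoplus_{i\in I}A_i$ is separated and complete in the induced topology, which is the modified coproduct topology.

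The step most likely to conceal a subtlety is the first one: matching the base of the modified coproduct topology with the base of the subspace topology, coordinate block by coordinate block, making sure that the finite ``zeroed'' blocks $\prod_{j\in J}\{0\}$ restrict correctly to the direct sum and that neither a strictly finer nor a strictly coarser topology is produced in the process. This is routine but must be done with the definitions side by side. As an alternative that avoids invoking Lemma~\ref{closure-completion-lemma}, one could prove completeness directly by repeating, with only notational changes, the argument in the proof of Lemma~\ref{modified-box-topology-lemma}: given an element $b$ of the completion, set $c=(q_i(b))_{i\in I}$, observe that $q_j(b)=0$ for $j$ in the finite ``zeroed'' index set of any chosen basic open subgroup, so that $c$ indeed lies in $\bigoplus_{i\in I}A_i$, and then check that $b$ and~$c$ have the same image modulo every basic open subgroup. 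The reduction to the closed-subgroup statement is, however, shorter, and I would prefer it.
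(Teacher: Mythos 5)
Your proof is correct and follows exactly the same route as the paper's: identify the modified coproduct topology with the subspace topology induced from the modified box topology, then combine Lemma~\ref{modified-box-topology-lemma}, Lemma~\ref{modified-top-sum-product-lemma}, and the closed-subgroup case of Lemma~\ref{closure-completion-lemma}. The extra care you take in matching the bases of the two topologies, and the alternative direct argument you sketch, are both fine but not needed beyond what the paper records.
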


\begin{proof}
 By Lemma~\ref{modified-box-topology-lemma}, the direct product
$\prod_{i\in I}A_i$ is separated and complete in the modified box
topology.
 According to Lemma~\ref{modified-top-sum-product-lemma},
\,$\bigoplus_{i\in I}A_i$ is a closed subgroup of $\prod_{i\in I}A_i$
in the modified box topology on the product.
 It is clear from the definitions that the modified coproduct topology
on $\bigoplus_{i\in I}A_i$ is induced from the modified box topology
on $\prod_{i\in I}A_i$ via the embedding of abelian groups
$\bigoplus_{i\in I}A_i\rarrow\prod_{i\in I}A_i$.
 Now it remains to apply Lemma~\ref{closure-completion-lemma}.
\end{proof}

\begin{thm}[{\cite[Proposition~11.1]{RD}, \cite[Theorem~4.1.48]{AGM}}]
\label{top-abelian-main-theorem}
 Any separated topological abelian group can be obtained as
the quotient group of a complete, separated topological abelian
group by a closed subgroup, endowed with the quotient topology.
\end{thm}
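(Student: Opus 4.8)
The plan is to realize an arbitrary separated topological abelian group $B$ as a continuous open surjective image, with closed kernel, of a direct sum of countably many copies of $B$ taken with the \emph{modified} coproduct topology. Concretely, I would set $I=\{1,2,3,\dots\}$, let $A_i=B$ for every $i\in I$, and let $\fA$ denote $\bigoplus_{i\in I}A_i=\bigoplus_{n\ge1}B$ equipped with the modified coproduct topology. Since $B$ is separated, Corollary~\ref{modified-coproduct-cor} tells us that $\fA$ is a complete, separated topological abelian group. This is the one genuinely substantial point, and in a self-contained treatment it would be the main obstacle: an ordinary direct sum of copies of $B$ need not be complete when $B$ is not (Lemma~\ref{coproduct-topology-lemma} only gives completeness of a coproduct of \emph{complete} groups), so one is forced to pass to the modified coproduct topology, whose completeness is the content of the preceding sequence of lemmas. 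Granting Corollary~\ref{modified-coproduct-cor}, the rest is essentially bookkeeping.

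Next I would define the summation homomorphism $\sigma\:\fA\rarrow B$ by $\sigma\bigl((b_n)_{n\ge1}\bigr)=\sum_{n\ge1}b_n$; the sum is finite, so this is a well-defined group homomorphism, and it is surjective because $\sigma(b,0,0,\dots)=b$. For continuity, note that for any open subgroup $U\subset B$ the subgroup $\bigoplus_{n\ge1}U$ is open in $\fA$ (it is basic already for the ordinary coproduct topology, which is coarser than the modified one) and lies in $\sigma^{-1}(U)$ since $U$ is closed under addition. The delicate remaining point is that $\sigma$ is an \emph{open} map: let $V=\bigoplus_{j\in J}\{0\}\oplus\bigoplus_{l\in I\setminus J}U_l\subset\fA$ be a basic open subgroup for the modified coproduct topology, with $J\subset I$ finite and each $U_l\subset B$ open. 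Then $\sigma(V)=\sum_{l\in I\setminus J}U_l$ is the subgroup of $B$ generated by the $U_l$ with $l\notin J$; since $I$ is infinite and $J$ finite, there is some index $l_0\notin J$, so $\sigma(V)\supset U_{l_0}$ and $\sigma(V)$ is an open subgroup of $B$. It is exactly here that the choice of an infinite index set is essential. As any open subset of $\fA$ is a union of cosets of basic open subgroups, $\sigma$ sends open sets to open sets, so $B$ carries the quotient topology with respect to $\sigma$.

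Finally I would put $\fK=\ker\sigma=\sigma^{-1}(\{0\})$. Since $B$ is separated, $\{0\}$ is closed in $B$, and since $\sigma$ is continuous, $\fK$ is a closed subgroup of $\fA$. The induced homomorphism $\fA/\fK\rarrow B$ is a continuous bijection, and it is a homeomorphism because a subset $U\subset B$ is open precisely when $\sigma^{-1}(U)$ is open in $\fA$, which in turn holds precisely when the preimage of $U$ in $\fA/\fK$ is open. Thus $B$ is identified, as a topological abelian group, with the quotient $\fA/\fK$ of the complete, separated group $\fA$ by its closed subgroup $\fK$, endowed with the quotient topology, which is what was wanted. (This is Dierolf's construction as presented in \cite[Proposition~11.1]{RD} and \cite[Theorem~4.1.48]{AGM}, specialized to linearly topologized abelian groups.)
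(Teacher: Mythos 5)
Your proof is correct and is essentially identical to the paper's: both take the direct sum of infinitely many copies of the given group with the modified coproduct topology (whose completeness is supplied by Corollary~\ref{modified-coproduct-cor}), and verify that the summation map is a continuous open surjection, with openness hinging on $I\setminus J$ being nonempty. The only cosmetic differences are that you fix $I$ to be countable whereas the paper allows any infinite index set, and you spell out explicitly that the kernel is closed because the target is separated.
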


\begin{proof}
 Let $Q$ be a separated topological abelian group.
 Choose an infinite set~$I$, and denote by $\fA_I(Q)$ the abelian
group $Q^{(I)}=\bigoplus_{i\in I}Q$, endowed with the modified
coproduct topology (of the direct sum of copies of~$Q$).
 Consider the summation map $\Sigma\:\fA_I(Q)=Q^{(I)}\rarrow Q$,
defined by the rule that, for every $i\in I$, the composition
$Q\rarrow Q^{(I)}\rarrow Q$ of the natural inclusion map
$\iota_i\:Q\rarrow Q^{(I)}$ with the map $\Sigma\:Q^{(I)}\rarrow Q$
is equal to the identity map, $\Sigma\circ\iota_i=\mathrm{id}_Q$.

 According to Corollary~\ref{modified-coproduct-cor}, the topological
abelian group $\fA_I(Q)$ is separated and complete.
 In order to show that the topology of $Q$ is the quotient topology
of the topology on $\fA_I(Q)$, it remains to check that $\Sigma$
is an open continuous map.

 To show that $\Sigma$ is continuous, consider an open subgroup
$W\subset Q$.
 Denote by $\fU\subset\fA_I(Q)$ the subgroup $\fU=W^{(I)}
\subset Q^{(I)}$.
 By the definition of the modified coproduct topology, the subgroup
$\fU$ is open in~$\fA$.
 Clearly, $\Sigma(\fU)\subset W$; so $\fU\subset\Sigma^{-1}(W)$.
 Hence the subgroup $\Sigma^{-1}(W)\subset\fA_I(Q)$ is open.

 To show that $\Sigma$ is open, it suffices to check that the subgroup
$f(\fU)\subset Q$ is open in $Q$ for any open subgroup $\fU\subset\
\fA_I(Q)$ belonging to a chosen base of open subgroups in $\fA_I(Q)$.
 We can assume that $\fU=\bigoplus_{j\in J}\{0\}\oplus
\bigoplus_{l\in I\setminus J} W_l\subset Q^{(I)}$, where $J\subset I$
is a finite subset and $W_l\subset Q_l$ are open subgroups.
 Then we have $\Sigma(\fU)=\sum_{l\in I\setminus J}W_l\subset Q$.
 Since the set $I$ is infinite by assumption, and consequently
the set $I\setminus J$ is nonempty, it follows that $\Sigma(\fU)$ is
an open subgroup in~$Q$.
\end{proof}

\Section{Vector Spaces with Linear Topology}  \label{top-vector-secn}

 Throughout the rest of the paper, we denote by~$k$ a fixed ground
field.
 A \emph{topological vector space} $V$ is a topological abelian group
with a $k$\+vector space structure such that the multiplication map
$k\times V\rarrow V$ is continuous (where the topology on~$k$ is
discrete).
 In other words, this means that the multiplication with any fixed
element of~$k$ is a continuous endomorphism of~$V$.

 A topological vector space $V$ is said to have \emph{linear topology}
if open $k$\+vector subspaces form a base of neighborhoods of zero
in~$V$.
 In this paper, all the ``topological vector spaces'' will be presumed
to have linear topology.

 All the basic theory of topological abelian groups developed in
Sections~\ref{top-abelian-secn}\+-\ref{counterex-secn} extends or
specializes \emph{verbatim} to topological vector spaces.
 In particular, the completion (of the underlying topological abelian
group) of a topological vector space is a topological vector space.
 One can speak of complete and/or separated topological vector spaces,
the induced topology on a vector subspace, the quotient topology on
a quotient vector space, closed vector subspaces and closures of
vector subspaces, and product and coproduct topologies for topological
vector spaces in the same way as for topological abelian groups.
 In the same way as for topological abelian groups, one defines
the box topology and the modified box topology on a direct product of
topological vector spaces, and the modified coproduct topology on
a direct sum.

 Similarly to Section~\ref{top-abelian-secn}, we denote by
$\Top_k$ the category of topological vector spaces and continuous
$k$\+linear maps.
 We also denote by $\Vect_k$ the category of (abstract,
nontopological) $k$\+vector spaces.
 The full subcategory of separated topological vector spaces is
denoted by $\Top^\s_k\subset\Top_k$, and the full subcategory of
complete, separated topological vector spaces by $\Top^\scc_k\subset
\Top^\s_k\subset\Top_k$.

 Then all the three categories $\Top_k$, $\Top^\s_k$, and $\Top^\scc_k$
are additive categories with set-indexed limits and colimits.
 The full subcategories $\Top^s_k$ and $\Top^\scc_k$ are reflective
in $\Top_k$.
 The forgetful functor $\Top_k\rarrow\Vect_k$ preserves all limits
and colimits.
 The forgetful functors $\Top_k\rarrow\Top_\boZ$, \
$\Top^\s_k\rarrow\Top^\s_\boZ$, and $\Top^\scc_k\rarrow\Top^\scc_\boZ$
preserve all limits and colimits, and commute with the reflectors.

 All the assertions and results of Sections~\ref{top-abelian-secn}\+-%
\ref{counterex-secn} remain valid in the topological vector
space setting.
 Without repeating all the details, we restrict ourselves to restating
Theorem~\ref{top-abelian-main-theorem} for topological vector spaces.

\begin{thm} \label{top-vector-main-theorem}
 Any separated topological vector space $Q$ can be obtained as
the quotient vector space of a complete, separated topological vector
space by a closed vector subspace (endowed with the quotient topology).
 Specifically, for any infinite set~$I$, the open, continuous
summation map\/ $\Sigma\:\fA_I(Q)\rarrow Q$ makes $Q$ a topological
quotient vector space of the complete, separated topological vector
space\/ $\fA_I(Q)=Q^{(I)}=\bigoplus_{i\in I}Q$ with the modified
coproduct topology.  \qed
\end{thm}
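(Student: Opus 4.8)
The plan is to follow the proof of Theorem~\ref{top-abelian-main-theorem} \emph{verbatim}, checking only that the passage from topological abelian groups to topological vector spaces changes nothing. The key observation is that every subgroup occurring in the bases of neighborhoods of zero used in Section~\ref{counterex-secn}---the box, modified box, and modified coproduct topologies---is in fact a $k$\+vector subspace as soon as the $A_i$ are topological vector spaces and the $U_i$ are open vector subspaces; and that scalar multiplication, acting coordinatewise and continuous on each factor, remains continuous in each of these topologies. Consequently the vector-space analogues of Lemmas~\ref{closure-completion-lemma}, \ref{coproduct-topology-lemma}, \ref{box-topology-lemma}, \ref{modified-box-topology-lemma}, \ref{modified-top-sum-product-lemma} and of Corollary~\ref{modified-coproduct-cor} hold with the same proofs, reading ``topological vector space'' for ``topological abelian group'' and ``vector subspace'' for ``subgroup'' throughout.

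Granting this, the argument runs exactly as before. Set $\fA_I(Q)=Q^{(I)}=\bigoplus_{i\in I}Q$ with the modified coproduct topology; by the vector-space form of Corollary~\ref{modified-coproduct-cor} it is a complete, separated topological vector space. The summation map $\Sigma\:\fA_I(Q)\rarrow Q$ determined by $\Sigma\circ\iota_i=\mathrm{id}_Q$ for all $i\in I$ is $k$\+linear. It is continuous because, for an open vector subspace $W\subset Q$, the subspace $W^{(I)}\subset Q^{(I)}$ is open in $\fA_I(Q)$ and lies in $\Sigma^{-1}(W)$. It is open because, for a basic open subspace $\fU=\bigoplus_{j\in J}\{0\}\oplus\bigoplus_{l\in I\setminus J}W_l$ with $J\subset I$ finite and $W_l\subset Q$ open vector subspaces, one has $\Sigma(\fU)=\sum_{l\in I\setminus J}W_l$, a vector subspace of $Q$ containing each $W_l$; since $I$ is infinite, $I\setminus J\ne\varnothing$, so $\Sigma(\fU)$ contains an open subspace and is open.

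Finally, put $\fK=\ker\Sigma$. This is a $k$\+vector subspace of $\fA_I(Q)$, and it is closed: $\Sigma$ is continuous and $\{0\}\subset Q$ is closed because $Q$ is separated, so $\fK=\Sigma^{-1}(\{0\})$ is closed. By the vector-space analogue of the criterion in Section~\ref{top-abelian-secn} (a surjective continuous open linear map realizes its target with the quotient topology), $\Sigma$ identifies $\fA_I(Q)/\fK$, taken with the quotient topology, with $Q$, which is the assertion.

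I do not anticipate a genuine obstacle here; the whole content is the transcription remark of the first paragraph. The one point worth a second look is simply that the declared bases of the modified box and modified coproduct topologies really do consist of open \emph{vector} subspaces---which is immediate once the $U_l$ are chosen to be vector subspaces---so that $\fA_I(Q)$ is a topological vector space with linear topology, not merely a topological abelian group.
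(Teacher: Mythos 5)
Your proposal is correct and matches the paper's treatment exactly: the paper proves Theorem~\ref{top-abelian-main-theorem} for topological abelian groups and then obtains Theorem~\ref{top-vector-main-theorem} by precisely the \emph{verbatim} transfer you describe, the whole content being that the basic open subgroups in the modified coproduct topology are vector subspaces when the $U_l$ are. The only addition you make --- explicitly checking that $\ker\Sigma$ is closed via continuity of $\Sigma$ and separatedness of $Q$ --- is a correct and harmless elaboration.
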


 In the rest of this section we mostly discuss topological vector spaces
with a \emph{countable} base of neighborhoods of zero.
 These form a special, well-behaved class of topological vector spaces
(cf.\ Proposition~\ref{countable-base-kernel}).

\begin{lem} \label{countable-is-product-of-discrete}
 A complete, separated topological vector space has a countable base of
neighborhoods of zero if and only if it is isomorphic to the product of
a countable family of discrete vector spaces, endowed with the product
topology.
\end{lem}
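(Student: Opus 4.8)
The plan is to prove the two implications directly. The ``if'' direction is a routine unravelling of the product topology; the ``only if'' direction reduces, via the completion, to the elementary fact that a countable tower of surjections of $k$\+vector spaces splits compatibly.

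For the ``if'' direction, suppose $V\cong\prod_{n\ge1}D_n$ with the $D_n$ discrete vector spaces and the product taken in the product topology. Being a product of complete, separated topological vector spaces (a discrete space being trivially such), $V$ is complete and separated, by the closedness of $\Top^\scc_k$ under products recalled in Sections~\ref{top-abelian-secn} and~\ref{top-vector-secn}. Since $\{0\}$ is open in each discrete $D_n$, the product topology on $V$ has a base of neighborhoods of zero formed by the subgroups $\prod_{j\in J}\{0\}\times\prod_{l\notin J}D_l$ with $J\subset\{1,2,3,\dots\}$ finite; and among these the subspaces $U_N=\prod_{n>N}D_n$, for $N\ge0$, are cofinal, since any finite $J$ lies in $\{1,\dots,N\}$ for $N=\max J$. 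Hence $\{U_N\}_{N\ge0}$ is a countable base of neighborhoods of zero in~$V$.

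For the ``only if'' direction, let $V$ be complete and separated with a countable base of neighborhoods of zero. Replacing the $n$\+th member of this base by its intersection with all the earlier ones (a finite intersection of open subspaces being an open subspace), we may assume the base is a decreasing chain $V\supseteq U_1\supseteq U_2\supseteq U_3\supseteq\dots$ of open vector subspaces. Then $\{U_n\}$ is coinitial in the poset of all open subspaces of~$V$, so the completion of $V$ may be computed as $\varprojlim_{n\ge1}V/U_n$; and since $V$ is complete and separated, the completion map is an isomorphism of topological vector spaces $V\rarrow\varprojlim_{n\ge1}V/U_n$, carrying $U_n$ to the kernel of the $n$\+th projection. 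Here each quotient $V/U_n$ is discrete, and the transition map $V/U_{n+1}\rarrow V/U_n$ is surjective with kernel $U_n/U_{n+1}$.

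Now set $D_1=V/U_1$ and $D_n=U_{n-1}/U_n$ for $n\ge2$, and work in $\Vect_k$. Choosing, for each $n\ge1$, a $k$\+linear section of the surjection $V/U_{n+1}\rarrow V/U_n$ and composing the resulting splittings inductively, one builds $k$\+linear isomorphisms $\phi_n\:V/U_n\rarrow D_1\oplus\dots\oplus D_n$ compatible with the transition maps $V/U_{n+1}\rarrow V/U_n$ on one side and the coordinate projections $D_1\oplus\dots\oplus D_{n+1}\rarrow D_1\oplus\dots\oplus D_n$ on the other; passing to the projective limit, the $\phi_n$ assemble into a $k$\+linear isomorphism $\phi\:\varprojlim_{n}V/U_n\rarrow\prod_{n\ge1}D_n$. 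Finally $\phi$ is a homeomorphism: the chain $\{U_n\}$ is a base of neighborhoods of zero in $\varprojlim_n V/U_n$ (by the description of the projective limit topology), and $\phi$ carries $U_n$ to $\prod_{m>n}D_m$, which is a base of neighborhoods of zero in the product topology on $\prod_m D_m$ as in the ``if'' direction; composing with $V\rarrow\varprojlim_n V/U_n$ completes the argument. The one step that is not purely formal is this compatible choice of sections, i.e.\ the identification of the tower $(V/U_n)_n$ with the tower of partial-sum projections in $\Vect_k$ --- which works precisely because over a field every epimorphism of vector spaces splits --- while everything else is bookkeeping with bases of neighborhoods of zero.
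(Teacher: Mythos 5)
Your proof is correct and follows essentially the same route as the paper: both directions are handled identically, and in the ``only if'' direction your compatible sections of the tower $V/U_{n+1}\rarrow V/U_n$ are just a repackaging of the paper's choice of complements $V_n\subset\fU_n$ with $\fU_n=V_n\oplus\fU_{n+1}$, both resting on the fact that every surjection of $k$\+vector spaces splits. Nothing further to add.
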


\begin{proof}
 It is clear from the definition of the product topology that
the product of a countable family of discrete vector spaces has
a countable base of neighborhoods of zero.
 Such topological vector spaces are also complete and separated,
since the full subcategory of complete, separated topological
vector spaces is closed under infinite products in $\Top_k$ and
contains the discrete vector spaces.

 Conversely, let $\fV$ be a complete, separated topological vector
space with a countable base of neighborhoods of zero.
 Since any countable directed poset has a cofinal subposet
isomorphic to the poset of natural numbers, one can choose
a descending sequence of open vector subspaces $\fV=\fU_0\supset
\fU_1\supset\fU_2\supset\dotsb$ such that the set of all
subspaces $\fU_n$, \,$n\ge0$, is a base of neighborhoods of zero
in~$\fV$.
 Considering $\fU_n\supset\fU_{n+1}$ as an abstract vector space
with a vector subspace, choose a complementary vector subspace
$V_n\subset\fU_n$; so $\fU_n=V_n\oplus\fU_{n+1}$ for every $n\ge0$.
 Endow the vector spaces $V_n$ with the discrete topology.
 Since $\fV$ is complete and separated, we have
$$
 \fV\simeq\varprojlim\nolimits_{n\ge1}\fV/\fU_n\simeq
 \varprojlim\nolimits_{n\ge1}\bigoplus\nolimits_{i=0}^n V_i\simeq
 \prod\nolimits_{i=0}^\infty V_i
$$
as an abstract vector space; and it is clear from the definitions
of a topology base and the product topology that the topologies
on $\fV$ and $\prod_{i=0}^\infty V_i$ agree.
\end{proof}

 The next proposition shows that complete, separated topological vector
spaces with a countable base of neighborhoods of zero have a rather
strong injectivity property in $\Top_k$.

\begin{prop} \label{countable-injectivity}
 Let $V$ be a topological vector space and $U\subset V$ be a vector
subspace, endowed with the induced topology.
 Let\/ $\fW$ be a complete, separated topological vector space with
a countable base of neighborhoods of zero.
 Then any continuous linear map $U\rarrow\fW$ can be extended to
a continuous linear map $V\rarrow\fW$.
\end{prop}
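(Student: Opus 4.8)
The plan is to reduce the extension problem to a countable sequence of ordinary (non-topological) linear-algebra extension steps, using the description of $\fW$ from Lemma~\ref{countable-is-product-of-discrete} as a product $\fW=\prod_{n\ge1}\fW_n$ of discrete vector spaces. Equivalently, fix a descending base of open subspaces $\fW=\fW_0\supset\fW_1\supset\fW_2\supset\dotsb$ of $\fW$, so that $\fW=\varprojlim_n\fW/\fW_n$. A continuous linear map $g\:U\to\fW$ is the same as a compatible system of linear maps $g_n\:U\to\fW/\fW_n$, each of which is continuous, i.e.\ each $\ker g_n\subset U$ is open in the induced topology on $U$, hence of the form $U\cap V_n$ for some open subspace $V_n\subset V$ (and without loss of generality $V_0\supset V_1\supset V_2\supset\dotsb$, shrinking if necessary). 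The task is to produce a compatible system $f_n\:V\to\fW/\fW_n$ with $f_n|_U=g_n$ and each $f_n$ continuous; their projective limit is then the desired continuous extension $f\:V\to\fW$.

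First I would handle the single-step problem: given $f_n\:V\to\fW/\fW_n$ extending $g_n$, together with the transition map $\fW/\fW_{n+1}\to\fW/\fW_n$ and the map $g_{n+1}\:U\to\fW/\fW_{n+1}$ lifting $g_n|_U$, I must find $f_{n+1}\:V\to\fW/\fW_{n+1}$ simultaneously lifting $f_n$ and extending $g_{n+1}$. This is a purely algebraic pushout/pullback question: consider the fibre product $P=(\fW/\fW_{n+1})\times_{\fW/\fW_n}V$ along $f_n$; the maps $g_{n+1}$ and the inclusion $U\hookrightarrow V$ determine a linear map $U\to P$, and I need a linear splitting $V\to P$ of the projection $P\to V$ extending this map on $U$. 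Since $P\to V$ is a surjective linear map of abstract vector spaces (surjective because $\fW/\fW_{n+1}\to\fW/\fW_n$ is), such a splitting-with-prescribed-values-on-$U$ exists by elementary linear algebra: choose any splitting on a complement of $U$ in $V$, or more directly, the obstruction lives in an $\mathrm{Ext}^1$ of vector spaces, which vanishes. I would also need to check that the resulting $f_{n+1}$ is continuous, which is automatic because $\ker f_{n+1}\supset\ker f_n\cap(\text{preimage of }0)$ contains an open subspace — one must be slightly careful here and may need to intersect with a further open subspace $V_{n+1}$ of $V$, but since we only require $\ker f_{n+1}$ to be open, and $\ker f_n$ is already open, this causes no trouble.

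Having the single step, I would build the system $(f_n)$ by induction on $n$, starting from $f_0\:V\to\fW/\fW_0=0$. The compatibility $f_n=(\text{transition})\circ f_{n+1}$ holds by construction at each stage, so $(f_n)_{n\ge0}$ is a genuine compatible system and defines $f=\varprojlim_n f_n\:V\to\varprojlim_n\fW/\fW_n=\fW$. Continuity of $f$ follows from continuity of each $f_n$ (the preimage of the basic open subgroup $\ker(\fW\to\fW/\fW_n)$ is $\ker f_n$, which is open in $V$), and $f|_U=g$ follows from $f_n|_U=g_n$ for all $n$ together with $\fW$ being separated (so that a compatible system of maps into the $\fW/\fW_n$ is determined by its components).

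The main obstacle, and the reason the countability hypothesis is essential, is precisely the inductive construction of a \emph{compatible} system: at each stage we make a choice of splitting, and for an uncountable base one cannot in general arrange these choices coherently — there is no single transfinite lifting, exactly as in the discussion surrounding the map~\eqref{quotient-completion-map} and the failure of completeness of quotients. With a countable base, however, the inductive choices automatically assemble into a tower, since each new choice is only required to be compatible with the one immediately preceding it. (This is the same phenomenon that makes $\varprojlim^1$ of a tower of surjections vanish, cf.\ Proposition~\ref{countable-base-kernel}.) The linear-algebra lifting at each finite stage is the routine part; the conceptual content is that countably many unconstrained choices can be threaded together, one after another.
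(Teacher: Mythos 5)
Your proof takes a genuinely different route from the paper's. The paper first invokes Lemma~\ref{countable-is-product-of-discrete} to write $\fW$ as a categorical product $\prod_n W_n$ of discrete spaces --- the ``compatible choices'' are made once and for all there, in splitting the tower of open subspaces --- which reduces the whole problem to a single discrete target $W$; that case is then one application of the factorization $U\to U/K\hookrightarrow V/L\to W$ plus ordinary linear algebra. You instead keep the tower $\fW/\fW_n$ and thread the liftings inductively. Both arguments exploit countability in the same essential way (a tower of surjections can be split step by step), but yours avoids the structure theorem for $\fW$ at the price of inductive bookkeeping, and it is a perfectly viable alternative.

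There is one step you call ``automatic'' that is not: the continuity of $f_{n+1}$. If you choose the splitting of $P\to V$ by extending arbitrarily from $U$ to a complement of $U$ in $V$, the resulting $f_{n+1}$ need not be continuous (take $U=0$, $f_n=0$, $\fW/\fW_{n+1}=k$: a splitting is then an arbitrary linear functional on $V$, and $\ker f_{n+1}$ need not contain any open subspace). Note also that the containment goes the wrong way --- $\ker f_{n+1}\subset\ker f_n$, so openness of $\ker f_n$ gives nothing. The fix is the one you gesture at with your $V_{n+1}$, but it must be built into the prescription \emph{before} extending: choose an open subspace $V_{n+1}\subset V$ with $U\cap V_{n+1}=\ker g_{n+1}$, and prescribe $f_{n+1}$ to vanish on the open subspace $\ker f_n\cap V_{n+1}$ in addition to equalling $g_{n+1}$ on $U$. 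These prescriptions agree on $U\cap\ker f_n\cap V_{n+1}\subset\ker g_{n+1}$ and are compatible with $\pi\circ f_{n+1}=f_n$ (since $f_n$ vanishes on $\ker f_n$), so they define a lift of $f_n$ on the subspace $U+(\ker f_n\cap V_{n+1})$; only then extend to all of $V$ by your torsor/complement argument. With that correction the induction goes through and the rest of your argument (passage to the limit, continuity and the value on $U$ via separatedness) is fine.
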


\begin{proof}
 According to Lemma~\ref{countable-is-product-of-discrete},
the topological vector space $\fW$ is isomorphic to a countable product
$\prod_{i=0}^\infty W_i$ of discrete vector spaces $W_i$, endowed with
the product topology.
 According to the discussion above in this section and in
Section~\ref{top-abelian-secn}, this means that the object $\fW$ is
the categorical product of the objects $W_i$ in the category $\Top_k$.
 Therefore, it suffices to consider the case of a discrete vector
space $\fW=W$ in order to prove the proposition.

 Let $f\:U\rarrow W$ be a continuous linear map.
 Since $W$ is discrete, the zero subspace $\{0\}\subset W$ is open
in $W$; hence the subspace $K=\ker(f)=f^{-1}(0)$ is open in~$U$.
 By the definition of the induced topology on $U$, there exists
an open vector subspace $L\subset V$ such that $U\cap L=K$.
 The linear map~$f$ factorizes through the surjection $U\rarrow U/K$;
so we have a linear map (of discrete vector spaces)
$\bar f\:U/K\rarrow W$.
 The vector space $U/K$ is a subspace in the (discrete)
vector space $V/L$; so the map~$\bar f$ can be extended to a linear
map $\bar g\:V/L\rarrow W$.
 Now the composition $V\rarrow V/L\rarrow W$ provides a desired
continuous linear extension $g\:V\rarrow W$ of the map~$f$.
\end{proof}

\begin{cor} \label{countable-base-complete-subspace-splits}
 Let $V$ be a topological vector space and\/ $\fU\subset V$ be a vector
subspace, endowed with the induced topology.
 Assume that the topological vector space $\fU$ is complete, separated,
and has a countable base of neighborhoods of zero.
 Then there exists a topological vector space $W$ and an isomorphism
of topological vector spaces $V\simeq\fU\oplus W$ forming
a commutative triangle diagram with the inclusion $\fU\rarrow V$
and the direct summand inclusion $\fU\rarrow\fU\oplus W$.
\end{cor}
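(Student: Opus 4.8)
The plan is to use the injectivity property of Proposition~\ref{countable-injectivity} to split off $\fU$ as a direct summand. First I would apply Proposition~\ref{countable-injectivity} to the identity map $\mathrm{id}_\fU\:\fU\rarrow\fU$: since $\fU\subset V$ carries the induced topology and $\fU$ is complete, separated, with a countable base of neighborhoods of zero, the identity extends to a continuous $k$\+linear map $r\:V\rarrow\fU$ with $r|_\fU=\mathrm{id}_\fU$. Composing with the inclusion $\iota\:\fU\rarrow V$ we obtain a continuous idempotent endomorphism $e=\iota\circ r$ of $V$ (idempotent because $r\circ\iota=\mathrm{id}_\fU$, so $e\circ e=\iota\circ(r\circ\iota)\circ r=\iota\circ r=e$), whose image is~$\fU$.

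Next, set $W=\ker(r)\subset V$, endowed with the induced topology, and consider the continuous $k$\+linear map $\Phi\:\fU\oplus W\rarrow V$, \ $(u,w)\mapsto\iota(u)+w$, where $\fU\oplus W$ denotes the biproduct in $\Top_k$. This $\Phi$ is bijective as a map of abstract vector spaces: if $\iota(u)+w=0$, then applying $r$ gives $u=0$, whence $w=0$, so $\Phi$ is injective; and every $v\in V$ decomposes as $v=\iota(r(v))+(v-e(v))$ with $v-e(v)\in\ker(r)$, since $r(v-e(v))=r(v)-r(v)=0$, so $\Phi$ is surjective. Its inverse is the map $\Psi\:V\rarrow\fU\oplus W$ with components $v\mapsto r(v)\in\fU$ and $v\mapsto v-e(v)\in W$.

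It remains only to check that $\Psi$ is continuous. Since $\fU\oplus W$ is in particular a product in $\Top_k$, it suffices to verify that the two components of $\Psi$ are continuous. The first component is $r$, which is continuous by construction. The second component is $v\mapsto v-e(v)$ as a map into $W$; since $W$ carries the topology induced from $V$, a map into $W$ is continuous precisely when its composite into $V$ is continuous, and $\mathrm{id}_V-e$ is continuous, being a difference of continuous maps. Hence $\Phi$ is an isomorphism in $\Top_k$, and since $\Phi$ precomposed with the summand inclusion $\fU\rarrow\fU\oplus W$ equals $\iota$, the desired commutative triangle follows. I do not anticipate a genuine obstacle here: the real content is already contained in Proposition~\ref{countable-injectivity}, and what remains uses only the universal property of the product $\fU\oplus W$ in $\Top_k$ together with the characterization of the subspace topology on $W$.
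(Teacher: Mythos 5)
Your argument is correct and is essentially the paper's proof: both apply Proposition~\ref{countable-injectivity} to extend $\mathrm{id}_\fU$ to a continuous retraction $r\:V\rarrow\fU$ and then take $W=\ker(r)$. The paper simply invokes the general fact that a retract is a direct summand in an additive category with kernels, whereas you spell out that standard verification (bijectivity of $\Phi$ and continuity of $\Psi$ via the product property of $\fU\oplus W$ and the subspace topology on $W$) explicitly.
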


\begin{proof}
 In any additive category with kernels or cokernels, any retract
is a direct summand.
 In the situation at hand, denote the inclusion map
by $\iota\:\fU\rarrow V$.
 By Proposition~\ref{countable-injectivity}, the identity map
$\fU\rarrow\fU$ can be extended to a continuous linear map
$g\:V\rarrow\fU$ such that $g\circ\iota=\mathrm{id}_\fU$.
 Now the topological vector space $W$ can be constructed as
the kernel of~$g$ or the cokernel of~$\iota$ in the category $\Top_k$.
\end{proof}

 A complete, separated topological vector space $\fV$ (with linear
topology) is called \emph{linearly compact} (or \emph{pseudo-compact}, or
\emph{profinite-dimensional}) if all its open subspaces $\fU\subset\fV$ 
have finite codimension in $\fV$, i.~e., the quotient spaces $\fV/\fU$
are finite-dimensional.
 For any discrete vector space $V$, the dual vector space $V^*=
\Hom_k(V,k)$ has a natural linearly compact topology in which
the open subspaces are the annihilators of finite-dimensional subspaces
in~$V$.
 The vector space $V$ can be recovered as the space of all continuous
linear functions $\fV\rarrow k$ on its dual vector space
$\fV=\Hom_k(V,K)$.
 The correspondence $V\longleftrightarrow\fV$ is an anti-equivalence
between the category of discrete vector spaces $\Vect_k$ and
the category of linearly compact vector spaces.
 Hence the category of linearly compact vector spaces is abelian.

 A topological vector space is linearly compact if and only if it is
isomorphic to a product $\prod_{i\in I}k$ of copies of
the one-dimensional discrete vector space~$k$ over some index set $I$,
endowed with the product topology.
 Following the proofs of Proposition~\ref{countable-injectivity}
and Corollary~\ref{countable-base-complete-subspace-splits}, one can see
that the former holds for an arbitrary linearly compact vector space
$\fW$, and the latter holds for any linearly compact vector space~$\fU$
(not necessarily with a countable base of neighborhoods of zero).

\Section{Exact Categories} \label{exact-categories-secn}

 Reasonably modern expositions on exact categories (in Quillen's
sense) include Keller's~\cite[Appendix~A]{Kel},
\cite[Appendix]{DRSS}, the present author's~\cite[Appendix~A]{Partin},
and B\"uhler's long paper~\cite{Bueh}.
 In this section we use some material from~\cite{Partin}.

 An \emph{exact category} $\sE$ is an additive category endowed with
a class of \emph{short exact sequences} (known also in the recent
literature as \emph{conflations}) $0\rarrow E'\rarrow E\rarrow E''
\rarrow0$.
 A morphism $E'\rarrow E$ appearing in a short exact sequence
$0\rarrow E'\rarrow E\rarrow E''\rarrow0$ is said to be
an \emph{admissible monomorphism} (or an \emph{inflation}), and
a morphism $E\rarrow E''$ appearing in such a short exact sequence is
said to be an \emph{admissible epimorphism} (or a \emph{deflation}).
 The following axioms must be satisfied:

\begin{itemize}
\item[Ex0:] The zero short sequence $0\rarrow 0\rarrow 0\rarrow 0
\rarrow0$ is exact.
Any short sequence isomorphic to a short exact sequence is exact.
\item[Ex1:] In any short exact sequence $0\rarrow E'\rarrow E\rarrow
E''\rarrow0$, the morphism $E'\rarrow E$ is the kernel of
the morphism $E\rarrow E''$, and the morphism $E\rarrow E''$ is
the cokernel of the morphism $E'\rarrow E$ in the category~$\sE$.
\item[Ex2:] Let $0\rarrow E'\rarrow E\rarrow E''\rarrow0$ be
a short exact sequence.  Then (a)~for any morphism $E'\rarrow F'$
there exists a commutative diagram
\begin{equation} \label{Ex2a-diagram}
\begin{gathered}
\xymatrix{
 0\ar[r] & E' \ar[r] \ar[d] & E \ar[r] \ar[d] & E'' \ar[r] & 0 \\
 0\ar[r] & F' \ar[r] & F \ar[ru]
}
\end{gathered}
\end{equation}
with a short exact sequence $0\rarrow F'\rarrow F\rarrow E''\rarrow0$;
and dually, (b)~for any morphism $F''\rarrow E''$ there exists
a commutative diagram
\begin{equation} \label{Ex2b-diagram}
\begin{gathered}
\xymatrix{
 0\ar[r] & E' \ar[r] \ar[rd] & E \ar[r] & E'' \ar[r] & 0 \\
 & & F \ar[u]\ar[r] & F'' \ar[u] \ar[r] & 0
}
\end{gathered}
\end{equation}
with a short exact sequence $0\rarrow E'\rarrow F\rarrow F''\rarrow0$.
\item[Ex3:] (a)~The composition of any two admissible monomorphisms
is an admissible monomorphism.  (b)~The composition of any two
admissible epimorphisms is an admissible epimorphism.
\end{itemize}

 The above formulation of axiom~Ex2 differs from the usual one, but
is actually equivalent to it under the assumption of Ex0--Ex1.
 The usual formulation is:
\begin{itemize}
\item[Ex2$'$:]
 (a)~For any admissible monomorphism $E'\rarrow E$ and any morphism
$E'\rarrow F'$ in $\sE$, there exists a pushout object
$F=F'\sqcup_{E'}E$ in $\sE$, and the natural morphism $F'\rarrow F$
is an admissible monomorphism (in other words, the class of
admissible monomorphisms is closed under pushouts). \\
 (b)~For any admissible epimorphism $E\rarrow E''$ and any morphism
$F''\rarrow E''$ in $\sE$, there exists a pullback object
$F=F''\sqcap_{E''}E$ in $\sE$, and the natural morphism $F\rarrow F''$
is an admissible epimorphism (in other words, the class of
admissible epimorphisms is closed under pullbacks).
\end{itemize}

 In fact, for any additive category $\sE$ with a class of short
exact sequences satisfying Ex0--Ex1 and Ex2(a), and for any commutative
diagram~\eqref{Ex2a-diagram} with short exact sequences $0\rarrow E'
\rarrow E\rarrow E''\rarrow0$ and $0\rarrow F'\rarrow F\rarrow E''
\rarrow0$, the commutative square $E'\rarrow E\rarrow F$, \ 
$E'\rarrow F'\rarrow F$ is a pushout
square~\cite[Proposition~A.2]{Partin}.
 So one has $F=F'\sqcup_{E'}E$, and Ex2$'$(a) follows.
 Conversely, to deduce Ex2(a) from Ex2$'$(a) under Ex0--Ex1, it suffices
to observe that $F=F'\sqcup_{E'}E$ implies $\coker(F'\to F)=
\coker(E'\to E)=E''$.

 Dually, for any additive category $\sE$ with a class of short
exact sequences satisfying Ex0--Ex1 and Ex2(b), and for any commutative
diagram~\eqref{Ex2b-diagram} with short exact sequences $0\rarrow E'
\rarrow E\rarrow E''\rarrow0$ and $0\rarrow E'\rarrow F\rarrow F''
\rarrow0$, the commutative square $F\rarrow E\rarrow E''$, \
$F\rarrow F''\rarrow E''$ is a pullback
square~\cite[Section~A.4]{Partin}.
 So one has $F=F''\sqcap_{E''}E$, and Ex2$'$(b) follows.
 Conversely, to deduce Ex2(b) from Ex2$'$(b) under Ex0--Ex1, it suffices
to notice that $F=F''\sqcap_{E''}E$ implies $\ker(F\to F'')=
\ker(E\to E'')=E'$.

 Moreover, assuming Ex0--Ex2, the short sequence $0\rarrow E'\rarrow
F'\oplus E\rarrow F\rarrow0$ is exact in the context of
Ex2(a) or Ex2$'$(a), and dually, the short sequence $0\rarrow F\rarrow E
\oplus F''\rarrow E''\rarrow0$ is exact in the context of
Ex2(b) or Ex2$'$(b) \cite[Appendix~A]{Kel}, \cite[Appendix]{DRSS}.

 The following property is known as the ``obscure axiom''
(see~\cite[Remark~2.17]{Bueh} for a historical discussion).
 It follows from Ex0--Ex2 (see~\cite[2nd step of the proof of
Proposition~A.1]{Kel} or~\cite[Proposition~2.16]{Bueh}).
\begin{itemize}
\item[OEx:] (a)~If a morphism~$g$ has a cokernel in $\sE$ and
the composition~$fg$ is an admissible monomorphism for some
morphism~$f$, then $g$~is an admissible monomorphism. \\
(b)~If a morphism~$f$ has a kernel in $\sE$ and the composition~$fg$
is an admissible epimorphism for some morphism~$g$, then
$f$~is an admissible epimorphism.
\end{itemize}

 An additive category $\sA$ is said to be \emph{idempotent-complete}
if, for every object $A\in\sA$ and an endomorphism $e\:A\rarrow A$
such that $e^2=e$, there exists a direct sum decomposition
$A\simeq B\oplus C$ in $\sA$ such that $e$~is the composition of
the direct summand projection and the direct summand inclusion
$A\rarrow B\rarrow A$.
 In other words, this means that any idempotent endomorphism $e\in\sA$
has a kernel, or equivalently, a cokernel, or equivalently, any
idempotent endomorphism $e\in\sA$ has an image.
 An additive category $\sA$ is said to be \emph{weakly
idempotent-complete} if, for every pair of objects $A$, $B\in\sA$
and pair of morphisms $p\:A\rarrow B$, \ $i\:B\rarrow A$ such that
$pi=\mathrm{id}_B$, there exists a direct sum decomposition
$A\simeq B\oplus C$ in $\sA$ such that $i$~is the direct summand
inclusion $B\rarrow A$ and $p$~is the direct summand projection
$A\rarrow B$.
 In other words, this means that the morphism~$p$ (i.~e., any retraction
in $\sA$) has a kernel, or equivalently, the morphism~$i$ (i.~e., any
section in $\sA$) has a cokernel.

 The assumption of weak idempotent completeness simplifies the theory
of exact categories considerably.
 In particular, the following axiom for a class of short exact sequences
in an additive category $\sE$ is equivalent, under the assumption of
Ex0--Ex1, to $\sE$ being weakly idempotent-complete (as an additive
category) \emph{and} satisfying Ex2 or~Ex2$'$ (as an additive
category with a class of short exact sequences):

\begin{itemize}
\item[Ex2$''$:] (a)~If a composition~$fg$ is an admissible monomorphism
in $\sE$, then $g$~is an admissible monomorphism. \\
(b)~If a composition~$fg$ is an admissible epimorphism in $\sE$, then
$f$~is an admissible epimorphism. \\
(c)~If in the commutative diagram
\begin{equation} \label{Ex2secondc-diagram}
\begin{gathered}
\xymatrix{
 0\ar[r] & E' \ar[r] \ar[rd] & E_1 \ar[r] \ar[d] & E'' \ar[r] & 0 \\
 & & E_2 \ar[ru]
}
\end{gathered}
\end{equation}
both the short sequences $0\rarrow E'\rarrow E_1\rarrow E''\rarrow0$
and $0\rarrow E'\rarrow E_2\rarrow E''\rarrow0$ are exact, then
the morphism $E_1\rarrow E_2$ is an isomorphism.
\end{itemize}

 Indeed, conditions~Ex2$''$(a\+-b) form a stronger (and simpler)
version of the obscure axiom~OEx.
 Any one of these two conditions for a class of short exact sequences
satisfying (a small part of) Ex0--Ex2 easily implies that the category
$\sE$ is weakly idempotent-complete.
 Conversely, in a weakly idempotent-complete category $\sE$,
axioms~Ex0--Ex1 and Ex2$'$ imply Ex2$''$(a\+-b)
\cite[Appendix, part~C]{DRSS}, \cite[Proposition~7.6]{Kel}.
 Condition~Ex2$''$(c) is a common particular case of Ex2(a) and Ex2(b).
 It is shown in~\cite[proof of Proposition~A.4]{Partin} that
Ex0--Ex1 and Ex2$''$ imply~Ex2.

 Finally, we should mention that, under the assumption of Ex0--Ex2,
the conditions Ex3(a) and Ex3(b) are equivalent to each other, so it
suffices to require only one of them~\cite[3rd step of the proof of
Proposition~A.1]{Kel} (see also~\cite[the discussion of diagram~(A.3)
at the end of Section~A.5]{Partin}).

\begin{ex} \label{inherited-exact-structure}
 Let $\sE$ be an exact category and $\sG\subset\sE$ be a full additive
subcategory.
 We will say that the full subcategory $\sG\subset\sE$ is \emph{closed
under extensions} if, for any short exact sequence $0\rarrow E'\rarrow
E\rarrow E''\rarrow0$, the object $E$ belongs to $\sG$ whenever both
the objects $E'$ and $E''$ do.
 Similarly, we will say that $\sG$ is \emph{closed under the kernels
of admissible epimorphisms} in $\sE$ if in the same short exact
sequence the object $E'$ belongs to $\sG$ whenever both the objects $E$
and $E''$ do; and $\sG$ is \emph{closed under the cokernels of
admissible monomorphisms} in $\sE$ if the object $E''$ belongs to $\sG$
whenever both the objects $E'$ and $E$ do.

 Finally, we will say that the full subcategory $\sG\subset\sE$
\emph{inherits the exact category structure} if the class of all
short sequences in $\sG$ that are exact in $\sE$ forms an exact
category structure on~$\sG$.
 Assume that the full subcategory $\sG$ is \emph{either} closed
under extensions, \emph{or} it is closed under both the kernels of
admissible epimorphisms and the cokernels of admissible monomorphisms
in~$\sE$.
 Then $\sG$ inherits the exact category structure from~$\sE$.

 Indeed, if $\sG$ is closed under extensions in $\sE$, then the axioms
Ex0--Ex2 are clearly satisfied for the class of all short sequences
in $\sG$ that are exact in~$\sE$.
 To see that the composition $A\rarrow B\rarrow C$ of two admissible
epimorphisms in $\sG$ is an admissible epimorphism in $\sE$, one
observes that the kernel of $A\rarrow C$ is an extension of
the kernels of $A\rarrow B$ and $B\rarrow C$ in~$\sE$.

 If $\sG$ is closed under the kernels of admissible epimorphisms and
the cokernels of admissible monomorphisms, then the axioms Ex0--Ex1
and Ex3 are clearly satisfied in~$\sG$.
 To check the axiom Ex2(a), consider a short exact sequence
$0\rarrow E'\rarrow E\rarrow E''\rarrow 0$ in $\sG$ and a morphism
$E'\rarrow F'$ in~$\sG$.
 Build a commutative diagram~\eqref{Ex2a-diagram} in $\sE$ with a short
exact sequence $0\rarrow F'\rarrow F\rarrow E''\rarrow0$, and recall
that the short sequence $0\rarrow E'\rarrow F'\oplus E\rarrow F\rarrow0$
is exact in~$\sE$.
 Since the objects $E'$ and $F'\oplus E$ belong to $\sG$ and $\sG$ is
closed under the cokernels of admissible monomorphisms in $\sE$, it
follows that $F\in\sG$.

 Given an exact category $\sE$, a full subcategory $\sG$ closed under
extensions in $\sE$, endowed with the exact structure inherited from
$\sE$, is called a \emph{fully exact subcategory} in
the surveys~\cite[Sections~4 and~12]{Kel2}, \cite[Sections~10.5
and~13.3]{Bueh} and the paper~\cite[Definition~2.2(ii)]{DS}.
 More generally, a full subcategory $\sG$ inheriting an exact category
structure from~$\sE$, endowed with the inherited exact category
structure, is simply called an \emph{exact subcategory}
in~\cite[Definition~2.2(i)]{DS}.
 A characterization of exact subcategories (or in our terminology,
full subcategories inheriting an exact category structure) can be found
in~\cite[Theorem~2.6]{DS} (see also~\cite[Lemma~4.20]{Pedg}).
\end{ex}

 Let $\sE$ and $\sG$ be exact categories.
 An additive functor $\Psi\:\sE\rarrow\sG$ is said to be \emph{exact}
if it takes short exact sequences to short exact sequences.

\begin{ex} \label{psi-exact-structure}
 The following construction provides a source of examples of exact
category structures that is important for the purposes of our
discussion.

 Let $\sE$ and $\sG$ be exact categories, and $\Psi\:\sE\rarrow\sG$
be an additive functor.
 We will say that $\Psi$ \emph{preserves the kernels of admissible
epimorphisms} if for every short exact sequence $0\rarrow E'\rarrow
E\rarrow E''\rarrow0$ in $\sE$ the morphism $\Psi(E')\rarrow\Psi(E)$
is the kernel of the morphism $\Psi(E)\rarrow\Psi(E'')$ in~$\sG$.
 Similarly, we say that $\Psi$ \emph{preserves the cokernels of
admissible monomorphisms} if for every short exact sequence
$0\rarrow E'\rarrow E\rarrow E''\rarrow0$ in $\sE$ the morphism
$\Psi(E)\rarrow\Psi(E'')$ is the cokernel of the morphism
$\Psi(E')\rarrow\Psi(E)$ in~$\sG$.

 Finally, we will say that a short sequence $0\rarrow E'\rarrow E
\rarrow E''\rarrow0$ in $\sE$ is \emph{$\Psi$\+exact} if
$0\rarrow E'\rarrow E\rarrow E''\rarrow0$ is a short exact sequence
in $\sE$ \emph{and} $0\rarrow\Psi(E')\rarrow\Psi(E)\rarrow\Psi(E'')
\rarrow0$ is a short exact sequence in~$\sG$.
 Assume that the functor $\Psi$ \emph{either} preserves the kernels
of admissible epimorphisms, \emph{or} it preserves the cokernels of
admissible monomorphisms.
 Then we claim that the class of all $\Psi$\+exact sequences in
$\sE$ is a (new) exact category structure on~$\sE$.
 We will call it the \emph{$\Psi$\+exact category structure}.
 Accordingly, we will speak of \emph{$\Psi$\+admissible monomorphisms}
and \emph{$\Psi$\+admissible epimorphisms} in~$\sE$.

 Indeed, without any assumptions on $\Psi$ it is clear that the class
of all $\Psi$\+exact sequences satisfies~Ex0--Ex1.
 Assuming that $\Psi$ preserves the kernels of admissible epimorphisms,
we will check~Ex2--Ex3.

 Ex2(a): given a $\Psi$\+exact sequence $0\rarrow E'\rarrow E\rarrow E''
\rarrow0$ and a morphism $E'\rarrow F'$ in $\sE$, we can apply
axiom~Ex2(a) or Ex2$'$(a) in the exact category $\sE$ and obtain
a commutative diagram~\eqref{Ex2a-diagram} with $F=F'\sqcup_{E'}E$
and a short exact sequence $0\rarrow F'\rarrow F\rarrow E''\rarrow0$.
 Since the functor $\Psi$ preserves the kernels of admissible
epimorphisms, the morphism $\Psi(F')\rarrow\Psi(F)$ is the kernel of
the morphism $\Psi(F)\rarrow\Psi(E'')$ in~$\sG$.
 So the morphism $\Psi(F)\rarrow\Psi(E'')$ has a kernel, and
the composition $\Psi(E)\rarrow\Psi(F)\rarrow\Psi(E'')$ is
an admissible epimorphism in~$\sG$.
 According to property~OEx(b) in the exact category~$\sG$,
it follows that $\Psi(F)\rarrow\Psi(E'')$ is an admissible
epimorphism in~$\sG$.
 Thus $0\rarrow\Psi(F')\rarrow\Psi(F)\rarrow\Psi(E'')\rarrow0$
is a short exact sequence in~$\sG$.
 By the definition, this means that the short sequence $0\rarrow F'
\rarrow F\rarrow E''\rarrow0$ is $\Psi$\+exact in~$\sE$, as desired.

 Ex2(b): given a $\Psi$\+exact sequence $0\rarrow E'\rarrow E\rarrow E''
\rarrow0$ and a morphism $F''\rarrow E''$ in $\sE$, we apply
axiom~Ex2(b) or~Ex2$'$(b) in $\sE$ and obtain a commutative
diagram~\eqref{Ex2b-diagram} with $F=F''\sqcap_{E''}E$ and
a short exact sequence $0\rarrow E'\rarrow F\rarrow F''\rarrow0$.
 According to the discussion above, the short sequence $0\rarrow F
\rarrow E\oplus F''\rarrow E''\rarrow0$ is exact in~$\sE$.
 Since $\Psi$ preserves the kernels of admissible epimorphisms, it
follows that the morphism $\Psi(F)\rarrow\Psi(E)\oplus\Psi(F'')$
is the kernel of the morphism $\Psi(E)\oplus\Psi(F'')\rarrow\Psi(E'')$
in~$\sG$.
 In other words, this means that $\Psi(F)=\Psi(F'')\sqcap_{\Psi(E'')}
\Psi(E)$.
 Now axiom Ex2$'$(b) in the exact category $\sG$ tells that
the short sequence $0\rarrow\Psi(E')\rarrow\Psi(F)\rarrow\Psi(F'')
\rarrow0$ is exact in~$\sG$.
 Thus the short sequence $0\rarrow E'\rarrow F\rarrow F''\rarrow0$
is $\Psi$\+exact in $\sE$.

 Ex3: following the above discussion, it suffices to check~Ex3(b).
 Let $f$ and~$g$ be a pair of composable $\Psi$\+admissible
epimorphisms in~$\sE$.
 By axiom~Ex3(b) in the exact category $\sE$, the composition
$fg\:E\rarrow E''$ is an admissible epimorphism in $\sE$; so we
have a short exact sequence $0\rarrow E'\rarrow E\rarrow E''\rarrow0$
in~$\sE$.
 Since $\Psi$ preserves the kernels of admissible epimorphisms,
the morphism $\Psi(E')\rarrow\Psi(E)$ is a kernel of
the morphism $\Psi(E)\rarrow\Psi(E'')$.
 By the definitions, the morphisms $\Psi(f)$ and $\Psi(g)$ are
admissible epimorphisms in~$\sG$; so by axiom~Ex3(b) in the category
$\sG$, the morphism $\Psi(fg)\:\Psi(E)\rarrow\Psi(E'')$ is
an admissible epimorphism in $\sG$, too.
 Hence the short sequence $0\rarrow\Psi(E')\rarrow\Psi(E)\rarrow
\Psi(E'')\rarrow0$ is exact in~$\sG$, the short sequence $0\rarrow
E'\rarrow E\rarrow E''\rarrow0$ is $\Psi$\+exact in $\sE$,
and the morphism $fg\:E\rarrow E''$ is a $\Psi$\+admissible
epimorphism.
\end{ex}

\Section{Quasi-Abelian Categories}  \label{quasi-abelian-secn}

 Quasi-abelian categories~\cite{Schn,Rum0} form the second most
well-behaved class of additive categories after the abelian ones.
 We refer to~\cite{Rum1,Rum2} for a historical and terminological
discussion of quasi-abelian categories.
 In the language of general category theory, the quasi-abelian
categories can be described as the regular and coregular additive
categories (in the sense of~\cite{BGO}).

 Let $\sA$ be an additive category.
 Throughout this section, we will assume that the kernels and
cokernels of all morphisms exist in $\sA$ (such additive categories
$\sA$ are nowadays called \emph{preabelian}).
 We will say that a morphism~$k$ in $\sA$ is \emph{a kernel} if
$k$~is a kernel of some morphism in $\sA$ (in this case, $k$~is
a kernel of its cokernel).
 Similarly, a morphism~$c$ in $\sA$ is said to be \emph{a cokernel}
if it is a cokernel of some morphism in~$\sA$.
 Obviously, any kernel is a monomorphism and any cokernel is
an epimorphism, but the converse need not be true.

 Given a morphism $f\:A\rarrow B$ in $\sA$ with a kernel
$K=\ker(f)$ and a cokernel $C=\coker(f)$, one denotes
the cokernel of the morphism $K\rarrow A$ by $\coker(K\to A)=\coim(f)$
and the kernel of the morphism $B\rarrow C$ by $\ker(B\to C)=\im(f)$.
 Then the original morphism $f\:A\rarrow B$ decomposes naturally as
$A\rarrow\coim(f)\rarrow\im(f)\rarrow B$, with a uniquely defined
morphism $\coim(f)\rarrow\im(f)$ in between.

\begin{prop} \label{four-properties-kernel-side-prop}
 For any additive category\/ $\sA$ with kernels and cokernels,
the following four conditions are equivalent:
\begin{enumerate}
\item for any morphism~$f$ in $\sA$, the natural morphism\/
$\coim(f)\rarrow\im(f)$ is an epimorphism;
\item the composition of any two kernels is a kernel in\/~$\sA$;
\item if a composition~$fg$ is a kernel in\/ $\sA$, then $g$~is
a kernel;
\item for any pair of morphisms $A'\rarrow A$ and $A'\rarrow B'$ in\/
$\sA$ such that $A'\rarrow A$ is a kernel, there exists a commutative
square
$$
\xymatrix{
 A' \ar[r] \ar[d] & A \ar[d] \\
 B' \ar[r] & B
}
$$
where $B'\rarrow B$ is a monomorphism.
\end{enumerate}
\end{prop}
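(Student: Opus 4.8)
The plan is to route the equivalences through conditions~(1) and~(2), which are the most convenient to pass to and from: I would prove $(1)\Leftrightarrow(2)$, then $(2)\Rightarrow(3)$ and $(2)\Rightarrow(4)$, and finally $(3)\Rightarrow(2)$ and $(4)\Rightarrow(2)$. Three elementary facts about any additive category with kernels and cokernels are used constantly: \textup{(A)}~a split monomorphism $i$ with retraction $p$ is a kernel, namely $i=\ker(\mathrm{id}-ip)$; \textup{(B)}~a split monomorphism that is also an epimorphism, or a split epimorphism that is also a monomorphism, is an isomorphism; \textup{(C)}~if $g$ has a cokernel $\bar g$, then $\coker(fg)$ is the pushout of $f$ along~$\bar g$; together with the routine remark that the image of a morphism factoring through a kernel is contained in that kernel.

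The core is a bootstrapping argument. For $(1)\Rightarrow(2)$, let $k$, $l$ be composable kernels, so $kl$ is monic; factor it as $A\xrightarrow{q}\im(kl)\xrightarrow{i}C$ with $i=\ker(\coker(kl))$. Condition~(1), applied to $kl$ (whose coimage is $A$), makes $q$ an epimorphism. From $(\coker k)\,i\,q=(\coker k)\,kl=0$ and $q$ epic one gets $(\coker k)\,i=0$, so $i$ factors through $\ker(\coker k)$, that is, through~$k$: writing $i=kj$ gives $l=jq$ after cancelling~$k$. Repeating with $\coker l$: $(\coker l)\,j\,q=(\coker l)\,l=0$, so $j=lj'$, and cancelling~$l$ in $l=lj'q$ gives $j'q=\mathrm{id}_A$. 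Thus $q$ is split monic and epic, hence an isomorphism by~(B), and $kl$ is a kernel. For $(2)\Rightarrow(1)$ one runs the reverse bootstrap: factor an arbitrary $f$ through its image $q\colon A\to\im f$; if some $u$ kills $q$, factor $q$ through $\mu=\ker u$, so that $f$ factors through the composite of kernels $i\mu$, which is itself a kernel by~(2); then $\im f$ is contained in $\ker(\coker(i\mu))=\im(i\mu)$, so $i$ factors through $i\mu$, which exhibits $\mu$ as split epic and hence (being monic) an isomorphism by~(B), forcing $u=0$. So $q$ is epic, i.e.\ $\coim f\to\im f$ is epic.

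The passages out of~(2) are short. For $(2)\Rightarrow(3)$: given $fg$ a kernel, set $h=\coker(fg)$ and factor $g$ through $\kappa=\ker(hf)$, say $g=\kappa g'$; since $f\kappa$ factors through $\ker h=fg$, cancelling the monomorphism $fg$ shows $g'$ is split monic, hence a kernel by~(A), so $g=\kappa g'$ is a composite of kernels and therefore a kernel by~(2). For $(2)\Rightarrow(4)$: in the pushout $B=A\sqcup_{A'}B'$ of a kernel $m$ along $a$, the canonical monomorphism $\binom{m}{-a}\colon A'\to A\oplus B'$ is $(m\oplus\mathrm{id}_{B'})$ precomposed with the split monomorphism $\binom{\mathrm{id}_{A'}}{-a}$, hence a composite of kernels and so a kernel by~(2); thus it equals $\ker(\coker\binom{m}{-a})$, and chasing the projection onto~$A$ shows that the leg $B'\to B$ is a monomorphism — the commutative square demanded by~(4).

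The part I expect to fight with is closing the cycle: $(3)\Rightarrow(2)$ and $(4)\Rightarrow(2)$. Conditions~(1) and~(2) handed us, for free, an epimorphism resp.\ a composite of kernels to start the bootstrap, whereas~(3) and~(4) do not. For~(4) the plan would be first to upgrade ``the pushout leg of a kernel is monic'' to ``the pushout square of a kernel is bicartesian'', after which $(4)\Rightarrow(2)$ drops out of~(C): writing $\coker(kl)=c'$ as the pushout of $k$ along $\coker l$ and chasing the pullback property (using that $k$ and~$l$ are kernels) recovers $kl=\ker(c')$. The delicate step is that upgrade: (4) literally only gives monicity of the leg, and promoting a monomorphism to a \emph{kernel} is precisely the ``monic-and-epic need not be iso'' difficulty recurring one more time; I would try to clear it by splitting the idempotent that appears, or by feeding back the implications already proved. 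For~(3) I would apply it to a composite manufactured to be visibly a kernel, again reducing to image-factorization bookkeeping; choosing that composite is the subtle point.
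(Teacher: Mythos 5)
The parts you actually carry out are correct: facts (A)--(C) are standard, and I checked your bootstrap arguments for $(1)\Leftrightarrow(2)$, $(2)\Rightarrow(3)$ and $(2)\Rightarrow(4)$ in detail; they all work (for $(2)\Rightarrow(1)$ the step ``$\im f$ is contained in $\im(i\mu)$'' does follow, since $f$ factors through the kernel $i\mu$, so $\coker(i\mu)$ factors through $\coker(f)$ and therefore kills $i$). The genuine gap is the one you flag yourself: neither $(3)$ nor $(4)$ is ever shown to imply any of the other conditions, so the four statements are not proved equivalent, and these missing implications are precisely the substantive content of the proposition (the paper itself only points to \cite[Example~(7) in Section~A.5]{Partin}, \cite[Propositions~1--2]{Rum0} and \cite[Proposition~3.1]{KW} for them). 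Worse, your plan for $(4)\Rightarrow(2)$ is circular as stated: the pushout square $pm=ja$ of a kernel $m$ along $a$ is bicartesian precisely when $\binom{m}{-a}=\ker\bigl((p,j)\bigr)$, i.e.\ when $\binom{m}{-a}$ is a kernel --- and $\binom{m}{-a}=(m\oplus\mathrm{id}_{B'})\binom{\mathrm{id}}{-a}$ is exactly the composite of two kernels whose kernel-ness is an instance of~$(2)$. Condition $(1)$ only makes the comparison map $A'\rarrow\ker((p,j))$ an epimorphism (and it is monic anyway), and ``monic and epic'' cannot be promoted to ``iso'' here, which is the same difficulty you identify elsewhere.

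The $(4)$-side of the cycle can be closed by a direct proof of $(4)\Rightarrow(1)$ that avoids pushouts entirely. Let $f\:A\rarrow B$, let $i=\ker(\coker f)\:\im f\rarrow B$ and $\bar q\:A\rarrow\im f$ with $i\bar q=f$; it suffices to show $\bar q$ is an epimorphism. Suppose $u\bar q=0$; put $\mu=\ker(u)$, so $\bar q=\mu e$ and $f=i\mu e$. Apply $(4)$ to the kernel $i\:\im f\rarrow B$ and the morphism $\coker(\mu)\:\im f\rarrow\coker(\mu)$: there is a commutative square $gi=h\cdot\coker(\mu)$ with $h$ a monomorphism. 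Then $gf=gi\mu e=h\,\coker(\mu)\,\mu\,e=0$, so $g$ factors through $\coker(f)$ and hence $gi=0$; since $h$ is monic, $\coker(\mu)=0$, so $\mu$ is an epimorphism and $u\mu=0$ forces $u=0$. Combined with your $(1)\Rightarrow(2)$ this disposes of~$(4)$. That still leaves $(3)\Rightarrow(2)$ (equivalently $(3)\Rightarrow(1)$) unproved; your remark that ``choosing that composite is the subtle point'' is exactly where the argument currently stops, and this implication does require a genuine idea (see \cite[Proposition~3.1]{KW}), so as it stands the proof is incomplete.
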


\begin{proof}
 See~\cite[Example~(7) and ``Proof of~(7)'' in Section~A.5]{Partin}.

 The equivalence (1)\,$\Longleftrightarrow$\,(4) can be also found
in~\cite[Proposition~1]{Rum0}, and the implications
(1)\,$\Longrightarrow$\,(2), \ (1)\,$\Longrightarrow$\,(3)
in~\cite[Proposition~2]{Rum0}.
 The equivalences
(1)\,$\Longleftrightarrow$\,(2)\,$\Longleftrightarrow$\,(3) can be
found in~\cite[Proposition~3.1]{KW}.
 See~\cite[Section~3]{KW} for a historical discussion with references.
\end{proof}

\begin{prop} \label{four-properties-cokernel-side-prop}
 For any additive category\/ $\sA$ with kernels and cokernels,
the following four conditions are equivalent:
\begin{enumerate}
\item for any morphism~$f$ in $\sA$, the natural morphism\/
$\coim(f)\rarrow\im(f)$ is a monomorphism;
\item the composition of any two cokernels is a cokernel in\/~$\sA$;
\item if a composition~$fg$ is a cokernel in\/ $\sA$, then $f$~is
a cokernel;
\item for any pair of morphisms $A\rarrow A''$ and $B''\rarrow A''$ in\/
$\sA$ such that $A\rarrow A''$ is a cokernel, there exists a commutative
square
$$
\xymatrix{
 A \ar[r] & A'' \\
 B \ar[r] \ar[u] & B'' \ar[u]
}
$$
where $B\rarrow B''$ is an epimorphism.
\end{enumerate}
\end{prop}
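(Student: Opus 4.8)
The plan is to deduce this proposition from Proposition~\ref{four-properties-kernel-side-prop} by passing to the opposite category. An additive category $\sA$ has kernels and cokernels if and only if $\sA^{\mathrm{op}}$ does, and under the identification $(\sA^{\mathrm{op}})^{\mathrm{op}}=\sA$ all the relevant notions swap in the expected way: a morphism is a cokernel in $\sA$ exactly when the corresponding morphism is a kernel in $\sA^{\mathrm{op}}$; an epimorphism in $\sA$ is a monomorphism in $\sA^{\mathrm{op}}$; a commutative square in $\sA$ of the shape appearing in condition~(4) here corresponds to a commutative square in $\sA^{\mathrm{op}}$ of the shape appearing in condition~(4) of Proposition~\ref{four-properties-kernel-side-prop}; and $(-)^{\mathrm{op}}$ reverses the order of composition, so that a factorization $fg$ with $g$ the first factor becomes a factorization with $g^{\mathrm{op}}$ the second factor. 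The one point that deserves a moment's care is the construction $f\mapsto\im(f)$: under passage to $\sA^{\mathrm{op}}$ it interchanges with $f\mapsto\coim(f)$, so that the canonical morphism $\coim(f)\rarrow\im(f)$ in $\sA$ goes over to the canonical morphism $\coim(f^{\mathrm{op}})\rarrow\im(f^{\mathrm{op}})$ in $\sA^{\mathrm{op}}$ with the arrow reversed.

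Granting this dictionary, I would observe that conditions~(1)--(4) of the present proposition are precisely conditions~(1)--(4) of Proposition~\ref{four-properties-kernel-side-prop} read in the category $\sA^{\mathrm{op}}$. Indeed: "the natural morphism $\coim(f)\rarrow\im(f)$ is a monomorphism in $\sA$'' says "the natural morphism $\coim(f^{\mathrm{op}})\rarrow\im(f^{\mathrm{op}})$ is an epimorphism in $\sA^{\mathrm{op}}$''; "the composition of two cokernels is a cokernel in $\sA$'' says "the composition of two kernels is a kernel in $\sA^{\mathrm{op}}$''; "$fg$ a cokernel in $\sA$ forces $f$ a cokernel'' says, after renaming via $(-)^{\mathrm{op}}$, "a composition that is a kernel in $\sA^{\mathrm{op}}$ has its first factor a kernel''; and the square in condition~(4) is the manifest dual of the square in condition~(4) of Proposition~\ref{four-properties-kernel-side-prop}. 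Therefore the equivalence of~(1)--(4) follows at once from Proposition~\ref{four-properties-kernel-side-prop} applied to $\sA^{\mathrm{op}}$.

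I do not expect any genuine obstacle here; the only thing to get right is the bookkeeping of the $\coim$/$\im$ interchange and of the reversal of composition order under $(-)^{\mathrm{op}}$. Alternatively, one could write out a self-contained argument dual to the one indicated for Proposition~\ref{four-properties-kernel-side-prop}, following \cite[Section~A.5]{Partin}, \cite[Propositions~1 and~2]{Rum0}, and \cite[Proposition~3.1]{KW}; but since the cited sources are naturally stated in the kernel form, invoking duality is both shorter and less error-prone, and I would present the proof that way.
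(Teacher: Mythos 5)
Your proposal is correct and coincides with the paper's own proof, which simply observes that the statement is dual to Proposition~\ref{four-properties-kernel-side-prop}; your careful bookkeeping of the $\coim$/$\im$ interchange and the reversal of composition under passage to $\sA^{\mathrm{op}}$ is exactly the content of that one-line argument.
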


\begin{proof}
 Dual to Proposition~\ref{four-properties-kernel-side-prop}.
\end{proof}

 Let us endow the additive category $\sA$ with the class of all
short sequences $0\rarrow A'\overset i\rarrow A\overset p\rarrow
A''\rarrow0$ satisfying~Ex1 (i.~e., all such sequences in which
$i$~is the kernel of~$p$ and $p$~is the cokernel of~$i$).
 An additive category $\sA$ with kernels and cokernels is said to be
\emph{quasi-abelian} if this (most obvious) class of short exact
sequences is an exact category structure on~$\sE$.

\begin{thm} \label{quasi-abelian-theorem}
 An additive category\/ $\sA$ with kernels and cokernels is
quasi-abelian if and only if the following three conditions hold:
\begin{enumerate}
\renewcommand{\theenumi}{\alph{enumi}}
\item $\sA$ satisfies the equivalent conditions of
Proposition~\ref{four-properties-kernel-side-prop};
\item $\sA$ satisfies the equivalent conditions of
Proposition~\ref{four-properties-cokernel-side-prop};
\item condition Ex2$''$(c) holds for the class of all short
sequences satisfying Ex1 in\/~$\sA$.
\end{enumerate}
 Furthermore, $\sA$ is quasi-abelian if and only if any pushout of
a kernel is a kernel and any pullback of a cokernel is a cokernel
in\/~$\sA$.
\end{thm}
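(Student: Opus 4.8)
The plan is to verify the exact-category axioms Ex0--Ex3 directly for the class $\sC$ of all short sequences $0\rarrow A'\overset i\rarrow A\overset p\rarrow A''\rarrow0$ in $\sA$ satisfying~Ex1, matching each axiom with one of the listed conditions. First I would record the basic dictionary: a morphism of $\sA$ is a $\sC$\+admissible monomorphism if and only if it is \emph{a kernel} in the sense introduced above in this section, and dually a $\sC$\+admissible epimorphism if and only if it is \emph{a cokernel}. This is immediate from the definition of~$\sC$ together with the standard fact that, in a preabelian category, a normal monomorphism equals the kernel of its own cokernel. Since Ex0 and Ex1 hold for~$\sC$ by construction, everything reduces to Ex2 and Ex3 read through this dictionary.

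For the ``only if'' direction, suppose that $\sC$ is an exact category structure. Then Ex3(a) asserts that the composition of two kernels is a kernel, which is condition~(2) of Proposition~\ref{four-properties-kernel-side-prop}, so~(a) holds; dually, Ex3(b) yields~(b). Finally, condition Ex2$''$(c) is a common particular case of Ex2(a) and Ex2(b), hence follows from Ex2; and this is precisely~(c).

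For the ``if'' direction, assume (a), (b), (c). Through the dictionary, condition~(3) of Proposition~\ref{four-properties-kernel-side-prop} (valid by~(a)) is Ex2$''$(a), condition~(3) of Proposition~\ref{four-properties-cokernel-side-prop} (valid by~(b)) is Ex2$''$(b), and~(c) is Ex2$''$(c). Thus $\sC$ satisfies Ex0--Ex1 and Ex2$''$, hence Ex2 by~\cite[proof of Proposition~A.4]{Partin}. Then condition~(2) of Proposition~\ref{four-properties-cokernel-side-prop} (valid by~(b)) says that the composition of two cokernels is a cokernel, i.e., Ex3(b); as Ex0--Ex2 are now in hand and Ex3(a) is equivalent to Ex3(b) under Ex0--Ex2, axiom Ex3 follows, and $\sA$ is quasi-abelian.

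For the final ``furthermore'', I would first recall that in a preabelian category the pushout of any two morphisms out of a common object, and the pullback of any two morphisms into one, always exist --- computed as a cokernel, respectively a kernel, of the relevant map to or from a biproduct --- and agree with the corresponding (co)limits in any exact category structure on~$\sA$. Hence, if $\sA$ is quasi-abelian then Ex2, and so Ex2$'$, holds, and Ex2$'$(a) and Ex2$'$(b) say exactly that pushouts of kernels are kernels and pullbacks of cokernels are cokernels. Conversely, this last condition \emph{is} Ex2$'$(a) and Ex2$'$(b) for $\sC$ via the dictionary, which yields Ex2; it also furnishes condition~(4) of Propositions~\ref{four-properties-kernel-side-prop} and~\ref{four-properties-cokernel-side-prop} --- one takes the pushout, respectively pullback, square itself, in which the edge opposite the given kernel, respectively cokernel, is again a kernel, respectively a cokernel, by hypothesis, hence in particular a monomorphism, respectively an epimorphism --- so~(a) and~(b) hold and Ex3 follows as before; thus $\sA$ is quasi-abelian. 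The one genuine subtlety I anticipate is the role of~(c): conditions~(a) and~(b) together merely say that $\coim(f)\rarrow\im(f)$ is always both a monomorphism and an epimorphism (the ``semi-abelian'' situation), which does \emph{not} force $\sA$ to be quasi-abelian, so~(c), i.e.\ Ex2$''$(c), must genuinely be invoked to bootstrap from Ex2$''$ to Ex2; correspondingly the pushout/pullback formulation is strictly stronger than the bare existence statements in Propositions~\ref{four-properties-kernel-side-prop}(4) and~\ref{four-properties-cokernel-side-prop}(4). All the remaining steps are the bookkeeping of assembling implications already recorded in Section~\ref{exact-categories-secn}.
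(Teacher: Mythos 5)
Your proof is correct and follows essentially the same route as the paper's: both arguments translate conditions (a), (b), (c) and the pushout/pullback condition into the axioms Ex2$''$, Ex2$'$, and Ex3 via the dictionary identifying admissible monomorphisms (resp.\ epimorphisms) with kernels (resp.\ cokernels), and then invoke the implications among the axioms already recorded in Section~\ref{exact-categories-secn}. The only difference is the order of presentation (the paper disposes of the ``furthermore'' first), and your closing remark on the indispensability of condition~(c) correctly reflects the failure of Raikov's conjecture discussed in the text.
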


\begin{proof}
 Clearly, all pushouts and pullbacks (as well as generally all finite
colimits and limits) exist in an additive category with cokernels and
kernels.
 So the condition that any pushout of a kernel is a kernel is
a reformulation of Ex2$'$(a), and the condition that any pullback
of a cokernel is a cokernel is a reformulation of Ex2$'$(b)
(for the class of all short sequences satisfying Ex1 in~$\sA$).

 Furthermore, the condition that any pushout of a kernel is a kernel
implies the condition of
Proposition~\ref{four-properties-kernel-side-prop}(4), which is
equivalent to~\ref{four-properties-kernel-side-prop}(2), which is
a restatement of Ex3(a).
 Similarly, the condition that any pullback of a cokernel is a cokernel
implies~\ref{four-properties-cokernel-side-prop}(4), which is
equivalent to~\ref{four-properties-cokernel-side-prop}(2), which is
a restatement of Ex3(b).
 Thus if the class of all short sequences satisfying Ex1 in $\sA$
satisfies Ex2$'$, then it also satisfies~Ex3.
 This proves the second assertion of the theorem.

 Finally, the condition of
Proposition~\ref{four-properties-kernel-side-prop}(3) is a restatement
of condition~Ex2$''$(a), and the condition of
Proposition~\ref{four-properties-cokernel-side-prop}(3) is a restatement
of condition~Ex2$''$(b).
 Since Ex2$''$ is equivalent to Ex2 for a weakly idempotent-complete
additive category with a class of short exact sequences satisfying
Ex0--Ex1 (see the discussion in Section~\ref{exact-categories-secn}),
the first assertion of the theorem follows.
\end{proof}

 In the terminology of~\cite{Rum0,Rum1}, an additive category $\sA$
with kernels and cokernels is said to be \emph{right semi-abelian}
if it satisfies the equivalent conditions of
Proposition~\ref{four-properties-kernel-side-prop}, and $\sA$ is
\emph{left semi-abelian} if it satisfies the equivalent conditions
Proposition~\ref{four-properties-cokernel-side-prop}.
 In the language of general category theory, an additive category
with kernels and cokernels is left semi-abelian if and only if it has
a (regular epimorphism, monomorphism)-factorization, and
right semi-abelian if and only if it has an (epimorphism, regular
monomorphism)-factorization.

 An additive category is called \emph{semi-abelian} if it is both
left and right semi-abelian.
 In other words, an additive category $\sA$ with kernels and cokernels
is semiabelian if and only if, for every morphism~$f$ in $\sA$,
the natural morphism $\coim(f)\rarrow\im(f)$ is both an epimorphism
and a monomorphism.

 In the terminology of~\cite{Rum1}, an additive category $\sA$ with
kernels and cokernels is \emph{right quasi-abelian} if any pushout
of a kernel is a kernel in $\sA$, and $\sA$ is \emph{left
quasi-abelian} if any pullback of a cokernel is a cokernel.
 In the language of general category theory~\cite{BGO}, an additive
category with kernels and cokernels is left quasi-abelian if and only
if it is regular, and right quasi-abelian if and only if it is
coregular.

 It is clear from Propositions~\ref{four-properties-kernel-side-prop}(4)
and~\ref{four-properties-cokernel-side-prop}(4) that any right
quasi-abelian category is right semi-abelian, and any left
quasi-abelian category is left semi-abelian~\cite[Corollary~1]{Rum0}.
 By the second assertion of Theorem~\ref{quasi-abelian-theorem},
a category is quasi-abelian if and only if it is both left and right
quasi-abelian.

 The following result can be found in~\cite[Proposition~3]{Rum0}.

\begin{cor} \label{quasi-modulo-semi}
 An additive category is quasi-abelian if and only if it is
right quasi-abelian and left semi-abelian, or equivalently, if and
only if it is right semi-abelian and left quasi-abelian.
 In other words, a semi-abelian category is right quasi-abelian
if and only if it is left quasi-abelian.
\end{cor}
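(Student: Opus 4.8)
The plan is as follows. The forward implications are immediate from what has been recalled just above: if $\sA$ is quasi-abelian then, by the second assertion of Theorem~\ref{quasi-abelian-theorem}, it is both left and right quasi-abelian, and since every left quasi-abelian category is left semi-abelian, $\sA$ is in particular right quasi-abelian and left semi-abelian. The second displayed criterion (right semi-abelian and left quasi-abelian) is the statement for the opposite category $\sA^{\mathrm{op}}$, using that passing to $\sA^{\mathrm{op}}$ interchanges ``left'' with ``right'' and ``kernel'' with ``cokernel'' (hence ``pushout'' with ``pullback''), while taking quasi-abelian categories to quasi-abelian categories. The last sentence is then just a rephrasing: ``semi-abelian'' means ``left and right semi-abelian'', and since a right quasi-abelian category is automatically right semi-abelian and a left quasi-abelian category is automatically left semi-abelian, both conditions ``semi-abelian and right quasi-abelian'' and ``semi-abelian and left quasi-abelian'' amount to ``quasi-abelian''.

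So the real content is the implication: if $\sA$ is right quasi-abelian and left semi-abelian, then $\sA$ is quasi-abelian. I would prove this by checking conditions~(a), (b), (c) of Theorem~\ref{quasi-abelian-theorem}. Condition~(b) is the hypothesis, and condition~(a) holds because right quasi-abelian implies right semi-abelian; thus everything reduces to verifying condition~(c), i.e.\ axiom~Ex2$''$(c) for the class of short sequences satisfying~Ex1. Concretely: given a diagram~\eqref{Ex2secondc-diagram} in which $0\rarrow E'\overset{m_1}{\rarrow}E_1\overset{e_1}{\rarrow}E''\rarrow0$ and $0\rarrow E'\overset{m_2}{\rarrow}E_2\overset{e_2}{\rarrow}E''\rarrow0$ both satisfy~Ex1 (so $m_i=\ker e_i$ and $e_i=\coker m_i$) and $u\:E_1\rarrow E_2$ satisfies $u m_1=m_2$ and $e_2 u=e_1$, I must show $u$ is an isomorphism.

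The argument I have in mind runs as follows. A two-line diagram chase using only the Ex1 conditions shows that $u$ is both a monomorphism and an epimorphism. Now form the pushout $P=E_1\sqcup_{E'}E_2$ of $m_1$ and $m_2$, with structure morphisms $a\:E_1\rarrow P$ and $b\:E_2\rarrow P$; since $m_1$ and $m_2$ are kernels and $\sA$ is right quasi-abelian, both $a$ and $b$ are kernels. By the standard behaviour of cokernels in a pushout square, $\coker a$ and $\coker b$ are canonically identified with $\coker m_2$ and $\coker m_1$ respectively, both equal to $E''$; the resulting cokernels $\pi_a,\pi_b\:P\rarrow E''$ satisfy $\pi_a b=e_2$, $\pi_b a=e_1$, $\pi_a a=0=\pi_b b$, and $a=\ker\pi_a$, $b=\ker\pi_b$. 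From $(a-bu)m_1=a m_1-b m_2=0$ one obtains a morphism $s\:E''\rarrow P$ with $s e_1=a-bu$; composing with $\pi_a$ and $\pi_b$ and cancelling the epimorphism $e_1$ yields $\pi_b s=\mathrm{id}_{E''}$ and $\pi_a s=-\mathrm{id}_{E''}$. Hence $s$ is a section of the cokernel $\pi_b$, and since $b=\ker\pi_b$ the morphism $E_2\oplus E''\rarrow P$ with components $b$ and $s$ is an isomorphism, identifying $b$ with the first direct summand inclusion. Under this identification $a=bu+s e_1$ becomes the morphism $E_1\rarrow E_2\oplus E''$ with components $u$ and $e_1$, and $\pi_a$ becomes the morphism $E_2\oplus E''\rarrow E''$ with components $e_2$ and $-\mathrm{id}$. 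Composing $a$ with the involutive (hence invertible) block endomorphism $\left(\begin{smallmatrix}\mathrm{id}&0\\ e_2&-\mathrm{id}\end{smallmatrix}\right)$ of $E_2\oplus E''$ turns $a$ into the morphism with components $u$ and $e_2 u-e_1=0$, that is, into the composition of $u$ with the first direct summand inclusion $E_2\rarrow E_2\oplus E''$. Being the image of the kernel $a$ under an automorphism, this composition is again a kernel, so by Proposition~\ref{four-properties-kernel-side-prop}(3) (valid since $\sA$ is right semi-abelian) $u$ is itself a kernel. Finally, a morphism that is both a kernel and an epimorphism is an isomorphism: being a kernel, it is the kernel of its own cokernel; being an epimorphism, its cokernel is the zero object; and the kernel of a morphism to a zero object is an identity. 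Thus $u$ is an isomorphism, and Ex2$''$(c) holds.

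The main obstacle is precisely the verification of Ex2$''$(c), and within it the decisive (and only genuinely nontrivial) use of the hypothesis is the passage to the pushout $P$ together with the observation that right quasi-abelianness forces its legs $a$ and $b$ to be kernels --- this is what promotes the soft fact ``$u$ is mono and epi'' to the rigid fact ``$u$ is a kernel'', whence invertibility. The surrounding steps (the identification of $\coker a$ and $\coker b$ with $E''$, the construction and analysis of the section $s$, and the block-matrix manipulation) are routine.
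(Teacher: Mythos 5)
Your proof is correct, and its top-level skeleton coincides with the paper's: everything is reduced to checking condition~(c) of Theorem~\ref{quasi-abelian-theorem}, conditions~(a) and~(b) being immediate from the hypotheses. Where you diverge is in how Ex2$''$(c) is actually verified. The paper disposes of it in one line by citing the general fact (from \cite[Proposition~A.2]{Partin}, recalled in Section~\ref{exact-categories-secn}) that, in the presence of Ex0--Ex1 and Ex2$'$(a), \emph{any} commutative diagram of two kernel--cokernel pairs over the same quotient object is automatically a pushout square; specializing to the diagram~\eqref{Ex2secondc-diagram} with the identity on $E'$ exhibits $E_2$ as $E'\sqcup_{E'}E_1\simeq E_1$, so $u$ is an isomorphism outright. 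You instead give a self-contained computation: you form the pushout $P=E_1\sqcup_{E'}E_2$ of the two kernels $m_1$, $m_2$, use right quasi-abelianness to make its legs kernels, split $P\simeq E_2\oplus E''$ via the section $s$, and conclude that $u$ is a kernel (via Proposition~\ref{four-properties-kernel-side-prop}(3), i.e.\ right semi-abelianness) as well as an epimorphism, hence invertible. All the individual steps check out (the mono/epi diagram chase, the identifications $\coker a\simeq\coker m_2$ and $\coker b\simeq\coker m_1$, the computation $\pi_b s=\mathrm{id}$, $\pi_a s=-\mathrm{id}$, the block-matrix reduction). What your route buys is independence from the cited external proposition, at the cost of essentially re-proving a special case of it by hand; what the paper's route buys is brevity and the observation that the same one-line argument works symmetrically from Ex2$'$(b), which is why it states both halves of the corollary at once.
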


\begin{proof}
 Following the discussion in Section~\ref{exact-categories-secn}
(or more specifically, \cite[Proposition~A.2 in Section~A.4]{Partin}),
any one of the conditions Ex2$'$(a) or Ex2$'$(b) implies Ex2$''$(c).
 So the condition of Theorem~\ref{quasi-abelian-theorem}(c) holds
for any additive category that is either right or left
quasi-abelian, and the corollary follows from the first assertion of
Theorem~\ref{quasi-abelian-theorem}.
\end{proof}

 The question whether all semi-abelian categories are quasi-abelian
came to be known as the \emph{Raikov problem} or \emph{Raikov
conjecture}, with the reference to Raikov's papers~\cite{Rai0,Rai1}.
 The conjecture was explicitly formulated in the paper~\cite{KCh}.
 The condition of our Theorem~\ref{quasi-abelian-theorem}(c) explicitly
appears in the papers~\cite{Rai0,KCh,Rai1} as one of the axioms.

 Counterexamples to Raikov's conjecture were discovered in
the papers~\cite{BD,Rum1,Rum2,SW,Wen}.
 The example in~\cite{Rum1} has algebraic (representation-theoretic)
flavour, while the examples in~\cite{BD,Rum2,SW,Wen} come from
functional analysis (the theory of locally convex and bornological
spaces).
 A historical discussion can be found in~\cite{Wen}.
 A very simple algebraic counterexample was also given
in~\cite[Example~A.5 in Section~A.4]{Partin}.

\begin{ex}[{\cite[Example~A.5]{Partin}}]
 Let $\sA$ be the additive category whose objects are morphisms of
(e.~g., finite dimensional) $k$\+vector spaces $f\:V''\rarrow V'$
endowed with the following additional datum: a vector subspace
$V\subset\im(f)$ is chosen in the vector space~$\im(f)$.
 We will denote the objects of $\sA$ by the symbols
$(V''\underset V{\overset f\rarrow} V')$.
 Morphisms in $\sA$ are defined in the obvious way.

 Then the forgetful functor from $\sA$ to the abelian category of
morphisms of vector spaces $f\:V''\rarrow V'$ (forgetting
the additional datum) is faithful and preserves kernels
and cokernels.
 Hence it follows easily that the category $\sA$ is semi-abelian,
and moreover, any pushout of a monomorphism in $\sA$ is a monomorphism
and any pullback of an epimorphism is an epimorphism (so the category
$\sA$ is \emph{integral} in the sense of~\cite{Rum0,HSW}).
 To see that $\sA$ is not quasi-abelian, it suffices to consider
the diagram
$$
\xymatrix{
 {(0\underset0{\overset0\rarrow}k)} \ar[r] \ar[rd]
 & {(k\underset0{\overset{\mathrm{id}}\rarrow}k)} \ar[r] \ar[d]
 & {(k\underset0{\overset0\rarrow}0)} \\
 & {(k\underset k{\overset{\mathrm{id}}\rarrow}k)} \ar[ru]
}
$$
showing that $\sA$ does not satisfy the condition of
Theorem~\ref{quasi-abelian-theorem}(c).
\end{ex}

 A simple algebraic example of an additive category which
is right quasi-abelian but not left semi-abelian can be found
in~\cite[Example~4.2]{LPRV} (while~\cite[Example~4.1]{LPRV}
is a left quasi-abelian but not right semi-abelian additive
category).
 In fact, both of these are examples of \emph{balanced} additive
categories with kernels and cokernels which are not abelian
(where ``balanced'' means that any morphism which is
simultaneously a monomorphism and an epimorphism is
an isomorphism).

 For an example of a left quasi-abelian but not right semi-abelian
additive category arising in functional analysis,
see~\cite[Example~4.2]{SW} and~\cite[Example~4.1]{KW}.

 The category of complete, separated topological vector spaces with
linear topology is right quasi-abelian but not left
semi-abelian, as we will see below in
Section~\ref{maximal-exact-VSLTs-secn}.
 For an analytic version, see~\cite[Propositions~4.1.6(b)
and~4.1.14]{Pros2} or~\cite[Example~4.2]{KW}.

\Section{Maximal Exact Structures} \label{maximal-exact-struct-secn}

 Let $\sA$ be a fixed additive category.
 Then exact category structures on $\sA$ are ordered by inclusion
(of their classes of short exact sequences).
 Clearly, there is a \emph{minimal} or \emph{split exact category
structure}, in which only the split short sequences $0\rarrow A'
\rarrow A'\oplus A''\rarrow A''\rarrow0$ are considered to be exact.

 It was shown in the paper~\cite{SW} that any additive category with
kernels and cokernels admits a \emph{maximal} exact category structure.
 This result was generalized to weakly idempotent-complete additive
categories in the paper~\cite{Cr}.
 A maximal exact category structure on an arbitrary additive category
was constructed in~\cite{Rum3}.

 In this section, we mostly follow~\cite{Cr}.
 So we assume $\sA$ to be a weakly idempotent-complete additive
category (see Section~\ref{exact-categories-secn}).
 As in Section~\ref{quasi-abelian-secn}, a morphism in $\sA$ is said
to be \emph{a} (\emph{co})\emph{kernel} if it is a (co)kernel of some
morphism in~$\sA$.
 If a morphism $i$ is a kernel and has a cokernel, then $i$~is
the kernel of its cokernel.
 Dually, if $p$~is a cokernel and has a kernel, then $p$~is
the cokernel of its kernel.

 A morphism $A'\rarrow A$ in $\sA$ is said to be
a \emph{semi-stable kernel}~\cite{Cr} if, for every morphism
$A'\rarrow B'$ in $\sA$, there exists a pushout object
$B=B'\sqcup_{A'}A$, and the natural morphism $B'\rarrow B$ is
a kernel in~$\sA$.
 Taking $B'=0$, one can see that any semi-stable kernel has
a cokernel.

 Dually, a morphism $A\rarrow A''$ in $\sA$ is said to be
a \emph{semi-stable cokernel} if, for every morphism $C''\rarrow A''$
in $\sA$, there exists a pullback object $C=C''\sqcap_{A''}A$, and
the natural morphism $C\rarrow C''$ is a cokernel in~$\sA$.
 Taking $C''=0$, one can see that any semi-stable cokernel has
a kernel.

\begin{prop} \label{semi-stable-cokernels-prop}
 In any weakly idempotent-complete additive category\/ $\sA$,
the class of all semi-stable cokernels has the following
properties: \par
\textup{(a)} any pullback of a semi-stable cokernel is a semi-stable
cokernel; \par
\textup{(b)} the direct sum of two semi-stable cokernels is
a semi-stable cokernel; \par
\textup{(c)} the composition of two semi-stable cokernels is
a semi-stable cokernel; \par
\textup{(d)} if the composition~$fg$ is a semi-stable cokernel,
then $f$~is a semi-stable cokernel.
\end{prop}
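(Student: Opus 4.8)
The plan is to prove (a) first, then (c) together with an auxiliary lemma, deduce (b) formally from (a) and (c), and finally treat (d), which is the substantive assertion. For (a), let $p\colon A\to A''$ be a semi-stable cokernel and let $e\colon E\to T$ be the projection of a pullback $E=T\sqcap_{A''}A$ along some $T\to A''$ (which exists by definition). Given any $D\to T$, the pasting lemma for pullbacks shows that $D\sqcap_T E$ exists and coincides with the pullback $D\sqcap_{A''}A$ of $p$ along $D\to T\to A''$ (which exists by semi-stability of $p$), and that its projection to $D$ is the projection $D\sqcap_{A''}A\to D$, a cokernel by semi-stability of $p$. Taking $D=T$ and the identity shows $e$ itself is a cokernel, so $e$ is a semi-stable cokernel.

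For (c), let $p\colon A\to B$ and $q\colon B\to C$ be semi-stable cokernels and $D\to C$ arbitrary. Forming $D\sqcap_C B$ (it exists, $q$ being semi-stable) and then $(D\sqcap_C B)\sqcap_B A$ (it exists, $p$ being semi-stable), the pasting lemma identifies the latter with $D\sqcap_C A$, with projection to $D$ the composite $(D\sqcap_C B)\sqcap_B A\to D\sqcap_C B\to D$ whose two factors are a pullback of $p$ and a pullback of $q$ respectively, hence semi-stable cokernels by~(a). Thus (c) follows from the lemma that \emph{the composite of two semi-stable cokernels is a cokernel}. For the lemma, each of $p$, $q$ has a kernel (pull back along the zero object) and is the cokernel of that kernel; writing $j\colon K\to B$ for $\ker q$ and pulling $p$ back along $j$ gives $M=K\sqcap_B A$ with $m\colon M\to K$ a cokernel and $n\colon M\to A$ a monomorphism. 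Then $qpn=qjm=0$; one checks $n=\ker(qp)$ (any $u$ with $qpu=0$ factors $pu$ through $\ker q$, hence factors through $M$, uniquely as $n$ is monic), and then $qp=\coker n$: if $hn=0$ then $h$ kills $\ker p$ (which factors through $n=\ker(qp)$), so $h=\bar h p$; since $\bar h jm=\bar h pn=hn=0$ and $m$ is epic, $\bar h j=0$, whence $\bar h=\tilde h q$ and $h=\tilde h(qp)$, uniquely because $qp$ is epic. Part (b) then follows: $p_1\oplus p_2=(p_1\oplus\mathrm{id})\circ(\mathrm{id}\oplus p_2)$, each factor being the pullback of some $p_i$ along a direct-summand projection (a split epimorphism, hence a semi-stable cokernel), so a semi-stable cokernel by~(a); now apply~(c).

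The substance is (d). Let $g\colon A\to B$ and $f\colon B\to C$ with $fg$ a semi-stable cokernel; then $fg$, hence $f$, is epic. By semi-stability of $fg$ the pullback $P=B\sqcap_C A$ of $fg$ along $f$ exists, with $\pi_B\colon P\to B$ a cokernel (hence a semi-stable cokernel by~(a)) and $\pi_A\colon P\to A$. The morphism $A\to P$ induced by $(\mathrm{id}_A,g)$ is a section of $\pi_A$, so by \emph{weak idempotent completeness} we get $P\cong A\oplus N$ with $N=\ker\pi_A$, under which $\pi_A$ is the projection and $\pi_B=(g,\rho)$ for $\rho:=\pi_B|_N$; note $f\rho=0$. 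Using the pullback property one identifies $\rho$ with $\ker f$, and then checks that $f=\coker\rho$ (from $f\rho=0$, that $\pi_B$ is epic with $f\pi_B=fg\,\pi_A$, and that $g$ carries $\ker(fg)$ into $\ker f$); moreover the square with corners $A\oplus N$, $A$, $B$, $C$ is bicartesian. In particular $f$ is a cokernel. It remains to show $f$ is stable under base change. For any $t\colon T\to C$, since $f\pi_B=fg\circ(\text{projection }A\oplus N\to A)$ is a composite of semi-stable cokernels and hence one by~(c), the pullback $Q:=T\sqcap_C(A\oplus N)$ exists with $Q\to T$ a cokernel, and $Q\cong(T\sqcap_C A)\oplus N$ because $f\pi_B$ ignores the $N$-coordinate; composing $Q\to A\oplus N$ with $\pi_B$ gives $Q\to B$ making $(Q\to B,\,Q\to T)$ a cone over $f$ and $t$. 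One then exhibits $T\sqcap_C B$ as the quotient of $Q$ by a copy of $\ker(fg)$, verifying that this quotient exists in $\sA$ (the morphism $\ker(fg)\to Q$ being divided out factors as a split monomorphism followed by the kernel of the semi-stable cokernel $Q\to T$, hence is a semi-stable kernel and has a cokernel) and that the induced projection $T\sqcap_C B\to T$ is a cokernel.

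I expect the base-change clause of (d) to be the main obstacle. Parts (a)--(c) are formal manipulations with the pasting lemma, and the first half of (d) is a routine use of weak idempotent completeness; but showing that $T\sqcap_C B$ exists \emph{and} has a cokernel projection requires constructing this pullback by hand --- in particular verifying that the relevant cokernels exist in $\sA$, which is only assumed weakly idempotent-complete and need not have all kernels and cokernels --- rather than merely writing it down as a limit.
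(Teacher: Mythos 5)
The paper's own ``proof'' of this proposition is merely a reference to~\cite{Cr}, so there is no in-text argument to compare yours against; the following assesses your argument on its own terms. Parts~(a), (b), (c), your auxiliary lemma that a composite of two semi-stable cokernels is a cokernel, and the first half of~(d) (the decomposition $P\cong A\oplus N$ via weak idempotent completeness, the identification $\rho=\ker f$, and the verification $f=\coker\rho$ using that $\pi_B$ is epic) are correct and complete.

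The base-change clause of~(d) has a genuine gap, exactly where you predicted. To establish that $\coker(\ker(fg)\to Q)$ exists, you factor $\ker(fg)\to Q$ as a split monomorphism followed by $\ker(Q\to T)\to Q$ and assert that the latter, being the kernel of the semi-stable cokernel $Q\to T$, is a semi-stable kernel. That premise is false in general: a semi-stable cokernel whose kernel is a semi-stable kernel is by definition a \emph{stable} cokernel, and the entire point of distinguishing the two notions (see Section~\ref{maximal-exact-struct-secn} and Example~\ref{right-quasi-abelian-semi-stable-example}) is that semi-stable cokernels need not be stable. Concretely, in the opposite category of $\Top^\scc_k$ (which is again weakly idempotent-complete) every cokernel is semi-stable, since $\Top^\scc_k$ is right quasi-abelian by Proposition~\ref{right-quasi-abelian-prop}; yet the kernel of such a cokernel corresponds to a cokernel $\fV\rarrow(\fV/\fK)\sphat\,$ in $\Top^\scc_k$, which by Proposition~\ref{vslt-semi-stable-cokernels-prop} and Corollary~\ref{not-left-quasi-abelian-cor} can fail to be surjective and hence fail to be semi-stable. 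So kernels of semi-stable cokernels are not semi-stable kernels in general, the existence of $\coker(\ker(fg)\to Q)$ is not established, and with it the existence of $T\sqcap_C B$ --- which is the whole content of the base-change claim. (Even granting existence, the universal property of the quotient as a pullback and the fact that its projection to $T$ is a cokernel are only asserted, not checked.) This step needs a genuinely different construction of $T\sqcap_C B$; as it stands, part~(d) is not proved.
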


\begin{proof}
 Part~(a) is easy (see~\cite[Lemma~2.2]{Cr}).
 Part~(b) is~\cite[Lemma~3.2]{Cr}.
 Part~(c) is~\cite[Proposition~3.1]{Cr}.
 Part~(d) is~\cite[Proposition~3.4]{Cr}.
\end{proof}

\begin{prop} \label{semi-stable-kernels-prop}
 In any weakly idempotent-complete additive category\/ $\sA$,
the class of all semi-stable kernels has the following
properties: \par
\textup{(a)} any pushout of a semi-stable kernel is a semi-stable
kernel; \par
\textup{(b)} the direct sum of two semi-stable kernels is
a semi-stable kernel; \par
\textup{(c)} the composition of two semi-stable kernels is
a semi-stable kernel; \par
\textup{(d)} if the composition~$fg$ is a semi-stable kernel,
then $g$~is a semi-stable kernel.
\end{prop}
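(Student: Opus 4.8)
The plan is to deduce this proposition from Proposition~\ref{semi-stable-cokernels-prop} by passing to the opposite category. The key preliminary observation is that weak idempotent completeness is a self-dual property of additive categories: a retraction in $\sA$ is a section in $\sA^{\mathrm{op}}$ and vice versa, and the requirement that every section have a cokernel is literally the same as the requirement that every retraction have a kernel. Hence, whenever $\sA$ is weakly idempotent-complete, so is $\sA^{\mathrm{op}}$, and Proposition~\ref{semi-stable-cokernels-prop} is available in $\sA^{\mathrm{op}}$.

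Next I would verify that the notion of a semi-stable kernel is precisely dual to that of a semi-stable cokernel. A pushout square in $\sA$ is a pullback square in $\sA^{\mathrm{op}}$, a kernel in $\sA$ is a cokernel in $\sA^{\mathrm{op}}$, and the defining condition ``for every $A'\rarrow B'$ there is a pushout $B=B'\sqcup_{A'}A$ with $B'\rarrow B$ a kernel'' becomes, after reversing all arrows, exactly the defining condition of a semi-stable cokernel in $\sA^{\mathrm{op}}$. The four assertions then translate term by term: a pushout in $\sA$ is a pullback in $\sA^{\mathrm{op}}$, so~(a) becomes Proposition~\ref{semi-stable-cokernels-prop}(a); the direct sum of two morphisms is self-dual, so~(b) becomes~\ref{semi-stable-cokernels-prop}(b); the composite $fg$ in $\sA$ is the composite $gf$ in $\sA^{\mathrm{op}}$, so~(c) becomes~\ref{semi-stable-cokernels-prop}(c); and~(d), ``if $fg$ is a semi-stable kernel then $g$ is one'', becomes ``if $gf$ is a semi-stable cokernel then $g$ is one'', which is~\ref{semi-stable-cokernels-prop}(d) read in $\sA^{\mathrm{op}}$.

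The step deserving the most care is therefore purely a matter of bookkeeping: to make the reduction legitimate, I would confirm that each result from~\cite{Cr} invoked in the proof of Proposition~\ref{semi-stable-cokernels-prop} --- namely \cite[Lemma~2.2, Lemma~3.2, Propositions~3.1 and~3.4]{Cr} --- is stated there at the level of generality of an arbitrary weakly idempotent-complete additive category, with no auxiliary hypotheses that could fail upon passing to $\sA^{\mathrm{op}}$. This is indeed the case, so there is no genuine obstacle. (Equivalently, and without any appeal to opposite categories, one could simply repeat the arguments of Proposition~\ref{semi-stable-cokernels-prop} with all arrows reversed; the duality reduction is just the clean way of saying that this works.) Granting the bookkeeping, the proposition follows at once.
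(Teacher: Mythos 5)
Your proposal is correct and matches the paper exactly: the paper's entire proof of this proposition is ``Dual to Proposition~\ref{semi-stable-cokernels-prop}.'' Your additional remarks (self-duality of weak idempotent completeness, the term-by-term translation of the four assertions, and the check that the cited results of~\cite{Cr} carry no non-self-dual hypotheses) are just the careful spelling-out of that one-line duality argument.
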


\begin{proof}
 Dual to Proposition~\ref{semi-stable-cokernels-prop}.
\end{proof}

 A short sequence $0\rarrow A'\overset i\rarrow A\overset p\rarrow
A''\rarrow0$ in $\sA$ is said to be \emph{stable exact} if
the morphism~$i$ is a kernel of the morphism~$p$, the morphism~$p$
is a cokernel of the morphism~$i$, the morphism~$i$ is a semi-stable
kernel, and the morphism~$p$ is a semi-stable cokernel.
 In this case, $i$~is said to be a \emph{stable kernel} and $p$~is
said to be a \emph{stable cokernel}.
 In other words, a stable kernel is defined as a semi-stable kernel
whose cokernel is semi-stable; a stable cokernel is defined as
a semi-stable cokernel whose kernel is semi-stable.

\begin{thm} \label{maximal-exact-theorem}
 For any weakly idempotent-complete additive category\/ $\sA$,
the class of all stable short exact sequences in\/ $\sA$
is an exact category structure on\/~$\sA$.
 This is the maximal exact category structure on\/ $\sA$
(i.~e., in any exact category structure on\/ $\sA$, all
short exact sequences are stable).
\end{thm}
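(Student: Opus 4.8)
The plan is to establish the two assertions of the theorem in turn: that the class of stable short exact sequences satisfies axioms Ex0--Ex3, and that it contains every exact category structure on~$\sA$.

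I would dispose of the maximality statement first, since it is the easy half. Let $\sE'$ be an arbitrary exact category structure on~$\sA$, and let $0\rarrow A'\overset i\rarrow A\overset p\rarrow A''\rarrow0$ be an $\sE'$\+short exact sequence. By Ex1 for $\sE'$, the morphism~$i$ is the kernel of~$p$ and $p$~is the cokernel of~$i$. For any morphism $A'\rarrow B'$ in~$\sA$, axiom Ex2$'$(a) in $\sE'$ provides a pushout $B=B'\sqcup_{A'}A$ and shows that the morphism $B'\rarrow B$ is an $\sE'$\+admissible monomorphism, hence a kernel in~$\sA$ by Ex1; thus $i$~is a semi-stable kernel. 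Dually, $p$~is a semi-stable cokernel. Since $\coker(i)=p$ is semi-stable and $\ker(p)=i$ is semi-stable, the short exact sequence $0\rarrow A'\rarrow A\rarrow A''\rarrow0$ is stable, which is exactly what maximality asserts.

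For the first assertion, Ex0 and Ex1 are immediate from the definition of a stable short exact sequence. To check Ex2$'$(a), let $0\rarrow E'\overset i\rarrow E\overset p\rarrow E''\rarrow0$ be a stable short exact sequence and $E'\rarrow F'$ a morphism in~$\sA$. Since $i$~is a semi-stable kernel, the pushout $F=F'\sqcup_{E'}E$ exists and $F'\rarrow F$ is a semi-stable kernel by Proposition~\ref{semi-stable-kernels-prop}(a); moreover $\coker(F'\to F)=\coker(i)=E''$, and the composition $E\rarrow F\rarrow E''$ equals~$p$, so since $p$~is a semi-stable cokernel, Proposition~\ref{semi-stable-cokernels-prop}(d) shows that $F\rarrow E''$ is a semi-stable cokernel. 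Hence $0\rarrow F'\rarrow F\rarrow E''\rarrow0$ is stable exact, i.e., $F'\rarrow F$ is an admissible monomorphism. Axiom Ex2$'$(b) is verified dually, using Propositions~\ref{semi-stable-cokernels-prop}(a) and~\ref{semi-stable-kernels-prop}(d). Now that Ex0--Ex2 hold and $\sA$ is weakly idempotent-complete, the strengthened obscure-axiom conditions Ex2$''$(a--b) are available (see Section~\ref{exact-categories-secn}).

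The main obstacle is Ex3. By the discussion in Section~\ref{exact-categories-secn}, under Ex0--Ex2 it suffices to verify Ex3(b): that the composition $p_2p_1\:X\rarrow Z$ of two admissible epimorphisms $p_1\:X\rarrow Y$ and $p_2\:Y\rarrow Z$ is again an admissible epimorphism. By Proposition~\ref{semi-stable-cokernels-prop}(c) the morphism $p_2p_1$ is a semi-stable cokernel, so the real point is to show that $\ker(p_2p_1)$ is a semi-stable kernel. Here I would follow~\cite{Cr}: form the pullback $N=\ker(p_2)\sqcap_Y X$ of the admissible monomorphism $\ker(p_2)\rarrow Y$ along~$p_1$, which exists because $p_1$~is a semi-stable cokernel; one then checks that $N=\ker(p_2p_1)$ with projection $n\:N\rarrow X$, that the other projection $N\rarrow\ker(p_2)$ is a semi-stable cokernel (Proposition~\ref{semi-stable-cokernels-prop}(a)) with kernel $\ker(p_1)$, and that the composite $\ker(p_1)\rarrow N\overset n\rarrow X$ is the admissible monomorphism $\ker(p_1)\rarrow X$; hence by Ex2$''$(a) the morphism $\ker(p_1)\rarrow N$ is an admissible monomorphism, so that $0\rarrow\ker(p_1)\rarrow N\rarrow\ker(p_2)\rarrow0$ is a stable short exact sequence. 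The delicate remaining step is to deduce that $n\:N\rarrow X$ is itself a semi-stable kernel: given a morphism $N\rarrow M$, one constructs the pushout $X\sqcup_N M$ in two stages, first pushing out the semi-stable kernel $\ker(p_1)\rarrow X$ to obtain an intermediate object, and then realizing $X\sqcup_N M$ as the cokernel of a morphism occurring as a leg of a bicartesian square built from a semi-stable kernel, whose existence and kernel property are supplied by the closure properties of Propositions~\ref{semi-stable-cokernels-prop} and~\ref{semi-stable-kernels-prop} together with the stable short exact sequences already produced. This last step is where essentially all the work lies, and the complete argument is carried out in~\cite{Cr}. Once Ex3(b) holds, Ex3(a) follows automatically, so the stable short exact sequences form an exact category structure; combined with the maximality established above, it is the maximal one.
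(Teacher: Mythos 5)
Your proposal is correct and takes essentially the same route as the paper: the paper's own proof just notes that maximality is immediate from Ex1 and Ex2$'$ and cites~\cite{Cr} for the rest, and your verification of Ex0--Ex2 via Propositions~\ref{semi-stable-cokernels-prop} and~\ref{semi-stable-kernels-prop} is a faithful expansion of that citation. The one genuinely hard step (that the kernel of a composite of two stable cokernels is again a semi-stable kernel) you also defer to~\cite{Cr}, exactly as the paper does.
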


\begin{proof}
 The second assertion follows immediately from axioms Ex1 and~Ex2$'$.
 Both the assertions are the result of~\cite[Theorem~3.5]{Cr}.
\end{proof}

\begin{rem}
 Theorem~\ref{maximal-exact-theorem} is surprising in the following
aspect, which was overlooked in~\cite[Example~(8) in
Section~A.5]{Partin}.
 Let $0\rarrow A'\overset i\rarrow A\overset p\rarrow A''\rarrow0$
be a stable short exact sequence in~$\sA$.
 So, for any morphism $A'\rarrow B'$, the pushout object
$B=B'\sqcup_{A'}A$ exists, and the morphism $j\:B'\rarrow B$ is
a (semi-stable) kernel; and dually, for any morphism $C''\rarrow B''$,
the pullback object $C=C''\sqcap_{A''}A$ exists, and the morphism
$q\:C\rarrow C''$ is a (semi-stable) cokernel.

 Why does the morphism~$j$ have to be a stable kernel, and why
does the morphism~$q$ have to be a stable cokernel?
 In other words, why does a pushout object $D=B'\sqcup_{A'}C$ exist,
or why does a pullback object $D=C''\sqcap_{A''}B$ exist?
 (One can see that it must be the same object, appearing in
the same stable short exact sequence $0\rarrow B'\rarrow D\rarrow C''
\rarrow0$, as the pushouts and the pullbacks of short exact sequences
in an exact category commute with each other.)

 Here is why.
 Consider the diagonal morphism $C''\oplus B\rarrow A''$, and
let $E=(C''\oplus B)\sqcap_{A''}A$ be the related pullback (which exists
since the morphism~$p$ is a semi-stable cokernel).
 The morphism $p$~factorizes as $A\rarrow C''\oplus B\rarrow A''$,
and consequently there is a morphism $A\rarrow E$ such that
the composition $A\rarrow E\rarrow A$ is the identity morphism.
 Since the category $\sA$ is weakly idempotent-complete by assumption,
it follows that $E=A\oplus D$, where $D$ is the kernel of $E\rarrow A$
or the cokernel of $A\rarrow E$.
 This object $D$ is the kernel of the morphism $C''\oplus B\rarrow A''$,
hence it is the desired pullback $D=C''\sqcap_{A''}B$.
\end{rem}

\begin{ex}
 Let $\sA$ be a quasi-abelian additive category
(see Section~\ref{quasi-abelian-secn}).
 Then all the kernels and cokernels in $\sA$ are semi-stable, and
consequently all of them are stable.
 The quasi-abelian exact category structure on $\sA$ (that is,
the class of all short sequences satisfying~Ex1) is its maximal
exact category structure.
\end{ex}

\begin{ex} \label{right-quasi-abelian-semi-stable-example}
 Let $\sA$ be a right quasi-abelian additive category.
 By the definition, it means that all the morphisms in $\sA$ have
kernels and cokernels, and all kernels in $\sA$ are semi-stable.
 It follows that all the semi-stable cokernels are stable.

 If $\sA$ is right quasi-abelian but not left quasi-abelian
(equivalently, right quasi-abelian but not left semi-abelian), then
there exist cokernels in $\sA$ that are not semi-stable cokernels.
 The kernels of such cokernels are semi-stable but not stable
kernels.

 The maximal exact category structure on $\sA$ consists of all
the short sequences $0\rarrow A'\rarrow A\rarrow A''\rarrow 0$
satisfying Ex1 such that the morphism $A\rarrow A''$ is
a semi-stable cokernel, or equivalently, the morphism $A'\rarrow A$
is a stable kernel.
\end{ex}

\Section{Categories of Incomplete VSLTs are Quasi-Abelian}
\label{incomplete-VSLTs-secn}

 It is claimed in~\cite[page~1, Section~1.1]{Beil} that the category
$\Top^\scc_k$ is quasi-abelian, and in particular it has an exact
category structure in which the admissible monomorphisms are
the closed embeddings and the admissible epimorphisms are
the open surjections.
 These assertions are not true.
 In the next Section~\ref{maximal-exact-VSLTs-secn} we will explain why.

 Surprisingly, the category $\Top_k$ of arbitrary (not necessarily
complete or separated) topological vector spaces with linear topology
has better exactness properties; in fact, it is quasi-abelian.
 So is the category $\Top^\s_k$ of separated, but not necessarily
complete topological vector spaces (with linear topology).

\begin{thm} \label{nonseparated-top-groups-spaces-theorem}
\textup{(a)} The category\/ $\Top_\boZ$ of (not necessarily complete
or separated) topological abelian groups with linear topology is
quasi-abelian. \par
 A continuous homomorphism $i\:K\rarrow A$ is a kernel (i.~e.,
an admissible monomorphism in the quasi-abelian exact structure)
in\/ $\Top_\boZ$ if and only if $i$~is injective and the topology
of $K$ is induced from the topology of $A$ (on $K$ viewed as a subgroup
in $A$ via~$i$). \par
 A continuous homomorphism $p\:A\rarrow C$ is a cokernel (i.~e.,
an admissible epimorphism in the quasi-abelian exact structure)
in\/ $\Top_\boZ$ if and only if $p$~is surjective and the topology
of $C$ is the quotient topology of the topology of $A$ (via~$p$);
in other words, this means that $p$~is a surjective open map. \par
\textup{(b)} The category\/ $\Top_k$ of (not necessarily complete
or separated) topological vector spaces with linear topology is
quasi-abelian. \par
 A continuous linear map $i\:K\rarrow V$ is a kernel (i.~e.,
an admissible monomorphism in the quasi-abelian exact structure)
in\/ $\Top_k$ if and only if $i$~is injective and the topology
of $K$ is induced from the topology of $V$ (on $K$ viewed as a subspace
in $V$ via~$i$). \par
 A continuous linear map $p\:V\rarrow C$ is a cokernel (i.~e.,
an admissible epimorphism in the quasi-abelian exact structure)
in\/ $\Top_k$ if and only if $p$~is surjective and the topology
of $C$ is the quotient topology of the topology of $V$ (via~$p$);
in other words, this means that $p$~is a surjective open map.
\end{thm}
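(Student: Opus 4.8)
The plan is to reduce everything to the second criterion in Theorem~\ref{quasi-abelian-theorem}. Since, as recalled in Section~\ref{top-abelian-secn}, the category $\Top_\boZ$ is additive with all kernels and cokernels --- the kernel of a morphism being its set-theoretic kernel with the induced topology, and the cokernel its set-theoretic cokernel with the quotient topology --- it is preabelian, and it will suffice to prove that every pushout of a kernel is a kernel and every pullback of a cokernel is a cokernel. As a preliminary step I would record the intrinsic descriptions of kernels and cokernels: from the explicit formulas just quoted one reads off that a morphism $i\:K\rarrow A$ is the kernel of its own cokernel (equivalently, a kernel of some morphism) precisely when $i$ is injective and $K$ carries the topology induced from $A$; dually, $p\:A\rarrow C$ is the cokernel of its own kernel precisely when $p$ is surjective and $C$ carries the quotient topology of $A$, i.e.\ $p$ is a surjective open map. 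Because in a quasi-abelian category the admissible monomorphisms are exactly the kernels and the admissible epimorphisms exactly the cokernels, this preliminary step also delivers the asserted descriptions once quasi-abelianness is known.

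The pullback step is the easy one. Let $p\:A\rarrow C$ be a cokernel and $g\:C'\rarrow C$ an arbitrary morphism; the pullback in $\Top_\boZ$ is the subgroup $P=\{(c',a)\in C'\oplus A:g(c')=p(a)\}$ with the induced topology, and the first projection $P\rarrow C'$ is surjective because $p$ is. For openness, a basic open subgroup of $P$ has the form $P\cap(U'\oplus U)$ with $U'\subset C'$ and $U\subset A$ open subgroups, and its image in $C'$ is $U'\cap g^{-1}(p(U))$; this is an open subgroup since $p(U)$ is open (as $p$ is an open map) and $g$ is continuous. An arbitrary open subgroup of $P$ contains such a basic one, so its image contains an open subgroup and is therefore open. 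Hence $P\rarrow C'$ is a surjective open map, i.e.\ a cokernel.

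The pushout step carries the real content. Let $i\:K\rarrow A$ be a kernel (injective, induced topology) and $f\:K\rarrow K'$ arbitrary; the pushout is $Q=(K'\oplus A)/N$ with the quotient topology, where $N=\{(f(k),-i(k)):k\in K\}$ is the kernel of the quotient map $\pi$, and $j\:K'\rarrow Q$ sends $k'$ to the class of $(k',0)$. Injectivity of $j$ follows at once from injectivity of $i$. The substantive point is that $K'$ carries the topology induced from $Q$; as $j$ is continuous, it remains only to show that for every open subgroup $V'\subset K'$ there is an open subgroup $V\subset Q$ with $j^{-1}(V)\subset V'$. Here I would invoke the hypothesis that $i$ has the induced topology: the open subgroup $f^{-1}(V')\subset K$ contains $i^{-1}(U)$ for some open subgroup $U\subset A$, so that $i(k)\in U$ forces $f(k)\in V'$. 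Then $V'\oplus U$ is an open subgroup of $K'\oplus A$, the subgroup $(V'\oplus U)+N$ is open (it contains $V'\oplus U$), and hence $V:=\pi((V'\oplus U)+N)$ is an open subgroup of $Q$ with $\pi^{-1}(V)=(V'\oplus U)+N$. Unwinding $j^{-1}(V)$ and using the defining property of $U$ gives $j^{-1}(V)\subset V'$. Thus $j$ is injective with the induced topology, hence a kernel, and every pushout of a kernel is a kernel.

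By Theorem~\ref{quasi-abelian-theorem}, $\Top_\boZ$ is quasi-abelian, and the descriptions of admissible monomorphisms and epimorphisms are exactly the preliminary characterizations above; this proves part~(a). Part~(b) is obtained by repeating the argument verbatim for $\Top_k$, replacing subgroups by $k$\+vector subspaces throughout, which is legitimate because all the relevant constructions in $\Top_k$ (kernels, cokernels, finite direct sums, induced and quotient topologies) are computed by the same formulas as in $\Top_\boZ$. The main obstacle is the induced-topology verification in the pushout step, and in particular the observation that it rests on $i$ already carrying the induced topology from $A$; it is precisely this feature that collapses when one passes to closed embeddings into completions in $\Top^\scc_k$, which is why the corresponding statement breaks down there.
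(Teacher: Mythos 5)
Your proof is correct and follows essentially the same route as the paper's: both establish the intrinsic descriptions of kernels (injective, induced topology) and cokernels (surjective open maps), and then verify by the same explicit computations that pushouts of kernels are kernels and pullbacks of cokernels are cokernels, concluding via the pushout/pullback criterion of Theorem~\ref{quasi-abelian-theorem}. The only organizational difference is that the paper first observes semi-abelianness (so that, by Corollary~\ref{quasi-modulo-semi}, checking one of the two conditions would suffice) before checking both anyway, whereas you invoke the second assertion of Theorem~\ref{quasi-abelian-theorem} directly.
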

 
\begin{proof}
 This result is well-known; see~\cite[Proposition~2.6]{Pros0}
or~\cite[Section~2.2]{Rum0}.
 Let us explain part~(a); part~(b) is similar.

 Following the discussion in Section~\ref{top-abelian-secn},
the forgetful functor $\Top_\boZ\rarrow\Ab$ preserves kernels and
cokernels (as well as all limits and colimits).
 As this functor is also faithful, it follows immediately that,
for any morphism~$f$ in $\Top_\boZ$, the induced morphism
$\coim(f)\rarrow\im(f)$ is an epimorphism and a monomorphism
(in fact, $\coim(f)\rarrow\im(f)$ is a bijective map).
 So the category $\Top_\boZ$ is semi-abelian.
 Furthermore, the monomorphisms in $\Top_\boZ$ are the injective
continuous homomorphisms, and the epimorphisms are the surjective
continuous homomorphisms.

 Clearly, if $i\:K\rarrow A$ is a kernel in $\Top_\boZ$, then
the topology of $K$ is induced from the topology of $A$ via~$i$.
 Conversely, if $i$ is an injective map and the topology of $K$ is
induced from the topology of~$A$, then $i$~is a kernel of
the continuous homomorphism $A\rarrow A/i(K)$ (where $A/i(K)$ is
endowed with the quotient topology).
 Similarly, if $p\:A\rarrow C$ is a cokernel in $\Top_\boZ$,
then the topology of $C$ is the quotient topology of the topology
of~$A$.
 Conversely, if $p$~is a surjective map and the topology of $C$ is
the quotient topology of the topology of $A$, then $p$~is a cokernel
of the continuous homomorphism $\ker(p)\rarrow A$ (where $\ker(p)
\subset A$ is endowed with the induced topology).

 According to Corollary~\ref{quasi-modulo-semi}, in order to show
that a semi-abelian category is quasi-abelian, it suffices to
check that it is \emph{either} right \emph{or} left quasi-abelian.
 For the sake of clarity and completeness, let us check both
the properties.

 Let $i\:K\rarrow A$ be an injective continuous homomorphism such
that the topology of $K$ is induced from the topology of $K$ via~$i$,
and let $f\:K\rarrow L$ be an arbitrary continuous homomorphism.
 Then the pushout $B=L\sqcup_KA$ is the abelian group $B=
\coker((-f,i))=(L\oplus A)/K$ endowed with the quotient topology.
 The natural map $j\:L\rarrow B$ is injective since the map~$i$ is.
 We need to check that the topology of $L$ is induced from
the topology of $B$ via~$j$.

 Let $E\subset L$ be an open subgroup.
 Then the preimage $f^{-1}(L)\subset K$ is an open subgroup, too.
 As the topology of $K$ is induced from $A$, this means that
there exists an open subgroup $U\subset A$ such that $f^{-1}(E)
=i^{-1}(U)$.
 Now the image of the open subgroup $E\oplus U\subset L\oplus A$
under the surjective map $L\oplus A\rarrow B$ is an open subgroup
$W\subset B$ (by the definition of the quotient topology).
 The preimage $j^{-1}(W)$ is the subgroup of all elements $l\in L$
for which there exist $e\in E$, \,$u\in U$, and $k\in K$ such that
$l=e+f(k)$ in $L$ and $u-i(k)=0$ in~$A$.
 We have $k\in i^{-1}(U)$, hence $f(k)\in E$; so $l\in E$, and
we have shown that $j^{-1}(W)=E$.

 Let $p\:A\rarrow C$ be an open continuous surjective homomorphism,
and let $f\:D\rarrow C$ be an arbitrary continuous homomorphism.
 Then the pullback $B=D\sqcap_CA$ is the subgroup $B=\ker((-f,p))
\subset D\oplus A$ endowed with the induced topology.
 The projection map $q\:B\rarrow D$ is surjective since the map~$p$
is.
 We need to check that $q$~is an open map.
 A base of open subgroups in $B$ is formed by the subgroups
$W=B\cap(E\oplus U)$, where $E\subset D$ and $U\subset A$ are
open subgroups.
 Now the subgroup $q(W)\subset D$ consists of all elements
$d\in D$ such that $d\in E$ and there exists $u\in U$ for which
$f(d)=p(u)$.
 So $q(W)=E\cap f^{-1}(p(U))$, which is an open subgroup in $D$
since the map~$p$ is open and the map~$f$ is continuous.
\end{proof}

\begin{thm} \label{incomplete-top-groups-spaces-theorem}
\textup{(a)} The category\/ $\Top^\s_\boZ$ of separated (but not
necessarily complete) topological abelian groups with linear topology
is quasi-abelian. \par
 A continuous homomorphism $i\:K\rarrow A$ is a kernel (i.~e.,
an admissible monomorphism in the quasi-abelian exact structure)
in\/ $\Top^\s_\boZ$ if and only if $i$~is injective, the subgroup
$i(K)$ is closed in $A$, and the topology of $K$ is induced from
the topology of $A$ (via~$i$); in other words, this means that
$i$~is an injective closed map. \par
 A continuous homomorphism $p\:A\rarrow C$ is a cokernel (i.~e.,
an admissible epimorphism in the quasi-abelian exact structure)
in\/ $\Top^\s_\boZ$ if and only if $p$~is surjective and the topology
of $C$ is the quotient topology of the topology of $A$ (via~$p$);
in other words, this means that $p$~is a surjective open map. \par
\textup{(b)} The category\/ $\Top^\s_k$ of separated (but not
necessarily complete) topological vector spaces with linear topology
is quasi-abelian. \par
 A continuous linear map $i\:K\rarrow V$ is a kernel (i.~e.,
an admissible monomorphism in the quasi-abelian exact structure)
in\/ $\Top^\s_k$ if and only if $i$~is injective, the subspace
$i(K)$ is closed in $V$, and the topology of $K$ is induced from
the topology of $V$ (via~$i$); in other words, this means that
$i$~is an injective closed map. \par
 A continuous linear map $p\:V\rarrow C$ is a cokernel (i.~e.,
an admissible epimorphism in the quasi-abelian exact structure)
in\/ $\Top^\s_k$ if and only if $p$~is surjective and the topology
of $C$ is the quotient topology of the topology of $V$ (via~$p$);
in other words, this means that $p$~is a surjective open map.
\end{thm}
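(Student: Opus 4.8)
The plan is to imitate the proof of Theorem~\ref{nonseparated-top-groups-spaces-theorem}, taking into account that $\Top^\s_\boZ$ is a \emph{reflective} subcategory of $\Top_\boZ$ rather than one closed under cokernels. As recalled in Section~\ref{top-abelian-secn}, the category $\Top^\s_\boZ$ is preabelian: for a morphism $f\:A\rarrow B$ the kernel is computed as in $\Top_\boZ$, namely $\ker(f)\subset A$ with the induced topology, while the cokernel is $B/\overline{f(A)}_B$ with the quotient topology (the reflector applied to the $\Top_\boZ$-cokernel). The first observation I would make is that $\ker(f)$ is automatically \emph{closed} in $A$, being the preimage under the continuous map $f$ of the closed subgroup $\{0\}\subset B$; hence $\coim(f)=A/\ker(f)$ already carries the quotient topology, while $\im(f)=\overline{f(A)}_B$ with the induced topology. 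The canonical morphism $\coim(f)\rarrow\im(f)$ is then a continuous injection with dense image; since injections are monomorphisms and continuous maps with dense image are epimorphisms in $\Top^\s_\boZ$, this shows $\Top^\s_\boZ$ is semi-abelian. (Along the way one records that the monomorphisms are exactly the injective continuous maps and the epimorphisms exactly the continuous maps with dense image.)

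Next I would pin down the kernels and cokernels. A morphism $i\:K\rarrow A$ is a kernel in $\Top^\s_\boZ$ precisely when it is the kernel of its own cokernel $A\rarrow A/\overline{i(K)}_A$; unwinding this, $i$ must be injective, $i(K)$ must be closed in $A$, and the topology of $K$ must be induced from that of $A$ --- i.e.\ $i$ is an injective closed map, in the terminology of Section~\ref{top-abelian-secn}. Dually, $p\:A\rarrow C$ is a cokernel in $\Top^\s_\boZ$ precisely when it is the cokernel of its kernel $\ker(p)\rarrow A$; since $\ker(p)$ is closed, this cokernel is $A/\ker(p)$ with the quotient topology, so $p$ must be surjective with $C$ carrying the quotient topology --- i.e.\ $p$ is a surjective open map. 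These are the descriptions asserted in the statement.

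It remains to prove quasi-abelianness, and here the key structural point is that, $\Top^\s_\boZ$ being reflective, it is closed under limits in $\Top_\boZ$, so pullbacks in $\Top^\s_\boZ$ are computed exactly as in $\Top_\boZ$. By Corollary~\ref{quasi-modulo-semi}, once $\Top^\s_\boZ$ is known to be semi-abelian it suffices to verify that it is \emph{left} quasi-abelian, i.e.\ that any pullback of a cokernel is a cokernel --- and this is where the computation already carried out in the proof of Theorem~\ref{nonseparated-top-groups-spaces-theorem} applies verbatim: given a surjective open map $p\:A\rarrow C$ and any $f\:D\rarrow C$, the projection $q\:D\sqcap_CA\rarrow D$ is again surjective, and for a basic open subgroup $W=(D\sqcap_CA)\cap(E\oplus U)$ one has $q(W)=E\cap f^{-1}(p(U))$, open because $p$ is open and $f$ is continuous. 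Thus $q$ is a surjective open map, hence a cokernel, so $\Top^\s_\boZ$ is left quasi-abelian and therefore quasi-abelian. The exact structure and the characterizations of admissible monomorphisms and epimorphisms then follow from Theorem~\ref{quasi-abelian-theorem}. Part~(b) is proved identically, replacing abelian groups and subgroups by vector spaces and subspaces throughout.

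The only real subtlety --- the one place where this departs from the proof of Theorem~\ref{nonseparated-top-groups-spaces-theorem} --- is the bookkeeping around the reflector: cokernels and pushouts in $\Top^\s_\boZ$ involve the separation quotient, so one must not assume the naive $\Top_\boZ$-constructions already land in $\Top^\s_\boZ$. The argument above sidesteps this by working entirely on the kernel/pullback side, where no reflector intervenes. If one wished, for completeness, to check \emph{right} quasi-abelianness directly (pushout of a kernel is a kernel), the main point would be that in the $\Top_\boZ$-pushout $B=L\sqcup_KA$ the subgroup $j(L)$ is already closed --- it is the kernel of the continuous map $B\rarrow A/i(K)$ with separated target --- which forces $\overline{\{0\}}_B\subseteq j(L)$, so that passing to the separation quotient $B/\overline{\{0\}}_B$ harms neither injectivity of $j$, nor closedness of its image, nor the induced topology on $L$.
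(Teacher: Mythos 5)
Your proposal is correct and follows essentially the same route as the paper's proof: identify the kernels as the injective closed maps and the cokernels as the surjective open maps, observe that $\Top^\s_\boZ$ is semi-abelian, and then invoke Corollary~\ref{quasi-modulo-semi} together with the pullback (resp.\ pushout) computations already carried out for $\Top_\boZ$. The paper verifies both the left and right quasi-abelian properties in full (your closing remark about $j(L)$ being closed in the $\Top_\boZ$-pushout, hence $\overline{\{0\}}_B\subseteq j(L)$ and $B$ already separated, is exactly the paper's argument for the right-hand side), whereas you streamline by checking only the pullback side, where no reflector intervenes; both are sound.
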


\begin{proof}
 This is also well-known; see~\cite[Section~2.2]{Rum0},
cf.~\cite[Proposition~3.1.8]{Pros2}.
 We will explain part~(a); part~(b) is similar.

 According to the discussion in Section~\ref{top-abelian-secn},
the inclusion functor $\Top^\s_\boZ\rarrow\Top_\boZ$ preserves
kernels (as well as all limits).
 The cokernel of a morphism $f\:A\rarrow B$ in $\Top^\s_\boZ$ is
computed as the maximal separated quotient group $C/\overline{\{0\}}_C$
of the cokernel $C=B/f(A)$ of the morphism~$f$ in the category
$\Top_\boZ$ (with the quotient topology on $C/\overline{\{0\}}_C$).
 Equivalently, the cokernel of~$f$ in $\Top^\s_\boZ$ is
the quotient group $B/\overline{f(A)}_B$, endowed with the quotient
topology.

 Clearly, if $i\:K\rarrow A$ is the kernel of a morphism $f\:A
\rarrow B$ in $\Top^\s_\boZ$, then the topology of $K$ is induced
from the topology of $A$ via~$i$.
 Furthermore, $K=i^{-1}(0)$ is a closed subgroup in $A$,
as the zero subgroup is closed in a separated topological group~$B$.
 Conversely, if $i$~is an injective map, $i(K)\subset A$ is a closed
subgroup, and the topology of $K$ is induced from the topology of~$A$,
then $i$~is a kernel of the continuous homomorphism $A\rarrow A/i(K)$,
where the group $A/i(K)$ is separated in the quotient topology since
the subgroup $i(K)$ is closed in~$A$.
 If $p\:B\rarrow C$ is the cokernel of a morphism $f\:A\rarrow B$
in $\Top^\s_\boZ$, then the map~$p$ is surjective and the topology
of $C$ is the quotient topology of the topology of~$B$.
 Conversely, if $p\:B\rarrow C$ is a surjective map and the topology
of $C$ is the quotient topology of the topology of $B$, then~$p$
is a cokernel of the continuous homomorphism $\ker(p)\rarrow B$,
where the subgroup $\ker(p)\subset B$ is separated in the induced
topology since the group $B$ is separated.

 So we have obtained the desired descriptions of the classes of all
kernels and cokernels in $\Top^\s_\boZ$, and it is clear from
these descriptions that the composition of two kernels is a kernel
and the composition of two cokernels is a cokernel.
 Hence the category $\Top^\s_\boZ$ is semi-abelian.
 Besides, the monomorphisms in $\Top^\s_\boZ$ are the injective
continuous homomorphisms, and the epimorphisms are continuous
homomorphisms $g\:C\rarrow D$ with a dense image,
$\overline{f(C)_D}=D$.
 For an arbitrary morphism $f\:A\rarrow B$, the induced morphism
$\coim(f)\rarrow\im(f)$ is the natural injective map $f(A)\rarrow
\overline{f(A)}_B$, where $f(A)$ is endowed with the quotient topology
of the topology of $A$, while $\overline{f(A)}_B$ carries the induced
topology of a subgroup in~$B$.
 As $f(A)$ is dense in $\overline{f(A)}_B$, the morphism
$\coim(f)\rarrow\im(f)$ is both a monomorphism and an epimorphism.

 In order to show that a semi-abelian category is quasi-abelian,
it suffices to check that it is either right or left quasi-abelian.
 We will check both the properties.

 Let $i\:K\rarrow A$ be a closed continuous injective homomorphism
of separated topological groups, and let $f\:K\rarrow L$ be an arbitrary
continuous homomorphism of separated topological groups.
 Let $B=(L\oplus A)/K$ denote the pushout of the morphisms~$i$ and~$f$
in the category $\Top_\boZ$.
 We claim that $B$ is separated, so $B$ is also the pushout
$B=L\sqcup_KA$ in the category $\Top^\s_\boZ$.
 Then we know from the proof of
Theorem~\ref{nonseparated-top-groups-spaces-theorem} that
the topology of $L$ is induced from the topology of $B$ via
the natural map $j\:L\rarrow B$, and it remains to check that
$j(L)$ is a closed subgroup in~$B$.

 Indeed, a subgroup $C$ in a topological abelian group $D$ is closed
if and only if the quotient group $D/C$ is separated in the quotient
topology.
 Now the quotient group $B/j(L)$ is naturally isomorphic, as
a topological group, to the quotient group $A/i(K)$.
 Since the subgroup $i(K)$ is closed in $A$, it follows that
the subgroup $j(L)$ is closed in~$B$.
 The zero subgroup is closed in $L$, hence it is also closed in $B$;
so $B$ is separated, and we are finished.

 The pullbacks in the category $\Top^\s_\boZ$ agree with those
in $\Top_\boZ$, because the kernels agree.
 Let $p\:A\rarrow C$ be an open continuous surjective homomorphism
of separated topological abelian groups, and let $f\:D\rarrow C$ be
an arbitrary continuous homomorphism of separated topological
abelian groups.
 Then the subgroup $B=\ker((-f,p))\subset D\oplus A$ endowed with
the induced topology is the pullback $B=D\sqcap_CA$ in both
the categories $\Top_\boZ$ and $\Top^\s_\boZ$.
 It was explained already in the proof of
Theorem~\ref{nonseparated-top-groups-spaces-theorem} that
$q\:B\rarrow D$ is a surjective open map.
\end{proof}

\begin{cor}
\textup{(a)} The full subcategory\/ $\Top^\s_\boZ\subset\Top_\boZ$
is closed under extensions (in the quasi-abelian exact structure
of\/ $\Top_\boZ$) and subobjects.
 The inherited exact category structure on\/ $\Top^\s_\boZ$ from
the quasi-abelian exact structure on\/ $\Top_\boZ$ coincides with
the quasi-abelian exact structure on\/ $\Top^\s_\boZ$. \par
\textup{(b)} The full subcategory\/ $\Top^\s_k\subset\Top_k$ is
closed under extensions (in the quasi-abelian exact structure of\/
$\Top_k$) and subobjects.
 The inherited exact category structure on\/ $\Top^\s_k$ from
the quasi-abelian exact structure on\/ $\Top_k$ coincides with
the quasi-abelian exact structure on\/ $\Top^\s_k$.
\end{cor}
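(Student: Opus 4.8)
The plan is to verify the two closure properties, then invoke Example~\ref{inherited-exact-structure} to obtain an inherited exact structure on $\Top^\s_\boZ$, and finally match that structure with the quasi-abelian one on $\Top^\s_\boZ$ described in Theorem~\ref{incomplete-top-groups-spaces-theorem}. I will spell out part~(a); part~(b) is word-for-word the same with ``subgroup'' replaced by ``subspace'' throughout, using the vector-space halves of Theorems~\ref{nonseparated-top-groups-spaces-theorem} and~\ref{incomplete-top-groups-spaces-theorem}.

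First I would check the closure properties. By Theorem~\ref{nonseparated-top-groups-spaces-theorem}(a), an admissible monomorphism (admissible subobject) $i\:K\rarrow A$ in $\Top_\boZ$ is an injective continuous homomorphism such that the topology of $K$ is induced from that of $A$; since any subgroup of a separated topological abelian group is separated in the induced topology, $K\in\Top^\s_\boZ$ whenever $A\in\Top^\s_\boZ$. For closure under extensions, take a short exact sequence $0\rarrow A'\overset i\rarrow A\overset p\rarrow A''\rarrow0$ in $\Top_\boZ$ with $A'$, $A''\in\Top^\s_\boZ$, and let $x\in A$ lie in every open subgroup of $A$. Continuity of $p$ puts $p(x)$ into every open subgroup of $A''$, so $p(x)=0$ because $A''$ is separated; hence $x=i(x')$ for a unique $x'\in A'$. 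As the open subgroups of $A'$ are exactly the preimages $i^{-1}(U)$ of open subgroups $U\subset A$, the element $x'$ lies in every open subgroup of $A'$, so $x'=0$ and $x=0$; thus $A$ is separated and belongs to $\Top^\s_\boZ$.

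By Example~\ref{inherited-exact-structure}, closure under extensions already guarantees that $\Top^\s_\boZ$ inherits from $\Top_\boZ$ the exact structure whose conflations are the short sequences in $\Top^\s_\boZ$ that are exact in $\Top_\boZ$; it then remains to identify this class with the class of short sequences in $\Top^\s_\boZ$ satisfying~Ex1. Let $0\rarrow A'\overset i\rarrow A\overset p\rarrow A''\rarrow0$ be a short sequence in $\Top^\s_\boZ$. If it is exact in $\Top_\boZ$, then, since the inclusion $\Top^\s_\boZ\rarrow\Top_\boZ$ preserves kernels, $i=\ker(p)$ also in $\Top^\s_\boZ$; moreover $i(A')=\ker(p)$ is closed in $A$ (the zero subgroup being closed in the separated group $A''$), so the cokernel of $i$ computed in $\Top_\boZ$, namely $A/i(A')$ with the quotient topology, already equals $A/\overline{i(A')}_A$, which is the cokernel of $i$ in $\Top^\s_\boZ$; hence $p=\coker(i)$ in $\Top^\s_\boZ$, i.e.\ the sequence satisfies~Ex1 there. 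Conversely, if the sequence satisfies~Ex1 in $\Top^\s_\boZ$, then by Theorem~\ref{incomplete-top-groups-spaces-theorem}(a) the map $i$ is an injective closed map (so $\overline{i(A')}_A=i(A')$ and the topology of $A'$ is induced) and $p$ is the projection onto $A/i(A')$ with the quotient topology; reading this off in $\Top_\boZ$ gives $\ker(p)=i(A')$ with its induced topology and $\coker(i)=A/i(A')$ with its quotient topology, i.e.\ $i=\ker(p)$ and $p=\coker(i)$ in $\Top_\boZ$, so the sequence is exact there.

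Nothing in this argument is genuinely difficult; the only point that needs care is the behaviour of the reflector, namely that the cokernel in $\Top^\s_\boZ$ is the quotient by the \emph{closure} $\overline{i(A')}_A$ and not by $i(A')$ itself. The observation that resolves it is that, for a short sequence lying entirely in $\Top^\s_\boZ$, this closure collapses to $i(A')$ as soon as either of the two Ex1 conditions is available; after that, both inclusions between the two classes of conflations are immediate.
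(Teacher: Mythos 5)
Your proposal is correct and follows essentially the same route as the paper's proof: verify closure under subobjects and extensions (your element-chase with a vector $x$ lying in all open subgroups is just an unwound version of the paper's argument that $\overline{\{0\}}_A\subset i(A')$ and hence vanishes by separatedness of $A'$ in the induced topology), invoke Example~\ref{inherited-exact-structure}, and then match the two classes of conflations via the descriptions of kernels and cokernels in Theorems~\ref{nonseparated-top-groups-spaces-theorem} and~\ref{incomplete-top-groups-spaces-theorem}, with the key observation in both directions being that closedness of $i(A')$ collapses $\overline{i(A')}_A$ to $i(A')$ so the two cokernels agree.
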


\begin{proof}
 We will explain part~(a).
 The monomorphisms in $\Top_\boZ$ are the injective continuous
homomorphisms, and if $f\:A'\rarrow A''$ is an injective continuous
abelian group map and the topological abelian group $A''$ is separated,
then the topological abelian group $A'$ is separated, too (since
one always has $f(\overline{\{0\}}_{A'})\subset\overline{\{0\}}_{A''}$
for a continuous map~$f$).
 Now let $0\rarrow K\overset i\rarrow A\overset p\rarrow C\rarrow0$
be a short exact sequence in (the quasi-abelian exact structure on)
$\Top_\boZ$ with $A$, $C\in\Top^\s_\boZ$.
 Then $0$ is a closed subgroup in $C$, hence $p^{-1}(0)=i(K)$ is
a closed subgroup in~$A$.
 Since $0$~is a closed subgroup in $K$, the topology of $K$ is induced
from the topology of $A$ via~$i$, and $i(K)$ is a closed subgroup
in $A$, it follows that $0$~is a closed subgroup in~$A$,
so $A\in\Top^\s_\boZ$.

 Following Example~\ref{inherited-exact-structure}, the inherited
exact category structure on $\Top^\s_\boZ$ exists.
 If $0\rarrow K\overset i\rarrow A\overset p\rarrow C\rarrow0$ is
a short sequence in $\Top^\s_\boZ$ that is exact in $\Top_\boZ$,
then $i$~is the kernel of~$p$ and $p$~is the cokernel of~$i$ in
$\Top_\boZ$, hence also in $\Top^\s_\boZ$.
 Conversely, let $0\rarrow K\overset i\rarrow A\overset p\rarrow C
\rarrow0$ be a short exact sequence in the quasi-abelian exact
structure on $\Top^\s_\boZ$.
 Then $i$~is the kernel of~$p$ in $\Top^\s_\boZ$, hence
$i$~is also the kernel of~$p$ in $\Top_\boZ$, as the kernels of
morphisms in $\Top_\boZ$ and $\Top^\s_\boZ$ agree.
 Moreover, the subgroup $p^{-1}(0)=i(K)$ is closed in $A$ in
this case, since $C\in\Top^\s_\boZ$.
 Therefore, the quotient group $A/i(K)$ is separated in the quotient
topology, and the cokernels of the morphism~$i$ in $\Top_\boZ$ and
$\Top^\s_\boZ$ agree.
 So $p$~is also the cokernel of~$i$ in $\Top_\boZ$.

 For an alternative proof, compare
Propositions~\ref{nonseparated-top-vector-spaces-as-suppl-proobjects}(a)
and~\ref{incomplete-top-vector-spaces-as-suppl-proobjects}(a) below
with Lemma~\ref{subcategories-of-supplemented-proobjects}(c) and
Corollary~\ref{sup-epimorphic-quasi-abelian-cor}(b).
\end{proof}

 Denote by $\Top^\omega_\boZ\subset\Top_\boZ$ and $\Top^{\omega,\s}
_\boZ\subset\Top^\s_\boZ$ the full subcategories formed by all
the (unseparated or separated, respectively) topological abelian
groups with a \emph{countable base of neighborhoods of zero}
(consisting of open subgroups).
 Similarly, let $\Top^\omega_k\subset\Top_k$ and $\Top^{\omega,\s}_k
\subset\Top^\s_k$ denote the full subcategories of all topological
vector spaces with a countable base of neighborhoods of zero
(consisting of open subspaces).

 Clearly, the full subcategories $\Top^\omega_\boZ\subset\Top_\boZ$
and $\Top^{\omega,\s}_\boZ\subset\Top^\s_\boZ$ are closed under
kernels, cokernels, and countable products.
 It follows that the additive categories $\Top^\omega_\boZ$ and
$\Top^{\omega,\s}_\boZ$ are quasi-abelian.
 Furthermore, the full subcategories $\Top^\omega_\boZ\subset\Top_\boZ$
and $\Top^{\omega,\s}_\boZ\subset\Top^\s_\boZ$ inherit exact category
structures from their ambient exact categories (in the sense of
Example~\ref{inherited-exact-structure}), and the inherited exact
structures on $\Top^\omega_\boZ$ and $\Top^{\omega,\s}_\boZ$ coincide
with these categories' own quasi-abelian exact structures.
 The full subcategories $\Top^\omega_k\subset\Top_k$ and
$\Top^{\omega,\s}_k\subset\Top^\s_k$ have similar properties.

 However, the full subcategories of topological vector spaces/abelian
groups with a countable base of neighborhoods of zero \emph{are
not closed under countable coproducts}, as the next lemma shows.

\begin{lem} \label{countable-coproducts-not-preserve-countable-base}
 Let $A_0$, $A_1$, $A_2$,~\dots\ be a sequence of topological abelian
groups such that the separated topological abelian group
$A_n/\overline{\{0\}}_{A_n}$ is nondiscrete for every $n\ge0$.
 Then the topological abelian group\/ $\bigoplus_{n=0}^\infty A_n$
does \emph{not} have a countable base of neighborhoods of zero in
the coproduct topology.
\end{lem}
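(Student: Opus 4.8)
The plan is to argue by contradiction, running a diagonal construction over the countably many putative basic neighborhoods and the countably many summands $A_n$. Suppose $\bigoplus_{n=0}^\infty A_n$, with the coproduct topology, had a countable base of neighborhoods of zero; since the topology is linear, every neighborhood of zero contains an open subgroup, so after replacing each base member by an open subgroup inside it we may assume the base consists of open subgroups $W_0,W_1,W_2,\dotsc$. For every pair of indices $m$, $n$ I would set $U^{(m)}_n=\iota_n^{-1}(W_m)\subset A_n$, where $\iota_n\:A_n\rarrow\bigoplus_i A_i$ is the coproduct inclusion; this is an open subgroup of $A_n$, and since each $\iota_n(u)$ with $u\in U^{(m)}_n$ lies in the subgroup $W_m$, one has $\bigoplus_n U^{(m)}_n\subset W_m$ for every~$m$ (this is essentially the description of the coproduct topology recalled in Section~\ref{top-abelian-secn}).

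Next I would extract a witness from the diagonal. The key observation is that for each $n$ the open subgroup $U^{(n)}_n\subset A_n$ cannot coincide with $\overline{\{0\}}_{A_n}$: otherwise $U^{(n)}_n/\overline{\{0\}}_{A_n}=\{0\}$ would be an open subgroup of $A_n/\overline{\{0\}}_{A_n}$, contradicting the hypothesis that this separated quotient is nondiscrete. Since every open subgroup of $A_n$ contains $\overline{\{0\}}_{A_n}$, this forces $U^{(n)}_n\supsetneq\overline{\{0\}}_{A_n}$, so I may pick $a_n\in U^{(n)}_n$ with $a_n\notin\overline{\{0\}}_{A_n}$; and because $\overline{\{0\}}_{A_n}$ is the intersection of all open subgroups of $A_n$, there is an open subgroup $V_n\subset A_n$ with $a_n\notin V_n$.

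Finally I would assemble the open subgroup $V=\bigoplus_{n=0}^\infty V_n$ of $\bigoplus_n A_n$ and derive the contradiction. For each $m$, the element $\iota_m(a_m)$, equal to $a_m$ in the $m$-th coordinate and zero in all others, lies in $\bigoplus_n U^{(m)}_n$, because its $m$-th coordinate $a_m$ belongs to $U^{(m)}_m$; hence $\iota_m(a_m)\in W_m$. On the other hand $\iota_m(a_m)\notin V$, since its $m$-th coordinate $a_m$ is not in $V_m$. Thus $W_m\not\subset V$ for every $m$, although $V$ is itself a neighborhood of zero; this contradicts the assumption that $\{W_m\}$ is a base of neighborhoods of zero.

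The whole argument is elementary; the only place that calls for care is the bookkeeping of the diagonal --- one must draw the witness $a_n$ from the $n$-th coordinate of (an open subgroup contained in) the $n$-th putative base member, so that each $W_m$ really fails to lie inside $V$ --- together with the short reduction turning ``the separated quotient $A_n/\overline{\{0\}}_{A_n}$ is nondiscrete'' into ``no open subgroup of $A_n$ equals $\overline{\{0\}}_{A_n}$''. I do not anticipate a genuine obstacle.
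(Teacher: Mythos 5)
Your argument is correct, and it is a genuinely different (and more direct) route than the one in the paper. The paper first reduces to the case where every $A_n$ has a countable base of neighborhoods of zero (using that quotients inherit countable bases), then chooses a strictly decreasing chain of open subgroups $U_n^0\varsupsetneq U_n^1\varsupsetneq\dotsb$ in each $A_n$, identifies a base of the coproduct topology with the family $U^\psi=\bigoplus_n U_n^{\psi(n)}$ indexed by $\psi\in\omega^\omega$, and concludes by the nonexistence of a countable dominating family in $(\omega^\omega,\le)$ via Cantor's diagonal argument. You instead diagonalize directly against the putative countable base $\{W_m\}$: extracting $a_m\in\iota_m^{-1}(W_m)\setminus\overline{\{0\}}_{A_m}$ (possible exactly because the separated quotient is nondiscrete, so $\iota_m^{-1}(W_m)\varsupsetneq\overline{\{0\}}_{A_m}=\bigcap\{\text{open subgroups}\}$) and choosing $V_m$ avoiding $a_m$, you get an open subgroup $V=\bigoplus_m V_m$ containing no $W_m$. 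This skips both the reduction step and the explicit parametrization by $\omega^\omega$, and uses the nondiscreteness hypothesis in a cleaner way; what the paper's version buys is the explicit description of a base of the coproduct topology as $\{U^\psi\}_{\psi\in\omega^\omega}$, which is of some independent interest but not needed for the statement. All the small verifications in your sketch (refining a countable base of neighborhoods to one consisting of open subgroups, $\bigoplus_n\iota_n^{-1}(W_m)\subset W_m$, and $\iota_m(a_m)\in W_m\setminus V$) check out.
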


\begin{proof}
 For every $n\ge0$, the natural surjective map $\bigoplus_{i=0}^\infty
A_i\rarrow A_n$ makes $A_n$ a quotient group of
$A=\bigoplus_{i=0}^\infty A_i$, endowed with the quotient topology.
 Hence if $A$ has a countable base of neighborhoods of zero, then so
do $A_n$ for all $n\ge0$.
 This allows us to assume that the topological group $A_n$ has
a countable base of neighborhoods of zero for every $n\ge0$.
 Now if the separated quotient group $A_n/\overline{\{0\}}_{A_n}$ is
not discrete, then the topological group $A_n$ does not have
a finite base of neighborhoods of zero.
 Thus, for every $n\ge0$, we can choose a strictly decreasing
sequence of open subgroups $A_n=U_n^0\varsupsetneq U_n^1\varsupsetneq
U_n^2\varsupsetneq\dotsb$ indexed by the nonnegative integers $j\ge0$
such that the subgroups $(U_n^j)_{j\ge0}$ form a base of neighborhoods
of zero in~$A_n$.

 Then a base of neighborhoods of zero in the topological group $A=
\bigoplus_{n=0}^\infty A_n$ is formed by the subgroups
$U^\psi=\bigoplus_{n=0}^\infty U_n^{\psi(n)}\subset A$, where
$\psi$~ranges over all the maps $\psi\:\omega\rarrow\omega$ from
the set~$\omega$ of all nonnegative integers to itself.
 Suppose, for the sake of contradiction, that the topological group
$A$ has a countable base of neighborhoods of zero.
 Then the set of all the subgroups $U^\psi\subset A$ has a countable
subset forming a base of neighborhoods of zero in~$A$.
 This means that there is a countable subset $\cF\subset \omega^\omega$
in the set of all functions $\omega\rarrow\omega$ such that for
every $\psi\:\omega\rarrow\omega$ there exists $\phi\in\cF$ for which
$U^\phi\subset U^\psi$.
 In view of our choice of the open subgroups $U_n^j\subset A_n$ and
the construction of the open subgroups $U^\psi\subset A$, the inclusion
$U^\phi\subset U^\psi$ means that $\phi(n)\ge\psi(n)$ for all $n\ge0$.
 A simple application of Cantor's diagonal argument shows that such
a countable set $\cF$ of functions $\omega\rarrow\omega$ does not exist.
\end{proof}

\Section{The Maximal Exact Category Structure on VSLTs}
\label{maximal-exact-VSLTs-secn}

 The main result of this section is that the categories
$\Top^\scc_\boZ$ and $\Top^\scc_k$ of complete, separated topological
abelian groups/vector spaces with linear topology are right, but not
left quasi-abelian.
 We also describe the classes of (co)kernels, semi-stable (co)kernels,
and stable (co)kernels in these additive categories, thus obtaining,
in particular, a description of their maximal exact category structures.

\begin{prop} \label{top-groups-spaces-kernels-cokernels-prop}
\textup{(a)} A morphism $i\:\fK\rarrow\fA$ in\/ $\Top^\scc_\boZ$ is
a monomorphism if and only if it is an injective map.
 A morphism $p\:\fA\rarrow\fC$ in\/ $\Top^\scc_\boZ$ is an epimorphism
if and only if the subgroup $p(\fA)$ is dense in\/~$\fC$. \par
 A morphism $i\:\fK\rarrow\fA$ in\/ $\Top^\scc_\boZ$ is a kernel if
and only if $i$~is injective, the subgroup $i(\fK)$ is closed in\/
$\fA$, and the topology of\/ $\fK$ is induced from the topology of\/
$\fA$ (via~$i$).
 In other words, this means that $i$~is an injective closed map. \par
 A morphism $p\:\fA\rarrow\fC$ in\/ $\Top^\scc_\boZ$ is a cokernel if
and only if $p$~is isomorphic to the composition of natural continuous
homomorphisms\/ $\fA\rarrow\fA/\fK\rarrow(\fA/\fK)\sphat\,$, where\/
$\fK\subset\fA$ is a closed subgroup, $\fA\rarrow\fA/\fK$ is
the quotient map, $\fA/\fK$ is endowed with the quotient topology,
$(\fA/\fK)\sphat$ is the completion of the (separated) topological
abelian group\/ $\fA/\fK$, endowed with the completion topology, and\/
$\fA/\fK\rarrow(\fA/\fK)\sphat\,$ is the completion map.
 In other words, this means that $p$~is the composition of
a surjective continuous open homomorphism (onto an incomplete
topological abelian group) followed by the completion map. \par
\textup{(b)} A morphism $i\:\fK\rarrow\fV$ in\/ $\Top^\scc_k$ is
a monomorphism if and only if it is an injective map.
 A morphism $p\:\fV\rarrow\fC$ in\/ $\Top^\scc_k$ is an epimorphism
if and only if the subspace $p(\fV)$ is dense in\/~$\fC$. \par
 A morphism $i\:\fK\rarrow\fV$ in\/ $\Top^\scc_k$ is a kernel if
and only if $i$~is injective, the subspace $i(\fK)$ is closed in\/
$\fV$, and the topology of\/ $\fK$ is induced from the topology of\/
$\fV$ (via~$i$).
 In other words, this means that $i$~is an injective closed map. \par
 A morphism $p\:\fV\rarrow\fC$ in\/ $\Top^\scc_k$ is a cokernel if
and only if $p$~is isomorphic to the composition of natural continuous
linear maps\/ $\fV\rarrow\fV/\fK\rarrow(\fV/\fK)\sphat\,$, where\/
$\fK\subset\fV$ is a closed subspace, $\fV\rarrow\fV/\fK$ is
the quotient map, $\fV/\fK$ is endowed with the quotient topology,
$(\fV/\fK)\sphat$ is the completion of the (separated) topological
vector space\/ $\fV/\fK$, endowed with the completion topology, and\/
$\fV/\fK\rarrow(\fV/\fK)\sphat\,$ is the completion map.
 In other words, this means that $p$~is the composition of
a surjective continuous open linear map (onto an incomplete
topological vector space) followed by the completion map.
\end{prop}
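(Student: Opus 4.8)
The plan is to establish part~(a) in full; part~(b) then follows \emph{verbatim}, with abelian groups replaced by $k$\+vector spaces and additive maps by $k$\+linear ones. Three facts recalled earlier will carry almost all the weight. First, the inclusion $\Top^\scc_\boZ\rarrow\Top_\boZ$ preserves limits, so kernels in $\Top^\scc_\boZ$ are the kernels in $\Top_\boZ$, i.e.\ preimages of zero endowed with the induced topology. Second, the reflector onto $\Top^\scc_\boZ$ is the completion functor, and cokernels (indeed all colimits) in $\Top^\scc_\boZ$ are computed by applying completion to the cokernel taken in $\Top_\boZ$. Third, Lemma~\ref{closure-completion-lemma} tells us that a closed subgroup of a complete, separated topological abelian group is itself complete and separated in the induced topology.

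For the monomorphism assertion I would use faithfulness of the forgetful functor $\Top^\scc_\boZ\rarrow\Ab$: any injective continuous homomorphism is a monomorphism, and conversely a monomorphism $i\:\fK\rarrow\fA$ has zero kernel, this kernel being the subgroup $i^{-1}(0)\subset\fK$ computed as in $\Top_\boZ$. For the epimorphism assertion: if $p(\fA)$ is dense in $\fC$ and morphisms $f,g\:\fC\rarrow\fD$ agree after precomposition with $p$, then $f-g$ is a continuous homomorphism into a separated group that vanishes on the dense subgroup $p(\fA)$, hence vanishes identically; conversely $p$ is an epimorphism precisely when its cokernel $(\fC/p(\fA))\sphat$ is zero, and the completion of a topological abelian group vanishes exactly when the only open subgroup of the group is the group itself, which for $\fC/p(\fA)$ amounts (open subgroups being closed) to $\overline{p(\fA)}_\fC=\fC$.

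Next I would treat kernels. If $i\:\fK\rarrow\fA$ is the kernel of a morphism $f\:\fA\rarrow\fB$ in $\Top^\scc_\boZ$, then $i(\fK)=f^{-1}(0)$ is closed since $\{0\}$ is closed in the separated group $\fB$, and $\fK$ carries the induced topology since kernels are computed in $\Top_\boZ$; thus $i$ is an injective closed map. Conversely, suppose $i$ is injective, $i(\fK)$ is closed in $\fA$, and $\fK$ carries the induced topology. Then $\fA/i(\fK)$ in the quotient topology is separated, so its completion map $\fA/i(\fK)\rarrow(\fA/i(\fK))\sphat$ is injective; hence the kernel, computed in $\Top_\boZ$ and thus in $\Top^\scc_\boZ$, of the composite $p\:\fA\rarrow\fA/i(\fK)\rarrow(\fA/i(\fK))\sphat$ equals $i(\fK)$ with the induced topology, and $i$ is (isomorphic to) the kernel of $p$.

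Finally, the cokernels. For any $f\:\fD\rarrow\fA$ in $\Top^\scc_\boZ$, the cokernel of $f$ there is $(\fA/f(\fD))\sphat$ together with the map induced by the quotient map $\fA\rarrow\fA/f(\fD)$. Setting $\fK=\overline{f(\fD)}_\fA$ and observing that any open subgroup of $\fA$ containing $f(\fD)$ already contains $\fK$, I would identify the open subgroups of $\fA/f(\fD)$ with those of $\fA/\fK$, with matching quotients, so that $(\fA/f(\fD))\sphat\rarrow(\fA/\fK)\sphat$ is an isomorphism compatible with the maps from $\fA$; hence every cokernel in $\Top^\scc_\boZ$ is isomorphic, as an object under $\fA$, to the composite $\fA\rarrow\fA/\fK\rarrow(\fA/\fK)\sphat$ for a closed subgroup $\fK\subset\fA$. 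Conversely, for closed $\fK$ the subgroup $\fK$ belongs to $\Top^\scc_\boZ$ by Lemma~\ref{closure-completion-lemma}, and the cokernel of the inclusion $\fK\rarrow\fA$ is exactly that composite; the ``in other words'' reformulation just records that $\fA\rarrow\fA/\fK$ is a surjective continuous open homomorphism onto the separated (in general incomplete) group $\fA/\fK$, followed by its completion map. I expect the one genuinely nonroutine point to be this reduction from $f(\fD)$ to its closure $\fK$: one must check that passing to the closure leaves $(\fA/f(\fD))\sphat$ unchanged and that the resulting identification of cokernels respects the structure maps out of $\fA$. Everything else is bookkeeping with faithfulness of the forgetful functors, the limit/colimit formulas for $\Top^\scc_\boZ$, and separatedness.
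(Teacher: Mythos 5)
Your proposal is correct and follows essentially the same route as the paper: kernels computed as in $\Top_\boZ$ and identified with injective closed maps via separatedness of $\fA/i(\fK)$, and cokernels computed as completions of quotients, with the reduction from $f(\fD)$ to its closure $\fK=\overline{f(\fD)}_\fA$ justified by the fact that an open subgroup containing a subgroup contains its closure. You in fact spell out the monomorphism/epimorphism characterizations (via vanishing of $\ker$ and of $(\fC/p(\fA))\sphat$) in more detail than the paper's proof does.
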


\begin{proof}
 We will explain part~(a); part~(b) is similar.
 According to the discussion in Section~\ref{top-abelian-secn},
the inclusion functors $\Top^\scc_\boZ\rarrow\Top^\s_\boZ\rarrow
\Top_\boZ$ preserve kernels (as well as all limits).
 The cokernel of a morphism $f\:\fA\rarrow\fB$ in $\Top^\scc_\boZ$ is
computed as the completion $C\sphat\,=(\fB/f(\fA))\sphat\,$ of
the cokernel $C=\fB/f(\fA)$ of the morphism~$f$ in the category
$\Top_\boZ$ (with the completion topology on $C\sphat\,$).

 According to Theorem~\ref{incomplete-top-groups-spaces-theorem}(a),
if $i\:\fK\rarrow\fA$ is the kernel of a morphism $f\:\fA\rarrow\fB$
in $\Top^\scc_\boZ$ (hence also in $\Top^\s_\boZ$), then $i$~is
an injective closed map.
 Conversely, if $i$~is an injective closed map, then $i$~is
the kernel of the morphism $\fA\rarrow(\fA/i(\fK))\sphat\,$ in
$\Top^\scc_\boZ$.
 Notice that the topological group $\fA/i(\fK)$ is separated in
the quotient topology, since the subgroup $i(\fK)$ is closed in $\fA$;
hence the completion map $\fA/i(\fK)\rarrow(\fA/i(\fK))\sphat\,$
is injective and the kernel of the composition
$\fA\rarrow\fA/i(\fK)\rarrow(\fA/i(\fK))\sphat\,$ is the subgroup
$i(\fK)\subset\fA$.

 It is clear from the above description of the cokernels of morphisms
in $\Top^\scc_\boZ$ that if $p\:\fB\rarrow\fC$ is the cokernel of
a morphism $f\:\fA\rarrow\fB$, then $p$~is the composition of
a surjective continuous open homomorphism $\fB\rarrow\fB/f(\fA)$
and the completion map $\fB/f(\fA)\rarrow(\fB/f(\fA))\sphat\,$.
 Denote by $\fK=\overline{f(\fA)}_\fB\subset\fB$ the closure of
the subgroup $f(\fA)\subset\fB$; then~$p$ is also the composition
of a surjective continuous open homomorphism $\fB\rarrow\fB/\fK$
onto a separated topological abelian group $\fB/\fK$ and
an injective completion map $(\fB/\fK)\rarrow(\fB/\fK)\sphat\,$.
 Conversely, for any closed subgroup $\fK\subset\fB$, the morphism
$\fB\rarrow(\fB/\fK)\sphat\,$ is the cokernel of the morphism
$\fK\rarrow\fB$ in $\Top^\scc_\boZ$ (where $\fK$ is endowed with
the induced topology of a subgroup in~$\fB$).
\end{proof}

\begin{rem} \label{complete-implies-closed}
 For any separated topological abelian group $B$ with the completion
$\fB=B\sphat\,$, the topology of $B$ is induced from the topology
of $\fB$ via the injective completion map $B\rarrow\fB$.
 Furthermore, let $\fL\subset B$ be a subgroup; suppose that $\fL$
is complete in the induced topology.
 Then $\fL\subset B\subset\fB$ can be also considered as a subgroup
in $\fB$; as the topology of $\fL$ is induced from the topology of
$B$ and the latter is induced from the topology of $\fB$, it follows
that the topology of $\fL$ is induced from the topology of~$\fB$.
 According to Lemma~\ref{closure-completion-lemma}, the closure of
$\fL$ in $\fB$ is the completion of~$\fL$.
 Since $\fL$ is complete by assumption, it follows that $\fL$ is
closed in~$\fB$.
 Consequently, $\fL$ is also closed in~$B$.
 We have shown that \emph{a complete, separated topological abelian
group is closed in any separated topological abelian group where it
is embedded with the induced topology}.

 So the condition that $i(\fK)$ is closed in $\fA$ can be dropped
from the characterization of kernels in $\Top^\scc_\boZ$ given in
Proposition~\ref{top-groups-spaces-kernels-cokernels-prop}(a):
any morphism in $\Top^\scc_\boZ$ that is a kernel in $\Top_\boZ$ is
also a kernel in  $\Top^\scc_\boZ$.
 The inclusion of the full subcategory $\Top^\s_\boZ\rarrow\Top_\boZ$
does \emph{not} have this property (cf.\
Theorems~\ref{nonseparated-top-groups-spaces-theorem}(a)
and~\ref{incomplete-top-groups-spaces-theorem}(a)).
 Similarly, the condition that $i(\fK)$ is closed in $\fV$ can be
dropped from the characterization of kernels in $\Top^\scc_k$ given
in Proposition~\ref{top-groups-spaces-kernels-cokernels-prop}(b),
and any morphism in $\Top^\scc_k$ that is a kernel in $\Top_k$ is
also a kernel in $\Top^\scc_k$.
 The inclusion of the full subcategory $\Top^\s_k\rarrow\Top_k$
does \emph{not} have this property (cf.\
Theorem~\ref{nonseparated-top-groups-spaces-theorem}(b)
and~\ref{incomplete-top-groups-spaces-theorem}(b)).
\end{rem}

 We refer to Section~\ref{maximal-exact-struct-secn} for the definitions
of semi-stable and stable kernels and cokernels in additive categories
generally, and to Example~\ref{right-quasi-abelian-semi-stable-example}
for a discussion of these classes of morphisms in right quasi-abelian
categories specifically.

\begin{prop} \label{right-quasi-abelian-prop}
 The additive categories\/ $\Top^\scc_\boZ$ and\/ $\Top^\scc_k$ are
right quasi-abelian.
 In other words, all kernels in\/ $\Top^\scc_\boZ$ and\/ $\Top^\scc_k$
are semi-stable kernels, and consequently all semi-stable cokernels in
these categories are stable cokernels. 
\end{prop}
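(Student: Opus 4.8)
The plan is to prove that every kernel $i\:\fK\rarrow\fA$ in $\Top^\scc_\boZ$ — and, by the entirely analogous argument, in $\Top^\scc_k$ — is a semi-stable kernel. Right quasi-abelianness is then immediate from the definition, and the second assertion (semi-stable cokernels are stable) will follow from Example~\ref{right-quasi-abelian-semi-stable-example}. So, fixing such an $i$ and an arbitrary morphism $f\:\fK\rarrow\fL$ in $\Top^\scc_\boZ$, I must produce the pushout $\fL\sqcup_\fK\fA$ and check that the structural morphism out of $\fL$ is a kernel.

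The first step is to identify the pushout. Since $\Top^\scc_\boZ$ is reflective in $\Top_\boZ$ with the completion functor as its reflector (Section~\ref{top-abelian-secn}), the pushout $\fB=\fL\sqcup_\fK\fA$ in $\Top^\scc_\boZ$ is the completion $B\sphat\,$ of the pushout $B$ of $f$ and $i$ formed in $\Top_\boZ$ (that is, $B=(\fL\oplus\fA)/\fK$ with the quotient topology), and the structural morphism $\fL\rarrow\fB$ is the composite of $j_0\:\fL\rarrow B$ with the completion map $\lambda_B\:B\rarrow B\sphat\,$. By Proposition~\ref{top-groups-spaces-kernels-cokernels-prop}(a), the morphism $i$ is an injective closed map; I would then invoke the computation in the proof of Theorem~\ref{incomplete-top-groups-spaces-theorem}(a) (the part verifying that $\Top^\s_\boZ$ is right quasi-abelian), which shows that $B$ is separated and that $j_0$ is itself an injective closed map — in particular, $j_0$ is injective and the topology of $\fL$ is induced from that of $B$.

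The remaining step is to check the three conditions of Proposition~\ref{top-groups-spaces-kernels-cokernels-prop}(a) for the composite morphism $\fL\rarrow B\sphat\,=\fB$. Injectivity holds because $j_0$ is injective and, since $B$ is separated, so is the completion map $\lambda_B$. The topology of $\fL$ is induced from that of $\fB$: it is induced from that of $B$, and the topology of the separated group $B$ is induced from that of its completion $\fB$ (Remark~\ref{complete-implies-closed}). Finally, $\fL$ is a complete, separated topological abelian group embedded with its induced topology into the separated topological abelian group $\fB$, so $\fL$ is closed in $\fB$ by Remark~\ref{complete-implies-closed} (which rests on Lemma~\ref{closure-completion-lemma}: the closure of $\fL$ in $\fB$ equals the completion of $\fL$, namely $\fL$ itself). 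Hence $\fL\rarrow\fB$ is a kernel in $\Top^\scc_\boZ$, so $i$ is a semi-stable kernel.

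I do not anticipate a real obstacle. The only step that looks delicate is replacing the pushout $B$ in $\Top_\boZ$ by its completion $\fB$ — completing a quotient is precisely where pathologies arise in this subject — but here it is harmless exactly because $\fL$ is already complete, so it coincides with its own closure inside $\fB$. The genuine asymmetry that makes $\Top^\scc_\boZ$ fail to be left quasi-abelian does not surface in this argument; it will appear only on the dual side, where one pulls a cokernel back along a morphism, which is taken up in the subsequent discussion.
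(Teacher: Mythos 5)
Your proposal is correct and follows essentially the same route as the paper's proof: compute the pushout as the completion of the pushout in $\Top_\boZ$, use the earlier results on $\Top_\boZ$ and $\Top^\s_\boZ$ to see that $\fL$ embeds into the separated group $B$ with its induced topology, and then invoke Remark~\ref{complete-implies-closed} (via Lemma~\ref{closure-completion-lemma}) to conclude that the complete subgroup $\fL$ is automatically closed in $\fB=B\sphat\,$, hence a kernel. No gaps.
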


\begin{proof}
 Let us explain the assertion for topological abelian groups;
the case of topological vector spaces is similar.
 Let $i\:\fK\rarrow\fA$ be a kernel of some morphism
in $\Top^\scc_\boZ$; by
Proposition~\ref{top-groups-spaces-kernels-cokernels-prop},
this means that $i$~is a closed continuous injective homomorphism
of complete, separated topological abelian groups.
 Let $f\:\fK\rarrow\fL$ be an arbitrary morphism in $\Top^\scc_\boZ$.
 Then the pushout $B$ of the morphisms~$i$ and~$p$ in
the category $\Top_\boZ$ is the quotient group $B=(\fL\oplus\fA)/\fK$,
endowed with the quotient topology.
 Following the proof of
Theorem~\ref{incomplete-top-groups-spaces-theorem}, \,$B$ is
a separated topological abelian group.
 The pushout $\fB=\fL\sqcup_\fK\fA$ in the category $\Top^\scc_\boZ$
is the completion of the topological group $B$, i.~e., $\fB=B\sphat\,$.
 We know from Theorem~\ref{nonseparated-top-groups-spaces-theorem}
that the topology of $\fL$ is induced from the topology of $B$ via
the natural injective map $\fL\rarrow B$.
 Hence the topology of $\fL$ is also induced from the topology of $\fB$
via the natural morphism $\fL\rarrow\fB$, which is the composition of
injective maps $\fL\rarrow B\rarrow\fB$.
 According to Remark~\ref{complete-implies-closed}, it follows that
the morphism $\fL\rarrow\fB$ is a kernel in $\Top^\scc_\boZ$.

 For an alternative proof, compare
Theorem~\ref{complete-top-vector-spaces-as-proobjects} below with
Proposition~\ref{limit-epimorphic-inside-supplemented-prop}.
\end{proof}

\begin{prop} \label{vslt-semi-stable-cokernels-prop}
\textup{(a)} A morphism $p\:\fA\rarrow\fC$ in\/ $\Top^\scc_\boZ$ is
a semi-stable cokernel if and only if~$p$ is a surjective open map. \par
\textup{(b)} A morphism $p\:\fV\rarrow\fW$ in\/ $\Top^\scc_k$ is
a semi-stable cokernel if and only if~$p$ is a surjective open map.
\end{prop}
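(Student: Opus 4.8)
The plan is to prove the two implications separately; I write everything for $\Top^\scc_\boZ$, the case of $\Top^\scc_k$ being word-for-word the same with the discrete group $\boZ$ replaced by the one-dimensional discrete vector space $k$. I will use freely Proposition~\ref{top-groups-spaces-kernels-cokernels-prop}(a): a cokernel in $\Top^\scc_\boZ$ is, up to isomorphism, a composite $\fA\rarrow\fA/\fK\rarrow(\fA/\fK)\sphat\,$ with $\fK\subset\fA$ a closed subgroup, $\fA\rarrow\fA/\fK$ the (surjective, open) quotient map, and $\fA/\fK\rarrow(\fA/\fK)\sphat\,$ the completion map; and the fact that the epimorphisms in $\Top^\scc_\boZ$ are precisely the morphisms with dense image.

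First, suppose $p\:\fA\rarrow\fC$ is a surjective open map in $\Top^\scc_\boZ$. Its kernel $\fK\subseteq\fA$, in the induced topology, is closed (being $p^{-1}(0)$ with $\fC$ separated), hence complete and separated (Lemma~\ref{closure-completion-lemma} and the remark following it), and $\fA/\fK$ in the quotient topology is isomorphic to $\fC$; so $p$ is the cokernel of $\fK\rarrow\fA$. Given any morphism $g\:\fD\rarrow\fC$ in $\Top^\scc_\boZ$, the subgroup $\fB=\{(d,a)\in\fD\oplus\fA:g(d)=p(a)\}$ with the induced topology is the kernel of the continuous homomorphism $\fD\oplus\fA\rarrow\fC$, $(d,a)\mapsto g(d)-p(a)$, hence a closed subgroup of the complete separated group $\fD\oplus\fA$, hence itself complete and separated; since the inclusion $\Top^\scc_\boZ\rarrow\Top_\boZ$ preserves limits, $\fB$ is the pullback $\fD\sqcap_\fC\fA$ in $\Top^\scc_\boZ$. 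The projection $q\:\fB\rarrow\fD$ is surjective (because $p$ is) and open (by the computation in the last paragraph of the proof of Theorem~\ref{nonseparated-top-groups-spaces-theorem}, where it is shown that $q$ carries the basic open subgroup $\fB\cap(E\oplus U)$ onto $E\cap g^{-1}(p(U))$, which is open). Applying Proposition~\ref{top-groups-spaces-kernels-cokernels-prop}(a) with the closed subgroup $\ker q\subseteq\fB$ (whose associated quotient is already complete), $q$ is a cokernel. Thus $p$ is a semi-stable cokernel.

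Conversely, suppose $p\:\fA\rarrow\fC$ is a semi-stable cokernel. Taking $g=\mathrm{id}_\fC$ in the definition, $p$ is its own pullback along $\mathrm{id}_\fC$, so $p$ is a cokernel; by Proposition~\ref{top-groups-spaces-kernels-cokernels-prop}(a) we may assume $\fC=(\fA/\fK)\sphat\,$ and that $p$ is the composite $\fA\rarrow\fA/\fK\rarrow\fC$ of the (open, surjective) quotient map~$\pi$ and the completion map~$\iota$, the latter being injective (as $\fA/\fK$ is separated) with dense image. Write $Q=\iota(\fA/\fK)\subseteq\fC$; this is exactly the image $p(\fA)$. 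I claim $Q=\fC$. Suppose not, and choose $c\in\fC\setminus Q$. Let $\fD=\boZ$ with the discrete topology (so $\fD\in\Top^\scc_\boZ$) and let $g\:\fD\rarrow\fC$ be the homomorphism with $g(1)=c$; it is continuous because $\fD$ is discrete. Form the pullback $\fB=\fD\sqcap_\fC\fA=\{(n,a):nc=p(a)\}$ in $\Top^\scc_\boZ$ exactly as above, with projection $q\:\fB\rarrow\fD$. Since $p$ is a semi-stable cokernel, $q$ is a cokernel, hence an epimorphism, hence of dense image; but $\fD=\boZ$ is discrete, so $q$ must be surjective, i.e.\ $q(\fB)=\boZ$. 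On the other hand $q(\fB)=\{n\in\boZ:nc\in Q\}$ is a subgroup of $\boZ$ not containing $1$ (since $c\notin Q$), hence a proper subgroup --- a contradiction. Therefore $Q=\fC$, so $\iota$ is a bijective continuous homomorphism which, by construction of the completion topology, identifies the open subgroups of $\fA/\fK$ with those of its completion; hence $\iota$ is a topological isomorphism and $p$ is isomorphic to the open surjection~$\pi$. This proves~(a); part~(b) follows by the same argument with $\fD=k$ (discrete) and $g$ the linear map sending $1$ to~$c$.

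The forward implication is essentially bookkeeping on top of Theorem~\ref{nonseparated-top-groups-spaces-theorem} and Proposition~\ref{top-groups-spaces-kernels-cokernels-prop} --- pullbacks of surjective open maps are surjective open, and such maps are cokernels. The main point is the converse, and specifically the choice of test object: one must realize that a single \emph{discrete} object (cyclic in the group case, one-dimensional in the vector space case), mapped into $\fC$ so as to pick out a point outside the dense image $p(\fA)$, already produces a pullback whose structure morphism is not an epimorphism --- and it is this that forces $p$ to be surjective, hence (being a cokernel) open.
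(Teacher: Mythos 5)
Your proof is correct and follows essentially the same route as the paper's: the forward direction reduces to the explicit pullback computation from the proof of Theorem~\ref{nonseparated-top-groups-spaces-theorem} plus the observation that an open surjection onto a complete separated target is a cokernel, and the converse uses exactly the paper's key test object --- the discrete group $\boZ$ (resp.\ the discrete line $k$) mapped to a point of $\fC$ outside the dense image $p(\fA)$, whose pullback projection fails to have dense image in a discrete target.
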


\begin{proof}
 Let us explain part~(a); part~(b) is similar.
 By Proposition~\ref{top-groups-spaces-kernels-cokernels-prop},
a morphism $p\:\fA\rarrow\fC$ is a cokernel in $\Top^\scc_\boZ$ if
and only if it is the composition of a surjective open map
$\fA\rarrow C$ onto a separated topological abelian group $C$
and the completion map $C\rarrow C\sphat\,=\fC$.
 So we have to show that a cokernel~$p$ is a semi-stable cokernel
in $\Top^\scc_\boZ$ if and only if $p$~is a surjective map.

 Suppose that $p$~is not surjective, and choose an element $x\in\fC$
not belonging to the image of~$p$.
 Consider the abelian group $\boZ$ with the discrete topology,
and the group homomorphism $f\:\boZ\rarrow\fC$ taking $1$ to~$x$.
 Recall that the pullbacks in $\Top^\scc_\boZ$ agree with those
in $\Top_\boZ$ and are preserved by the forgetful functor
$\Top^\scc_\boZ\rarrow\Ab$.
 The map $q\:\boZ\sqcap_\fC\fA\rarrow\boZ$ is not surjective, as
the element $1\in\boZ$ does not belong to its image.
 As the topology on $\boZ$ is discrete, the image of~$q$ cannot be
dense in $\boZ$; so $q$~is not even an epimorphism in $\Top^\scc_\boZ$.
 Thus $p$~is not a semi-stable cokernel.

 Suppose that $p$~is surjective; then $p$~is an open map, hence
it is a cokernel in $\Top_\boZ$ and $\Top^\s_\boZ$.
 Let $f\:\fD\rarrow\fC$ be an arbitrary morphism in $\Top^\scc_\boZ$.
 Once again we recall that the pullbacks in $\Top^\scc_\boZ$ agree with
those in $\Top_\boZ$ and $\Top^\s_\boZ$.
 By Theorem~\ref{incomplete-top-groups-spaces-theorem}, the morphism
$q\:\fD\sqcap_\fC\fA\rarrow\fA$ is a cokernel in $\Top^\s_\boZ$.
 Following the descriptions of the classes of cokernels in
the categories $\Top^\s_\boZ$ and $\Top^\scc_\boZ$ given in
Theorem~\ref{incomplete-top-groups-spaces-theorem} and
Proposition~\ref{top-groups-spaces-kernels-cokernels-prop},
any morphism in $\Top^\scc_\boZ$ that is a cokernel in
$\Top^\s_\boZ$ is also a cokernel in $\Top^\scc_\boZ$.
 (It is also true that a morphism in $\Top^\s_\boZ$ is a cokernel
if and only if it is a cokernel in $\Top_\boZ$;
see Theorem~\ref{nonseparated-top-groups-spaces-theorem}.)
 Thus $	q$~is a cokernel in $\Top^\scc_\boZ$, and $p$~is
a semi-stable cokernel, as desired.
\end{proof}

\begin{cor} \label{vslt-stable-kernels-cor}
\textup{(a)} A morphism $i\:\fK\rarrow\fA$ in\/ $\Top^\scc_\boZ$ is
a stable kernel if and only if $i$~is an injective closed map
and the quotient group\/ $\fA/i(\fK)$ is complete in the quotient
topology. \par
\textup{(b)} A morphism $i\:\fK\rarrow\fV$ in\/ $\Top^\scc_k$ is
a stable kernel if and only if $i$~is an injective closed map
and the quotient space\/ $\fV/i(\fK)$ is complete in the quotient
topology.
\end{cor}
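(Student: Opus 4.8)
The plan is to unwind the definition of a stable kernel from Section~\ref{maximal-exact-struct-secn} by combining the descriptions of kernels and cokernels in $\Top^\scc_\boZ$ and $\Top^\scc_k$ obtained above; I will explain part~(a), part~(b) being entirely analogous. By definition, a stable kernel is a semi-stable kernel whose cokernel is a semi-stable cokernel. Since $\Top^\scc_\boZ$ is right quasi-abelian by Proposition~\ref{right-quasi-abelian-prop}, every kernel in $\Top^\scc_\boZ$ is automatically a semi-stable kernel; consequently a morphism $i\:\fK\rarrow\fA$ is a stable kernel if and only if $i$ is a kernel in $\Top^\scc_\boZ$ and the cokernel of $i$ is a semi-stable cokernel.

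Next I would use Proposition~\ref{top-groups-spaces-kernels-cokernels-prop}(a): the morphism $i$ is a kernel in $\Top^\scc_\boZ$ if and only if it is an injective closed map, and in that case the cokernel of $i$ is (isomorphic to) the composition $p\:\fA\rarrow\fA/i(\fK)\rarrow(\fA/i(\fK))\sphat\,$ of the quotient map with the completion map, where $\fA/i(\fK)$ carries the quotient topology. Because $i(\fK)$ is a closed subgroup of $\fA$, the quotient group $\fA/i(\fK)$ is separated, so the completion map $\fA/i(\fK)\rarrow(\fA/i(\fK))\sphat\,$ is injective; it is surjective, equivalently an isomorphism, precisely when $\fA/i(\fK)$ is complete in the quotient topology.

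It remains to apply Proposition~\ref{vslt-semi-stable-cokernels-prop}(a), according to which the cokernel $p$ is a semi-stable cokernel if and only if $p$ is a surjective open map. The quotient map $\fA\rarrow\fA/i(\fK)$ is always surjective and open, so if $\fA/i(\fK)$ is complete then $p$ is identified with that quotient map and is a semi-stable cokernel; conversely, if $\fA/i(\fK)$ is not complete, then the completion map occurring in the factorization of $p$ is not surjective, hence $p$ itself is not surjective and thus not a semi-stable cokernel. Chaining these equivalences yields the assertion of the corollary. I do not anticipate a genuine obstacle, as the statement is a formal consequence of the three cited propositions; the only points requiring attention are to keep track of the completion map in the factorization of the cokernel, and to observe that $\fA/i(\fK)$ is separated whenever $i$ is a closed embedding, so that completeness of the quotient is equivalent to the completion map being an isomorphism.
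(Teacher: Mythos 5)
Your proposal is correct and follows essentially the same route as the paper: the paper's proof is a one-line citation of Proposition~\ref{right-quasi-abelian-prop}, Proposition~\ref{vslt-semi-stable-cokernels-prop}(a), and the description of cokernels in $\Top^\scc_\boZ$, which is exactly the chain of reductions you spell out. The only details you add (every kernel is semi-stable by right quasi-abelianness, and the completion map $\fA/i(\fK)\rarrow(\fA/i(\fK))\sphat\,$ is an injection that is surjective precisely when the quotient is complete) are the ones the paper leaves implicit, and they are handled correctly.
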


\begin{proof}
 Part~(a) follows from Propositions~\ref{right-quasi-abelian-prop}
and~\ref{vslt-semi-stable-cokernels-prop}(a) together with
the description of the cokernels of morphisms in the category
$\Top^\scc_\boZ$ (see Section~\ref{top-abelian-secn} and/or
the first paragraph of the proof
of Proposition~\ref{top-groups-spaces-kernels-cokernels-prop}).
 Part~(b) is similar.
\end{proof}

\begin{cor} \label{not-left-quasi-abelian-cor}
 The categories\/ $\Top^\scc_\boZ$ and\/ $\Top^\scc_k$ are not
left quasi-abelian.
 In fact, they are not even left semi-abelian.
\end{cor}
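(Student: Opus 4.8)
The plan is to establish the stronger claim — that neither $\Top^\scc_\boZ$ nor $\Top^\scc_k$ is left semi-abelian — since the failure of left quasi-abelianness then follows at once, every left quasi-abelian category being left semi-abelian (Proposition~\ref{four-properties-cokernel-side-prop}(4)). The strategy is to exhibit a kernel in $\Top^\scc_\boZ$ which is \emph{not} a stable kernel, and then to observe that a category which is simultaneously right quasi-abelian and left semi-abelian admits no such morphism.

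First I would construct the non-stable kernel. Choose a separated topological abelian group $Q$ that fails to be complete — say $Q=\boZ$ with the $p$\+adic topology, whose completion is $\boZ_p\ne\boZ$. By Theorem~\ref{top-abelian-main-theorem}, $Q$ is the topological quotient $\fA/\fK$ of the complete, separated topological abelian group $\fA=\fA_I(Q)$ by the closed subgroup $\fK=\ker(\Sigma)$ with its induced topology, where $\Sigma\:\fA\rarrow Q$ is the open continuous surjective summation map. A closed subgroup of a complete, separated topological abelian group is itself complete and separated (Lemma~\ref{closure-completion-lemma}; cf.\ Remark~\ref{complete-implies-closed}), so $\fK$ lies in $\Top^\scc_\boZ$ and the inclusion $i\:\fK\rarrow\fA$ is an injective closed map in $\Top^\scc_\boZ$, hence a kernel there by Proposition~\ref{top-groups-spaces-kernels-cokernels-prop}(a). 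But $\fA/i(\fK)=Q$ is not complete, so by Corollary~\ref{vslt-stable-kernels-cor}(a) the morphism $i$ is not a stable kernel. Phrased dually via Proposition~\ref{vslt-semi-stable-cokernels-prop}(a): the cokernel of $i$ in $\Top^\scc_\boZ$ is the composite $\fA\rarrow\fA/\fK\rarrow(\fA/\fK)\sphat$ described in Proposition~\ref{top-groups-spaces-kernels-cokernels-prop}(a), which is not a surjective open map — its image being the proper dense subgroup $Q\subset Q\sphat$ — hence not a semi-stable cokernel.

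Then I would derive the contradiction. By Proposition~\ref{right-quasi-abelian-prop}, $\Top^\scc_\boZ$ is right quasi-abelian. Were it also left semi-abelian, it would be quasi-abelian by Corollary~\ref{quasi-modulo-semi}, and therefore — as recorded in the first of the examples following Theorem~\ref{maximal-exact-theorem} — all its kernels would be semi-stable, and consequently stable; this contradicts the existence of the kernel $i$ above. Hence $\Top^\scc_\boZ$ is not left semi-abelian, and a fortiori not left quasi-abelian. The case of $\Top^\scc_k$ is identical, with $Q$ replaced by a separated incomplete topological vector space (for instance $k[t]$ with the $(t)$\+adic topology) and with Theorem~\ref{top-vector-main-theorem}, Proposition~\ref{top-groups-spaces-kernels-cokernels-prop}(b), Corollary~\ref{vslt-stable-kernels-cor}(b) and the vector-space case of Proposition~\ref{right-quasi-abelian-prop} invoked in place of their abelian-group analogues.

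No step here is genuinely difficult: the substance has been front-loaded into the earlier sections — the construction of counterexamples to completeness of quotients (Theorem~\ref{top-abelian-main-theorem}), the identification of stable kernels (Corollary~\ref{vslt-stable-kernels-cor}), and the proof that $\Top^\scc_\boZ$ and $\Top^\scc_k$ are right quasi-abelian (Proposition~\ref{right-quasi-abelian-prop}). The only point that repays attention is the logical glue: a right quasi-abelian category that is left semi-abelian is quasi-abelian, hence has all kernels stable, so producing a single non-stable kernel already defeats left semi-abelianness.
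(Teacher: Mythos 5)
Your proof is correct, and it is built on the same underlying counterexample as the paper's: an incomplete separated group/space $Q$ realized via Theorem~\ref{top-abelian-main-theorem} (resp.\ Theorem~\ref{top-vector-main-theorem}) as an open quotient of the complete group $\fA_I(Q)$ by the closed subgroup $\fK=\ker(\Sigma)$. Where you diverge is in the deduction. The paper argues concretely: it forms the cokernel $p\:\fV\rarrow\fC=(\fV/\fK)\sphat\,$, picks $x\in\fC\setminus C$, and computes that the pullback of $p$ along $x\:k\rarrow\fC$ is $\fK$ with zero projection to~$k$, so a pullback of a cokernel is not even an epimorphism; this refutes left quasi-abelianness directly and, via Proposition~\ref{four-properties-cokernel-side-prop}(4), left semi-abelianness, and it simultaneously sets up the explicit counterexamples to conditions (1)--(3) elaborated in Examples~\ref{not-left-semi-abelian-examples}. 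You instead stay one level more abstract: you certify that $i\:\fK\rarrow\fA$ is a kernel which is not stable (Proposition~\ref{top-groups-spaces-kernels-cokernels-prop} plus Corollary~\ref{vslt-stable-kernels-cor}, since $\fA/i(\fK)=Q$ is incomplete), and then invoke Proposition~\ref{right-quasi-abelian-prop} together with Corollary~\ref{quasi-modulo-semi} and the fact that all kernels in a quasi-abelian category are stable. This is logically sound and free of circularity (all the cited results precede the corollary and do not depend on it); it buys brevity by offloading the concrete work onto Corollary~\ref{vslt-stable-kernels-cor}, at the cost of not exhibiting the explicit failing pullback that the paper reuses afterwards. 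Your specific choices of $Q$ ($\boZ$ with the $p$\+adic topology, $k[t]$ with the $(t)$\+adic topology) are legitimate incomplete separated examples, though note they have countable bases of neighborhoods of zero, which is harmless here but worth flagging given Proposition~\ref{countable-base-kernel} concerns the countability of the base of the \emph{kernel} $\fK$, not of~$Q$.
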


\begin{proof}
 Let us discuss the case of topological vector spaces.
 Choose an incomplete separated topological $k$\+vector space $C$
(with linear topology).
 For example, one can consider the complete, separated topological
vector space $\fC=k^\omega=\prod_{n=0}^\infty k$ (with the product
topology of discrete one-dimensional vector spaces~$k$), and
the dense vector subspace $C=k^{(\omega)}=\bigoplus_{n=0}^\infty k
\subset\fC$ with the topology on $C$ induced from~$\fC$.
 Then $\fC$ and consequently $C$ are even topological vector spaces
with countable bases of open subspaces.
 Furthermore, $\fC$ is a linearly compact (profinite-dimensional)
topological vector space, while $C$ has countable dimension.

 Following Theorem~\ref{top-vector-main-theorem}, there exists
an open, continuous surjective linear map $\Sigma\:\fA_\omega(C)
\rarrow C$ onto $C$ from the complete, separated topological vector
space $\fA_\omega(C)=C^{(\omega)}=\bigoplus_{m=0}^\infty C$ with
the modified coproduct topology.
 Put $\fV=\fA_\omega(C)$, and let $\fK\subset\fV$ be the kernel
of~$\Sigma$, endowed with the induced topology as a closed
subspace in~$\fV$.
 Then $i\:\fK\rarrow\fV$ is a morphism in $\Top^\scc_k$, the map
$\Sigma\:\fV\rarrow C$ is the cokernel of~$i$ in $\Top_k$ and in
$\Top^\s_k$, and the composition $\fV\overset\Sigma\rarrow C
\overset{\lambda_C}\rarrow C\sphat\,=\fC$ is the cokernel of~$i$
in $\Top^\scc_k$.
 Denote this composition by $p\:\fV\rarrow\fC$.

 Now $p$~is a cokernel in $\Top^\scc_k$ which is not a surjective map.
 Choose a vector $x\in\fC\setminus C$; so $x$~does not belong to
the image of~$p$.
 Consider the one-dimensional vector space~$k$ with the discrete
topology, and the linear map $f\:k\rarrow\fC$ taking $1$ to~$x$.
 Then the pullback $k\sqcap_\fC\fV$ of the morphisms $p$ and~$f$
is $k\sqcap_\fC\fV=\fK$, and the natural morphism
$q\:k\sqcap_\fC\fV\rarrow k$ is the zero map.
 So the morphism $q\:k\sqcap_\fC\fV\rarrow k$ is certainly not
a cokernel (and not an epimorphism) in $\Top^\scc_k$.
\end{proof}

\begin{exs} \label{not-left-semi-abelian-examples}
 The proof of
Corollary~\ref{not-left-quasi-abelian-cor} presents an example
showing that the property of
Proposition~\ref{four-properties-cokernel-side-prop}(4) does not hold
in the category $\Top^\scc_k$.
 Consequently, there should also exist counterexamples to
the properties of
Proposition~\ref{four-properties-cokernel-side-prop}(1\+-3)
in $\Top^\scc_k$.
 Let us suggest such counterexamples here.

 We keep the notation of the proof of
Corollary~\ref{not-left-quasi-abelian-cor}.
 The induced topology on the vector subspace $kx\subset\fC$
is separated (since $\fC$ is separated); so it must be discrete.
 By Corollary~\ref{countable-base-complete-subspace-splits}, it
follows that $kx$ is a direct summand in $\fC$, so the quotient
space $\fC/kx$ is separated and complete.
 Therefore, we have $\coker(f)=\fC/kx$ in $\Top^\scc_k$.
 Denote by~$g$ the split epimorphism $g\:\fC\rarrow\fC/kx$.
 Then both the morphisms $p$ and~$g$ are cokernels in $\Top^\scc_k$,
but the composition $gp\:\fV\rarrow\fC/kx$ is not a cokernel.
 Indeed, the kernel of~$gp$ is the morphism $i\:\fK\rarrow\fV$,
and the cokernel of~$i$ is $p$ rather than~$gp$.

 Alternatively, endow the direct sum $kx\oplus\fV$ with
the (co)product topology of the discrete topology on $kx$ and
the above topology on~$\fV$.
 Then the morphism $\mathrm{id}_{kx}\oplus p\:kx\oplus\fV\rarrow
kx\oplus\fC$ is a cokernel (of the morphism $(0,i)\:\fK\rarrow kx
\oplus\fV$), and the morphism $(f,\mathrm{id}_{\fC})\:
kx\oplus\fC\rarrow\fC$ is a cokernel (in fact, even a direct summand
projection), but the composition $(f,\mathrm{id}_{\fC})\circ
(\mathrm{id}_{kx}\oplus p)=(f,p)\:kx\oplus\fV\rarrow\fC$
is not a cokernel.
 Indeed, the kernel of $(f,p)$ is the morphism $(0,i)\:\fK\rarrow
kx\oplus\fC$, and the cokernel of $(0,i)$ is $kx\oplus\fV\rarrow
kx\oplus\fC$ rather than $kx\oplus\fV\rarrow\fC$.
 These are counterexamples to the property of
Proposition~\ref{four-properties-cokernel-side-prop}(2)
in $\Top^\scc_k$.

 Furthermore, the composition $\fV\rarrow kx\oplus\fV\rarrow\fC$
of the morphisms $(0,\mathrm{id}_\fV)\:\fV\rarrow kx\oplus\fV$
and $(f,p)\:kx\oplus\fV\rarrow\fC$ is the morphism $p\:\fV\rarrow\fC$,
which is the cokernel of $i\:\fK\rarrow\fC$.
 But the morphism $kx\oplus\fV\rarrow\fC$ is not a cokernel.
 This is a counterexample to the property of
Proposition~\ref{four-properties-cokernel-side-prop}(3)
in $\Top^\scc_k$. {\hbadness=1125\par}

 Finally, the coimage of the morphism $gp\:\fV\rarrow\fC/k$ in
the category $\Top^\scc_k$ is the object $\coim(gp)=\coker(i)=
\fC$, while the image is $\im(gp)=\ker(\fC/k\to0)=\fC/k$.
 The natural morphism $\coim(gp)\rarrow\im(gp)$ is not
a monomorphism; in fact, it is the split epimorphism
$g\:\fC\rarrow\fC/kx$.

 Alternatively, the coimage of the morphism $(f,p)\:kx\oplus\fV
\rarrow\fC$ is $\coim((f,p))=\coker((0,i)\:\fK\to kx\oplus\fV)=
kx\oplus\fC$, while the image of $(f,p)$ is $\im((f,p))=
\ker(\fC\to0)=\fC$.
 The natural morphism $\coim((f,p))\rarrow\im((f,p))$ is not
a monomorphism; in fact, it is the split epimorphism
$(f,\mathrm{id}_{\fC})\:kx\oplus\fC\rarrow\fC$.
 These are counterexamples to the property of
Proposition~\ref{four-properties-cokernel-side-prop}(1)
in $\Top^\scc_k$.
\end{exs}

\begin{cor}
\textup{(a)} The full subcategory\/ $\Top^\scc_\boZ\subset\Top^\s_\boZ$
is closed under extensions (in the quasi-abelian exact structure
of\/ $\Top^\s_\boZ$) and kernels.
 The inherited exact category structure on\/ $\Top^\scc_\boZ$ from
the quasi-abelian exact structure on\/ $\Top^\s_\boZ$ coincides with
the maximal exact structure on\/ $\Top^\scc_\boZ$. \par
\textup{(b)} The full subcategory\/ $\Top^\scc_k\subset\Top^\s_k$ is
closed under extensions (in the quasi-abelian exact structure of\/
$\Top^\s_k$) and kernels.
 The inherited exact category structure on\/ $\Top^\scc_k$ from
the quasi-abelian exact structure on\/ $\Top^\s_k$ coincides with
the maximal exact structure on\/ $\Top^\scc_k$.
\end{cor}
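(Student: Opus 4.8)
The plan is to prove part~(a), for topological abelian groups; part~(b), for topological vector spaces, is obtained by repeating the same arguments \emph{verbatim}. First I would dispose of the easy half. To say that $\Top^\scc_\boZ$ is closed under kernels in $\Top^\s_\boZ$ means that $\ker(f)\in\Top^\scc_\boZ$ for every morphism $f\:\fA\rarrow\fB$ in $\Top^\s_\boZ$ with $\fA$, $\fB\in\Top^\scc_\boZ$; and indeed $\ker(f)=f^{-1}(0)$ is a subgroup of the complete, separated group $\fA$ which is closed (the preimage of the closed subgroup $\{0\}\subset\fB$) and carries the induced topology, hence is complete by Lemma~\ref{closure-completion-lemma}. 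One cannot argue similarly for cokernels of admissible monomorphisms --- the failure of completeness of quotients is the whole point of the paper --- so, in order to apply Example~\ref{inherited-exact-structure}, the real work is to show that $\Top^\scc_\boZ$ is closed under extensions.

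For that, let $0\rarrow\fK\overset i\rarrow\fA\overset p\rarrow\fC\rarrow0$ be a short exact sequence in $\Top^\s_\boZ$ with $\fK$, $\fC\in\Top^\scc_\boZ$; then $\fA$ is separated, and it remains to prove that the completion map $\lambda_\fA$ is surjective. By Theorem~\ref{incomplete-top-groups-spaces-theorem}(a), $i$ identifies $\fK$ with a closed subgroup of $\fA$ carrying the induced topology, and $p$ identifies $\fC$ (with its topology) with the quotient $\fA/i(\fK)$ in the quotient topology. The key point is the left exactness of the completion functor: for each open subgroup $\fU\subset\fA$ one has a short exact sequence of abelian groups $0\rarrow(i(\fK)+\fU)/\fU\rarrow\fA/\fU\rarrow\fA/(i(\fK)+\fU)\rarrow0$; these form a projective system over the directed poset of open subgroups of $\fA$; and $\varprojlim$ is left exact, so $0\rarrow\varprojlim_\fU(i(\fK)+\fU)/\fU\rarrow\varprojlim_\fU\fA/\fU\rarrow\varprojlim_\fU\fA/(i(\fK)+\fU)$ is exact. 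Here the subgroups $i(\fK)\cap\fU$ run cofinally over all open subgroups of $\fK$, and the subgroups $i(\fK)+\fU$ run cofinally over all open subgroups of $\fA$ containing $i(\fK)$, i.e.\ over all open subgroups of $\fA/i(\fK)$; hence the three terms are $\fK\sphat$, $\fA\sphat$, and $(\fA/i(\fK))\sphat$, the first map is $\lambda_\fA\circ i$, and the composite $\fA\overset{\lambda_\fA}\rarrow\fA\sphat\rarrow(\fA/i(\fK))\sphat$ is $p$. Since $\fK$ and $\fC\simeq\fA/i(\fK)$ are already complete, $\fK\sphat=\fK$ and $(\fA/i(\fK))\sphat=\fC$, so $\ker(\fA\sphat\rarrow\fC)=\lambda_\fA(i(\fK))$. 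Now, given $b\in\fA\sphat$, lift the image of $b$ in $\fC$ along the surjection $p$ to some $a_0\in\fA$; then $b-\lambda_\fA(a_0)\in\ker(\fA\sphat\rarrow\fC)=\lambda_\fA(i(\fK))$, say $b-\lambda_\fA(a_0)=\lambda_\fA(i(k))$, so $b=\lambda_\fA(a_0+i(k))$ and $\lambda_\fA$ is surjective. (If one prefers, the identity $\ker(\fA\sphat\rarrow\fC)=\lambda_\fA(i(\fK))$ can be extracted by a direct chase with compatible families of cosets $a_\fU+\fU\in\fA/\fU$; note that one should \emph{not} try to prove $\fA\sphat\rarrow\fC$ surjective, as the corresponding $\varprojlim^1$ need not vanish.)

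By the extension branch of Example~\ref{inherited-exact-structure}, $\Top^\scc_\boZ$ now inherits an exact structure from $\Top^\s_\boZ$, namely the class of short sequences in $\Top^\scc_\boZ$ that are exact in $\Top^\s_\boZ$. To identify it with the maximal exact structure, note that $\Top^\scc_\boZ$ has kernels and cokernels, hence is weakly idempotent-complete, and is right quasi-abelian by Proposition~\ref{right-quasi-abelian-prop}; so by Example~\ref{right-quasi-abelian-semi-stable-example} its maximal exact structure consists of the short sequences $0\rarrow\fK\overset i\rarrow\fA\overset p\rarrow\fC\rarrow0$ satisfying~Ex1 in $\Top^\scc_\boZ$ for which $p$ is a semi-stable cokernel, which by Proposition~\ref{vslt-semi-stable-cokernels-prop}(a) means exactly that $p$ is a surjective open map. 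These two classes coincide. If $0\rarrow\fK\overset i\rarrow\fA\overset p\rarrow\fC\rarrow0$ is exact in $\Top^\s_\boZ$, then by Theorem~\ref{incomplete-top-groups-spaces-theorem}(a) $p$ is a surjective open map with $\ker(p)=i(\fK)$ and $i$ an injective closed map; since $\Top^\scc_\boZ\rarrow\Top^\s_\boZ$ preserves kernels, $i=\ker(p)$ in $\Top^\scc_\boZ$, while $\coker(i)$ in $\Top^\scc_\boZ$ is $(\fA/i(\fK))\sphat=\fC$ because $\fC$ is complete, so $p=\coker(i)$; thus the sequence satisfies~Ex1 in $\Top^\scc_\boZ$ with $p$ a surjective open map, so it lies in the maximal structure. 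Conversely, if the sequence satisfies~Ex1 in $\Top^\scc_\boZ$ with $p$ a semi-stable cokernel, then $p$ is surjective open and, by Proposition~\ref{top-groups-spaces-kernels-cokernels-prop}(a), $i=\ker(p)$ is an injective closed map with $\fK$ in the induced topology; hence the sequence is isomorphic to the canonical short exact sequence $0\rarrow\ker(p)\rarrow\fA\overset p\rarrow\fC\rarrow0$ attached to the admissible epimorphism $p$ in $\Top^\s_\boZ$, and is therefore exact in $\Top^\s_\boZ$, so it lies in the inherited structure.

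The main obstacle is the closure under extensions; everything else is bookkeeping with the descriptions of (co)kernels and semi-stable cokernels in $\Top^\scc_\boZ$ and $\Top^\s_\boZ$ coming from Theorem~\ref{incomplete-top-groups-spaces-theorem}, Proposition~\ref{top-groups-spaces-kernels-cokernels-prop}, Proposition~\ref{right-quasi-abelian-prop} and Proposition~\ref{vslt-semi-stable-cokernels-prop}. Within the extension argument the one delicate point is that completion is merely left exact, so surjectivity of $\lambda_\fA$ must be obtained by lifting along the genuine surjection $p$ rather than along the completion map $\fA\sphat\rarrow\fC$.
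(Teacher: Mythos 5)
Your proposal is correct and follows essentially the same route as the paper: the same projective system of discrete quotient sequences $0\to(i(\fK)+\fU)/\fU\to\fA/\fU\to\fA/(i(\fK)+\fU)\to0$, left exactness of $\varprojlim$ together with completeness of $\fK$ and $\fC$ to identify the outer terms, and a diagram chase (which you carry out with explicit elements, lifting along the surjection~$p$) to conclude that $\lambda_\fA$ is surjective; the identification with the maximal exact structure via semi-stable cokernels matches the paper's bookkeeping as well.
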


\begin{proof}
 Let us explain part~(a).
 Let $0\rarrow\fK\overset i\rarrow A\overset p\rarrow\fC\rarrow0$
be a short exact sequence in (the quasi-abelian exact structure on)
$\Top^\s_\boZ$ with $\fK$, $\fC\in\Top^\scc_\boZ$.
 Then, for every open subgroup $U\subset A$, we have a short exact
sequence of (discrete quotient) groups $0\rarrow\fK/(i^{-1}(U)\cap\fK)
\rarrow A/U\rarrow\fC/(p(U))\rarrow0$.
 Consider the commutative diagram of a morphism of short sequences
of abelian groups
$$
\xymatrix{
 0\ar[r] & \fK \ar[r]\ar@{=}[d] & A \ar[r]\ar[d]
 & \fC \ar[r]\ar@{=}[d] & 0 \\
 0\ar[r] & \varprojlim_{U\subset A} \fK/(i^{-1}(U)\cap\fK) \ar[r]
 & \varprojlim_{U\subset A} A/U \ar[r]
 & \varprojlim_{U\subset A} \fC/(p(U))
}
$$
 The lower line is a left exact sequence, since the projective limit
functor is left exact.
 The map $\fK\rarrow\varprojlim_{U\subset A}\fK/(i^{-1}(U)\cap\fK)$
is an isomorphism, since the topological abelian group $\fK$ is
complete in its topology induced from the topology of $A$ via~$i$.
 The map $\fC\rarrow\varprojlim_{U\subset A}\fC/(p(U))$ is
an isomorphism, since the topological abelian group $\fC$ is complete
in its quotient topology.
 It follows that the map $\varprojlim_{U\subset A}A/U\rarrow
\varprojlim_{U\subset A}\fC/(p(U))$ is surjective and
the map $A\rarrow\varprojlim_{U\subset A}A/U$ is an isomorphism,
so the topological abelian group $A$ is complete
(cf.\ Lemma~\ref{abelian-reflective-closed-under-extensions}).

 Following Example~\ref{inherited-exact-structure}, the inherited exact
structure on $\Top^\scc_\boZ$ exists.
 Let $0\rarrow\fK\overset i\rarrow\fA\overset p\rarrow\fC\rarrow0$
be a short exact sequence in the maximal exact structure on
$\Top^\scc_\boZ$; we have to show that this short sequence is also
exact in $\Top^\s_\boZ$.
 But this is clear from
Proposition~\ref{vslt-semi-stable-cokernels-prop}(a)
or Corollary~\ref{vslt-stable-kernels-cor}(a) compared with
the descriptions of the kernels and cokernels of morphisms in
the categories $\Top^\scc_\boZ$ and $\Top^\s_\boZ$ known from
the proofs of Theorem~\ref{incomplete-top-groups-spaces-theorem}(a)
and Proposition~\ref{top-groups-spaces-kernels-cokernels-prop}(a)
(actually, even from Section~\ref{top-abelian-secn}).

 For an alternative proof, compare
Theorem~\ref{complete-top-vector-spaces-as-proobjects}(a) and
Proposition~\ref{incomplete-top-vector-spaces-as-suppl-proobjects}(a)
below with
Corollary~\ref{sup-epimorphic-quasi-abelian-cor}
and Proposition~\ref{limit-epimorphic-inside-supplemented-prop}.
\end{proof}

 Denote by $\Top^{\omega,\scc}_\boZ\subset\Top^\scc_\boZ$ the full
subcategory formed by all the (complete, separated) topological
abelian groups with a countable base of neighborhoods of zero
(consisting of open subgroups).
 Similarly, let $\Top^{\omega,\scc}_k\subset\Top^\scc_k$ denote
the full subcategory of all topological vector spaces with
a countable base of neighborhoods of zero.
 Clearly, the full subcategories 
$\Top^{\omega,\scc}_\boZ\subset\Top^\scc_\boZ$ and
$\Top^{\omega,\scc}_k\subset\Top^\scc_k$ are closed under
kernels, cokernels, and countable products.

 Furthermore, by Proposition~\ref{countable-base-kernel},
the cokernel of any morphism in $\Top^{\omega,\scc}_\boZ$
(as well as the cokernel of any injective closed morphism
$i\:\fK\rarrow\fA$ in $\Top^\scc_\boZ$ with
$\fK\in\Top^{\omega,\scc}_\boZ$) is a surjective open map.
 Consequently, the category $\Top^{\omega,\scc}_\boZ$ is
quasi-abelian.
 The kernels (i.~e., the admissible monomorphisms in
the quasi-abelian exact structure) in $\Top^{\omega,\scc}_\boZ$
are the injective closed maps, and the cokernels (i.~e.,
the admissible epimorphisms in the quasi-abelian exact structure)
are the surjective open maps.
 The full subcategory $\Top^{\omega,\scc}_\boZ\subset\Top^\scc_\boZ$
inherits the maximal exact category structure of the additive category
$\Top^\scc_\boZ$, and the inherited exact category structure on
$\Top^{\omega,\scc}_\boZ$ is the quasi-abelian exact structure.

 The full subcategory $\Top^{\omega,\scc}_k\subset\Top^\scc_k$
has similar properties.
 Moreover, by Corollary~\ref{countable-base-complete-subspace-splits},
every short exact sequence in the quasi-abelian category
$\Top^{\omega,\scc}_k$ splits.
 In other words, the quasi-abelian (hence maximal) exact structure
on $\Top^{\omega,\scc}_k$ coincides with the split (minimal) exact
category structure.
 In the ambient categories $\Top^\scc_k\subset\Top^\s_k\subset\Top_k$,
the complete, separated topological vector spaces with a countable
base of neighborhoods of zero have a rather strong injectivity
property described in Proposition~\ref{countable-injectivity}.
 In particular, all the objects of $\Top^{\omega,\scc}_k$ are injective
with respect to the maximal exact category structure on $\Top^\scc_k$ 
(and even with respect to the quasi-abelian exact structures on
$\Top^\s_k$ and $\Top_k$).

 However, the full subcategory $\Top^{\omega,\scc}_k$ is \emph{not}
closed under countable coproducts in $\Top^\scc_k$;
see Lemma~\ref{countable-coproducts-not-preserve-countable-base}.
 Similarly, the full subcategory $\Top^{\omega,\scc}_\boZ$ is \emph{not}
closed under countable coproducts in $\Top^\scc_\boZ$.
 We recall that coproducts in the category $\Top^\scc_k$ agree with
those in $\Top^\s_k$ and in $\Top_k$ (and similarly for topological
abelian groups); see Lemma~\ref{coproduct-topology-lemma}.

 In order to formulate the conclusion, let us add some bits of
terminology.
 Given a complete, separated topological abelian group (or vector space)
$\fA$ with linear topology and a closed injective morphism of
topological abelian groups/vector spaces $i\:\fK\rarrow\fA$, we will
say that the map~$i$ is \emph{stably closed} if $i$~is a stable kernel
in $\Top^\scc_\boZ$ or in $\Top^\scc_k$ (see
Corollary~\ref{vslt-stable-kernels-cor} for the description).
 In this case, the closed subgroup/subspace $i(\fK)\subset\fA$ will
be also called \emph{stably closed}.

\begin{conc} \label{maximal-exact-conclusion}
 The category $\Top^\scc_k$ of complete, separated topological vector
spaces with linear topology is \emph{not} quasi-abelian
(see Proposition~\ref{right-quasi-abelian-prop}
and Corollary~\ref{not-left-quasi-abelian-cor}).
 Moreover, contrary to~\cite[page~1, Section~1.1]{Beil}, there does
\emph{not} exist an exact category structure on $\Top^\scc_k$ in
which all the closed embeddings would be admissible monomorphisms.
 In the maximal exact category structure on $\Top^\scc_k$,
the admissible epimorphisms are the open surjections, and
the admissible monomorphisms are the \emph{stably} closed embeddings.

 The problem does not arise in the categories of incomplete topological
vector spaces $\Top_k$ and $\Top^\s_k$
(see Theorems~\ref{nonseparated-top-groups-spaces-theorem}
and~\ref{incomplete-top-groups-spaces-theorem}), and it also does not
arise in the category of complete, separated topological vector spaces
with a countable base of neighborhoods of zero.
 However, countable coproducts in any one of the categories
$\Top^\scc_k\subset\Top^\s_k\subset\Top_k$ do \emph{not} preserve
the classes of topological vector spaces with a countable base of
open subspaces (by
Lemmas~\ref{countable-coproducts-not-preserve-countable-base}
and~\ref{coproduct-topology-lemma}).
\end{conc}

\Section{Pro-Vector Spaces} \label{pro-vector-spaces-secn}

 Complete, separated topological vector spaces with linear topology
form a full subcategory in the abelian category of pro-vector spaces;
and incomplete topological vector spaces can be interpreted as
pro-vector spaces with some additional datum.
 In this section we explain that the full subcategory $\Top^\scc_k$
does \emph{not} inherit an exact category structure from the abelian
exact category structure of $\Pro(\Vect_k)$.
 In particular, $\Top^\scc_k$ is \emph{not} closed under extensions
in $\Pro(\Vect_k)$.

 We refer to~\cite[Chapter~6]{KS} for a general discussion of
ind-objects; the pro-objects are dual.
 Given a category $\sC$, the category $\Pro(\sC)$ of pro-objects in
$\sC$ is defined as the opposite category to the full subcategory
in the category of covariant functors $\sC\rarrow\Sets$ formed by
the directed inductive limits of corepresentable functors
$\Hom_\sC(C,{-})$, \,$C\in\sC$.
 For an additive category $\sC$, one can use (if one wishes, additive)
functors $\sC\rarrow\Ab$ in lieu of the functors $\sC\rarrow\Sets$.

 Explicitly, this means that the objects of $\Pro(\sC)$ are
the projective systems $P\:\Gamma\rarrow\sC$ indexed by directed
posets~$\Gamma$.
 This means that for every $\gamma\in\Gamma$ there is an object
$P_\gamma\in\sC$ and for every $\gamma<\delta\in\Gamma$ there is
a morphism $P_\delta\rarrow P_\gamma$ such that for every
$\gamma<\delta<\epsilon\in\Gamma$ the triangle diagram
$P_\epsilon\rarrow P_\delta\rarrow P_\gamma$ is commutative.
 The object of $\Pro(\sC)$ corresponding to a projective system
$(P_\gamma)_{\gamma\in\Gamma}$ is denoted by
$$
 \plim_{\gamma\in\Gamma} P_\gamma\in\Pro(\sC). 
$$
 The (opposite object to the) object $\plim_{\gamma\in\Gamma}P_\gamma$
corresponds to the functor $\sC\rarrow\Sets$ (or, in the additive case,
possibly $\sC\rarrow\Ab$) defined by the rule
$$
 X\longmapsto\varinjlim\nolimits_{\gamma\in\Gamma}
 \Hom_\sC(P_\gamma,X), \qquad X\in\sC.
$$
 The set/group of morphisms $\plim_{\gamma\in\Gamma}P_\gamma
\rarrow\plim_{\delta\in\Delta}Q_\delta$ in $\Pro(\sC)$ is, by
the definition, the set/group of morphisms in the opposite
direction between the corresponding functors $\sC\rarrow\Sets$
(or $\sC\rarrow\Ab$).
 This means that
$$
 \Hom_{\Pro(\sC)}(\plim_{\gamma\in\Gamma}P_\gamma,\,
 \plim_{\delta\in\Delta}Q_\delta)=
 \varprojlim\nolimits_{\delta\in\Delta}
 \varinjlim\nolimits_{\gamma\in\Gamma}\Hom_\sC(P_\gamma,Q_\delta)
$$
for any two projective systems $(P_\gamma)_{\gamma\in\Gamma}$
and $(Q_\delta)_{\delta\in\Delta}$ in $\sC$ indexed by directed
posets $\Gamma$ and~$\Delta$.

 To any object of $\sC$ one can assign the object of $\Pro(\sC)$
represented by the projective system $R_C\:\{*\}\rarrow\sC$ indexed
by the sigleton $\Gamma=\{*\}$ with $R_C({*})=C$.
 This defines a fully faithful functor $\sC\rarrow\Pro(\sC)$ such
that an arbitrary object $\plim_{\gamma\in\Gamma} P_\gamma\in
\Pro(\sC)$ is the projective limit of the objects $P_\gamma\in
\sC\subset\Pro(\sC)$ in the category $\Pro(\sC)$.
 All directed projective limits exist in the category $\Pro(\sC)$.

 The formula for the set/group of morphisms in $\Pro(\sC)$ from
an arbitrary object $\plim_{\gamma\in\Gamma}P_\gamma$ to
an object $C\in\sC\subset\Pro(\sC)$ is worth writing down explicitly:
$$
 \Hom_{\Pro(\sC)}(\plim_{\gamma\in\Gamma}P_\gamma,\,C)=
 \varinjlim\nolimits_{\gamma\in\Gamma}\Hom_\sC(P_\gamma,C).
$$
 So any given morphism $\plim_{\gamma\in\Gamma}P_\gamma\rarrow C$
in $\Pro(\sC)$ factorizes through the canonical projection
$\plim_{\gamma\in\Gamma}P_\gamma\rarrow P_\delta$ for some
$\delta\in\Gamma$.

 Notice that directed projective limits in the category $\sC$, which
may or may not exist, are in any case \emph{almost never} preserved
by the embedding $\sC\rarrow\Pro(\sC)$.
 In fact, if $(P_\gamma\in\sC)_{\gamma\in\Gamma}$ is a projective
system indexed by a directed poset $\Gamma$ and the projective limit
$\varprojlim_{\gamma\in\Gamma}P_\gamma$ exists in $\sC$, then
the inclusion functor $\sC\rarrow\Pro(\sC)$ preserves this projective
limit if and only if the object $\plim_{\gamma\in\Gamma}P_\gamma
\in\Pro(\sC)$ belongs to the full subcategory $\sC\subset\Pro(\sC)$.
 Then one has $\plim_{\gamma\in\Gamma}P_\gamma=\varprojlim_{\gamma
\in\Gamma}P_\gamma$.
 If this is the case, then there exists an index $\delta\in\Gamma$
such that the projection $\plim_{\gamma\in\Gamma}P_\gamma\rarrow
P_\delta$ is a split monomorphism in $\Pro(\sC)$, and it follows
that the projection $\varprojlim_{\gamma\in\Gamma}P_\gamma\rarrow
P_\delta$ is a split monomorphism in~$\sC$.
 So this is a kind of degenerate situation.

 If the category $\sC$ is additive, then so is $\Pro(\sC)$.
 The following description of zero objects in $\Pro(\sC)$ is helpful.
 Let $\sC$ be an additive category and $(P_\gamma)_{\gamma\in\Gamma}$
be a directed projective system in~$\sC$.
 One can see from the above description of morphisms in $\Pro(\sC)$
that the object $\plim_{\gamma\in\Gamma} P_\gamma$ vanishes
in $\Pro(\sC)$ if and only if the projective system
$(P_\gamma)_{\gamma\in\Gamma}$ is \emph{pro-zero}, in the sense
that for every $\gamma\in\Gamma$ there exists $\delta\in\Delta$,
\,$\delta\ge\gamma$ such that the transition morphism $P_\delta
\rarrow P_\gamma$ is zero.

 If the category $\sC$ is abelian, then so is $\Pro(\sC)$.
 Let us explain this assertion in some more detail.
 Given a morphism $f:\plim_{\gamma\in\Gamma}P_\gamma\rarrow
\plim_{\delta\in\Delta}Q_\delta$ in $\Pro(\sC)$, consider the poset
$\Xi$ formed by all the triples
$\xi'=(\gamma',\delta', g_{\gamma'\delta'})$
such that $\gamma'\in\Gamma$, \,$\delta'\in\Delta$, \
$g_{\gamma'\delta'}$ is a morphism $g_{\gamma'\delta'}\:
P_{\gamma'}\rarrow Q_{\delta'}$ in $\sC$, and the square diagram formed
by~$f$, $g_{\gamma'\delta'}$, and the canonical projections
$\plim_{\gamma\in\Gamma}P_\gamma\rarrow P_{\gamma'}$, \
$\plim_{\delta\in\Delta}Q_\delta\rarrow Q_{\delta'}$ is commutative
in $\Pro(\sC)$.
 By the definition, $(\gamma',\delta',g_{\gamma'\delta'})\le
(\gamma'',\delta'',g_{\gamma''\delta''})$ in $\Xi$ if
$\gamma'\le\gamma''$ in $\Gamma$, \ $\delta'\le\delta''$ in $\Delta$,
and the relevant square diagram is commutative in~$\sC$.
 Now the morphism~$f$ is naturally isomorphic to the morphism
$\plim_{\xi'\in\Xi}g_{\gamma'\delta'}\:
\plim_{\xi'\in\Xi}P_{\gamma'}\rarrow \plim_{\xi'\in\Xi}Q_{\delta'}$.
 This construction shows that any morphism in $\Pro(\sC)$ can be
represented by a $\Xi$\+indexed projective system of morphisms
in~$\sC$ for a suitable directed poset~$\Xi$.

 Now let $(f_\xi\:P_\xi\to Q_\xi)_{\xi\in\Xi}$ be a directed
projective system of morphisms in~$\sC$.
 Then the kernel and cokernel of the morphism $\plim_{\xi\in\Xi}f_\xi\:
\plim_{\xi\in\Xi}P_\xi\rarrow\plim_{\xi\in\Xi}Q_\xi$ can be simply
computed termwise as
$$
 \ker(\plim_{\xi\in\Xi}f_\xi)=\plim_{\xi\in\Xi}\ker(f_\xi)
 \quad\text{and}\quad
 \coker(\plim_{\xi\in\Xi}f_\xi)=\plim_{\xi\in\Xi}\coker(f_\xi).
$$
 Hence, in particular, the embedding functor $\sC\rarrow\Pro(\sC)$
is exact.

 Let $\sC$ be a complete abelian category (i.~e., an abelian category
with projective limits, or equivalently, with infinite products).
 Then there is a left exact functor
$$
 \varprojlim\:\Pro(\sC)\lrarrow\sC
$$
taking a pro-object $\plim_{\gamma\in\Gamma}P_\gamma\in\Pro(\sC)$
to the projective limit of the directed projective system
$(P_\gamma)_{\gamma\in\Gamma}$ in the category~$\sC$,
$$
 \varprojlim(\plim_{\gamma\in\Gamma}P_\gamma)=
 \varprojlim\nolimits_{\gamma\in\Gamma}P_\gamma.
$$
 The functor $\varprojlim$ is right adjoint to the fully faithful
exact embedding functor $\sC\rarrow\Pro(\sC)$.
 In particular, there is a natural transformation of endofunctors
on the category $\Pro(\sC)$ assigning to every pro-object
$P\in\Pro(\sC)$ the adjunction morphism
$$
 \varprojlim\nolimits_{\gamma\in\Gamma}P_\gamma
 \lrarrow\plim_{\gamma\in\Gamma}P_\gamma
$$
in the category of pro-objects $\Pro(\sC)$ (where the object
$\varprojlim_{\gamma\in\Gamma}P_\gamma\in\sC$ is viewed as an object
of $\Pro(\sC)$ via the embedding functor $\sC\rarrow\Pro(\sC)$).

 The following simple categorical construction will be useful for
our discussion of incomplete topological vector spaces/abelian groups
with linear topology in the next Section~\ref{suppl-pro-secn}.
 A \emph{supplemented pro-object} $(C,P)$ in a category $\sC$ is
a pair consisting of an object $C\in\sC$ and a pro-object
$P\in\Pro(\sC)$, endowed with a morphism $\pi\:C\rarrow P$ in
$\Pro(\sC)$.
 Morphisms of supplemented pro-objects are defined in the obvious way.
 We will denote the category of supplemented pro-objects in $\sC$
by $\Pro^\su(\sC)$.

 We will say that a supplemented pro-object $(C,P)$ in $\sC$ is
\emph{sup-epimorphic} if the morphism $\pi\:C\rarrow P$ is
an epimorphism in the abelian category $\Pro(\sC)$.
 Let $(P_\gamma)_{\gamma\in\Gamma}$ be a directed projective system
in $\sC$ such that $P=\plim_{\gamma\in\Gamma}P_\gamma$; then
the morphism $\pi$ is represented by a compatible cone of
morphisms $\pi_\gamma\:C\rarrow P_\gamma$.
 Denote by $Q_\gamma=\im(\pi_\gamma)\subset P_\gamma$ the images of
the morphisms~$\pi_\gamma$.
 It is clear from the above discussion of kernels and cokernels
in $\Pro(\sC)$ that $\pi$~is an epimorphism in $\Pro(\sC)$ if and
only if $\plim_{\gamma\in\Gamma} P_\gamma/Q_\gamma=0$, i.~e.,
the projective system $(P_\gamma/Q_\gamma)_{\gamma\in\Gamma}$ is
pro-zero.

 Equivalently, this means that the morphism
$\plim_{\gamma\in\Gamma}Q_\gamma\rarrow\plim_{\gamma\in\Gamma}P_\gamma$
induced by the projective system of monomorphisms $Q_\gamma\rarrow
P_\gamma$ is an isomorphism in $\Pro(\sC)$.
 If this is the case, one can replace the projective system
$(P_\gamma)_{\gamma\in\Gamma}$ by the projective system
$(Q_\gamma)_{\gamma\in\Gamma}$.
 This allows one to assume that all the morphisms
$\pi_\gamma\:C\rarrow P_\gamma$ are epimorpisms in~$\sC$.
 Then all the transition morphisms $P_\delta\rarrow P_\gamma$
in the projective system $(P_\gamma)_{\gamma\in\Gamma}$ are
epimorphisms, too.

 Let $\sC$ be a complete abelian category.
 Then the forgetful functor $\Pro^\su(\sC)\rarrow\Pro(\sC)$ taking
a supplemented pro-object $(C,P)$ to the pro-object $P$ has
a right adjoint functor taking a pro-object $P\in\Pro(\sC)$ to
the supplemented pro-object $(\varprojlim P,\,P)\in\Pro^\su(\sC)$,
where $\pi\:C=\varprojlim P\rarrow P$ is the above adjunction morphism.

 We will say that a pro-object $P\in\Pro(\sC)$ is
\emph{limit-epimorphic} if the supplemented pro-object
$(\varprojlim P,\,P)$ is sup-epimorphic.
 Simply put, a pro-object $P\in\Pro(\sC)$ is limit-epimorphic if and
only if there exists a directed projective system
$(P_\gamma)_{\gamma\in\Gamma}$ in $\sC$ such that $P\simeq
\plim_{\gamma\in\Gamma}P_\gamma$ in $\Pro(\sC)$ and the projection
morphism $\varprojlim_{\gamma\in\Gamma}P_\gamma\rarrow P_\delta$
is an epimorphism in $\sC$ for every $\delta\in\Gamma$.

 A pro-object $P$ in an abelian category $\sC$ is said to be
\emph{strict} if there exists a directed projective system
$(P_\gamma)_{\gamma\in\Gamma}$ in $\sC$ such that all the transition
morphisms $P_\delta\rarrow P_\gamma$, \ $\gamma<\delta\in\Gamma$
are epimorphisms in $\sC$ and $P\simeq\plim_{\gamma\in\Gamma}P_\gamma$
in $\Pro(\sC)$.
 Following the discussion above, in any sup-epimorphic supplemented
pro-object $(C,P)\in\Pro^\su(\sC)$, the pro-object $P\in\Pro(\sC)$
is strict.
 In particular, assuming that $\sC$ is complete, any limit-epimorphic
pro-object in $\sC$ is strict.

\begin{rem}
 It is well-known that any pro-vector space or pro-abelian group
represented by a \emph{countable} directed projective system of
epimorphisms in $\Vect_k$ or $\Ab$ is limit-epimorphic in
the sense of the above definition.
 However, generally speaking, a strict pro-vector space need
\emph{not} be limit-epimorphic.
 For a counterexample of a directed projective system of surjective
linear maps between nonzero, countably-dimensional vector spaces,
indexed by the first uncountable ordinal~$\aleph_1$, whose projective
limit vanishes, see~\cite[Section~3]{HS}.
\end{rem}

\begin{prop}
 For any abelian category\/ $\sC$, the full subcategory of strict
pro-objects is closed under quotients and extensions in\/ $\Pro(\sC)$.
\end{prop}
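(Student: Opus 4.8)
The plan is to work throughout with the toolkit recalled above: every morphism of $\Pro(\sC)$, and hence (by taking termwise kernels) every short exact sequence in $\Pro(\sC)$, can be represented level-wise by a projective system of morphisms, resp.\ of short exact sequences, in $\sC$ indexed by a common directed poset; kernels and cokernels of level-wise morphisms are computed termwise; and a directed system in $\sC$ represents the zero pro-object exactly when it is pro-zero. In particular, a level-wise morphism $(P_\xi\rarrow Q_\xi)_\xi$ induces an epimorphism (resp.\ a monomorphism) in $\Pro(\sC)$ precisely when the system $(\coker(P_\xi\to Q_\xi))_\xi$ (resp.\ $(\ker(P_\xi\to Q_\xi))_\xi$) is pro-zero. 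The first step would be to prove the Mittag--Leffler reformulation of strictness: \emph{a pro-object $P$ is strict if and only if for some, equivalently for every, directed projective system $(P_\gamma)_{\gamma\in\Gamma}$ with $P\simeq\plim_\gamma P_\gamma$ and each $\gamma$ the subobjects $\im(P_\delta\to P_\gamma)\subset P_\gamma$ stabilize as $\delta$ grows.} The ``if'' part is easy --- the stabilized images $\overline{P}_\gamma$ form a subsystem with epimorphic transition maps, and $\plim_\gamma\overline{P}_\gamma\rarrow\plim_\gamma P_\gamma$ is termwise monomorphic with termwise cokernel $P_\gamma/\overline{P}_\gamma$ pro-zero, hence an isomorphism; for ``only if'' one compares a given representing system with one whose transition maps are epimorphisms, chasing the comparison isomorphism and its inverse through the $\Hom$-formula for $\Pro(\sC)$.

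For closure under \emph{quotients}, let $P$ be strict and $\pi\:P\rarrow R$ an epimorphism. Fix a representation $P\simeq\plim_\gamma P_\gamma$ with epimorphic transition maps and represent $\pi$ level-wise keeping this system on the source: $\pi\simeq\plim_{\xi\in\Xi}(g_\xi\:P_\xi\to R_\xi)$ over a directed poset $\Xi$ with a cofinal map $\Xi\rarrow\Gamma$, so that the source system over $\Xi$ is the pullback of $(P_\gamma)_\Gamma$ and still has epimorphic transition maps. Put $R'_\xi:=\im(g_\xi)\subset R_\xi$. Since the source transition maps are epimorphisms, $\im(R'_\xi\to R'_{\xi'})=\im(P_\xi\to P_{\xi'}\to R_{\xi'})=\im(P_{\xi'}\to R_{\xi'})=R'_{\xi'}$, so $(R'_\xi)_\Xi$ has epimorphic transition maps; and $\plim_\xi R'_\xi\rarrow\plim_\xi R_\xi=R$ is termwise monomorphic with termwise cokernel $\coker(g_\xi)$, which is pro-zero because $\pi$ is an epimorphism. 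Hence $R\simeq\plim_\xi R'_\xi$ is strict.

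For closure under \emph{extensions}, let $0\rarrow P'\rarrow P\rarrow P''\rarrow0$ be exact with $P'$ and $P''$ strict. Fix a representation $P'\simeq\plim_\delta B_\delta$ with epimorphic transition maps and represent the monomorphism $P'\rarrow P$ level-wise keeping $(B_\delta)$ on the source, over a poset $\Xi$ cofinal over the $B$-index; after dividing each $B_\xi$ by the kernel of $B_\xi\to P_\xi$ (which is pro-zero, as $P'\to P$ is monomorphic, and this operation preserves both epimorphic transition maps and the pro-object) I may assume each $B_\xi\rarrow P_\xi$ is a monomorphism. Let $A_\xi:=\coker(B_\xi\to P_\xi)$; then $0\rarrow B_\xi\rarrow P_\xi\rarrow A_\xi\rarrow0$ is exact in $\sC$ for every $\xi$, with epimorphic transition maps on the $B$-side, and $(A_\xi)_\Xi$ represents $P''\simeq\coker(P'\to P)$, hence is a Mittag--Leffler system by the lemma of the first paragraph. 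Applying the snake lemma to each transition square of these short exact sequences and using $\coker(B_\xi\to B_{\xi'})=0$ yields a \emph{natural} isomorphism $\coker(P_\xi\to P_{\xi'})\simeq\coker(A_\xi\to A_{\xi'})$. Fixing $\xi'$ and letting $\xi=\delta$ grow, Mittag--Leffler-ness of $(A_\xi)$ says that the quotients $\coker(A_\delta\to A_{\xi'})$ of $A_{\xi'}$ stabilize with eventually invertible transition maps; transporting along the natural isomorphism, the quotients $\coker(P_\delta\to P_{\xi'})$ of $P_{\xi'}$ stabilize too, and since the corresponding surjections $P_{\xi'}\twoheadrightarrow\coker(P_\delta\to P_{\xi'})$ are compatible, past stabilization they differ by isomorphisms and so have the same kernel $\im(P_\delta\to P_{\xi'})$. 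Thus $(P_\xi)_\Xi$ is Mittag--Leffler, and by the lemma $P\simeq\plim_\xi P_\xi$ is strict.

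The hard part is the Mittag--Leffler reformulation of strictness in the first paragraph, and, within the extension argument, the passage from stabilization of the quotient objects $\coker(P_\delta\to P_\xi)$ to stabilization of the subobjects $\im(P_\delta\to P_\xi)$ --- which is exactly where one exploits that compatible epimorphisms with eventually identified targets have equal kernels. The remaining level-wise bookkeeping --- cleaning up a morphism so as to be termwise monomorphic or epimorphic while keeping a prescribed epimorphic-transition system on one side, and the cofinality of the index posets produced by the representation construction --- is routine but should be spelled out.
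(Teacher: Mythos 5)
Your proof of closure under quotients is essentially the paper's own argument: represent the epimorphism level-wise over a system with epimorphic transition maps on the source and pass to the termwise images. For closure under extensions you take a genuinely different route. The paper never leaves the level of representing systems: it normalizes the termwise short exact sequences in two stages, first pushing out along a comparison with an epimorphic-transition system representing $P'$, then pulling back along a comparison with one representing $P''$, and concludes because the class of epimorphisms is closed under extensions in the category of morphisms in $\sC$, so the middle system acquires epimorphic transition maps outright. You instead reformulate strictness as the Mittag--Leffler condition, normalize only the subobject side, and transfer Mittag--Lefflerness from $(A_\xi)$ to $(P_\xi)$ through the snake-lemma identification $\coker(P_{\xi'}\to P_\xi)\simeq\coker(A_{\xi'}\to A_\xi)$; the final passage from stabilizing cokernels to stabilizing images is sound. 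Both arguments work. Yours has the side benefit of establishing that \emph{every} representing system of a strict pro-object is Mittag--Leffler, which is of independent interest; the cost is that the invariance of the Mittag--Leffler property under pro-isomorphism --- the ``only if'' half of your first-paragraph lemma, a diagram chase through the $\Hom$-formula comparable in length to the rest of the proof, which you only sketch --- becomes the load-bearing step, whereas the paper's pushout/pullback normalization avoids it entirely and needs nothing beyond the termwise computation of kernels and cokernels.
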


\begin{proof}
 Let $f\:P\rarrow Q$ be a morphism in $\Pro(\sC)$ with a strict
pro-object~$P$.
 Then the construction from the above discussion of kernels and
cokernels in $\Pro(\sC)$ allows to represent~$f$ as the morphism
$f=\plim_{\xi\in\Xi}f_\xi\:\plim_{\xi\in\Xi}P_\xi\rarrow
\plim_{\xi\in\Xi}Q_\xi$ for some projective system of morphisms
$f_\xi\:P_\xi\rarrow Q_\xi$ in $\sC$ indexed by a directed poset~$\Xi$.
 Moreover, following the construction, one can choose
$(P_\xi)_{\xi\in\Xi}$ to be a projective system of epimorphisms
in~$\sC$.
 Then $(\im(f_\xi))_{\xi\in\Xi}$ is also a projective system of
epimorphisms in~$\sC$.
 If the morphism $P\rarrow Q$ is an epimorphism in $\Pro(\sC)$,
then $Q=\plim_{\xi\in\Xi}Q_\xi\simeq\plim_{\xi\in\Xi}\im(f_\xi)$
in $\Pro(\sC)$.
 This proves that the full subcategory of strict pro-objects is closed
under quotients.

 Let $0\rarrow P'\rarrow P\rarrow P''\rarrow0$ be a short exact
sequence in $\Pro(\sC)$.
 According to the same argument above, one can represent this short
exact sequence as the $\plim$ of a projective system of short
exact sequences $0\rarrow P'_\gamma\rarrow P_\gamma\rarrow P''_\gamma
\rarrow0$ indexed by a directed poset~$\Gamma$.
 Let $(Q'_\delta)_{\delta\in\Delta}$ be a directed projective
system in $\sC$ such that $\plim_{\delta\in\Delta}Q'_\delta\simeq
\plim_{\gamma\in\Gamma}P'_\gamma$ in $\Pro(\sC)$.
 Applying the same construction to the (iso)morphism
$\plim_{\gamma\in\Gamma}P'_\gamma\rarrow
\plim_{\delta\in\Delta}Q'_\delta$, we obtain a directed poset $\Xi$,
a projective system of short exact sequences $0\rarrow P'_\xi
\rarrow P_\xi\rarrow P''_\xi\rarrow0$ indexed by $\Xi$, whose
$\plim$ is the original short exact sequence $0\rarrow P'\rarrow P
\rarrow P''\rarrow0$, and a projective system of morphisms
$P'_\xi\rarrow Q'_\xi$ indexed by $\Xi$, whose $\plim$ is
the isomorphism $\plim_{\gamma\in\Gamma}P'_\gamma\rarrow
\plim_{\delta\in\Delta}Q'_\delta$.
 Moreover, following the construction, if
$(Q'_\delta)_{\delta\in\Delta}$ is a projective system of
epimorphisms in $\sC$, then so is $(Q'_\xi)_{\xi\in\Xi}$.

 Now, for every $\xi\in\Xi$, consider the pushout $0\rarrow Q'_\xi
\rarrow Q_\xi\rarrow P''_\xi\rarrow0$ of the short exact sequence
$0\rarrow P'_\xi\rarrow P_\xi\rarrow P''_\xi\rarrow0$ by
the morphism $P'_\xi\rarrow Q'_\xi$.
 Then we have an isomorphism $\plim_{\xi\in\Xi}Q_\xi\simeq
\plim_{\xi\in\Xi}P_\xi$ in $\Pro(\sC)$.
 The original short exact sequence $0\rarrow P'\rarrow P\rarrow P''
\rarrow0$ in $\Pro(\sC)$ can be obtained (up to an isomorphism of
short exact sequences in $\Pro(\sC)$) by applying $\plim$ to
the directed projective system of short exact sequences $0\rarrow
Q'_\xi\rarrow Q_\xi\rarrow P''_\xi\rarrow0$ in~$\sC$.

 Dually, let $(S''_\lambda)_{\lambda\in\Lambda}$ be a directed
projective system in $\sC$ such that $\plim_{\lambda\in\Lambda}
S''_\lambda\simeq\plim_{\gamma\in\Gamma}P''_\gamma$ in $\Pro(\sC)$.
 Applying once again the same construction to the (iso)mor\-phism
$\plim_{\lambda\in\Lambda}S''_\lambda\rarrow
\plim_{\xi\in\Xi}P''_\xi$, we obtain a directed poset $\Upsilon$,
a projective system of short exact sequences $0\rarrow Q'_\upsilon
\rarrow Q_\upsilon\rarrow P''_\upsilon\rarrow0$ indexed by $\Upsilon$,
whose $\plim$ is the original short exact sequence $0\rarrow P'
\rarrow P\rarrow P''\rarrow0$, and a projective system of morphisms
$S''_\upsilon\rarrow P''_\upsilon$ indexed by $\Upsilon$, whose
$\plim$ is the isomorphism $\plim_{\lambda\in\Lambda}S''_\lambda
\simeq\plim_{\gamma\in\Gamma}P''_\gamma$.
 Moreover, if $(S''_\lambda)_{\lambda\in\Lambda}$ is a projective
system of epimorphisms in $\sC$, then so is
$(S''_\upsilon)_{\upsilon\in\Upsilon}$.
 Besides, if $(Q'_\xi)_{\xi\in\Xi}$ is a projective system of
epimorphisms in $\sC$, then so is $(Q'_\upsilon)_{\upsilon\in\Upsilon}$.

 Finally, for every $\upsilon\in\Upsilon$, we consider the pullback
$0\rarrow Q'_\upsilon\rarrow T_\upsilon\rarrow S''_\upsilon\rarrow0$
of the short exact sequence $0\rarrow Q'_\upsilon\rarrow Q_\upsilon
\rarrow P''_\upsilon\rarrow0$ by the morphism $S''_\upsilon\rarrow
P''_\upsilon$.
 Then we have an isomorphism $\plim_{\upsilon\in\Upsilon}T_\upsilon
\simeq\plim_{\upsilon\in\Upsilon}Q_\upsilon$ in $\Pro(\sC)$.
  The original short exact sequence $0\rarrow P'\rarrow P\rarrow P''
\rarrow0$ in $\Pro(\sC)$ can be obtained (up to an isomorphism) by
applying $\plim$ to the directed projective system of short exact
sequences $0\rarrow Q'_\upsilon\rarrow T_\upsilon\rarrow S''_\upsilon
\rarrow0$ in~$\sC$.

 We have shown that any short exact sequence $0\rarrow P'\rarrow P
\rarrow P''\rarrow0$ in $\Pro(\sC)$ with strict pro-objects $P'$
and $P''$ can be obtained by applying $\plim$ to a directed
projective system of short exact sequences $0\rarrow
Q'_\upsilon\rarrow T_\upsilon\rarrow S''_\upsilon\rarrow0$ in $\sC$
such that both $(Q'_\upsilon)_{\upsilon\in\Upsilon}$ and
$(S''_\upsilon)_{\upsilon\in\Upsilon}$ are projective systems
of epimorphisms.
 It remains to observe that if $(Q'_\upsilon)_{\upsilon\in\Upsilon}$
and $(S''_\upsilon)_{\upsilon\in\Upsilon}$ are projective systems
of epimorphisms in $\sC$, then so is $(T_\upsilon)_{\upsilon\in
\Upsilon}$, because the class of all epimorphisms is closed under
extensions in the category of morphisms in~$\sC$.
\end{proof}

\begin{lem}
 For any complete abelian category\/ $\sC$, the full subcategory of
limit-epimorphic pro-objects is closed under quotients in\/ $\Pro(\sC)$.
\end{lem}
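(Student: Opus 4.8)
The plan is to exploit the fact that, by the very definition recalled above, a pro-object $P\in\Pro(\sC)$ is limit-epimorphic precisely when the canonical morphism $\pi_P\:\varprojlim P\rarrow P$ in $\Pro(\sC)$ is an epimorphism in the abelian category $\Pro(\sC)$. Here $\pi_P$ is the component at $P$ of the natural transformation from the endofunctor $P\longmapsto\varprojlim P$ (i.e., the composite of $\varprojlim\:\Pro(\sC)\rarrow\sC$ with the embedding $\sC\rarrow\Pro(\sC)$) to the identity endofunctor on $\Pro(\sC)$, which exists because $\varprojlim$ is right adjoint to the embedding (and $\sC$ is complete). Once this observation is in place, the lemma reduces to a two-line diagram chase, with no serious obstacle.

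First I would fix an epimorphism $g\:P\rarrow Q$ in $\Pro(\sC)$ with $P$ limit-epimorphic, and apply the naturality of the transformation $\pi$ to the morphism~$g$. This produces a commutative square in $\Pro(\sC)$ whose vertical arrows are $\pi_P$ and $\pi_Q$, whose bottom arrow is $g$, and whose top arrow is the morphism $\varprojlim(g)\:\varprojlim P\rarrow\varprojlim Q$ (viewed in $\Pro(\sC)$ via the embedding). In particular one has the identity $g\circ\pi_P=\pi_Q\circ\varprojlim(g)$ in $\Pro(\sC)$.

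Next I would observe that the left-hand composite $g\circ\pi_P$ is an epimorphism in $\Pro(\sC)$, being a composition of the two epimorphisms $\pi_P$ (an epimorphism exactly because $P$ is limit-epimorphic) and $g$ (an epimorphism by hypothesis). Hence the right-hand composite $\pi_Q\circ\varprojlim(g)$ is an epimorphism as well; and since a composition $v\circ u$ being an epimorphism forces its second factor~$v$ to be an epimorphism, it follows that $\pi_Q\:\varprojlim Q\rarrow Q$ is an epimorphism in $\Pro(\sC)$. By the definition recalled at the start, this says precisely that $Q$ is limit-epimorphic, which is what was to be shown.

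The only thing to be careful about is that the adjunction $(\text{embedding})\dashv\varprojlim$ and the resulting natural transformation $\pi$ are indeed available — but both were set up in the discussion above, and the categorical fact that a right factor of an epic composite is epic is entirely formal. (One could instead imitate the hands-on argument used for strict pro-objects in the preceding proposition: represent $g$ as $\plim_{\xi\in\Xi}g_\xi$ after a cofinal reindexing of the presenting system of $P$ along which $\varprojlim_\xi P_\xi\rarrow P_\xi$ remains an epimorphism, and then identify $Q$ with $\plim_{\xi\in\Xi}\im(g_\xi)$; but the adjunction argument is considerably shorter.)
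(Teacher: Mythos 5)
Your proof is correct and is essentially the paper's own argument: both use the commutative square $\varprojlim P\rarrow P\rarrow Q$, $\varprojlim P\rarrow\varprojlim Q\rarrow Q$ coming from the naturality of the adjunction morphism, together with the fact that the right factor of an epic composite is epic, to conclude that $\varprojlim Q\rarrow Q$ is an epimorphism.
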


\begin{proof}
 If $P\rarrow Q$ is an epimorphism in $\Pro(\sC)$ and $\varprojlim P
\rarrow P$ is also an epimorphism in $\Pro(\sC)$ (where
$\varprojlim P\in\sC\subset\Pro(\sC)$), then it follows from
commutativity of the square diagram $\varprojlim P\rarrow P\rarrow Q$,
\ $\varprojlim P\rarrow\varprojlim Q\rarrow Q$ that $\varprojlim Q
\rarrow Q$ is an epimorphism in $\Pro(\sC)$.
\end{proof}

 We will denote the full subcategory of limit-epimorphic pro-objects
by $\Pro_\lie(\sC)\subset\Pro(\sC)$.
 The following theorem explains why we are interested in
limit-epimorphic pro-objects in connection with topological algebra.

\begin{thm} \label{complete-top-vector-spaces-as-proobjects}
\textup{(a)} There is a natural equivalence of additive categories\/
$\Top^\scc_\boZ\simeq\Pro_\lie(\Ab)$ assigning to a limit-epimorphic
pro-abelian group $P$ the abelian group $A=\varprojlim P$ endowed
with the topology of projective limit of discrete abelian groups. \par
\textup{(b)} There is a natural equivalence of additive categories\/
$\Top^\scc_k\simeq\Pro_\lie(\Vect_k)$ assigning to a limit-epimorphic
pro-vector space $P$ the vector space $V=\varprojlim P$ endowed
with the topology of projective limit of discrete vector spaces.
\end{thm}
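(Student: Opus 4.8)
The plan is to construct the functor $F\:\Top^\scc_\boZ\rarrow\Pro(\Ab)$ sending a complete, separated topological abelian group $A$ to the pro-object $F(A)=\plim_{U\subset A}A/U$, where $U$ ranges over the poset of open subgroups of $A$ ordered by reverse inclusion and each $A/U$ is regarded as a discrete abelian group, and sending a continuous homomorphism $f\:A\rarrow B$ to the evident induced pro-morphism; then to prove that $F$ is fully faithful, that its essential image is exactly $\Pro_\lie(\Ab)$, and that the corestriction is quasi-inverse to the functor $G\:P\longmapsto\varprojlim P$ equipped with the projective limit topology of discrete groups. Both $F$ and $G$ are visibly additive, so this gives the asserted equivalence of additive categories; part~(b) follows by the identical argument with ``$k$-vector space'', ``open subspace'' and ``$k$-linear map'' in place of ``abelian group'', ``open subgroup'' and ``additive map''. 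First I would check that $F$ takes values in $\Pro_\lie(\Ab)$: by the very definition of completeness and separatedness the adjunction morphism $\varprojlim F(A)=\varprojlim_{U}A/U\rarrow A$ is an isomorphism, while each canonical projection $A\rarrow A/U$ is surjective, hence an epimorphism in $\Ab$; by the discussion of limit-epimorphic pro-objects in Section~\ref{pro-vector-spaces-secn} this is precisely the condition $F(A)\in\Pro_\lie(\Ab)$. In the other direction, for $P\in\Pro(\Ab)$ I set $G(P)=\varprojlim P$ with the topology whose open subgroups are the kernels of the canonical maps from $\varprojlim P$ to the terms of any projective system representing $P$ (equivalently, the coarsest group topology making every morphism from $P$ to a discrete group induce a continuous map on $\varprojlim P$); this topology does not depend on the presentation, and $G(P)$ is a projective limit of discrete groups, hence a complete, separated group with linear topology, so $G\:\Pro(\Ab)\rarrow\Top^\scc_\boZ$ is well defined and functorial.

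Next I would prove that $F$ is fully faithful by invoking the formula for morphisms of pro-objects recalled in Section~\ref{pro-vector-spaces-secn}, namely $\Hom_{\Pro(\Ab)}(\plim_U A/U,\,\plim_V B/V)=\varprojlim_V\varinjlim_U\Hom_{\Ab}(A/U,B/V)$. A continuous homomorphism $f\:A\rarrow B$ yields, for every open subgroup $V\subset B$, the composite $A\rarrow B/V$, which by continuity factors uniquely through $A/U$ whenever $U\subset f^{-1}(V)$, and these factorizations are compatible over $V$; this datum is exactly an element of the right-hand side, and conversely a compatible family of maps $A\rarrow B/V$ assembles, using $B=\varprojlim_V B/V$, into a unique continuous homomorphism $A\rarrow B$. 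Hence $\Hom_{\Top^\scc_\boZ}(A,B)\cong\Hom_{\Pro(\Ab)}(F(A),F(B))$, naturally in $A$ and $B$.

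It then remains to establish the natural isomorphisms $GF\simeq\mathrm{Id}$ and $FG\simeq\mathrm{Id}_{\Pro_\lie(\Ab)}$, which together with full faithfulness show that $F$ corestricts to an equivalence onto $\Pro_\lie(\Ab)$ with quasi-inverse $G$. The underlying group of $GF(A)$ is $\varprojlim_U A/U=A$, and its distinguished base of open subgroups $\ker(\varprojlim_U A/U\rarrow A/U)$ is identified with the base $\{U\}$ of open subgroups of $A$; completeness and separatedness of $A$ turn this into a homeomorphism, naturally in $A$. For $FG$, given $P\in\Pro_\lie(\Ab)$ I would use the definition of limit-epimorphic to choose a representing system $(P_\gamma)_{\gamma\in\Gamma}$ for which every projection $A=\varprojlim_\gamma P_\gamma\rarrow P_\gamma$ is an epimorphism; then the subgroups $\fU_\gamma=\ker(A\rarrow P_\gamma)$ form a base of open subgroups of $A$, and surjectivity of the projections gives $A/\fU_\gamma\cong P_\gamma$ compatibly with the transition maps, whence $F(G(P))=\plim_{\fW\subset A}A/\fW\cong\plim_{\gamma}A/\fU_\gamma\cong\plim_{\gamma}P_\gamma=P$ in $\Pro(\Ab)$. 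The hard part will be this last step: the limit-epimorphic hypothesis is used precisely to identify $A/\fU_\gamma$ with the full term $P_\gamma$ rather than with some proper image, and the cofinality of $\{\fU_\gamma\}$ among all open subgroups of $A$ is needed to replace the intrinsic pro-object $\plim_{\fW}A/\fW$ by the chosen presentation (a cofinal directed subposet inducing an isomorphism of pro-objects). Naturality of all these isomorphisms follows from the uniqueness statements built into the constructions.
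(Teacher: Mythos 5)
Your proposal is correct and follows essentially the same route as the paper: both construct the pair of functors $A\longmapsto\plim_{U\subset A}A/U$ and $P\longmapsto\varprojlim P$ (with the projective limit topology of discrete quotients), observe that the first lands in the limit-epimorphic pro-objects because completeness and separatedness make $\varprojlim_U A/U\rarrow A/U_0$ surjective, and check the two composites. The paper packages this as the restriction of an adjoint pair between $\Top_k$ and $\Pro(\Vect_k)$ and leaves the final verification to the reader, whereas you carry out the full-faithfulness and cofinality arguments explicitly; the substance is the same.
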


\begin{proof}
 Both the equivalences are almost obvious (see
Propositions~\ref{nonseparated-top-vector-spaces-as-suppl-proobjects}%
\+-\ref{incomplete-top-vector-spaces-as-suppl-proobjects} below for
generalizations).
 Let us explain part~(b); part~(a) is similar.

 Given a pro-vector space $P=\plim_{\gamma\in\Gamma}P_\gamma$,
the projective limit topology (of discrete vector spaces~$P_\gamma$)
on $V=\varprojlim P=\varprojlim_{\gamma\in\Gamma}P_\gamma$ has
a base of neighborhoods of zero consisting of the kernels of
the projection maps $V\rarrow P_\gamma$.
 One can readily check that this topology on $\varprojlim P$ is
well-defined and functorial, i.~e., for any morphism of pro-vector
spaces $\plim_{\gamma\in\Gamma}P_\gamma\rarrow\plim_{\delta\in\Delta}
Q_\delta$ the induced map of projective limits
$\varprojlim_{\gamma\in\Gamma}P_\gamma\rarrow\varprojlim_{\delta\in
\Delta}Q_\delta$ is continuous.
 Moreover, the topological vector space $\varprojlim_{\gamma\in\Gamma}
P_\gamma$ is separated and complete (in fact, it is a closed subspace
in the complete, separated topological vector space
$\prod_{\gamma\in\Gamma}P_\gamma$, with the product topology of
discrete vector spaces on $\prod_{\gamma\in\Gamma}P_\gamma$ and
the induced topology of a subspace in $\prod_{\gamma\in\Gamma}P_\gamma$
on $\varprojlim_{\gamma\in\Gamma}P_\gamma$).
 This defines the desired functor $\Pro_\lie(\Vect_k)\subset
\Pro(\Vect_k)\rarrow\Top^\scc_k$.

 The inverse functor $\Top^\scc_k\subset\Top_k\rarrow\Pro_\lie(\Vect_k)$
assigns to a topological vector space $V$ the pro-vector space
$P=\plim_{U\subset V}V/U$, where $U$ ranges over the open vector
subspaces in~$V$.
 The pro-vector space $P$ is limit-epimorphic, because the composition
$V\rarrow \varprojlim P\rarrow V/U$ is surjective for all open vector
subspaces $U\subset V$, and consequently the map $\varprojlim P
\rarrow V/U$ is surjective.
 In fact, we have constructed a pair of adjoint functors between
the categories $\Top_k$ and $\Pro(\Vect_k)$, with the left adjoint
functor $V\longmapsto\plim_{U\subset V}V/U$ and the right adjoint
functor $P\longmapsto\varprojlim P$.
 Moreover, the former functor takes values inside
$\Pro_\lie(\Vect_k)\subset\Pro(\Vect_k)$, while the latter one lands
within $\Top^\scc_k\subset\Top_k$.
 Checking that the restrictions of these functors to $\Top^\scc_k$
and $\Pro_\lie(\Vect_k)$ are mutually inverse equivalences is left
to the reader.
\end{proof}

 It is easy to see that the class of all limit-epimorphic pro-objects
is \emph{not} closed under subobjects in $\Pro(\sC)$.
 In fact, for any directed projective system $(P_\gamma)_{\gamma\in
\Gamma}$ one can construct a directed projective system of split
monomorphisms $P_\gamma\rarrow Q_\gamma$ in $\sC$ such that
$\varprojlim_{\gamma\in\Gamma}Q_\gamma\rarrow Q_\delta$ is
a split epimorphism in $\sC$ for every $\delta\in\Gamma$.
 It suffices to put $Q_\delta=\prod_{\gamma\le\delta}P_\gamma$
for every $\delta\in\Gamma$.
 The following counterexamples show that the class of all
limit-epimorphic pro-objects is \emph{not} closed under extensions
in $\Pro(\sC)$, either (generally speaking).

\begin{prop} \label{does-not-inherit-prop}
\textup{(a)} The full subcategory\/ $\Top^\scc_\boZ\simeq\Pro_\lie(\Ab)
\subset\Pro(\Ab)$ does \emph{not} inherit an exact category structure
from the abelian exact structure of the abelian category\/ $\Pro(\Ab)$.
 In fact, a short sequence in\/ $\Top^\scc_\boZ$ is exact in\/
$\Pro(\Ab)$ if and only if it satisfies Ex1 in\/ $\Top^\scc_\boZ$
(but the class of all short sequences satisfying Ex1 is not
an exact category structure on\/ $\Top^\scc_\boZ$).
 In particular, the full subcategory\/ $\Pro_\lie(\Ab)$ is
\emph{not} closed under extensions in\/ $\Pro(\Ab)$. \par
\textup{(b)} The full subcategory\/ $\Top^\scc_k\simeq\Pro_\lie(\Vect_k)
\subset\Pro(\Vect_k)$ does \emph{not} inherit an exact category
structure from the abelian exact structure of the abelian category\/
$\Pro(\Vect_k)$.
 In fact, a short sequence in\/ $\Top^\scc_k$ is exact in\/
$\Pro(\Vect_k)$ if and only if it satisfies Ex1 in\/ $\Top^\scc_k$
(but the class of all short sequences satisfying Ex1 is not
an exact category structure on\/ $\Top^\scc_k$).
 In particular, the full subcategory\/ $\Pro_\lie(\Vect_k)$ is
\emph{not} closed under extensions in\/ $\Pro(\Vect_k)$.
\end{prop}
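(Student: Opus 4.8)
The plan is to exploit the equivalence $\Top^\scc_k\simeq\Pro_\lie(\Vect_k)\subset\Pro(\Vect_k)$ of Theorem~\ref{complete-top-vector-spaces-as-proobjects}(b) and to determine exactly which short sequences in $\Top^\scc_k$ become short exact in $\Pro(\Vect_k)$. The target is the identification: a short sequence $0\rarrow\fK\overset i\rarrow\fA\overset p\rarrow\fC\rarrow0$ in $\Top^\scc_k$ maps to a short exact sequence in $\Pro(\Vect_k)$ \emph{if and only if} it satisfies Ex1 in $\Top^\scc_k$. Once this is proved, the rest is formal: by Proposition~\ref{right-quasi-abelian-prop} and Corollary~\ref{not-left-quasi-abelian-cor} the category $\Top^\scc_k$ is right but not left quasi-abelian, hence not quasi-abelian, which by the very definition of quasi-abelian says that the class of short sequences satisfying Ex1 in $\Top^\scc_k$ is not an exact category structure; so $\Top^\scc_k\simeq\Pro_\lie(\Vect_k)$ does not inherit an exact structure from $\Pro(\Vect_k)$, and then Example~\ref{inherited-exact-structure} (closure under extensions would force inheritance) shows $\Pro_\lie(\Vect_k)$ is not closed under extensions in $\Pro(\Vect_k)$. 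Part~(a) is identical with $\Ab$ in place of $\Vect_k$.

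For the direction ``Ex1 $\Rightarrow$ exact in $\Pro(\Vect_k)$'' I would argue as follows. By Proposition~\ref{top-groups-spaces-kernels-cokernels-prop}(b) an Ex1 sequence has $i$ an injective closed map, so $\fK\subset\fA$ is a closed subspace with the induced topology and $\fC=(\fA/\fK)\sphat\,$. Let $\Delta$ be the directed poset of open subspaces of $\fA$ ordered by reverse inclusion, and $\Delta_\fK\subset\Delta$ the subposet of those containing $\fK$. Using that $\fU\longmapsto\fK\cap\fU$ is cofinal onto a base of open subspaces of $\fK$, and that the open subspaces of the completion $(\fA/\fK)\sphat\,$ are indexed by $\Delta_\fK$ with quotients $\fA/\fU$, the pro-objects attached to $\fK$, $\fA$, $\fC$ are $\plim_{\fU\in\Delta}\fK/(\fK\cap\fU)$, \ $\plim_{\fU\in\Delta}\fA/\fU$, \ $\plim_{\fU\in\Delta_\fK}\fA/\fU$. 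Then $\Psi(i)$ is represented by a termwise injective projective system, hence a monomorphism in $\Pro(\Vect_k)$, with cokernel computed termwise as $\plim_{\fU\in\Delta}\fA/(\fK+\fU)$; and since $\fU\longmapsto\fK+\fU$ is a cofinal map $\Delta\rarrow\Delta_\fK$ (it is order-preserving and surjective, with $\fV=\fK+\fV$ for $\fV\in\Delta_\fK$), this cokernel is canonically isomorphic to $\plim_{\fU\in\Delta_\fK}\fA/\fU=\Psi(\fC)$, the isomorphism being compatible with $\Psi(p)$ by a check of universal properties. Thus $0\rarrow\Psi(\fK)\rarrow\Psi(\fA)\rarrow\Psi(\fC)\rarrow0$ is the tautological short exact sequence ``a monomorphism followed by its cokernel'' in the abelian category $\Pro(\Vect_k)$.

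For the converse, suppose $0\rarrow\Psi(\fK)\rarrow\Psi(\fA)\rarrow\Psi(\fC)\rarrow0$ is short exact in $\Pro(\Vect_k)$. Since $\Pro_\lie(\Vect_k)$ is a full subcategory of $\Pro(\Vect_k)$ closed under quotients (by the lemma preceding Theorem~\ref{complete-top-vector-spaces-as-proobjects}), the cokernel of $\Psi(i)$, which is $\Psi(\fC)$, lies in $\Pro_\lie(\Vect_k)$ and hence also serves as the cokernel of $\Psi(i)$ in $\Pro_\lie(\Vect_k)$; transported along the equivalence, $p=\coker(i)$ in $\Top^\scc_k$. Likewise, a kernel computed in an ambient category that happens to lie in a full subcategory is automatically a kernel there, so $\Psi(\fK)=\ker(\Psi(p))$ yields $i=\ker(p)$ in $\Top^\scc_k$; hence Ex1 holds. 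To exhibit the failure of closure under extensions concretely, I would transport the counterexample from the proof of Corollary~\ref{not-left-quasi-abelian-cor}: taking the non-surjective cokernel $p\:\fV\rarrow\fC$ there and the map $f\:k\rarrow\fC$ hitting a vector outside $\im(p)$, the pullback $E$ of $\Psi(p)$ along $\Psi(f)$ formed in the abelian category $\Pro(\Vect_k)$ sits in a short exact sequence $0\rarrow\Psi(\fK)\rarrow E\rarrow k\rarrow0$ with both outer terms limit-epimorphic; yet $E$ is not limit-epimorphic, for otherwise the pullback square would be a pullback in $\Pro_\lie(\Vect_k)$ and hence correspond to the pullback in $\Top^\scc_k$, which is $\fK$ with zero map to $k$, contradicting that $E\rarrow k$ is an epimorphism in $\Pro(\Vect_k)$.

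The step I expect to be the real work is the first direction: although ``passing to pro-objects is exact'' is morally evident, the honest identification of $\coker(\Psi(i))$ with $\Psi(\fC)$ genuinely uses the cofinality of $\fU\longmapsto\fK+\fU$ together with the precise description of the open subspaces of $(\fA/\fK)\sphat\,$, and the verification that the canonical cokernel projection agrees with $\Psi(p)$ requires some bookkeeping with the index posets $\Delta$ and $\Delta_\fK$. Everything else is either formal full-subcategory category theory or a direct appeal to the results of Section~\ref{maximal-exact-VSLTs-secn}.
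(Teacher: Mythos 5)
Your proposal is correct and follows essentially the same route as the paper: you identify the sequences of $\Top^\scc_k$ that become exact in $\Pro(\Vect_k)$ with the Ex1 sequences by passing to the termwise-exact projective system $0\rarrow(\fK+\fU)/\fU\rarrow\fA/\fU\rarrow\fA/(\fK+\fU)\rarrow0$ (your cofinality bookkeeping with $\fU\mapsto\fK+\fU$ is exactly the implicit content of the paper's reindexing), and then conclude via the failure of quasi-abelianness from Corollary~\ref{not-left-quasi-abelian-cor} together with Example~\ref{inherited-exact-structure}. Your closing concrete extension counterexample is a bonus that reproduces the Example the paper gives immediately after the proposition.
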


\begin{proof}
 Let us explain part~(b); part~(a) is similar.
 Clearly, any short sequence in $\Pro_\lie(\Vect_k)$ which satisfies
Ex1 in $\Pro(\Vect_k)$ also satisfies Ex1 in $\Pro_\lie(\Vect_k)$.
 Conversely, the class of all short sequences satisfying Ex1 in
$\Top^\scc_k$ was described in
Proposition~\ref{top-groups-spaces-kernels-cokernels-prop}.
 Such sequences have the form $0\rarrow\fK\rarrow\fV\rarrow
(\fV/\fK)\sphat\,\rarrow0$, where $\fV$ is a complete, separated
topological vector space, $\fK\subset\fV$ is a closed subspace with
the induced topology, $\fV/\fK$ is the quotient space with the quotient
topology, and $(\fV/\fK)\sphat\,$ is the completion of $\fV/\fK$,
endowed with the completion topology.
 In this context, consider the projective system of short exact
sequences of vector spaces
\begin{equation} \label{quotient-completion-sequence}
 0\lrarrow(\fK+\fU)/\fU\lrarrow\fV/\fU\lrarrow\fV/(\fK+\fU)
 \rarrow0
\end{equation}
indexed by the directed poset of all open subspaces $\fU\subset\fV$.
 Passing to $\plim$ produces from this projective system a short
exact sequence of pro-objects in $\Vect_k$ corresponding to
the short sequence $0\rarrow\fK\rarrow\fV\rarrow
(\fV/\fK)\sphat\,\rarrow0$ in $\Top^\scc_k$.

 The class of all short sequences satisfying Ex1 in $\Top^\scc_k$ is
not an exact category structure as the category $\Top^\scc_k$ is
not quasi-abelian (see Corollary~\ref{not-left-quasi-abelian-cor}).
 In view of Example~\ref{inherited-exact-structure}, it follows that
the full subcategory $\Pro_\lie(\Vect_k)$ cannot be closed under
extensions in $\Pro(\Vect_k)$.
\end{proof}
 
\begin{ex}
 Let us spell out an explicit example of a short exact sequence of
pro-vector spaces whose leftmost and rightmost terms are
limit-epimorphic, but the middle term isn't.
 A straightforward adaptation of the counterexample
from Corollary~\ref{not-left-quasi-abelian-cor} and
Examples~\ref{not-left-semi-abelian-examples} is sufficient
for this purpose.

 Let $\fV$ be a complete, separated topological vector space with
a closed subspace $\fK\subset\fV$ such that the quotient space
$C=\fV/\fK$ is incomplete in the quotient topology.
 Put $\fC=C\sphat\,$, and choose a vector $x\in\fC\setminus C$.
 Consider the short exact sequence of pro-vector spaces
$$
 0\lrarrow\plim_{\fU\subset\fV}(\fK+\fU)/\fU\lrarrow
 \plim_{\fU\subset\fV}\fV/\fU\lrarrow\plim_{\fU\subset\fV}
 \fV/(\fK+\fU)\lrarrow0
$$
obtained by applying $\plim$ to~\eqref{quotient-completion-sequence}.

 For every open subspace $\fU\subset\fV$, denote by $W_\fU\subset C$
the open subspace $W_\fU=(\fK+\fU)/\fK\subset\fV/\fK=C$, and let
$\fW_\fU\subset\fC$ be the related open subspace in~$\fC$.
 Then we have natural isomorphisms of discrete quotient spaces
$\fV/(\fK+\fU)\simeq C/W_\fU\simeq\fC/\fW_\fU$.
 Hence the choice of an element $x\in\fC$ induces a projective system
of linear maps $k\overset x\rarrow\fV/(\fK+\fU)$ indexed by
the open subspaces $\fU\subset\fV$.
 Taking the pullbacks of~\eqref{quotient-completion-sequence} with
respect to these maps, we obtain a projective system of pullback
diagrams of short exact sequences in $\Vect_k$ and the related
pullback diagram of short exact sequences of pro-vector spaces
\begin{equation} \label{provector-counterex}
\begin{gathered}
\xymatrix{
 \plim_{\fU\subset\fV}(\fK+\fU)/\fU \ar[r] \ar[rd]
 & \plim_{\fU\subset\fV}\fV/\fU \ar[r]
 & \plim_{\fU\subset\fV}\fV/(\fK+\fU) \\
 & \plim_{\fU\subset\fV}(kx\sqcap_{\fV/(\fK+\fU)}\fV/\fU)
 \ar[u]\ar[r] & \plim_{\fU\subset\fV}kx \ar[u]
}
\end{gathered}
\end{equation}

 The lower line in~\eqref{provector-counterex} is a short exact
sequence in $\Pro(\Vect_k)$ (obtained by taking $\plim$ of
the projective system of short exact sequences of vector spaces
$0\rarrow(\fK+\nobreak\fU)/\fU\rarrow kx\sqcap_{\fV/(\fK+\fU)}\fV/\fU
\rarrow kx\rarrow0$ indexed by the directed poset of open
subspaces $\fU\subset\fV$).
 In this short exact sequence of pro-vector spaces, the leftmost term
$\plim_{\fU\subset\fV}(\fK+\fU)/\fU$ is a limit-epimorphic pro-vector
space (corresponding to the complete, separated topological vector
space~$\fK$).
 The rightmost term $\plim_{\fU\subset\fV}kx=kx\in\Vect_k\subset
\Pro(\Vect_k)$ is also a limit-epimorphic pro-vector space
(corresponding to the discrete one-dimensional
topological vector space~$kx$).

 However, the middle term
$S=\plim_{\fU\subset\fV}(kx\sqcap_{\fV/(\fK+\fU)}\fV/\fU)$
is \emph{not} limit-epimorphic.
 In fact, one has $\varprojlim S=kx\sqcap_\fC\fV=\fK$, so the image of
the morphism $\varprojlim S\rarrow S$ in $\Pro(\Vect_k)$ is
the subobject $\plim_{\fU\subset\fV}(\fK+\fU)/\fU=
\ker(S\to kx)\subset S$.
\end{ex}

\Section{Supplemented Pro-Vector Spaces} \label{suppl-pro-secn}

 The definition of the category of supplemented pro-objects
$\Pro^\su(\sC)$ in an abelian category $\sC$ was given in
the previous Section~\ref{pro-vector-spaces-secn}.
 The notion of a supplemented pro-object allows to formulate
a category-theoretic interpretation or generalization of
the theory of topological abelian groups/vector spaces with
linear topology.
 Both the incomplete and the complete topological vector spaces can be
interpreted as full subcategories in supplemented pro-vector spaces.

\begin{lem}
 For any abelian category\/ $\sC$, the category of supplemented
pro-objects\/ $\Pro^\su(\sC)$ is abelian.
 The forgetful functors\/ $\Pro^\su(\sC)\rarrow\Pro(\sC)$ and\/
$\Pro^\su(\sC)\rarrow\sC$ are exact.
\end{lem}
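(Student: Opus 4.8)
The plan is to recognize $\Pro^\su(\sC)$ as the comma category of the exact embedding $\iota\:\sC\rarrow\Pro(\sC)$ (recall that $\iota$ is exact): its objects are the triples $(C,P,\pi)$ with $\pi\:\iota(C)\rarrow P$ a morphism in $\Pro(\sC)$, and its morphisms $(f,g)$ are the pairs with $\pi'\circ\iota(f)=g\circ\pi$. Then I would verify the abelian category axioms ``componentwise'', reducing everything to the fact that $\sC$ and $\Pro(\sC)$ are abelian. The additive structure is routine: the zero object is $(0,0,0)$, the biproduct of $(C,P,\pi)$ and $(C',P',\pi')$ is $(C\oplus C',\,P\oplus P',\,\pi\oplus\pi')$ (a biproduct in $\Pro(\sC)$ being available since that category is abelian), and $\Hom_{\Pro^\su(\sC)}\bigl((C,P,\pi),(C',P',\pi')\bigr)$ is the subgroup of $\Hom_\sC(C,C')\oplus\Hom_{\Pro(\sC)}(P,P')$ cut out by the relation $\pi'\circ\iota(f)=g\circ\pi$, so that composition is bilinear. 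The forgetful functors $\Pro^\su(\sC)\rarrow\sC$ and $\Pro^\su(\sC)\rarrow\Pro(\sC)$ are then additive and jointly faithful.

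Next I would construct kernels and cokernels. For a morphism $(f,g)\:(C,P,\pi)\rarrow(C',P',\pi')$, its kernel has first component $\ker f$, second component $\ker g$, and supplement morphism induced by $\pi$: since $\iota$ takes the zero morphism to the zero morphism, the composite $\iota(\ker f)\rarrow\iota(C)\overset{\pi}\rarrow P\overset{g}\rarrow P'$ equals $\iota(\ker f)\rarrow\iota(C)\overset{\iota(f)}\rarrow\iota(C')\overset{\pi'}\rarrow P'$, which vanishes, so $\pi$ induces a morphism $\iota(\ker f)\rarrow\ker g$; the universal property in $\Pro^\su(\sC)$ follows from joint faithfulness, after composing the relevant identities against the monomorphism $\ker g\rarrow P$. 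Dually, the cokernel has components $\coker f$ and $\coker g$, with supplement morphism $\overline\pi\:\iota(\coker f)\rarrow\coker g$ obtained by factoring $\iota(C')\overset{\pi'}\rarrow P'\rarrow\coker g$ through the cokernel $\iota(C')\rarrow\iota(\coker f)=\coker(\iota(f))$; here exactness of $\iota$ is exactly what is needed, both to identify $\iota(\coker f)$ with $\coker(\iota(f))$ and to know the composite in question kills the image of $\iota(f)$. A byproduct is that a short sequence in $\Pro^\su(\sC)$ satisfies Ex1 if and only if both of its images, in $\sC$ and in $\Pro(\sC)$, do; so once $\Pro^\su(\sC)$ is known to be abelian, a short exact sequence there is precisely one carried to short exact sequences in $\sC$ and in $\Pro(\sC)$, and the two forgetful functors are exact.

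It then remains to check normality: every monomorphism in $\Pro^\su(\sC)$ is a kernel, and dually every epimorphism is a cokernel. If $(f,g)$ is a monomorphism, the inclusion $\kappa$ of its kernel into $(C,P,\pi)$ has zero composite with $(f,g)$, hence $\kappa=0$; but $\kappa$ is itself a monomorphism (it is a kernel), so $\kappa$ can vanish only if the kernel object is zero, whence $\ker f=0$ and $\ker g=0$, i.e.\ $f$ is a monomorphism in $\sC$ and $g$ a monomorphism in $\Pro(\sC)$. Forming the cokernel of $(f,g)$ and then the kernel of that cokernel, computed componentwise as above and using that $\sC$ and $\Pro(\sC)$ are abelian, one obtains an object whose first two components are $\ker(C'\to\coker f)=\im f$ and $\ker(P'\to\coker g)=\im g$; the canonical isomorphisms $C\overset{\sim}\rarrow\im f$ and $P\overset{\sim}\rarrow\im g$ identify this object, as an object over $(C',P',\pi')$, with $(C,P,\pi)$ via the morphism $(f,g)$ itself. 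The one point needing care is that the supplement morphism of the kernel of the cokernel comes back as $\pi$ and not some other morphism $\iota(C)\rarrow P$: it is the unique $\psi$ with $g\circ\psi=\pi'\circ\iota(f)=g\circ\pi$, hence equal to $\pi$ because $g$ is a monomorphism. The epimorphism case is dual, using exactness of $\iota$ to form the cokernel of the kernel. Invoking the standard axiomatization --- an additive category with kernels and cokernels in which every monomorphism is a kernel and every epimorphism is a cokernel is abelian --- we conclude that $\Pro^\su(\sC)$ is abelian; and the componentwise description of (co)kernels obtained along the way gives the exactness of the forgetful functors. I expect this normality step to be the main obstacle: additivity, the componentwise (co)kernel formulas, and the exactness of the forgetful functors all reduce mechanically to the corresponding facts in $\sC$ and $\Pro(\sC)$, whereas recovering a monic (resp.\ epic) pair $(f,g)$ as the kernel of its cokernel (resp.\ the cokernel of its kernel) \emph{together with its supplement datum} genuinely uses the extra observation that both components are separately monic (resp.\ epic).
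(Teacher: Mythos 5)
Your proof is correct. The paper disposes of the lemma in two lines by a different packaging of the same computations: it views $\Pro^\su(\sC)$ as a full additive subcategory of the category of morphisms of $\Pro(\sC)$ (namely, the morphisms whose source lies in the essential image of the exact fully faithful embedding $\sC\rarrow\Pro(\sC)$), observes that the morphism category of an abelian category is abelian, and that this subcategory is closed under kernels and cokernels there --- which is precisely your identities $\iota(\ker f)=\ker\iota(f)$ and $\iota(\coker f)=\coker\iota(f)$; the general principle that a full additive subcategory closed under kernels and cokernels of an abelian category is abelian with exact inclusion then finishes the argument, absorbing in particular the normality check that you carry out by hand. What the paper's route buys is brevity and reuse of standard facts; what your route buys is a self-contained verification together with an explicit componentwise description of kernels, cokernels and short exact sequences in $\Pro^\su(\sC)$, from which the exactness of both forgetful functors drops out immediately rather than being deduced afterwards. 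Your identification of the step that genuinely needs care --- recovering the supplement morphism $\pi$ (resp.\ $\pi'$) when passing to the kernel of the cokernel (resp.\ the cokernel of the kernel), using that $g$ is monic (resp.\ that $\iota$ preserves epimorphisms) --- is exactly right, and is the content hidden inside the paper's appeal to the general principle.
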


\begin{proof}
 The category of morphisms in an abelian category is abelian;
and $\Pro^\su(\sC)$ is a full additive subcategory closed under
kernels and cokernels in the category of morphisms in $\Pro(\sC)$.
\end{proof}

 We refer to Section~\ref{pro-vector-spaces-secn} for the definition of
a \emph{sup-epimorphic} supplemented pro-object.
 Furthermore, we will say that a supplemented pro-object
$(C,P)$ in $\sC$ is \emph{jointly sup-monomorphic} if for
every nonzero morphism $E\rarrow C$ in $\sC$ the composition
$E\rarrow C\rarrow P$ is a nonzero morphism in $\Pro(\sC)$.
 Equivalently, this means that for any nonzero morphism $E\rarrow C$
in $\sC$ there exists an object $D\in\sC$ and a morphism $P\rarrow D$
in $\Pro(\sC)$ such that the composition $E\rarrow C\rarrow P\rarrow D$
is a nonzero morphism in~$\sC$.
 Let $(P_\gamma)_{\gamma\in\Gamma}$ be a directed projective system
in $\sC$ such that $P=\plim_{\gamma\in\Gamma}P_\gamma$, and let 
$\pi_\gamma\:C\rarrow P_\gamma$, \,$\gamma\in\Gamma$, be
a compatible cone of morphisms representing the morphism~$\pi$.
 Then $(C,P)$ is jointly sup-monomorphic if and only if,
for every nonzero morphism $E\rarrow C$ in $\sC$, there exists
$\gamma\in\Gamma$ such that the composition $E\rarrow C\rarrow P
\rarrow P_\gamma$ is nonzero.

 We will denote the full additive subcategory of sup-epimorphic
supplemented pro-objects by $\Pro^\su_\se(\sC)\subset\Pro^\su(\sC)$.
 The full additive subcategory of sup-epimorphic, jointly
sup-monomorphic supplemented pro-objects will be denoted by
$\Pro^\su_{\se,\jsm}(\sC)\subset\Pro^\su_\se(\sC)\subset\Pro^\su(\sC)$.

\begin{lem} \label{subcategories-of-supplemented-proobjects}
 Let\/ $\sC$ be an abelian category.  Then \par
\textup{(a)} the full subcategory\/ $\Pro^\su_\se(\sC)$
of sup-epimorphic supplemented pro-objects is closed under quotients
and extensions in the abelian category\/ $\Pro^\su(\sC)$; \par
\textup{(b)} the full subcategory of jointly sup-monomorphic
supplemented pro-objects is closed under subobjects and extensions
in the abelian category\/ $\Pro^\su(\sC)$; \par
\textup{(c)} the full subcategory\/ $\Pro^\su_{\se,\jsm}(\sC)$
of sup-epimorphic, jointly sup-monomorphic supplemented pro-objects
is closed under subobjects in the additive category of sup-epimorphic
supplemented pro-objects in\/~$\sC$.
\end{lem}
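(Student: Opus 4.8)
The plan is to derive all three parts from the facts recorded just above: $\Pro^\su(\sC)$ is abelian, it sits as a full additive subcategory closed under kernels and cokernels in the category of morphisms of $\Pro(\sC)$, and the forgetful functors $\Pro^\su(\sC)\rarrow\sC$ and $\Pro^\su(\sC)\rarrow\Pro(\sC)$ are exact. In particular, kernels and cokernels in $\Pro^\su(\sC)$ are computed componentwise; hence a morphism $(C',P')\rarrow(C,P)$ is a monomorphism, respectively an epimorphism, in $\Pro^\su(\sC)$ if and only if both $C'\rarrow C$ in $\sC$ and $P'\rarrow P$ in $\Pro(\sC)$ are, and a short sequence in $\Pro^\su(\sC)$ is exact if and only if its images under the two forgetful functors are exact. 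All three parts then reduce to short componentwise diagram chases.

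Part~(a). For closure under quotients: if $(C,P)\rarrow(C'',P'')$ is an epimorphism and $\pi\:C\rarrow P$ is an epimorphism in $\Pro(\sC)$, then the composite $C\rarrow P\rarrow P''$ is an epimorphism; since it factors as $C\rarrow C''\overset{\pi''}\rarrow P''$, the structure map $\pi''$ is an epimorphism, so $(C'',P'')$ is sup-epimorphic. For closure under extensions: applying the forgetful functors to a short exact sequence $0\rarrow(C',P')\rarrow(C,P)\rarrow(C'',P'')\rarrow0$ with sup-epimorphic outer terms yields a commutative ladder with exact rows in $\sC$ and in $\Pro(\sC)$ whose outer vertical arrows $\pi'$ and $\pi''$ are epimorphisms; the snake lemma in the abelian category $\Pro(\sC)$ gives an exact sequence $\coker\pi'\rarrow\coker\pi\rarrow\coker\pi''\rarrow0$, whence $\coker\pi=0$, i.~e., $(C,P)$ is sup-epimorphic.

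Part~(b). For closure under subobjects: let $(C',P')\rarrow(C,P)$ be a monomorphism with $(C,P)$ jointly sup-monomorphic, and let $f\:E\rarrow C'$ be a nonzero morphism in $\sC$; since $C'\rarrow C$ is monic, the composite $E\rarrow C'\rarrow C$ is nonzero, so $E\rarrow C'\rarrow C\rarrow P$ is nonzero in $\Pro(\sC)$, and by commutativity it equals $E\rarrow C'\rarrow P'\rarrow P$, so $E\rarrow C'\rarrow P'$ is nonzero. For closure under extensions: given $0\rarrow(C',P')\overset{(i,j)}\rarrow(C,P)\overset{(p,q)}\rarrow(C'',P'')\rarrow0$ with jointly sup-monomorphic outer terms and a nonzero $f\:E\rarrow C$ in $\sC$, distinguish two cases. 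If $pf\ne0$, then joint sup-monomorphicity of $(C'',P'')$ makes $E\rarrow C\rarrow C''\rarrow P''$ nonzero, which by commutativity is $E\rarrow C\overset{\pi}\rarrow P\rarrow P''$, forcing $\pi f\ne0$. If $pf=0$, componentwise exactness lets one write $f=i\bar f$ with $\bar f\:E\rarrow C'$, and $\bar f\ne0$ since $i$ is monic; then joint sup-monomorphicity of $(C',P')$ makes $E\rarrow C'\rarrow P'$ nonzero, and since $j\:P'\rarrow P$ is monic the composite $E\rarrow C'\rarrow P'\overset{j}\rarrow P$, which equals $\pi f$, is nonzero. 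Hence $(C,P)$ is jointly sup-monomorphic.

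Part~(c), and the main obstacle. If ``subobject'' is read relative to the ambient abelian category $\Pro^\su(\sC)$, then part~(c) is merely the special case of the subobject assertion of~(b) in which the subobject happens to be sup-epimorphic, and nothing further is needed. The point that makes a separate statement worthwhile --- and the only step that is not purely formal --- is that the additive category $\Pro^\su_\se(\sC)$ is \emph{not} abelian: a morphism $m=(m_C,m_P)\:(C_0,P_0)\rarrow(C_1,P_1)$ that is a monomorphism \emph{within} $\Pro^\su_\se(\sC)$ need not be a monomorphism in $\Pro^\su(\sC)$, since its kernel in $\Pro^\su(\sC)$ may have the form $(0,N)$ with $N\ne0$, which no sup-epimorphic test object can detect. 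So the plan is to first show that the underlying morphism $m_C$ is nevertheless a monomorphism in $\sC$: if $N'=\ker(m_C)\ne0$, then $(N',\mathrm{id}_{N'})$ is a sup-epimorphic supplemented pro-object, the morphism $(\iota,\pi_0\iota)\:(N',\mathrm{id}_{N'})\rarrow(C_0,P_0)$ --- with $\iota\:N'\rarrow C_0$ the inclusion and $\pi_0\:C_0\rarrow P_0$ the structure map --- lies in $\Pro^\su_\se(\sC)$, is nonzero, and is annihilated by $m$ (use commutativity of $m$ and $m_C\iota=0$), contradicting that $m$ is monic in $\Pro^\su_\se(\sC)$. Once $m_C$ is known to be monic in $\sC$, the subobject argument of~(b) applies verbatim --- it uses only that $m_C$, not $m_P$, is monic --- so $(C_0,P_0)$ is jointly sup-monomorphic; being sup-epimorphic by hypothesis, it lies in $\Pro^\su_{\se,\jsm}(\sC)$. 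The main obstacle is thus exactly this failure of abelianness of $\Pro^\su_\se(\sC)$, which forces the test-object computation above; everything else in the proof is the snake lemma together with componentwise computation of (co)kernels in the morphism category of $\Pro(\sC)$.
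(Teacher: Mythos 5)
Your proposal is correct. Parts (a) and (b) agree in substance with the paper: the paper declares (a) and the subobject half of (b) ``even easier'' and omits them (your snake-lemma and direct chases are the standard fillings-in), and your two-case analysis for closure of joint sup-monomorphicity under extensions is precisely the contrapositive of the paper's chase, which assumes $\pi f=0$ and deduces $f=0$ by passing first through $(C'',P'')$ and then through $(C',P')$. The genuine divergence is in part (c). Both you and the paper isolate the same obstacle --- a monomorphism $m$ in the non-abelian additive category $\Pro^\su_\se(\sC)$ need not be a monomorphism in the ambient abelian category $\Pro^\su(\sC)$ --- and both resolve it by showing that the $\sC$\+component $m_C$ is nevertheless a monomorphism in $\sC$, after which the subobject argument of (b), which only uses monicity of the $\sC$\+component, applies verbatim. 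You prove this with a test object: if $N'=\ker(m_C)\ne0$, the sup-epimorphic supplemented pro-object with both components $N'$ and identity structure map admits a nonzero morphism $(\iota,\pi_0\iota)$ into the source that is annihilated by~$m$, contradicting monicity. The paper instead computes the kernel of an arbitrary morphism $(f,g)$ of sup-epimorphic supplemented pro-objects explicitly as $(K,S)$, where $K=\ker(f)$ and $S$ is the image of the induced morphism $K\rarrow\ker(g)$ in $\Pro(\sC)$, and then observes that a monomorphism forces $K=0$. The two devices are equivalent in effect; yours is shorter and more elementary, while the paper's buys as a by-product an explicit description of kernels in $\Pro^\su_\se(\sC)$ (in particular, a proof that this additive category has kernels), which is the sort of structural information exploited elsewhere in the paper.
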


\begin{proof}
 We will only prove part~(c) and closedness under extensions in
part~(b), which is not difficult, but all the other assertions of
the lemma are even easier.

 Let $0\rarrow(C',P')\rarrow(C,P)\rarrow(C'',P'')\rarrow0$ be
a short exact sequence in the abelian category $\Pro^\su(\sC)$.
 Assume that the supplemented pro-objects $(C',P')$ and $(C'',P'')$
are jointly sup-monomorphic, and let $E\rarrow C$ be a morphism
in $\sC$ such that the composition $E\rarrow C\rarrow P$ is
a zero morphism in $\Pro(\sC)$.
 Then the composition $E\rarrow C\rarrow C''\rarrow P''$ is also
a zero morphism in $\Pro(\sC)$, hence the composition $E\rarrow C
\rarrow C''$ is a zero morphism in~$\sC$.
 It follows that the morphism $E\rarrow C$ factorizes through
the monomorphism $C'\rarrow C$, leading to a morphism $E\rarrow C'$.
 Now the composition $E\rarrow C'\rarrow P'$ is a zero morphism
in $\Pro(\sC)$, since the morphism $P'\rarrow P''$ is a monomorphism.
 Consequently, the morphism $E\rarrow C'$ is zero, and therefore
the original morphism $E\rarrow C$ is zero as well.

 Let $(f,g)\:(C',P')\rarrow(C'',P'')$ be a morphism of sup-epimorphic
supplemented pro-objects.
 Put $K=\ker(f)\in\sC$ and $Q=\ker(g)\in\Pro(\sC)$.
 Then there is the induced morphism $K\rarrow Q$ in $\Pro(\sC)$,
making $(K,Q)$ a supplemented pro-object.
 Denote by $S$ the image of the morphism $K\rarrow Q$ in $\Pro(\sC)$.
 Then $(K,S)$ is a sup-epimorphic supplemented pro-object, and one
can check that the composition $(K,S)\rarrow(K,Q)\rarrow(C',P')$ is
the kernel of the morphism $(f,g)$ in the category of sup-epimorphic
supplemented pro-objects in~$\sC$.
 In particular, if $(f,g)$ is a monomorphism in the category of
sup-epimorphic supplemented pro-objects, then $K=0$ and therefore
$f\:C'\rarrow C''$ is a monomorphism in~$\sC$.

 Now assume that $(C'',P'')$ is a jointly sup-monomorphic sup-epimorphic
supplemented pro-object, so the composition $E\rarrow C''\rarrow P''$ is
nonzero in $\Pro(\sC)$ for any nonzero morphism $E\rarrow C''$ in~$\sC$.
 Let $E\rarrow C'$ be a nonzero morphism in $\sC$; then the composition
$E\rarrow C'\rarrow C''$ is nonzero as well.
 It follows that the composition $E\rarrow C'\rarrow C''\rarrow P''$
is nonzero, and consequently the composition $E\rarrow C'\rarrow P'$
cannot vanish.
\end{proof}

\begin{lem} \label{quasi-abelian-reflective-subcategory}
\textup{(a)} Let\/ $\sA$ be a left semi-abelian additive category and\/
$\sB\subset\sA$ be a reflective full subcategory with the reflector\/
$\Psi\:\sA\rarrow\sB$ (i.~e., $\Psi$~is left adjoint to
the inclusion functor\/ $\sB\rarrow\sA$).
 Then the full subcategory\/ $\sB$ is closed under subobjects in\/
$\sA$ if and only if the adjunction morphism\/ $\psi_A\:A\rarrow
\Psi(A)$ is a cokernel in\/ $\sA$ for every $A\in\sA$. \par
\textup{(b)} Let\/ $\sA$ be a quasi-abelian additive category and\/
$\sB\subset\sA$ be a reflective full subcategory closed under
subobjects and extensions.
 Then the additive category\/ $\sB$ is quasi-abelian, and
the quasi-abelian exact category structure on\/ $\sB$ coincides with
the exact category structure inherited from the quasi-abelian exact
category structure on\/~$\sA$.
\end{lem}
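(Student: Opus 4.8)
The plan is to establish part~(a) first and then deduce part~(b) from it, leaning on the results on exact and quasi-abelian categories recalled above; throughout I use that $\sA$, being left semi-abelian, has all kernels and cokernels. For the implication that $\psi_A$ being always a cokernel implies $\sB$ is closed under subobjects: given $B\in\sB$ and a monomorphism $i\:A'\rarrow B$ in $\sA$, the adjunction lets me factor $i=j\circ\psi_{A'}$ through the reflection unit, so $\psi_{A'}$ is a monomorphism; being a cokernel with zero kernel, $\psi_{A'}$ is the cokernel of its kernel, i.~e., of $0\rarrow A'$, hence an isomorphism, so $A'\simeq\Psi(A')\in\sB$. For the converse, since $\sA$ is left semi-abelian I can factor $\psi_A$ as a cokernel $e\:A\rarrow J$ followed by a monomorphism $m\:J\rarrow\Psi(A)$: take $J=\coim(\psi_A)$, so that $e$ is the canonical cokernel $A\rarrow\coim(\psi_A)$ and $m$ is the composite of the monomorphism $\coim(\psi_A)\rarrow\im(\psi_A)$ (monic by left semi-abelianness) with the kernel $\im(\psi_A)\rarrow\Psi(A)$. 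As a subobject of $\Psi(A)\in\sB$, the object $J$ lies in $\sB$, so the adjunction factors $e$ as $e=h\circ\psi_A$; then $m\circ h\circ\psi_A=m\circ e=\psi_A$, and uniqueness in the adjunction forces $m\circ h=\mathrm{id}_{\Psi(A)}$. Thus $m$ is a monomorphism admitting a section, hence an isomorphism, and $\psi_A\simeq e$ is a cokernel.

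For part~(b) I would first apply part~(a): a quasi-abelian category is left semi-abelian, so every reflection unit $\psi_A$ is a cokernel in $\sA$. The heart of the argument is to show that kernels and cokernels computed in $\sB$ coincide with those computed in $\sA$. For kernels this is immediate, since the kernel in $\sA$ of a morphism between objects of $\sB$ is a subobject of an object of $\sB$, hence lies in $\sB$, where it plainly has the universal property; in particular $\sB$ has kernels, and the monomorphisms of $\sB$ are exactly its morphisms that are monomorphisms in $\sA$. For cokernels, I would use the standard description of colimits in a reflective subcategory: the cokernel in $\sB$ of $g\:B'\rarrow B''$ is $\psi_C\circ c$, where $c\:B''\rarrow C$ denotes the cokernel of $g$ in $\sA$. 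Now $c$ is a cokernel in $\sA$ and, by part~(a), so is $\psi_C$; since $\sA$ is quasi-abelian its cokernels are precisely the admissible epimorphisms of its exact structure, and these compose by axiom~Ex3(b), so $\psi_C\circ c$ is again a cokernel in $\sA$. Conversely, a cokernel in $\sA$ between objects of $\sB$ is the cokernel of its kernel --- which lies in $\sB$ --- and is therefore a cokernel in $\sB$ as well. In particular $\sB$ has cokernels, so it is preabelian.

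It then follows that a short sequence $0\rarrow B'\rarrow B\rarrow B''\rarrow0$ in $\sB$ satisfies~Ex1 in $\sB$ precisely when it satisfies~Ex1 in $\sA$, i.~e., precisely when it is exact in the quasi-abelian exact structure of $\sA$. Since $\sB$ is closed under extensions, Example~\ref{inherited-exact-structure} shows that the class of all short sequences in $\sB$ that are exact in $\sA$ is an exact category structure on $\sB$; by the preceding remark this coincides with the class of all short sequences in $\sB$ satisfying~Ex1, so the latter is an exact structure --- which is exactly the assertion that $\sB$ is quasi-abelian, with its quasi-abelian structure equal to the one inherited from $\sA$. The main obstacle, as I see it, is the comparison of cokernels: it is the one place where the hypotheses genuinely interact, combining part~(a) with the composability of admissible epimorphisms in the quasi-abelian category $\sA$; the two halves of part~(a) and the final matching of exact structures are then routine.
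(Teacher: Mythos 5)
Your proof is correct and follows essentially the same route as the paper: for part~(a) the same two observations (a monic cokernel is an isomorphism, and the coimage factorization through a subobject of $\Psi(A)$ lying in $\sB$), and for part~(b) the same comparison of kernels and cokernels in $\sB$ versus $\sA$ using part~(a) plus closure of cokernels under composition, followed by the appeal to the inherited exact structure on an extension-closed subcategory. The only cosmetic difference is that you invoke Ex3(b) for the quasi-abelian exact structure on $\sA$ where the paper cites the equivalent left semi-abelian property directly.
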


\begin{proof}
 Part~(a): assume that $\psi_A\:A\rarrow\Psi(A)$ is a cokernel
in $\sA$ for all $A\in\sA$.
 Let $B\in\sB$ be an object and $A\rarrow B$ be a monomorphism
in~$\sA$.
 Then the composition $A\rarrow B\rarrow\Psi(B)$ is a monomorphism,
too, since $\psi_B\:B\rarrow\Psi(B)$ is an isomorphism.
 As the composition $A\rarrow\Psi(A)\rarrow\Psi(B)$ is the same
morphism, it follows that $\psi_A\:A\rarrow\Psi(A)$ is
a monomorphism.
 Now a morphism that is simultaneously a cokernel and a monomorphism
must be an isomorphism; hence $\psi_A$~is an isomorphism, and
$A\in\sB$ because $\Psi(A)\in\sB$.

 Conversely, assume that $\sB$ is closed under subobjects in~$\sA$.
 The assumption that $\sA$ is left semi-abelian means that any morphism
$f\:C\rarrow D$ in $\sA$ factorizes as a cokernel $C\rarrow\coim(f)$
followed by a monomorphism $\coim(f)\rarrow D$.
 Given an object $A\in\sA$, denote by $B=\coim(\psi_A)$ the coimage
of the morphism $\psi_A\:A\rarrow\Psi(A)$.
 Since $\sB$ is closed under subobjects in $\sA$ and $\Psi(A)\in\sB$,
we have $B\in\sB$.
 In view of the universal property of the morphism~$\psi_A$,
the epimorphism $A\rarrow B$ factorizes uniquely through~$\psi_A$;
so we get a morphism $\Psi(A)\rarrow B$.
 It follows easily that the pair of morphisms $B\rarrow\Psi(A)$
and $\Psi(A)\rarrow B$ are mutually inverse isomorphisms.
 It remains to recall that the morphism $A\rarrow B$ is a cokernel
by construction.

 Part~(b): first of all, a reflective full subcategory $\sB$ in
a category $\sA$ with kernels and cokernels also has kernels
and cokernels.
 In fact, any reflective full subcategory $\sB\subset\sA$ is closed
under kernels; and the cokernel of a morphism $f\:B'\rarrow B''$ in
$\sB$ can be constructed by applying the reflector $\Psi$ to
the cokernel of~$f$ in~$\sA$.

 Furthermore, if $\sA$ is left semi-abelian and all the adjunction
morphisms $\psi_A\:A\rarrow\Psi(A)$ are cokernels in $\sA$,
then a morphism $p\:B\rarrow C$ in $\sB$ is a cokernel if and only
if $p$~is a cokernel in~$\sA$.
 Indeed, let $i\:K\rarrow B$ be a kernel of~$p$ in $\sA$, then
$K\in\sB$ and $i$~is a kernel of~$p$ in~$\sB$.
 If $p$~is a cokernel of~$i$ in~$\sA$, then $p$~is also a cokernel
of~$i$ in~$\sB$.
 Conversely, denote by $q\:B\rarrow A$ the cokernel of~$i$ in~$\sA$.
 If $p$~is a cokernel of~$i$ in~$\sB$, then $p$~is the composition
$B\overset q\rarrow A\overset{\psi_A}\rarrow\Psi(A)\simeq C$.
 Since $\sA$ is left semi-abelian, the composition of two cokernels
is a cokernel in~$\sA$.

 Under the same assumptions, a morphism $i\:K\rarrow B$ in $\sB$ is
a kernel if and only if $i$~is a kernel in $\sA$ \emph{and}
the cokernel $\coker_\sA(i)$ of the morphism~$i$ in $\sA$
belongs to~$\sB$.
 Indeed, if $i$~is a kernel in $\sA$, then $i$~is a kernel of
the morphism $B\rarrow\coker_\sA(i)=C$.
 If $C\in\sB$, then $i$~is also a kernel of the morphism $B\rarrow C$
in~$\sB$.
 Conversely, assume that $i$~is a kernel in~$\sB$.
 The cokernel of~$i$ in $\sB$ is computable as $\coker_\sB(i)=
\Psi(C)$.
 So $i$~is a kernel of the morphism $B\rarrow\Psi(C)$ in~$\sB$.
 Then $i$~is also a kernel of the morphism $B\rarrow\Psi(C)$
in~$\sA$.
 By our assumptions, the composition of two cokernels $B\rarrow C
\rarrow\Psi(C)$ is a cokernel in~$\sA$.
 So $B\rarrow\Psi(C)$ is a cokernel of the morphism~$i$ in~$\sA$.
 It follows that $C\rarrow\Psi(C)$ is an isomorphism and $C\in\sB$.

 Now it is clear that a short sequence in $\sB$ satisfies Ex1 if
and only if it satisfies Ex1 in~$\sA$.
 Since $\sB$ is closed under extensions in $\sA$ by assumption,
$\sB$ inherits an exact category structure from the quasi-abelian
exact category structure on~$\sA$
(by Example~\ref{inherited-exact-structure}).
 So the class of all short sequences satisfying Ex1 is an exact
category structure on $\sB$; in other words, this means that
$\sB$ is quasi-abelian. 
\end{proof}

\begin{cor} \label{sup-epimorphic-quasi-abelian-cor}
\textup{(a)} For any abelian category\/ $\sC$,
the additive category\/ $\Pro^\su_\se(\sC)$ of sup-epimorphic
supplemented pro-objects in\/ $\sC$ is quasi-abelian.
 The exact category structure on\/ $\Pro^\su_\se(\sC)$ inherited
from the abelian exact structure of the ambient abelian category
of supplemented pro-objects\/ $\Pro^\su(\sC)$ coincides with
the quasi-abelian exact category structure on\/ $\Pro^\su_\se(\sC)$.
\par
\textup{(b)} Assume that the intersections of families of subobjects
in any given object exist in an abelian category\/~$\sC$.
 Then the additive category\/ $\Pro^\su_{\se,\jsm}(\sC)$ of
sup-epimorphic, jointly sup-monomorphic supplemented pro-objects
in\/ $\sC$ is quasi-abelian.
 The exact category structure on\/ $\Pro^\su_{\se,\jsm}(\sC)$ inherited
from the quasi-abelian exact structure of the ambient quasi-abelian
category of sup-epimorphic supplemented pro-objects\/
$\Pro^\su_\se(\sC)$ coincides with the quasi-abelian exact category
structure on\/ $\Pro^\su_{\se,\jsm}(\sC)$.
\end{cor}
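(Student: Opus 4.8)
The plan is to present $\Pro^\su_\se(\sC)$ and $\Pro^\su_{\se,\jsm}(\sC)$ as (co)reflective full subcategories closed under the appropriate one-sided class of admissible subquotients, and then to quote Lemma~\ref{quasi-abelian-reflective-subcategory}(b). Recall that $\Pro^\su(\sC)$ is abelian, hence quasi-abelian, and that both the notion of a quasi-abelian category and its tautological exact structure (all short sequences satisfying~Ex1) are self-dual, as one sees from Theorem~\ref{quasi-abelian-theorem}; so the dual of Lemma~\ref{quasi-abelian-reflective-subcategory}(b) is at our disposal: \emph{if $\sA$ is quasi-abelian and $\sB\subset\sA$ is a coreflective full subcategory closed under quotients and extensions, then $\sB$ is quasi-abelian and its quasi-abelian exact structure agrees with the one inherited from~$\sA$.}

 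For part~(a) I would invoke this dual statement with $\sA=\Pro^\su(\sC)$ and $\sB=\Pro^\su_\se(\sC)$. Closedness under quotients and extensions is Lemma~\ref{subcategories-of-supplemented-proobjects}(a), so the only point to establish is coreflectivity. I claim that the coreflector sends a supplemented pro-object $(C,P,\pi)$ to $(C,\,\im\pi,\,\bar\pi)$, where $\im\pi\subset P$ is the image of $\pi$ computed in the abelian category $\Pro(\sC)$ and $\bar\pi\:C\rarrow\im\pi$ is its (epimorphic) corestriction, with counit the evident monomorphism $(C,\im\pi)\rarrow(C,P)$. The verification is short: for a sup-epimorphic object $(C',P',\pi')$ and a morphism $(f,g)\:(C',P')\rarrow(C,P)$, the identity $g\pi'=\pi f$ shows that $g\pi'$ factors through $\im\pi$, and since $\pi'$ is an epimorphism this forces $g$ itself to factor through $\im\pi$; that factorization is the required unique lift through the counit. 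Part~(a) then follows from the dual of Lemma~\ref{quasi-abelian-reflective-subcategory}(b); Example~\ref{inherited-exact-structure} could also be used to see that an inherited structure exists at all.

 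For part~(b) I would apply Lemma~\ref{quasi-abelian-reflective-subcategory}(b) in its original form, with $\sA=\Pro^\su_\se(\sC)$ (quasi-abelian by part~(a)) and $\sB=\Pro^\su_{\se,\jsm}(\sC)$. Closedness under subobjects is Lemma~\ref{subcategories-of-supplemented-proobjects}(c); closedness under extensions holds because, by part~(a), a short exact sequence in $\Pro^\su_\se(\sC)$ is exact already in $\Pro^\su(\sC)$, while an extension in $\Pro^\su(\sC)$ of jointly sup-monomorphic objects is again jointly sup-monomorphic by Lemma~\ref{subcategories-of-supplemented-proobjects}(b) (and sup-epimorphic by Lemma~\ref{subcategories-of-supplemented-proobjects}(a)). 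The substantive step --- which I expect to be the main obstacle, and the only place where the hypothesis on intersections of families of subobjects in $\sC$ enters --- is to construct the reflector. Given a sup-epimorphic $(C,P,\pi)$ represented by a projective system of epimorphisms $\pi_\gamma\:C\rarrow P_\gamma$, I would set $K=\bigcap_\gamma\ker(\pi_\gamma)\subset C$, the largest subobject of $C$ whose composite to $P$ vanishes in $\Pro(\sC)$ (it exists by the hypothesis), and check: the maps $C/K\twoheadrightarrow C/\ker(\pi_\gamma)\simeq P_\gamma$ make $(C/K,P)$ sup-epimorphic; it is jointly sup-monomorphic, since a morphism $E\rarrow C/K$ composing to zero in $P$ must factor through $\bigcap_\gamma\bigl(\ker(\pi_\gamma)/K\bigr)=0$; and the morphism $(C,P)\rarrow(C/K,P)$ given by the quotient map and $\mathrm{id}_P$ is universal among morphisms out of $(C,P)$ into jointly sup-monomorphic objects, because for such a target the composite $K\rarrow C\rarrow D$ already maps to zero in the target's pro-object and hence vanishes. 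With the reflector in hand, Lemma~\ref{quasi-abelian-reflective-subcategory}(b) yields part~(b).
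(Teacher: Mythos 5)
Your proposal is correct and follows essentially the same route as the paper: part~(a) via the dual of Lemma~\ref{quasi-abelian-reflective-subcategory}(b) with the coreflector $(C,P)\mapsto(C,\im\pi)$, and part~(b) via the lemma itself with the reflector $(C,P)\mapsto(C/K,P)$, \,$K=\bigcap_\gamma\ker(\pi_\gamma)$, together with Lemma~\ref{subcategories-of-supplemented-proobjects}. The only difference is that you spell out the adjunction verifications, which the paper leaves implicit.
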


\begin{proof}
 Part~(a) is a particular case of the opposite version of
Lemma~\ref{quasi-abelian-reflective-subcategory} (with an abelian
ambient category).
 The full subcategory $\Pro^\su_\se(\sC)$ is closed under quotients
and extensions in $\Pro^\su(\sC)$ by
Lemma~\ref{subcategories-of-supplemented-proobjects}(a).
 Furthermore, the full subcategory $\Pro^\su_\se(\sC)$ is coreflective
in $\Pro^\su(\sC)$: the coreflector $\Phi\:\Pro^\su(\sC)\rarrow
\Pro^\su_\se(\sC)$ assigns to any supplemented pro-object
$(C,P)\in\Pro^\su(\sC)$ the sup-epimorphic  supplemented pro-object
$\Phi(C,P)=(C,Q)$, where $Q$ is the image of the morphism
$\pi\:C\rarrow P$ in the abelian category $\Pro(\sC)$.
 The adjunction morphism $\phi_{(C,P)}\:(C,Q)\rarrow(C,P)$ is
a monomorphism in the abelian category $\Pro^\su(\sC)$.

 Part~(b) is a particular case of
Lemma~\ref{quasi-abelian-reflective-subcategory}.
 The full subcategory $\Pro^\su_{\se,\jsm}(\sC)$ is closed under
subobjects and extensions in $\Pro^\su_\se(\sC)$ by part~(a) and
Lemma~\ref{subcategories-of-supplemented-proobjects}(b\+-c).
 The full subcategory $\Pro^\su_{\se,\jsm}(\sC)$ is reflective in
$\Pro^\su_\se(\sC)$: the reflector $\Psi\:\Pro^\su_\se(\sC)\rarrow
\Pro^\su_{\se,\jsm}(\sC)$ assigns to any sup-epimorphic supplemented
pro-object $(C,P)\in\Pro^\su_\se(\sC)$ the sup-epimorphic, jointly
sup-monomorphic supplemented pro-object $\Psi(C,P)=(C/K,P)\in
\Pro^\su_{\se,\jsm}(\sC)$.
 Here $K\in\sC$ is a subobject in $C$ constructed as follows.
 Let $(P_\gamma)_{\xi\in\Gamma}$ be a directed projective system
in $\sC$ such that $P=\plim_{\gamma\in\Gamma}P_\gamma$, and let
$\pi_\gamma\:C\rarrow P_\gamma$, \,$\gamma\in\Gamma$, be
the compatible cone of morphisms in $\sC$ representing the morphism
$\pi\:C\rarrow P$ in $\Pro(\sC)$.
 Then $K$ is the intersection of the subobjects $\ker(\pi_\gamma)$
in the object $C\in\sC$.
 The adjunction morphism $\psi_{(C,P)}\:(C,P)\rarrow(C/K,P)$ is
a cokernel of the morphism $(K,0)\rarrow(C,P)$ in
$\Pro^\su_\se(\sC)$.
\end{proof}

\begin{prop} \label{nonseparated-top-vector-spaces-as-suppl-proobjects}
\textup{(a)} There is a natural equivalence of additive categories\/
$\Top_\boZ\simeq\Pro^\su_\se(\Ab)$ assigning to a topological
abelian group $A$ with linear topology the sup-epi\-morphic
supplemented pro-abelian group $(C,P)$ with $C=A$ and
$P=\plim_{U\subset A}A/U$, where $U$ ranges over the directed poset
of all open subgroups in~$A$. \par
\textup{(b)} There is a natural equivalence of additive categories\/
$\Top_k\simeq\Pro^\su_\se(\Vect_k)$ assigning to a topological vector
space $V$ with linear topology the sup-epimorphic supplemented
pro-vector space $(C,P)$ with $C=V$ and $P=\plim_{U\subset V}V/U$,
where $U$ ranges over the directed poset of all open subspaces in~$V$.
\end{prop}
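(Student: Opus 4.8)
The plan is to exhibit an explicit quasi-inverse pair of functors, which is the natural refinement to supplemented pro-objects of the adjunction between $\Top_k$ and $\Pro(\Vect_k)$ already used in the proof of Theorem~\ref{complete-top-vector-spaces-as-proobjects}. I will treat part~(b); part~(a) is obtained by replacing $\Vect_k$ with $\Ab$ throughout, and in fact both statements are the same construction performed over an arbitrary abelian category $\sC$, no completeness of $\sC$ being needed. First I would define $\mathsf F\:\Top_k\rarrow\Pro^\su_\se(\Vect_k)$ on objects by $V\longmapsto(V,\,\plim_{U\subset V}V/U)$, where $U$ ranges over the directed poset of open subspaces of $V$ and the supplementing morphism $\pi_V\:V\rarrow\plim_U V/U$ is the cone of quotient maps; a continuous linear $f\:V\rarrow V'$ is sent to $(f,g_f)$, with $g_f$ induced by the maps $V/f^{-1}(U')\rarrow V'/U'$. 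The things to check here are routine: that $\pi_V$ is an epimorphism in $\Pro(\Vect_k)$, which is immediate from the criterion recalled in Section~\ref{pro-vector-spaces-secn} since each quotient map $V\rarrow V/U$ is its own image componentwise, so the cokernel system is pro-zero; and functoriality.

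Next I would define the putative inverse $\mathsf G\:\Pro^\su_\se(\Vect_k)\rarrow\Top_k$ by equipping the underlying vector space $C$ of a sup-epimorphic supplemented pro-object $(C,P)$, $\pi\:C\rarrow P$, with the linear topology whose open subspaces are exactly those $U\subset C$ for which the projection $C\rarrow C/U$ factors through $\pi$ in $\Pro(\Vect_k)$. For a chosen representing system $P=\plim_{\gamma\in\Gamma}P_\gamma$ with cone $\pi_\gamma\:C\rarrow P_\gamma$ this says precisely that $U$ contains some $\ker(\pi_\gamma)$, so the subspaces $\ker(\pi_\gamma)$ form a base of neighbourhoods of zero; directedness of $\Gamma$ together with compatibility of the transition maps with the cone gives the base axioms, and $C$ itself is open because $C\rarrow0$ factors through anything. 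The point that needs real care, and the main potential pitfall, is that this topology must depend only on $(C,P)$ and not on the chosen representing system; here I would compare two systems through the identity morphism of $P$ in $\Pro(\Vect_k)$, which forces every kernel of one cone to contain a kernel of the other and conversely, so the two bases generate the same topology. Continuity of $\mathsf G(f,g)$ for a morphism $(f,g)\:(C,P)\rarrow(C',P')$ I would read off componentwise from the compatibility relation $\pi'_\delta\circ f=(P_\gamma\to P'_\delta)\circ\pi_\gamma$, which gives $\ker(\pi_\gamma)\subset f^{-1}(\ker(\pi'_\delta))$; functoriality is then clear.

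Finally I would check that the two functors are mutually inverse. One composite is the identity on the nose: the base of open subspaces that $\mathsf G\mathsf F$ produces on a topological vector space $V$ consists of the subspaces $\ker(V\rarrow V/U)=U$ with $U$ open in $V$, i.e.\ exactly the open subspaces of $V$, so $\mathsf G\mathsf F(V)=V$ with its original topology, naturally in $V$. For the other composite $\mathsf F\mathsf G\simeq\mathrm{Id}$, which I expect to carry the actual content, the hypothesis of sup-epimorphicity enters essentially: since $\pi\:C\rarrow P$ is an epimorphism, by the discussion in Section~\ref{pro-vector-spaces-secn} the representing system may be taken with all $\pi_\gamma$ surjective, so $P_\gamma=C/\ker(\pi_\gamma)$; then $\gamma\longmapsto\ker(\pi_\gamma)$ exhibits $(P_\gamma)_{\gamma\in\Gamma}$ as a cofinal subsystem of the system $(C/U)_{U\text{ open}}$ defining $\mathsf F\mathsf G(C,P)$, so the canonical maps furnish a natural isomorphism $\plim_{U\subset C}C/U\simeq\plim_{\gamma\in\Gamma}P_\gamma=P$ compatible with the supplementing morphisms out of $C$. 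Then part~(a) follows by the verbatim argument with $\Ab$ in place of $\Vect_k$.
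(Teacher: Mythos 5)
Your proposal is correct and follows essentially the same route as the paper, which defines the forward functor by $V\longmapsto(V,\plim_{U\subset V}V/U)$ and the inverse by declaring $U\subset C$ open iff it contains the subobject $\ker(\pi)$ (equivalently, iff $C\rarrow C/U$ factors through the epimorphism~$\pi$, which is your formulation). The paper records this only as a brief sketch, and your additional verifications (independence of the representing system, cofinality for $\mathsf F\mathsf G\simeq\mathrm{Id}$) are exactly the routine details it leaves implicit.
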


\begin{proof}
 Both the assertions are essentially obvious.
 Let us say a few words about~(b).
 Given a topological vector space $V$, the morphism of pro-vector
spaces $\pi\:V\rarrow\plim_{U\subset V}V/U$ is defined by
the compatible cone of natural surjections $V\rarrow V/U$.
 The morphism~$\pi$ is an epimorphism in $\Pro(\Vect_k)$, since
all the maps $V\rarrow V/U$ are epimorphisms in $\Vect_k$.
 This defines the functor $\Top_k\rarrow\Pro^\su_\se(\Vect_k)$.
 The inverse functor assigns to every supplemented pro-vector space
$(C,P)$ the vector space $C\in\Vect_k$ endowed with the topology
in which a vector subspace $U\subset C$ is open if and only if $U$,
viewed as a subobject of the object $C\in\Vect_k\subset\Pro(\Vect_k)$,
contains the subobject $\ker(\pi)\subset C$, \, $\ker(\pi)\in
\Pro(\Vect_k)$.
 Part~(a) is similar.
\end{proof}

\begin{prop} \label{incomplete-top-vector-spaces-as-suppl-proobjects}
\textup{(a)} The equivalence of additive categories of topological
abelian groups and sup-epimorphic supplemented pro-abelian groups\/
$\Top_\boZ\simeq\Pro^\su_\se(\Ab)$ from
Proposition~\ref{nonseparated-top-vector-spaces-as-suppl-proobjects}(a)
restricts to an equivalence between the full subcategories of
separated topological abelian groups and sup-epimorphic, jointly
sup-monomorphic pro-abelian groups,
$\Top^\s_\boZ\simeq\Pro^\su_{\se,\jsm}(\Ab)$. \par
\textup{(b)} The equivalence of additive categories of topological
vector spaces and sup-epimorphic supplemented pro-vector spaces\/
$\Top_k\simeq\Pro^\su_\se(\Vect_k)$ from
Proposition~\ref{nonseparated-top-vector-spaces-as-suppl-proobjects}(b)
restricts to an equivalence between the full subcategories of
separated topological vector spaces and sup-epimorphic, jointly
sup-monomorphic pro-vector spaces,
$\Top^\s_k\simeq\Pro^\su_{\se,\jsm}(\Vect_k)$.
\end{prop}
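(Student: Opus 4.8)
The plan is to deduce this from Proposition~\ref{nonseparated-top-vector-spaces-as-suppl-proobjects} with essentially no extra work. Since the equivalences $\Top_\boZ\simeq\Pro^\su_\se(\Ab)$ and $\Top_k\simeq\Pro^\su_\se(\Vect_k)$ are already established there, and since $\Top^\s_\boZ$, $\Pro^\su_{\se,\jsm}(\Ab)$, $\Top^\s_k$, $\Pro^\su_{\se,\jsm}(\Vect_k)$ are \emph{full} subcategories of the respective ambient categories, it will suffice to check that a topological vector space $V$ is separated if and only if the sup-epimorphic supplemented pro-vector space $(C,P)=(V,\plim_{U\subset V}V/U)$ corresponding to it under the equivalence is jointly sup-monomorphic; and similarly with $\boZ$ in place of $k$ for abelian groups. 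I would write out the vector space case, the abelian group case being word-for-word the same.

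So I would fix $V$ and use the canonical presentation $P=\plim_{U\subset V}V/U$ with the cone $\pi_U\:V\rarrow V/U$ of quotient maps indexed by the directed poset of open subspaces $U\subset V$. By the criterion for joint sup-monomorphy recorded right after the definition in Section~\ref{suppl-pro-secn}, $(V,P)$ is jointly sup-monomorphic if and only if for every nonzero linear map $E\rarrow V$ there is an open subspace $U\subset V$ with the composition $E\rarrow V\rarrow V/U$ nonzero. The next step is to reduce this to the test object $E=k$: since $k$ is a generator of $\Vect_k$, a linear map $E\rarrow V$ is nonzero exactly when its precomposition with some $k\rarrow E$ is nonzero, so it is enough to impose the condition on maps $k\rarrow V$. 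Finally, a map $k\rarrow V$ is just a vector $v\in V$, and the composition $k\rarrow V\rarrow V/U$ is nonzero precisely when $v\notin U$. Hence $(V,P)$ is jointly sup-monomorphic if and only if for every nonzero $v\in V$ there is an open subspace $U$ of $V$ not containing $v$, i.e., if and only if the intersection of all open subspaces of $V$ is zero, which is exactly the condition that $V$ be separated.

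Combining these observations, the equivalence $\Top_k\simeq\Pro^\su_\se(\Vect_k)$ matches the object classes of $\Top^\s_k$ and $\Pro^\su_{\se,\jsm}(\Vect_k)$, and therefore, being full and essentially surjective, restricts to an equivalence between these two full subcategories; running the same argument with $\boZ$ a generator of $\Ab$ gives $\Top^\s_\boZ\simeq\Pro^\su_{\se,\jsm}(\Ab)$. I do not expect any real obstacle here: the one spot that needs a sentence of care is the passage from arbitrary test objects $E$ to the generator $k$ (or $\boZ$), together with the routine unwinding of ``$E\rarrow V\rarrow V/U$ is nonzero'' into the membership statement $v\notin U$ for $E=k$.
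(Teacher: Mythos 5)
Your argument is correct and follows essentially the same route as the paper, which disposes of the claim in one line by observing that a morphism $E\rarrow V$ composed with $\pi\:V\rarrow\plim_{U\subset V}V/U$ vanishes if and only if its image lies in $\overline{\{0\}}_V=\bigcap_U U$, so that joint sup-monomorphy is exactly separatedness. Your detour through the generator $k$ (resp.\ $\boZ$) is harmless but not needed, since the criterion ``$\im(f)\subset U$ for all open $U$'' already handles arbitrary test objects $E$ directly.
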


\begin{proof}
 Part~(b): given a topological vector space $V$ and the related
(sup-epimorphic) supplemented pro-vector space $(V,P)$, a morphism
of vector spaces $f\:E\rarrow V$ vanishes in composition with
the morphism $\pi\:V\rarrow P$ if and only if the image of~$f$ is
contained in the closure of the zero subgroup in the topological
vector space~$V$.
 Part~(a) is similar.
\end{proof}

 Let $\sC$ be a complete abelian category.
 Then, following the discussion in Section~\ref{pro-vector-spaces-secn},
the exact forgetful functor $\Pro^\su(\sC)\rarrow\Pro(\sC)$ has a right
adjoint functor taking a pro-object $P\in\Pro(\sC)$ to the supplemented
pro-object $(\varprojlim P,\,P)\in\Pro^\su(\sC)$.
 The fully faithful functor $P\longmapsto(\varprojlim P,\,P)$ allows
to consider $\Pro(\sC)$ as a full subcategory in $\Pro^\su(\sC)$.
 Notice that all the supplemented pro-objects in $\Pro(\sC)\subset
\Pro^\su(\sC)$ are jointly sup-monomorphic.
 The following lemma implies that the full subcategory 
$\Pro(\sC)\subset\Pro^\su(\sC)$ is closed under extensions.

\begin{lem} \label{abelian-reflective-closed-under-extensions}
 Let\/ $\sA$ and\/ $\sB$ be abelian categories and\/
$\Theta\:\sA\rarrow\sB$ be an exact functor with a fully faithful
right adjoint functor\/ $\Lambda\:\sB\rarrow\sA$ (so the composition
$\Theta\Lambda\:\sB\rarrow\sB$ is the identity functor).
 Then the essential image $\Lambda(\sB)\subset\sA$ of the functor
$\Lambda$ is closed under extensions as a full subcategory in
the abelian category~$\sA$.
\end{lem}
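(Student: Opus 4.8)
The plan is to identify the essential image $\Lambda(\sB)$ as the full subcategory of objects on which the unit of the adjunction is invertible, and then to deduce closedness under extensions by a snake lemma argument. Let $\eta\:\mathrm{Id}_\sA\rarrow\Lambda\Theta$ and $\varepsilon\:\Theta\Lambda\rarrow\mathrm{Id}_\sB$ denote the unit and counit of the adjunction $\Theta\dashv\Lambda$. That $\Lambda$ is fully faithful (equivalently, the stated hypothesis $\Theta\Lambda=\mathrm{Id}_\sB$) means precisely that $\varepsilon$ is an isomorphism; inserting this into the triangle identity $\Lambda(\varepsilon_B)\circ\eta_{\Lambda B}=\mathrm{id}_{\Lambda B}$ shows that $\eta_{\Lambda B}$ is an isomorphism for every $B\in\sB$. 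Conversely, if $\eta_A$ is an isomorphism then $A\simeq\Lambda(\Theta A)\in\Lambda(\sB)$. Hence $\Lambda(\sB)$ is exactly the full subcategory $\{A\in\sA:\eta_A\text{ is an isomorphism}\}$; put differently, $\Lambda(\sB)$ is a reflective subcategory of $\sA$ whose reflector is $\Lambda\Theta$ and whose reflection morphisms are the $\eta_A$.

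Next I would record that $\Lambda\Theta\:\sA\rarrow\sA$ is a left exact functor: $\Theta$ is exact by assumption, while $\Lambda$, being a right adjoint, preserves all limits and in particular kernels, so the composite preserves kernels. Now let $0\rarrow A'\rarrow A\rarrow A''\rarrow0$ be a short exact sequence in $\sA$ with $A'$, $A''\in\Lambda(\sB)$, so that $\eta_{A'}$ and $\eta_{A''}$ are isomorphisms. Applying the left exact functor $\Lambda\Theta$ yields a left exact sequence $0\rarrow\Lambda\Theta A'\rarrow\Lambda\Theta A\rarrow\Lambda\Theta A''$, and naturality of $\eta$ produces a commutative diagram having the given short exact sequence on top and this three-term sequence on the bottom, with vertical maps $\eta_{A'}$, $\eta_A$, $\eta_{A''}$. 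The snake lemma then provides an exact sequence
\[
 \ker\eta_{A'}\rarrow\ker\eta_A\rarrow\ker\eta_{A''}\rarrow
 \coker\eta_{A'}\rarrow\coker\eta_A\rarrow\coker\eta_{A''}.
\]
Since $\eta_{A'}$ and $\eta_{A''}$ are isomorphisms, all four outer terms vanish, forcing $\ker\eta_A=0=\coker\eta_A$. As $\sA$ is abelian, hence balanced, $\eta_A$ is an isomorphism, so $A\simeq\Lambda\Theta A\in\Lambda(\sB)$, and the essential image is closed under extensions.

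The one point deserving care is the invocation of the snake lemma: the bottom row of the ladder is merely left exact — there is no reason for $\Lambda\Theta A\rarrow\Lambda\Theta A''$ to be an epimorphism — so I would be careful to use the form of the snake lemma suited to this asymmetric situation (top row exact and right exact at its right end, bottom row exact and left exact at its left end), and in particular not the short five lemma, which does not literally apply since the bottom row is not a short exact sequence. Everything else is formal manipulation with the adjunction $\Theta\dashv\Lambda$ and its unit and counit.
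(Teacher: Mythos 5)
Your proof is correct and follows essentially the same route as the paper's: apply the left exact functor $\Lambda\Theta$ to the given short exact sequence, compare with the original via the unit of the adjunction, and use that the outer vertical maps are isomorphisms to conclude the middle one is. The only cosmetic difference is that you finish with the (asymmetric form of the) snake lemma, which you correctly note applies even though the bottom row is only left exact, whereas the paper closes with a direct diagram chase yielding the same conclusion.
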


\begin{proof}
 Let $0\rarrow B'\rarrow A\rarrow B''\rarrow0$ be a short
exact sequence in $\sA$ with $B'$, $B''\in\Lambda(\sB)$.
 Then $0\rarrow\Theta(B')\rarrow\Theta(A)\rarrow\Theta(B'')\rarrow0$
is a short exact sequence in~$\sB$.
 The functor $\Lambda$ is left exact (as a right adjoint functor
between abelian categories), so $0\rarrow\Lambda\Theta(B')\rarrow
\Lambda\Theta(A)\rarrow\Lambda\Theta(B'')$ is a left exact
sequence in~$\sA$.
 Now we have a commutative diagram of adjunction morphism
of short sequences
$$
\xymatrix{
 0\ar[r] & B' \ar[r]\ar@{=}[d] & A \ar[r]\ar[d]
 & B'' \ar[r]\ar@{=}[d] & 0 \\
 0\ar[r] & \Lambda\Theta(B') \ar[r] & \Lambda\Theta(A) \ar[r]
 & \Lambda\Theta(B'')
}
$$
where the morphisms $B'\rarrow\Lambda\Theta(B')$ and $B''\rarrow
\Lambda\Theta(B'')$ are isomorphisms.
 It follows that $\Lambda\Theta(A)\rarrow\Lambda\Theta(B'')$ is
an epimorphism and $A\rarrow\Lambda\Theta(A)$ is an isomorphism;
so $A\in\Lambda(\sB)\subset\sA$.
\end{proof}

\begin{lem} \label{reflective-coreflective-lemma}
 Let\/ $\sA$ and\/ $\sB$ be abelian categories and\/
$\Theta\:\sA\rarrow\sB$ be an exact functor with a fully faithful
right adjoint\/ $\Lambda\:\sB\rarrow\sA$ (so the essential image\/
$\Lambda(\sB)\subset\sA$ of the functor\/ $\Lambda$ is a reflective
full subcategory in\/~$\sA$ with the reflector~$\Lambda\Theta$).
 Furthermore, let\/ $\sE\subset\sA$ be a coreflective full
subcategory with the coreflector\/ $\Phi\:\sA\rarrow\sE$; suppose
that\/ $\sE$ is closed under quotients and extensions in\/~$\sA$.
 Assume that\/ $\Phi(\Lambda(\sB))\subset\Lambda(\sB)$ and\/
$\Lambda\Theta(\sE)\subset\sE$ in\/~$\sA$.
 Then \par
\textup{(a)} the full subcategory\/ $\sF=\Lambda(\sB)\cap\sE\subset\sA$
is right quasi-abelian; \par
\textup{(b)} a morphism in\/ $\sF$ is a kernel in\/ $\sF$ if and only
if it is a monomorphism in\/~$\sA$; \par
\textup{(c)} the full subcategory\/ $\sF\subset\sA$ is closed under
extensions, so it inherits an exact category structure from the abelian
exact structure of the abelian category\/~$\sA$.
\end{lem}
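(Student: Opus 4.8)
The plan is to set up two complementary ``closure'' structures on $\sF=\Lambda(\sB)\cap\sE$ and play them off against each other: $\sF$ is coreflective in the abelian category $\Lambda(\sB)$, and reflective in $\sE$. First I would record the purely formal facts. The essential image $\Lambda(\sB)$ is equivalent to $\sB$ via the fully faithful functor $\Lambda$, hence is an abelian category; its inclusion into $\sA$ is fully faithful and, having the left adjoint $\Lambda\Theta$, preserves all limits (in particular kernels), while the cokernel of a morphism in $\Lambda(\sB)$ is $\Lambda\Theta$ applied to its cokernel in $\sA$; and by Lemma~\ref{abelian-reflective-closed-under-extensions} the subcategory $\Lambda(\sB)$ is closed under extensions in $\sA$. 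Using the hypothesis $\Phi(\Lambda(\sB))\subset\Lambda(\sB)$, a direct check of the adjunctions shows that the restriction of $\Phi$ is a coreflector $\Lambda(\sB)\rarrow\sF$; using $\Lambda\Theta(\sE)\subset\sE$, the restriction of $\Lambda\Theta$ is a reflector $\sE\rarrow\sF$. Since $\sE$ is coreflective in $\sA$ it is closed under colimits (so under finite coproducts), and it is closed under quotients and extensions by hypothesis. It follows that $\sF$ is additive and has kernels and cokernels: the kernel of a morphism $f$ in $\sF$, computed in the coreflective subcategory $\sF\subset\Lambda(\sB)$, is $\Phi(\ker_{\Lambda(\sB)}f)=\Phi(\ker_\sA f)$, and its cokernel, computed in $\Lambda(\sB)$, is $\Lambda\Theta(\coker_\sA f)$; in each case the displayed object lies in $\Lambda(\sB)\cap\sE$ by the closure properties just listed.

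The heart of the matter is part~(b). For the ``only if'' direction I would first prove that for \emph{every} $X\in\sA$ the counit morphism $\Phi(X)\rarrow X$ is a monomorphism in $\sA$. Factoring it through its image $I\subset X$, the object $I$ is a quotient of $\Phi(X)\in\sE$, hence $I\in\sE$; therefore the inclusion $I\rarrow X$ lifts along the counit to a morphism $I\rarrow\Phi(X)$, and the uniqueness clause in the coreflection adjunction forces the epimorphism $\Phi(X)\rarrow I$ to have a one-sided inverse, so it is an isomorphism and $\Phi(X)\rarrow X$ is (isomorphic to) the monomorphism $I\hookrightarrow X$. Granting this, any kernel in $\sF$, being the composite $\Phi(\ker_\sA p)\rarrow\ker_\sA p\rarrow F_2$ of two monomorphisms in $\sA$, is a monomorphism in $\sA$. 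For the ``if'' direction, let $i\:F_1\rarrow F_2$ be a monomorphism in $\sA$ with $F_1,F_2\in\sF$; then $i$ is a monomorphism in the abelian category $\Lambda(\sB)$, so $i=\ker_{\Lambda(\sB)}(q)$ for $q\:F_2\rarrow D$, where $D=\coker_{\Lambda(\sB)}(i)=\Lambda\Theta(\coker_\sA i)$ lies in $\sF$ (its cokernel in $\sA$ is a quotient of $F_2\in\sE$, hence lies in $\sE$, and $\Lambda\Theta$ preserves membership in $\sE$). Since $\Lambda(\sB)\hookrightarrow\sA$ preserves kernels, $\ker_\sA q=\ker_{\Lambda(\sB)}q=F_1$, which already lies in $\sE$; hence $\ker_\sF(q)=\Phi(F_1)=F_1$, and $i$ is a kernel in $\sF$.

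Part~(a) follows quickly. We have already noted that $\sF$ is additive with kernels and cokernels, so it remains to check that any pushout of a kernel is a kernel. Let $i\:F_1\rarrow F_2$ be a kernel in $\sF$ and $f\:F_1\rarrow G$ any morphism in $\sF$; by part~(b), $i$ is a monomorphism in $\sA$, hence in $\Lambda(\sB)$. Since $\sF$ is coreflective in $\Lambda(\sB)$ it is closed under colimits there, so the pushout of $i$ and $f$ in $\sF$ coincides with their pushout in the abelian category $\Lambda(\sB)$, and in an abelian category the pushout of a monomorphism along any morphism is again a monomorphism; thus the structure map $G\rarrow H$ is a monomorphism in $\Lambda(\sB)$, hence in $\sA$, hence a kernel in $\sF$ by part~(b) again. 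Finally, for part~(c): given a short exact sequence $0\rarrow F'\rarrow A\rarrow F''\rarrow 0$ in $\sA$ with $F',F''\in\sF$, one has $A\in\Lambda(\sB)$ by Lemma~\ref{abelian-reflective-closed-under-extensions} and $A\in\sE$ because $\sE$ is closed under extensions, whence $A\in\sF$; so $\sF$ is closed under extensions in $\sA$, and Example~\ref{inherited-exact-structure} yields the inherited exact category structure.

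The main obstacle, as I see it, is bookkeeping rather than ideas: one must keep careful track of in which of the four categories $\sA$, $\Lambda(\sB)$, $\sE$, $\sF$ each (co)kernel, pushout and limit is being formed, and invoke the correct adjunction at each step --- in particular the asymmetry that $\Lambda(\sB)\hookrightarrow\sA$ preserves kernels but not cokernels, whereas $\sE\hookrightarrow\sA$ preserves cokernels and quotients but not kernels. The one step that is not a formal diagram chase is the monomorphicity of the counit $\Phi(X)\rarrow X$, which is where the hypothesis ``$\sE$ is closed under quotients'' is consumed; this is the abstract counterpart of the observation (Remark~\ref{complete-implies-closed}) that a complete, separated topological vector space is closed in any separated space containing it with the induced topology, which underlies the proof of Proposition~\ref{right-quasi-abelian-prop}.
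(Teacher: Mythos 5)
Your proof is correct and follows essentially the same route as the paper's: both exploit that $\sF$ is coreflective in the abelian category $\Lambda(\sB)$ and reflective in $\sE$, characterize the kernels in $\sF$ as the monomorphisms of $\sA$ (using closure of $\sE$ under quotients for the counit and left exactness of $\Lambda\Theta$, equivalently abelianness of $\Lambda(\sB)$, for the unit on cokernels), and then deduce (a) by forming pushouts in the ambient abelian category and (c) from Lemma~\ref{abelian-reflective-closed-under-extensions}. The only cosmetic difference is that you inline a direct proof that the counit $\Phi(X)\rarrow X$ is a monomorphism where the paper cites the dual of Lemma~\ref{quasi-abelian-reflective-subcategory}(a); the argument you give is precisely the dual of that lemma's proof.
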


\begin{proof}
 Part~(b): first of all, the full subcategory $\Lambda(\sB)$ is
closed under kernels in $\sA$ (since it is reflective), while
the full subcategory $\sE$ is closed under cokernels in~$\sA$.
 Furthermore, the functor $\Phi|_{\Lambda(\sB)}\:\Lambda(\sB)
\rarrow\sF$ is the coreflector onto $\sF$ in $\Lambda(\sB)$, while
the functor $\Lambda\Theta|_\sE\:\sE\rarrow\sF$ is the reflector
onto $\sF$ in~$\sE$.
 
 Let $f\:F'\rarrow F''$ be a morphism in~$\sF$.
 Then the kernel of~$f$ in $\sF$ can be computed by applying
the coreflector~$\Phi$ to the kernel of~$f$ in $\sA$ (which
belongs to~$\Lambda(\sB)$), that is $\ker_\sF(f)=\Phi(\ker_\sA(f))$;
while the cokernel of~$f$ in $\sF$ can be computed by applying
the reflector $\Lambda\Theta$ to the cokernel of~$f$ in $\sA$
(which belongs to~$\sE$), i.~e., $\coker_\sF(f)=
\Lambda\Theta(\coker_\sA(f))$.
 Put $B=\ker_\sA(f)\in\sA$; then the composition $\Phi(B)\rarrow F'$
of the adjunction morphism $\Phi(B)\rarrow B$ with the natural
morphism $B\rarrow F'$ is a kernel of the morphism~$f$ in~$\sF$.
 The full subcategory $\sE$ is closed under under quotients in $\sA$
by assumption; so the dual version of
Lemma~\ref{quasi-abelian-reflective-subcategory}(a) tells that
the morphism $\Phi(B)\rarrow B$ is a monomorphism in $\sA$,
and the morphism $B\rarrow F'$ is a monomorphism in $\sA$ by
definition.
 It follows that the composition $\Phi(B)\rarrow B\rarrow F'$ is
a monomorphism in $\sA$, too; thus any kernel in $\sF$ is
a monomorphism in~$\sA$.

 Conversely, let $i\:K\rarrow F$ be a morphism in $\sF$ such that
$i$~is a monomorphism in~$\sA$.
 Put $E=\coker_\sA(i)\in\sA$; then the composition $t\:F\rarrow
\Lambda\Theta(E)$ of the natural morphism $F\rarrow E$ with
the adjunction morphism $E\rarrow\Lambda\Theta(E)$ is a cokernel
of the morphism~$i$ in~$\sF$.
 The functor $\Theta$ is exact by assumption, while the functor
$\Lambda$~is left exact as a right adjoint.
 Applying a left exact functor $\Lambda\Theta$ to the short exact
sequence $0\rarrow K\rarrow F\rarrow E\rarrow0$ in $\sA$, we
obtain a left exact sequence $0\rarrow K\rarrow F\rarrow
\Lambda\Theta(E)$ in~$\sA$.
 Hence the adjunction morphism $E\rarrow\Lambda\Theta(E)$ is
a monomorphism in~$\sA$.
 Since $i$~is a kernel of the epimorphism $F\rarrow E$ in $\sA$,
it is also a kernel of the composition $F\rarrow E\rarrow
\Lambda\Theta(E)$.
 Now $t\:F\rarrow\Lambda\Theta(E)$ is a morphism in~$\sF$;
as $i$~is a kernel of~$t$ in $\sA$, so $i$~is
also a kernel of~$t$ in~$\sF$.

 Part~(a): let $i\:K\rarrow F$ and $g\:K\rarrow L$ be two morphisms
in $\sF$ such that $i$~is a kernel in~$\sF$.
 By part~(b), this means that $i$~is a monomorphism in~$\sA$.
 Denote by $E$ the pushout of the morphisms~$i$ and~$g$ computed
in the category~$\sA$; so $E=\coker_\sA((-g,i)\:K\rarrow L\oplus F$.
 Then the object $\Lambda\Theta(E)$ is the pushout of~$i$ and~$g$
in~$\sF$.
 The natural morphism $L\rarrow\Lambda\Theta(E)$ is the composition
$L\rarrow E\rarrow\Lambda\Theta(E)$.
 The morphism $L\rarrow E$ is a monomorphism in $\sA$, since $\sA$
is an abelian category.
 Following the argument in the previous paragraph, the adjunction
morphism $E\rarrow\Lambda\Theta(E)$ is a monomorphism in~$\sA$.
 The composition of two monomorphisms $L\rarrow\Lambda\Theta(E)$
is again a monomorphism in~$\sA$; being a morphism in $\sF$, it is
a kernel in $\sF$ by part~(b).

 Part~(c): the full subcategory $\Lambda(\sB)\subset\sA$ is closed
under extensions by
Lemma~\ref{abelian-reflective-closed-under-extensions}, while
the full subcategory $\sE\subset\sA$ is closed under extensions
by assumption.
 Hence the intersection $\sF=\Lambda(\sB)\cap\sE$ is closed under
extensions in the abelian category $\sA$, and
Example~\ref{inherited-exact-structure} is applicable.
\end{proof}

 Let $\sC$ be a complete additive category.
 Then the composition of fully faithful additive functors
$\Pro_\lie(\sC)\rarrow\Pro(\sC)\rarrow\Pro^\su(\sC)$ allows
to consider the category $\Pro_\lie(\sC)$ of limit-epimorphic
pro-objects in $\sC$ as a full subcategory in the category
$\Pro^\su(\sC)$ of supplemented pro-objects.
 In fact, a pro-object $P\in\Pro(\sC)$ is limit-epimorphic
if and only if the corresponding supplemented pro-object
$(\varprojlim P,\,P)\in\Pro(\sC)$ is sup-epimorphic; so one has
$\Pro_\lie(\sC)=\Pro(\sC)\cap\Pro^\su_\se(\sC)\subset\Pro^\su(\sC)$.

\begin{prop} \label{limit-epimorphic-inside-supplemented-prop}
 For any complete abelian category\/ $\sC$, the additive category\/
$\Pro_\lie(\sC)$ of limit-epimorphic pro-objects in\/ $\sC$ is
right quasi-abelian.
 The full subcategory\/ $\Pro_\lie(\sC)$ is closed under extensions
in the abelian category\/ $\Pro^\su(\sC)$, and the exact category
structure on\/ $\Pro_\lie(\sC)$ inherited from the abelian exact
structure of\/ $\Pro^\su(\sC)$ coincides with the maximal exact
structure on\/ $\Pro_\lie(\sC)$.
\end{prop}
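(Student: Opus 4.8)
The plan is to realize $\Pro_\lie(\sC)$ as the intersection of a reflective and a coreflective full subcategory of the abelian category $\Pro^\su(\sC)$ and to invoke Lemma~\ref{reflective-coreflective-lemma}. Take $\sA=\Pro^\su(\sC)$ and $\sB=\Pro(\sC)$ (both abelian, as $\sC$ is), let $\Theta\:\Pro^\su(\sC)\rarrow\Pro(\sC)$ be the exact forgetful functor and $\Lambda\:\Pro(\sC)\rarrow\Pro^\su(\sC)$, $P\longmapsto(\varprojlim P,P)$, its fully faithful right adjoint, so $\Lambda(\sB)$ is the reflective full subcategory $\Pro(\sC)\subset\Pro^\su(\sC)$ with reflector $\Lambda\Theta$. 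Take $\sE=\Pro^\su_\se(\sC)$; by Corollary~\ref{sup-epimorphic-quasi-abelian-cor}(a) it is coreflective in $\Pro^\su(\sC)$ with coreflector $\Phi$, and by Lemma~\ref{subcategories-of-supplemented-proobjects}(a) it is closed under quotients and extensions there. As recorded immediately before the proposition, $\Lambda(\sB)\cap\sE=\Pro(\sC)\cap\Pro^\su_\se(\sC)=\Pro_\lie(\sC)$.

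First I would check the two compatibility conditions required by Lemma~\ref{reflective-coreflective-lemma}. For $\Lambda\Theta(\sE)\subseteq\sE$: if the supplement $C\rarrow P$ of $(C,P)$ is an epimorphism in $\Pro(\sC)$, it factors through the canonical projection $\varprojlim P\rarrow P$, which is then epi, so $\Lambda\Theta(C,P)=(\varprojlim P,P)$ lies in $\sE$. For $\Phi(\Lambda(\sB))\subseteq\Lambda(\sB)$: for $P\in\Pro(\sC)$ one has $\Phi(\Lambda P)=(\varprojlim P,\,\im(\varprojlim P\rarrow P))$, and writing $m$ for the inclusion of that image into $P$, left-exactness of $\varprojlim$ makes $\varprojlim m$ a monomorphism, while applying $\varprojlim$ to the image factorization of $\varprojlim P\rarrow P$ exhibits a section of $\varprojlim m$ (triangle identity); so $\varprojlim m$ is an isomorphism and $\Phi(\Lambda P)\simeq\Lambda(\im(\varprojlim P\rarrow P))\in\Lambda(\sB)$. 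Now Lemma~\ref{reflective-coreflective-lemma} yields at once that $\Pro_\lie(\sC)$ is right quasi-abelian, that it is closed under extensions in $\Pro^\su(\sC)$, and that it inherits an exact category structure; and by part~(b) of that lemma, a morphism of $\Pro_\lie(\sC)$ is a kernel in $\Pro_\lie(\sC)$ exactly when it is a monomorphism in $\Pro^\su(\sC)$.

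It remains to identify the inherited exact structure with the maximal one. One inclusion is automatic by Theorem~\ref{maximal-exact-theorem}. For the other, since $\Pro_\lie(\sC)$ is right quasi-abelian, Example~\ref{right-quasi-abelian-semi-stable-example} reduces the task to showing that a short sequence $0\rarrow F'\overset i\rarrow F\overset p\rarrow F''\rarrow0$ in $\Pro_\lie(\sC)$ which satisfies Ex1 and in which $p$ is a semi-stable cokernel is exact already in $\Pro^\su(\sC)$. By the previous paragraph $i$ is a monomorphism in $\Pro^\su(\sC)$; put $E=\coker_{\Pro^\su(\sC)}(i)$, which as a quotient of $F$ lies in $\Pro^\su_\se(\sC)$. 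Since cokernels in a reflective subcategory are computed by applying the reflector, $F''\simeq\Lambda\Theta(E)$ and $p$ is the composition $F\rarrow E\rarrow\Lambda\Theta(E)$ of the epimorphism $F\rarrow E$ with the adjunction morphism, the latter being a monomorphism in $\Pro^\su(\sC)$ (cf.\ the proof of Lemma~\ref{reflective-coreflective-lemma}(b)). As $\Theta(p)=\coker_{\Pro(\sC)}(\Theta(i))$ is already an epimorphism in $\Pro(\sC)$, and $F$, $F''$ being in $\Pro(\sC)$ are their own $\Lambda\Theta$, the whole matter comes down to proving that the $\sC$\+component of $p$, the map $\varprojlim\Theta(F)\rarrow\varprojlim\Theta(F'')$, is an epimorphism in $\sC$; equivalently, writing $Z=\varprojlim\Theta(F'')$ and $C_E=\im(\varprojlim\Theta(F)\rarrow Z)$ (the $\sC$\+component of $E$), that $C_E=Z$.

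This is the step where semi-stability of $p$ must be used, and I expect it to be the main obstacle. Let $h\:\Lambda(\iota Z)\rarrow F''$ be the morphism of $\Pro^\su(\sC)$ determined by the identity $Z\rarrow\varprojlim\Theta(F'')$, where $\iota\:\sC\rarrow\Pro(\sC)$ is the canonical embedding (note $\Lambda(\iota Z)\in\Pro_\lie(\sC)$), and form the pullback of $p$ along $h$ in $\Pro_\lie(\sC)$. Since $\Pro(\sC)$ is closed under kernels in $\Pro^\su(\sC)$, this pullback is $\Phi$ applied to the pullback $G$ computed in $\Pro^\su(\sC)$, and $\Phi$ alters only the pro\+component. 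By semi-stability the projection of the pullback onto $\Lambda(\iota Z)$ is a cokernel, hence an epimorphism, in $\Pro_\lie(\sC)$; and a morphism of $\Pro_\lie(\sC)$ is an epimorphism if and only if its image under $\Theta$ is an epimorphism in $\Pro(\sC)$ (because cokernels in $\Pro_\lie(\sC)$ are $\Lambda\Theta$ of cokernels in $\Pro^\su(\sC)$, and $\Lambda$, being fully faithful, reflects the zero object). Finally one computes images componentwise: the image of $G\rarrow\Lambda(\iota Z)$ in $\Pro^\su(\sC)$ is $h^{-1}$ of the image $E$ of $p$, with $\sC$\+component $C_E$ and pro\+component all of $\iota Z$, and passing through $\Phi$ forces the image of $\Theta$ of the projection to be $\iota(C_E)$; the epimorphy then gives $\iota(C_E)=\iota Z$, i.e.\ $C_E=Z$, so $E\rarrow F''$ is an isomorphism and the sequence is exact in $\Pro^\su(\sC)$. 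The genuinely delicate point is this last componentwise bookkeeping — carefully matching up the pullback taken in $\Pro_\lie(\sC)$, the pullback taken in $\Pro^\su(\sC)$, the effect of $\Phi$, and the fact that images in $\Pro^\su(\sC)$ (a full subcategory of the category of morphisms of $\Pro(\sC)$) are formed componentwise.
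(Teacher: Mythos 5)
Your proposal is correct and follows essentially the same route as the paper's proof: the same decomposition $\Pro_\lie(\sC)=\Lambda(\Pro(\sC))\cap\Pro^\su_\se(\sC)$ with the same verification of the two compatibility hypotheses of Lemma~\ref{reflective-coreflective-lemma}, and the same pullback of an admissible epimorphism along the morphism from the ``constant'' supplemented pro-object $(Z,\iota Z)$ to detect that $C_E\rarrow Z$ is an isomorphism. The only cosmetic differences are that you check $\Phi(\Lambda(\sB))\subset\Lambda(\sB)$ via left exactness of $\varprojlim$ and the triangle identity instead of the explicit projective-system description, and that you conclude via ``epimorphy in $\sF$ is detected by $\Theta$'' plus an image computation where the paper computes the cokernel explicitly; both are equivalent.
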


\begin{proof}
 Consider the abelian categories $\sA=\Pro^\su(\sC)$ and
$\sB=\Pro(\sC)$, the forgetful exact functor $\Theta\:\Pro^\su(\sC)
\rarrow\Pro(\sC)$, and the fully faithful functor $\Lambda\:\Pro(\sC)
\rarrow\Pro^\su(\sC)$ right adjoint to~$\Theta$; so
$\Lambda(P)=(\varprojlim P,\,P)$ for every $P\in\Pro(\sC)$.
 Consider further the coreflective full subcategory
$\Pro^\su_\se(\sC)\subset\Pro^\su(\sC)$ with the coreflector
$\Phi\:\Pro^\su(\sC)\rarrow\Pro^\su_\se(\sC)$ described in the proof
of Corollary~\ref{sup-epimorphic-quasi-abelian-cor}(a);
so $\Phi$ takes an arbitrary supplemented pro-object
$(C,P)\in\Pro^\su(\sC)$ to the supplemented pro-object $(C,Q)$,
where $Q$ is the image of the morphism $\pi\:C\rarrow P$
in $\Pro(\sC)$.

 Let $P=\plim_{\gamma\in\Gamma}P_\gamma$ be a pro-object in $\sC$;
denote by $Q_\delta\subset P_\delta$ the image of the projection
morphism $\varprojlim_{\gamma\in\Gamma} P_\gamma\rarrow P_\delta$
in the category~$\sC$.
 Clearly, one has $\varprojlim_{\gamma\in\Gamma}Q_\gamma\simeq
\varprojlim_{\gamma\in\Gamma}P_\gamma$ in~$\sC$, and it follows
that $\Phi(\Lambda(\sB))\subset\Lambda(\sB)$.

 Let $(C,P)$ be a sup-epimorphic supplemented pro-object in $\sC$,
so the morphism $\pi\:C\rarrow P$ is an epimorphism in $\Pro(\sC)$.
 The morphism $C\rarrow P$ factorizes naturally as $C\rarrow
\varprojlim P\rarrow P$, and it follows that $\varprojlim P
\rarrow P$ is also an epimorphism in $\Pro(\sC)$.
 Hence we have $\Lambda\Theta(\sE)\subset\sE$.

 Therefore, Lemma~\ref{reflective-coreflective-lemma} is applicable
to our set of data, and we can conclude that the additive category
$\Pro_\lie(\sC)=\sF=\Lambda(\sB)\cap\sE$ is right quasi-abelian and
closed under extensions as a full subcategory in $\sA=\Pro^\su(\sC)$.
 It remains to explain that the inherited exact category structure
on $\Pro_\lie(\sC)$ from the abelian exact structure on $\Pro^\su(\sC)$
is the maximal one.
 Indeed, let $0\rarrow K\overset i\rarrow F\overset p\rarrow G\rarrow0$
be a short exact sequence in the maximal exact category structure on
$\Pro_\lie(\sC)$.
 Then $i$~is a kernel in $\sF$, hence by
Lemma~\ref{reflective-coreflective-lemma}(b)
\,$i$~is a monomorphism in~$\sA$.
 Furthermore, $p$~is the cokernel of~$i$ in $\sF$; following
the discussion in the proof of
Lemma~\ref{reflective-coreflective-lemma}(b), this means that
$p$~is the composition $F\rarrow E\rarrow G$, where $F\rarrow E$
is the cokernel of~$i$ in $\sA$ and $E\rarrow\Lambda\Theta(E)=G$ is
the adjunction morphism.
 Moreover, following the same argument, the adjunction morphism
$E\rarrow G$ is a monomorphism in~$\sA$.

 Given an object $A\in\sA=\Pro^\su(\sC)$, let us use the notation
$A=(C_A,P_A)$ the related pair of objects $C_A\in\sC$ and
$P_A\in\Pro(\sC)$, with the structure morphism $\pi_A\:C_A\rarrow P_A$.
 Then the morphisms $C_K=\varprojlim P_K\rarrow P_K$ and
$C_F=\varprojlim P_F\rarrow P_F$ are epimorphisms in $\Pro(\sC)$,
the short sequence $0\rarrow P_K\rarrow P_F\rarrow P_E=P_G\rarrow0$ is
exact in $\Pro(\sC)$, the short sequence $0\rarrow C_K\rarrow C_F
\rarrow C_E\rarrow0$ is exact in $\sC$, and the morphisms
$C_E\rarrow P_E$ and $C_G=\varprojlim P_E\rarrow P_E=P_G$ are also
epimorphisms in $\Pro(\sC)$.
 The morphism $C_E\rarrow C_G$ is a monomorphism in $\sC$, and we
have to show that it is an isomorphism.

 Denote by $D$ the supplemented pro-object $D=(C_D,P_D)$ with
$C_D=C_G\in\sC$ and $P_D=C_G\in\sC\subset\Pro(\sC)$; so the morphism
$\pi_D\:C_D\rarrow P_D$ is an isomorphism in $\Pro(\sC)$.
 (This is our analogue of the discrete abelian group $\boZ$ from
the proof of Proposition~\ref{vslt-semi-stable-cokernels-prop}
or the discrete vector space~$kx$ from
Examples~\ref{not-left-semi-abelian-examples}.)
 Clearly, we have $D\in\sF$.
 Denote by $g\:D\rarrow G$ the natural morphism in $\sF$ with
the components $\mathrm{id}\:C_D\rarrow C_G$ and $\pi_G\:
P_D\rarrow P_G$.
 Then the pullback of the pair of morphisms $p$ and~$g$ in
the category $\sF$ can be computed by applying the coreflector $\Phi$
to the pullback $H=(C_F,\>C_G\sqcap_{P_G}P_F)$ of the same
of morphisms in the category~$\sA$.

 Clearly, the image of the morphism~$\pi_H$ is contained the subobject
$C_E\sqcap_{P_E}P_F\subset C_G\sqcap_{P_E}P_F=P_H$.
 In fact, the morphism $C_H=C_F\rarrow C_E\sqcap_{P_E}P_F$ is
an epimorphism in $\Pro(\sC)$, essentially because the morphism
$F\rarrow E$ is a cokernel of the morphism $K\rarrow F$ in
$\Pro^\su(\sC)$ and the morphism $\pi_K\:C_K\rarrow P_K$ is
an epimorphism in $\Pro(\sC)$.
 Thus we have $\Phi(H)=(C_F,\>C_E\sqcap_{P_E}P_F)\in\sF$.

 By axiom Ex2(b) or Ex2$'$(b), the class of all short exact sequences
in the maximal exact category structure on $\sF$ must be closed
under pullbacks; in other words, the morphism $p\:F\rarrow G$ has to
be a semi-stable cokernel in $\sF$, which means that the natural
morphism $q\:\Phi(H)\rarrow D$ must be a cokernel in~$\sF$.
 One has $\ker_\sF(q)=\ker_\sA(q)=K$ and $\coker_\sA(K\to\Phi(H))=
(C_E,C_E)$, hence $\coker_\sF(K\to\Phi(H))=\Lambda\Theta(C_E,C_E)=
(C_E,C_E)$.
 Hence $q$~is a cokernel in $\sF$ if and only if $C_E\rarrow C_G$
is a isomorphism.
 In the latter case, the short sequence $0\rarrow K\rarrow F
\rarrow G\rarrow0$ is exact in~$\sA$, and we are done.
\end{proof}

\begin{conc} \label{proobjects-conclusion}
 Viewed as a full subcategory in the abelian category of pro-vector
spaces $\Pro(\Vect_k)$, the category of complete, separated topological
vector spaces with linear topology $\Top^\scc_k$ is not well-behaved.
 It is \emph{not} closed under extensions and does \emph{not} inherit
an exact category structure
(see Proposition~\ref{does-not-inherit-prop}(b)).

 Viewed as a full subcategory in the abelian category of supplemented
pro-vector spaces $\Pro^\su(\Vect_k)$, the additive category
$\Top^\scc_k$ is better behaved.
 It is closed under extensions and inherits an exact category structure;
the inherited exact category structure is the maximal exact category
structure on $\Top^\scc_k$.
 (Compare Theorem~\ref{complete-top-vector-spaces-as-proobjects}(b)
with Proposition~\ref{limit-epimorphic-inside-supplemented-prop}.)

 The theory developed in
Sections~\ref{pro-vector-spaces-secn}\+-\ref{suppl-pro-secn}
extends the notion of a topological abelian group/vector space with
linear topology from the case of abelian groups or vector spaces
to arbitrary abelian categories $\sC$ with infinite products.
\end{conc}

\Section{The Strong Exact Category Structure on VSLTs}
\label{strong-exact-structure-secn}

 The following construction plays a key role in the theory of
contramodules over topological rings~\cite[Remark~A.3]{Psemi},
\cite[Section~1.2]{Pweak}, \cite[Section~2.1]{Prev},
\cite[Sections~1.2 and~5]{PR}, \cite[Section~6.2]{PS},
\cite[Section~2.7]{Pcoun}, \cite[Sections~2.5\+-2.7]{Pproperf},
\cite[Section~1]{PS3}.

 Let $\fA$ be a complete, separated topological abelian group with
linear topology, and let $X$ be a set.
 A family of elements $(a_x\in\fA)_{x\in X}$ is said to \emph{converge
to zero} in the topology of $\fA$ if for every open subgroup $\fU\subset
\fA$ the set $\{x\in X\mid a_x\notin\fU\}$ is finite.
 In the terminology of~\cite[Section~1]{Niel}, such a family of
elements $(a_x)_{x\in X}$ in a complete, separated topological
abelian group $\fA$ would be called ``$S$\+Cauchy'' or ``summable''.

 We will consider infinite formal linear combinations $\sum_{x\in X}
a_xx$ of elements of $X$ with families of coefficients $(a_x)_{x\in X}$
converging to zero in the topology of~$\fA$.
 Such infinite formal linear combinations form an abelian group, which
we will denote by $\fA[[X]]$.
 Equivalently, one can define $\fA[[X]]$ as the projective limit
$$
 \fA[[X]]=\varprojlim\nolimits_{\fU\subset\fA}(\fA/\fU)[X],
$$
where $\fU$ ranges over the poset of all open subgroups in $\fA$ and,
for any abelian group $A$, the notation $A[X]=A^{(X)}$ stands for
the direct sum of $X$ copies of $A$, interpreted as the set of all
finite formal linear combinations of elements of $X$ with
the coefficients in~$A$.
 Furthermore, one can endow $\fA[[X]]$ with the topology of projective
limit of discrete abelian groups $(\fA/\fU)[X]$, in which the kernels
of the projection maps $\fA[[X]]\rarrow(\fA/\fU)[X]$ form a base of
neighborhoods of zero.
 Obviously, this makes $\fA[[X]]$ a complete, separated topological
abelian group.

 The rule assigning the topological abelian group $\fA[[X]]$ to
a topological abelian group $\fA\in\Top^\scc_\boZ$ and a set $X$ is
a covariant functor of two arguments
\begin{equation} \label{abelian-convergent-linear-combinations-functor}
 {-}[[{-}]]\:\Top^\scc_\boZ\times\Sets\lrarrow\Top^\scc_\boZ.
\end{equation}
 Specifically, if $g\:\fA\rarrow\fB$ is a continuous homomorphism
of complete, separated topological abelian groups and $X$ is a set,
then the induced continuous homomorphism
$g[[X]]\:\fA[[X]]\rarrow\fB[[X]]$ takes a formal linear combination
$\sum_{x\in X}a_xx\in\fA[[X]]$ to the formal linear combination
$\sum_{x\in X}g(a_x)x\in\fB[[X]]$.
 If $\fA$ is a complete, separated topological abelian group and
$f\:X\rarrow Y$ is a map of sets, then the induced continuous
homomorphism $\fA[[f]]\:\fA[[X]]\rarrow\fA[[Y]]$ takes 
a formal linear combination $\sum_{x\in X}a_xx\in\fA[[X]]$ to
the formal linear combination $\sum_{y\in Y}c_yy\in\fA[[Y]]$,
where $c_y=\sum_{x:f(x)=y}a_x$ is the sum of a zero-convergent
family of elements in $\fA$, defined as the limit of finite partial
sums in the topology of~$\fA$.

 For any complete, separated topological vector space $\fV$ with
linear topology and any set $X$, the topological abelian group
$\fV[[X]]$ is naturally a (complete, separated) topological vector
space with linear topology, too.
 In other words, one has a covariant functor of two arguments
\begin{equation} \label{vector-convergent-linear-combinations-functor}
 {-}[[{-}]]\:\Top^\scc_k\times\Sets\lrarrow\Top^\scc_k
\end{equation}
which agrees with
the functor~\eqref{abelian-convergent-linear-combinations-functor}
and the forgetful functor $\Top^\scc_k\rarrow\Top^\scc_\boZ$.

 We will say that a continuous homomorphism of complete, separated
topological abelian groups $p\:\fA\rarrow\fC$ is \emph{strongly
surjective} if the map $p[[X]]\:\fA[[X]]\rarrow\fC[[X]]$ is surjective
for every set~$X$.
 A continuous homomorphism of complete, separated topological vector
spaces is \emph{strongly surjective} if it is strongly surjective
as a map of topological abelian groups.

 Furthermore, let $\fA$ be a complete, separated topological abelian
group (or vector space), and let $i\:\fK\rarrow\fA$ be a closed
injective morphism of topological abelian groups/vector spaces.
 We will say that the map $i$~is \emph{strongly closed} if
the quotient group/vector space $\fA/i(\fK)$ is complete in
the quotient topology and the quotient map $\fA\rarrow\fA/i(\fK)$
is strongly surjective.
 In this case, the closed subgroup/subspace $i(\fK)\subset\fA$ will
be also called \emph{strongly closed}.
 The notion of a strongly closed subgroup in a topological group is
important for the paper~\cite{Pproperf}, where it is discussed
in~\cite[Sections~2.11\+-2.12]{Pproperf}, and even more important
in the paper~\cite{PS3}.

 It is clear from Corollary~\ref{vslt-stable-kernels-cor} that any
strongly closed injective map is stably closed (in the sense of
the definition at the end of Section~\ref{maximal-exact-VSLTs-secn}).
 The counterexample following below shows that the inverse implication
fails.
 In other words, a surjective continuous open linear map (of complete,
separated topological vector spaces) need \emph{not} be strongly
surjective.
 
\begin{rem}
 It may be worthwhile to emphasize what the previous definitions mean.
 Let $p\:\fA\rarrow\fC$ be a surjective continuous homomorphism of
complete, separated topological abelian groups.
 Let us even assume that $p$~is open.
 What does it mean that $p$~is strongly surjective?

 By the definition, the relevant question is the following one.
 Let $(c_x\in\fC_x)_{x\in X}$ be a family of elements in $\fC$,
indexed by a set $X$ and converging to zero in the topology of~$\fC$.
 Can one lift the family $(c_x)_{x\in X}$ to a family of elements
$(a_x\in\fA)_{x\in X}$ such that $p(a_x)=c_x$ for every $x\in X$
\emph{and} the family of elements $(a_x)_{x\in X}$ converges to
zero in the topology of~$\fA$\,?

 One can lift every single element $c_x\in\fC$ to an element $a_x\in\fA$,
since the map~$p$ is surjective by assumption.
 But can one lift a whole zero-convergent family of elements in such
a way that it remains zero-convergent?
 Here is an example showing that this \emph{cannot} be done
(generally speaking). 
\end{rem}

\begin{ex} \label{strong-is-not-maximal}
 Let $Q$ be a separated topological abelian group and $I$ be
an infinite set.
 Consider the complete, separated topological abelian group
$\fA_I(Q)=Q^{(I)}$ in the modified coproduct topology, as in
Theorem~\ref{top-abelian-main-theorem}.
 We claim that \emph{no infinite family of nonzero elements in\/
$\fA_I(Q)$ converges to zero in the modified coproduct topology}.
 In other words, this means that the natural embedding of (abstract,
nontopological) abelian groups $\fA[X]\rarrow\fA[[X]]$, defined in
the obvious way for any complete, separated topological abelian
group $\fA$, is a bijection for $\fA=\fA_I(Q)$.
 A version of this assertion is included into the formulation
of~\cite[Proposition~11.1]{RD}.

 Indeed, let $(a_x\in\fA)_{x\in X}$ be a family of nonzero elements
in~$\fA$.
 For every $x\in X$, the element $a_x\in\fA$ can be viewed as
a family of elements $a_x=(q_{x,i}\in Q)_{i\in I}$ with $q_{x,i}=0$
for all but a finite subset of indices~$i$.
 Given a subset $Y\subset X$, denote by $J_Y\subset I$ the subset
of all indices $j\in I$ for which there exists an index $y\in Y$
such that $q_{y,j}\ne0$ in~$Q$.
 Consider two possibilities separately: either there exists a subset
$Y\subset X$ such that the complement $X\setminus Y$ is finite and
the set $J_Y$ is finite, or for any subset $Y\subset X$ with a finite
complement $X\setminus Y$ the set $J_Y$ is infinite.

 In the first case, put $J=J_Y$ and consider the open subgroup
$U=\bigoplus_{j\in J}\{0\}\oplus\bigoplus_{l\in I\setminus J}Q
\subset\bigoplus_{i\in I}Q=\fA$.
 For every $y\in Y$, we have $q_{y,l}=0$ for all $l\in I\setminus J$
(by the definition of $J=J_Y$).
 Since $a_y\ne0$, there should exist an index $j\in J$ such that
$q_{y,j}\ne0$.
 Hence $a_y\notin U$.
 Since the family of elements $(a_x)_{x\in X}$ converges to zero in
$\fA$ by assumption, it follows that the set $Y$ must be finite.
 As the complement $X\setminus Y$ is finite, too, we can conclude
that the set of indices $X$ is finite, as desired.

 In the second case, choose a pair of indices $j_1\in I$ and
$x_1\in X$ such that $q_{x_1,j_1}\ne0$.
 Since the set $J_{X\setminus\{x_1\}}$ is infinite, we can choose
a pair of indices $j_2\in I$ and $x_2\in X$ such that $j_2\ne j_1$,
\ $x_2\ne x_1$, and $q_{x_2,j_2}\ne0$.
 Since the set $J_{X\setminus\{x_1,x_2\}}$ is infinite, we can
choose a pair of indices $j_3\in I$ and $x_3\in X$ such that
$j_1\ne j_3\ne j_2$,\ $x_1\ne x_3\ne x_2$, and $q_{x_3,j_3}\ne0$.
 Proceeding in this way, we choose a sequence of pairwise distinct
indices $j_s\in I$ and a sequence of pairwise disctinct indices
$x_s\in X$, \ $s=1$, $2$, $3$,~\dots, such that $q_{x_s,j_s}\ne0$
in $Q$ for all $s\ge1$.

 Denote by $J$ the subset $\{j_1,j_2,j_3,\dotsc\}\subset I$.
 For every $j=j_s\in J$, choose an open subgroup $U_j\subset Q$
such that $q_{x_s,j_s}\notin U_j$.
 For every $l\in I\setminus J$, put $U_l=Q$.
 Consider the open subgroup $U=\bigoplus_{i\in I}U_i\subset
\bigoplus_{i\in I}Q=\fA$.
 For every $s\ge1$, we have $a_{x_s}\notin U$.
 As the subset of indices $\{x_1,x_2,x_2,\dotsc\}\subset X$ is
infinite, this contradicts the assumption that the family of
elements $(a_x)_{x\in X}$ converges to zero in~$\fA$.
 Thus the second case is impossible, and we have proved the claim.

 Now let $\fC$ be a complete, separated topological vector space in
which an infinite family of nonzero elements $(c_x\in\fC)_{x\in X}$
converging to zero in the topology of $\fC$ does exist; so
$\fC[X]\varsubsetneq\fC[[X]]$.
 For example, one can take $\fC=k^\omega=\prod_{n=0}^\infty k$
(with the product topology of discrete one-dimensional vector
spaces~$k$), as in Corollary~\ref{not-left-quasi-abelian-cor}.
 Then the topological basis $(e_n)_{n=0}^\infty$ of the linearly
compact topological vector space $\fC$ is an infinite family of
nonzero vectors converging to zero in~$\fC$.

 Let $I$ be any infinite set; it suffices to take $I=\omega$.
 According to the proof of Theorem~\ref{top-abelian-main-theorem},
the summation map $\Sigma\:\fV=\fA_I(\fC)\rarrow\fC$ is open and
continuous; it is also obviously a surjective homomorphism of
vector spaces.
 However, the image of the map $\Sigma[[X]]\:\fV[[X]]\rarrow\fC[[X]]$
is contained in $\fC[X]\varsubsetneq\fC[[X]]$, because
$\fV[[X]]=\fV[X]$ according to the argument above.
 In other words, the infinite zero-convergent family of nonzero vectors
$(c_x\in\fC)_{x\in X}$ cannot be lifted to a zero-convergent
family of vectors in $\fV$, as there are no infinite zero-convergent
families of nonzero vectors in $\fV=\fA_I(\fC)$.

 Thus the map~$\Sigma$ is \emph{not} strongly surjective.
 The kernel $\fK\subset\fV$ of the continuous homomorphism $\Sigma$
is a stably closed, but \emph{not} strongly closed vector subspace
in the complete, separated topological vector space~$\fV$.
\end{ex}

\begin{prop} \label{doublebracket-preserves-kers-cokers}
\textup{(a)} For any set $X$, the functor\/ ${-}[[X]]\:\Top^\scc_\boZ
\rarrow\Top^\scc_\boZ$ preserves kernels and cokernels. \par
\textup{(b)} For any set $X$, the functor\/ ${-}[[X]]\:\Top^\scc_k
\rarrow\Top^\scc_k$ preserves kernels and cokernels.
\end{prop}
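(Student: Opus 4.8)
The plan is to argue directly from the description $\fA[[X]]=\varprojlim_{\fU\subset\fA}(\fA/\fU)[X]$ (the projective limit of discrete quotients, carrying the projective limit topology), exploiting that for a fixed set $X$ the functor $A\longmapsto A[X]=A^{(X)}$ on $\Ab$ is \emph{exact} and \emph{preserves surjections}. Part~(b) will follow from part~(a) by the same manipulations in the $k$\+linear setting --- equivalently, because the forgetful functor $\Top^\scc_k\rarrow\Top^\scc_\boZ$ preserves kernels, cokernels, and (by the compatibility noted after~\eqref{vector-convergent-linear-combinations-functor}) the functor ${-}[[X]]$. As a preliminary I would record two facts about $G\longmapsto G[[X]]$ on $\Top^\scc_\boZ$: it sends injective maps to injective maps (clear from the formal\+sum description), and it sends dense\+image maps (equivalently, epimorphisms, by Proposition~\ref{top-groups-spaces-kernels-cokernels-prop}(a)) to dense\+image maps --- indeed, if $h\:M\rarrow N$ has dense image, then $M\rarrow N/\fW$ is surjective for every open $\fW\subset N$, hence so is $M[X]\rarrow(N/\fW)[X]$, hence so is $M[[X]]\rarrow(N/\fW)[X]$ (factor through $M[X]\subset M[[X]]$), which says $h[[X]](M[[X]])$ is dense in $N[[X]]$.

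\emph{Kernels.} Let $i\:\fK\rarrow\fA$ be the kernel of $f\:\fA\rarrow\fB$ in $\Top^\scc_\boZ$, so up to isomorphism $\fK=f^{-1}(0)\subset\fA$ with the induced topology. On underlying groups, $\ker(f[[X]])$ consists of the zero\+convergent families $(a_x)_{x\in X}$ in $\fA$ with $a_x\in\fK$ for all $x$; since $\fK$ carries the induced topology, zero\+convergence of such a family in $\fA$ is the same as in $\fK$, so $\ker(f[[X]])=\fK[[X]]$ as groups. For the topology, the base open subgroup $\{\sum_x a_xx\mid a_x\in\fU\ \forall x\}$ of $\fA[[X]]$ meets $\fK[[X]]$ exactly in $\{\sum_x a_xx\mid a_x\in\fK\cap\fU\ \forall x\}$, a base open subgroup of $\fK[[X]]$; hence the subspace topology on $\ker(f[[X]])$ is that of $\fK[[X]]$. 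As the forgetful/inclusion functors preserve kernels, this gives $\ker_{\Top^\scc_\boZ}(f[[X]])=\fK[[X]]=i[[X]]$.

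\emph{Cokernels.} Let $c=\coker(g)$ for $g\:\fB\rarrow\fA$ in $\Top^\scc_\boZ$, and put $\fL=\overline{g(\fB)}_\fA$, a closed subgroup; by Proposition~\ref{top-groups-spaces-kernels-cokernels-prop}, $\ker(c)=\fL$ and $c$ is the composition $\fA\rarrow\fA/\fL\rarrow(\fA/\fL)\sphat=\fC$ of an open surjection and a completion map. Applying the kernel case to the inclusion $\fL\rarrow\fA$ (which is $\ker(c)$), the map $\fL[[X]]\rarrow\fA[[X]]$ is an injective closed map, so $\fL[[X]]$ is a closed subgroup of $\fA[[X]]$ and $\coker(\fL[[X]]\to\fA[[X]])=(\fA[[X]]/\fL[[X]])\sphat$. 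Factoring $g$ as $\fB\rarrow\fL\hookrightarrow\fA$ with $\fB\rarrow\fL$ of dense image and invoking the preliminary remark, $g[[X]](\fB[[X]])$ is dense in $\fL[[X]]$, so $\overline{g[[X]](\fB[[X]])}_{\fA[[X]]}=\fL[[X]]$ and hence $\coker(g[[X]])=(\fA[[X]]/\fL[[X]])\sphat$ as well, with the natural map from $\fA[[X]]$. It remains to identify this with $c[[X]]\:\fA[[X]]\rarrow\fC[[X]]$. The key point is the ``splitting'' identity: for any open $\fU\subset\fA$,
$$
 \{\textstyle\sum_x a_xx\mid a_x\in\fU\ \forall x\}+\fL[[X]]
 =\{\textstyle\sum_x c_xx\in\fA[[X]]\mid c_x\in\fU+\fL\ \forall x\},
$$
whose nonobvious inclusion holds because a zero\+convergent family $(c_x)$ with $c_x\in\fU+\fL$ has $c_x\in\fU$ for all but finitely many $x$, and for those one may take $a_x=c_x$. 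It follows that the discrete quotients of $\fA[[X]]/\fL[[X]]$ in the quotient topology are precisely the groups $(\fA/\fV)[X]$ with $\fV\supset\fL$ an open subgroup of $\fA$ --- which are exactly the discrete quotients of $\fC[[X]]=((\fA/\fL)\sphat)[[X]]$, with matching transition maps. Hence $(\fA[[X]]/\fL[[X]])\sphat\simeq\fC[[X]]$, and since $c[[X]]$ kills $\fL[[X]]$, the canonical map $\coker(g[[X]])\rarrow\fC[[X]]$ is this isomorphism. Thus ${-}[[X]]$ preserves cokernels, completing part~(a); part~(b) is identical.

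I expect the cokernel step --- and inside it the splitting identity and the ensuing identification of discrete quotients --- to be where the real content lies; the kernel part and the preliminary remark are routine bookkeeping with bases of open subgroups. A more conceptual alternative would use Theorem~\ref{complete-top-vector-spaces-as-proobjects}: the functor ${-}[[X]]$ corresponds, under $\Top^\scc_\boZ\simeq\Pro_\lie(\Ab)$, to applying the exact functor ${-}[X]\:\Ab\rarrow\Ab$ termwise to pro\+objects, and one would want to read off exactness from exactness of the termwise functor on the abelian category $\Pro(\Ab)$. The catch is that $\Pro_\lie(\Ab)$ is not closed under subobjects in $\Pro(\Ab)$, so kernels (and cokernels) in $\Pro_\lie(\Ab)$ need not agree with those in $\Pro(\Ab)$; handling this carefully is no shorter than the direct argument above, which is why I would present the latter.
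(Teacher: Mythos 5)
Your proof is correct and takes essentially the same route as the paper's: the paper reduces the statement to exactly the same three facts (the induced topology on $\fK[[X]]$ as a subgroup of $\fA[[X]]$, density of the image of $\fB[[X]]$ in $\fL[[X]]$, and the identification of the completion of $\fA[[X]]/\fL[[X]]$ with $\fC[[X]]$ via the common projective limit $\varprojlim_{\fU}(\fA/(\fU+\fL))[X]$), leaving the details to the reader. Your ``splitting identity'' $\fU[[X]]+\fL[[X]]=(\fU+\fL)[[X]]$ is precisely the detail needed to justify the paper's identification of the discrete quotients, so your write-up is a fleshed-out version of the paper's sketch rather than a different argument.
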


\begin{proof}
 Let us discuss part~(a).
 Recall that the forgetful functor $\Top^\scc_\boZ\rarrow\Ab$ preserves
kernels, but not cokernels.
 Accordingly, in the context of the proposition, the claim is that, for
any set $X$, the functor ${-}[[X]]\:\Top^\scc_\boZ\rarrow\Ab$ preserves
the kernels (as one can easily see); but it does \emph{not} preserve
the cokernels.

 The assertion of part~(a) for the kernel and cokernel of a morphism
$f\:\fA\rarrow\fB$ in $\Top^\scc_\boZ$ reduces to the following
properties:
\begin{itemize}
\item for any topological group $\fA\in\Top^\scc_\boZ$ and a closed
subgroup $\fK\subset\fA$ with the induced topology, the topology of
$\fK[[X]]\in\Top^\scc_\boZ$ coincides with the induced topology on
$\fK[[X]]$ as a subgroup in $\fA[[X]]$;
\item for any morphism $g\:\fA\rarrow\fL$ in $\Top^\scc_\boZ$ with
$g(\fA)$ dense in $\fL$, the image of $\fA[[X]]$ is dense in $\fL[[X]]$;
in fact, the subgroup $g(\fA)[X]\subset\fL[[X]]$ is already dense;
\item for any for any topological group $\fB\in\Top^\scc_\boZ$,
a closed subgroup $\fL\subset\fB$, and the completion $\fC$ of
the quotient group $\fB/\fL$ in its quotient topology, one has
a natural isomorphism between the topological group $\fC[[X]]$ and
the completion of the quotient group $\fB[[X]]/\fL[[X]]$ in its
quotient topology.
\end{itemize}
 The latter property holds because both the topological groups in
question can be identified with the projective limit of discrete
groups $\fB[X]/(\fU[X]+\fL[X])\simeq(\fB/(\fU+\nobreak\fL))[X]$, where
$\fU$ ranges over the open subgroups of~$\fB$.
 We leave further details to the reader.
\end{proof}

\begin{cor} \label{converging-combinations-not-exact-for-maximal}
\textup{(a)} For any infinite set $X$, the functor\/
${-}[[X]]\:\Top^\scc_\boZ\rarrow\Top^\scc_\boZ$ is \emph{not} exact
with respect to the maximal exact structure on\/ $\Top^\scc_\boZ$. \par
\textup{(b)} For any infinite set $X$, the functor\/
${-}[[X]]\:\Top^\scc_k\rarrow\Top^\scc_k$ is \emph{not} exact
with respect to the maximal exact structure on\/ $\Top^\scc_k$. 
\end{cor}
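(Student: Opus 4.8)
The plan is to combine Proposition~\ref{doublebracket-preserves-kers-cokers} with the counterexample already produced in Example~\ref{strong-is-not-maximal} and the description of semi-stable cokernels from Proposition~\ref{vslt-semi-stable-cokernels-prop}. I will write out part~(b); part~(a) is identical in structure, using a complete, separated topological abelian group in place of a topological vector space (for instance $\fC=\prod_{n=0}^\infty\boZ$ with the product topology of discrete groups, whose standard generators $e_n$ form an infinite family of nonzero elements converging to zero).

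First I would fix the infinite set $X$ and choose a complete, separated topological vector space $\fC$ admitting an infinite family of nonzero vectors converging to zero, so that $\fC[X]\varsubsetneq\fC[[X]]$; concretely, take $\fC=k^\omega=\prod_{n=0}^\infty k$ with the product topology, as in Corollary~\ref{not-left-quasi-abelian-cor}, for which the topological basis $(e_n)_{n\ge0}$ is such a family. Picking any infinite index set $I$, form $\fV=\fA_I(\fC)=\fC^{(I)}$ with the modified coproduct topology. By Theorem~\ref{top-vector-main-theorem} the summation map $\Sigma\:\fV\rarrow\fC$ is a surjective open continuous linear map; letting $\fK=\ker(\Sigma)\subset\fV$ with the induced topology, the short sequence $0\rarrow\fK\rarrow\fV\overset\Sigma\rarrow\fC\rarrow0$ satisfies~Ex1 in $\Top^\scc_k$, and $\Sigma$, being a surjective open map, is a semi-stable (hence stable) cokernel by Proposition~\ref{vslt-semi-stable-cokernels-prop}(b). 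Thus this short exact sequence belongs to the maximal exact category structure on $\Top^\scc_k$.

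Next I would apply ${-}[[X]]$ to this conflation, obtaining $0\rarrow\fK[[X]]\rarrow\fV[[X]]\overset{\Sigma[[X]]}\rarrow\fC[[X]]\rarrow0$. By Proposition~\ref{doublebracket-preserves-kers-cokers}(b) the map $\fK[[X]]\rarrow\fV[[X]]$ is the kernel of $\Sigma[[X]]$ and $\Sigma[[X]]$ is the cokernel of $\fK[[X]]\rarrow\fV[[X]]$ in $\Top^\scc_k$, so this sequence again satisfies~Ex1 and $\Sigma[[X]]$ is a cokernel. The key point, taken from Example~\ref{strong-is-not-maximal}, is that $\fV[[X]]=\fV[X]$ (no infinite family of nonzero elements of $\fA_I(\fC)$ converges to zero in the modified coproduct topology); hence the image of $\Sigma[[X]]$ lies inside $\fC[X]$, which is a \emph{proper} subspace of $\fC[[X]]$ because $X$ is infinite and $\fC$ admits an infinite zero-convergent family of nonzero vectors. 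So $\Sigma[[X]]$ is a cokernel in $\Top^\scc_k$ which is not a surjective map, and by Proposition~\ref{vslt-semi-stable-cokernels-prop}(b) it is therefore not a semi-stable cokernel; since semi-stability is invariant under isomorphism of short sequences, $0\rarrow\fK[[X]]\rarrow\fV[[X]]\rarrow\fC[[X]]\rarrow0$ is not a conflation in the maximal exact structure on $\Top^\scc_k$. Hence ${-}[[X]]$ sends a conflation to a non-conflation and so is not exact for the maximal exact structures, which is part~(b); part~(a) is obtained verbatim from Proposition~\ref{doublebracket-preserves-kers-cokers}(a) and Proposition~\ref{vslt-semi-stable-cokernels-prop}(a).

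Since all the needed ingredients are already established, I do not expect a genuine obstacle; the one point to handle with care is the placement of the failure of exactness. The functor ${-}[[X]]$ preserves both kernels and cokernels, so the image sequence still satisfies~Ex1 in $\Top^\scc_k$ --- it would even be ``exact'' for the (nonexistent, by Corollary~\ref{not-left-quasi-abelian-cor}) naive structure --- and the obstruction is located precisely in the completion built into cokernels of $\Top^\scc_k$: $\Sigma[[X]]$ fails to be a \emph{surjective} open map, and that is exactly what excludes it from the maximal exact structure.
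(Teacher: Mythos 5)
Your proposal is correct and follows exactly the route the paper intends: the paper's proof is the one-line statement that the corollary is a direct consequence of Example~\ref{strong-is-not-maximal}, and your write-up simply fills in the same chain of references (the stable conflation $0\to\fK\to\fA_I(\fC)\to\fC\to0$, preservation of kernels and cokernels by Proposition~\ref{doublebracket-preserves-kers-cokers}, non-surjectivity of $\Sigma[[X]]$ from the example, and the surjectivity criterion of Proposition~\ref{vslt-semi-stable-cokernels-prop}). No gaps.
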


\begin{proof}
 This is a direct corollary of Example~\ref{strong-is-not-maximal}.
\end{proof}

\begin{thm} \label{strong-exact-structure-theorem}
\textup{(a)} There is an exact category structure on
the additive category\/ $\Top^\scc_\boZ$, called the \emph{strong
exact structure}, in which a short sequence\/
$0\rarrow\fK\rarrow\fA\rarrow\fC\rarrow0$ is exact if and only if
it satisfies Ex1 and the induced map\/ $\fA[[X]]\rarrow\fC[[X]]$ is
surjective for every set~$X$.
 The admissible monomorphisms in this exact structure are the strongly
closed injective maps, and the admissible epimorphisms are
the open strongly surjective maps.
 The strong exact structure on\/ $\Top^\scc_\boZ$ is different from
(i.~e., has fewer short exact sequences than) the maximal exact
structure. \par
\textup{(b)} There is an exact category structure on
the additive category\/ $\Top^\scc_k$, called the \emph{strong
exact structure}, in which a short sequence\/
$0\rarrow\fK\rarrow\fV\rarrow\fC\rarrow0$ is exact if and only if
it satisfies Ex1 and the induced map\/ $\fV[[X]]\rarrow\fC[[X]]$ is
surjective for every set~$X$.
 The admissible monomorphisms in this exact structure are the strongly
closed injective maps, and the admissible epimorphisms are
the open strongly surjective maps.
 The strong exact structure on\/ $\Top^\scc_k$ is different from
(i.~e., has fewer short exact sequences than) the maximal exact
structure.
\end{thm}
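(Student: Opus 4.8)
The plan is to realize the strong exact structure as the intersection, over all sets $X$, of a family of $\Psi$-exact category structures in the sense of Example~\ref{psi-exact-structure}. I will carry out part~(a); part~(b) is entirely parallel, with $\Top^\scc_\boZ$, $\Ab$ and the functor~\eqref{abelian-convergent-linear-combinations-functor} replaced by $\Top^\scc_k$, $\Vect_k$ and~\eqref{vector-convergent-linear-combinations-functor}.

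First I would fix a set $X$ and take $\sE=\sG=\Top^\scc_\boZ$ equipped with its maximal exact category structure (which exists by Theorem~\ref{maximal-exact-theorem}, since $\Top^\scc_\boZ$ is weakly idempotent-complete), together with the additive functor $\Psi={-}[[X]]\:\Top^\scc_\boZ\rarrow\Top^\scc_\boZ$. By Proposition~\ref{doublebracket-preserves-kers-cokers}(a) the functor $\Psi$ preserves kernels, hence it preserves the kernels of admissible epimorphisms in the sense of Example~\ref{psi-exact-structure}; so the class $\sF_X$ of $\Psi$-exact short sequences is an exact category structure on $\Top^\scc_\boZ$. By definition a short sequence $0\rarrow\fK\rarrow\fA\rarrow\fC\rarrow0$ lies in $\sF_X$ iff it is exact in the maximal structure \emph{and} $0\rarrow\fK[[X]]\rarrow\fA[[X]]\rarrow\fC[[X]]\rarrow0$ is exact in the maximal structure. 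Since ${-}[[X]]$ also preserves cokernels (again Proposition~\ref{doublebracket-preserves-kers-cokers}(a)), the morphism $\fA[[X]]\rarrow\fC[[X]]$ is always a cokernel in $\Top^\scc_\boZ$, and by the description of cokernels in Proposition~\ref{top-groups-spaces-kernels-cokernels-prop}(a) such a cokernel is an open surjection precisely when it is surjective; combined with Example~\ref{right-quasi-abelian-semi-stable-example} this shows $\sF_X$ consists exactly of the short sequences exact in the maximal structure for which $\fA[[X]]\rarrow\fC[[X]]$ is surjective.

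Next I would show that the intersection $\bigcap_X\sF_X$, over all sets $X$, is again an exact category structure on $\Top^\scc_\boZ$. Axioms Ex0, Ex1 and Ex3 pass to an arbitrary intersection of exact structures trivially. For Ex2, given a conflation in the intersection together with a morphism out of (resp.\ into) its end term, form the pushout (resp.\ pullback) in $\Top^\scc_\boZ$; it exists (e.g.\ by Ex2$'$ applied in $\sF_X$ with $X$ a one-point set), and, being a universal construction, it is simultaneously the pushout/pullback relevant to every $\sF_X$, so by Ex2$'$ in each $\sF_X$ the resulting short sequence is a conflation in each $\sF_X$, hence in the intersection. No set-theoretic difficulty arises from $X$ ranging over a proper class, since membership in $\bigcap_X\sF_X$ is checked one set $X$ at a time. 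The conflations of $\bigcap_X\sF_X$ are the short sequences exact in the maximal structure with $\fA[[X]]\rarrow\fC[[X]]$ surjective for every set $X$; taking $X$ a one-point set forces $\fA\rarrow\fC$ itself to be surjective, so by Proposition~\ref{top-groups-spaces-kernels-cokernels-prop}(a) and Example~\ref{right-quasi-abelian-semi-stable-example} a short sequence satisfying Ex1 with all these surjectivity properties is automatically exact in the maximal structure. Hence the conflations of $\bigcap_X\sF_X$ are precisely the short sequences satisfying Ex1 with $\fA[[X]]\rarrow\fC[[X]]$ surjective for all $X$, which is the exact structure described in the theorem.

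Finally I would identify the admissible morphisms and check strictness. An admissible epimorphism $p\:\fA\rarrow\fC$ of this structure sits in such a conflation, hence (as just noted) is a surjective open map with $p[[X]]$ surjective for all $X$, i.e.\ an open strongly surjective map; conversely, any open strongly surjective $p$ gives the conflation $0\rarrow\ker p\rarrow\fA\rarrow\fC\rarrow0$, so the admissible epimorphisms are exactly the open strongly surjective maps. Dually, an admissible monomorphism $i\:\fK\rarrow\fA$ is a closed injection with $\fA/i(\fK)$ complete and quotient map $\fA\rarrow\fA/i(\fK)$ strongly surjective, i.e.\ a strongly closed injective map, and conversely; this uses only the definitions of strongly surjective and strongly closed maps recalled just before the theorem. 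For strictness, Example~\ref{strong-is-not-maximal} (or Corollary~\ref{converging-combinations-not-exact-for-maximal}(a)) exhibits a short sequence exact in the maximal structure — arising from the open continuous surjection $\Sigma\:\fA_I(\fC)\rarrow\fC$ for a suitable complete separated $\fC$ carrying an infinite zero-convergent family of nonzero elements and an infinite index set $I$ — for which $\fA_I(\fC)[[X]]\rarrow\fC[[X]]$ is not surjective once $X$ is infinite; this conflation is not strong, so the strong exact structure has strictly fewer conflations than the maximal one. The step I expect to require the most care is the bookkeeping of the first two paragraphs: using that ${-}[[X]]$ preserves \emph{both} kernels and cokernels both to make Example~\ref{psi-exact-structure} applicable \emph{and} to pin down $\sF_X$ explicitly, and stating cleanly the (elementary but essential) fact that a possibly proper-class-indexed intersection of exact category structures on a fixed additive category is again an exact category structure.
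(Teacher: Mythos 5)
Your proof is correct and follows essentially the same route as the paper's: both realize the strong exact structure as the intersection, over all sets $X$, of the $\Psi_X$\+exact structures of Example~\ref{psi-exact-structure} built from the kernel-preserving functors ${-}[[X]]$ (via Proposition~\ref{doublebracket-preserves-kers-cokers}), and both invoke Example~\ref{strong-is-not-maximal}/Corollary~\ref{converging-combinations-not-exact-for-maximal} to show the structure is strictly smaller than the maximal one. Your added care in checking that a (class-indexed) intersection of exact structures is again an exact structure, and in pinning down $\sF_X$ via Propositions~\ref{top-groups-spaces-kernels-cokernels-prop} and~\ref{vslt-semi-stable-cokernels-prop}, only makes explicit what the paper leaves to the reader.
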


\begin{proof}
 Let us explain part~(a); part~(b) is similar.
 The argument is based on Example~\ref{psi-exact-structure}.
 Consider the exact category $\Top^\scc_\boZ$ with its maximal
exact category structure.
 Let us view the category of abelian groups $\Ab$ as an exact
category with the abelian exact structure.
 Fix a set~$X$, and consider the functor $\Psi_X\:\Top^\scc_\boZ
\rarrow\Ab$, \ $\Psi_X(\fA)=\fA[[X]]$.
 Following Proposition~\ref{doublebracket-preserves-kers-cokers},
the functor $\Psi_X$ preserves all kernels; in particular, it preserves
the kernels of admissible epimorphisms.

 Alternatively, consider the functor $\Psi'_X\:\Top^\scc_\boZ
\rarrow\Top^\scc_\boZ$, \ $\Psi'_X(\fA)=\fA[[X]]$, viewing both
the source and the target category $\Top^\scc_\boZ$ as an exact
category with the maximal exact structure.
 Following Proposition~\ref{doublebracket-preserves-kers-cokers},
the functor $\Psi'_X$ preserves all kernels and cokernels;
in particular, it preserves the kernels of admissible epimorphisms
and the cokernels of admissible monomorphisms.

 Thus, for any one of the functors $\Psi_X$ or $\Psi'_X$,
the construction of Example~\ref{psi-exact-structure} is applicable,
and it produces a new exact category structure on $\Top^\scc_\boZ$.
 It is clear from the discussion of the maximal exact structure on
$\Top^\scc_\boZ$ in Section~\ref{maximal-exact-VSLTs-secn}
(see Proposition~\ref{vslt-semi-stable-cokernels-prop}) that with both
approaches the same new exact category structure is produced.
 It can be called the \emph{$\Psi_X$\+exact structure} or
the \emph{$\Psi'_X$\+exact structure} (which is the same)
on $\Top^\scc_\boZ$.
 In the $\Psi_X$\+exact structure with a nonempty set $X$, a short
sequence $0\rarrow\fK\rarrow\fV\rarrow\fC\rarrow0$ is exact if and
only if it satisfies Ex1 and the induced map $\fV[[X]]\rarrow\fC[[X]]$
is surjective.

 The strong exact structure on $\Top^\scc_\boZ$ is the intersection of
the $\Psi_X$\+exact structures taken over all sets~$X$.
 In other words, a short sequence is exact in the strong exact structure
if and only if it exact in the $\Psi_X$\+exact structure for every
set~$X$.
 Clearly, the intersection of any nonempty collection of exact
structures on an additive category is an exact structure.
 It is obvious from the discussion of strongly surjective maps and
strongly closed injective maps above in this section that the strong
exact category structure on $\Top^\scc_\boZ$, defined in this way,
has the properties listed in the proposition.

 Finally, the strong exact structure on $\Top^\scc_\boZ$ differs from
the maximal exact structure according to
Corollary~\ref{converging-combinations-not-exact-for-maximal}.
 In fact, following the arguments in
Example~\ref{strong-is-not-maximal}, for a countable set $X$ already
the $\Psi_X$\+exact structure has fewer short exact sequences than
the maximal exact structure.
\end{proof}

 See~\cite[Lemma~2.4]{Pproperf} for a formulation and direct proof of
some specific properties related to the existence of the strong
exact structure.

 The short sequences that are exact in the strong exact category
structure on $\Top^\scc_\boZ$ or $\Top^\scc_k$ are called
the \emph{strong short exact sequences}.

 The full subcategories of (complete, separated) topological abelian
groups/vector spaces with a countable base of neighborhoods of zero
$\Top^{\omega,\scc}_\boZ$ and $\Top^{\omega,\scc}_k$ inherit the strong
exact category structures of the ambient additive categories
$\Top^\scc_\boZ$ and $\Top^\scc_k$.
 Moreover, the inherited exact structures on the quasi-abelian
categories $\Top^{\omega,\scc}_\boZ$ and $\Top^{\omega,\scc}_k$
coincide with their quasi-abelian exact structures (so for topological
abelian groups with a countable base of neighborhoods of zero there is
no difference between the maximal and strong exact structures).
 In fact, the following stronger version of
Proposition~\ref{countable-base-kernel} holds.
 
\begin{prop}
 Let\/ $\fA$ be a complete, separated topological abelian group, and
let\/ $\fK\subset\fA$ be a closed subgroup.
 Assume that the topological abelian group\/ $\fK$ has a countable
base of neighborhoods of zero.
 Then, for any set $X$, the map\/ $\fA[[X]]\rarrow (\fA/\fK)[[X]]$
induced by the open continuous homomorphism\/
$\fA\rarrow\fA/\fK$ is surjective.
 In other words, $\fK$ is a strongly closed subgroup in\/~$\fA$.
\end{prop}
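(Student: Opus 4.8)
The plan is to reduce the statement, for each set~$X$, to the surjectivity of a single map of abelian groups, and then to repeat the proof of Proposition~\ref{countable-base-kernel} with the exact functor ${-}[X]\:\Ab\rarrow\Ab$ (the direct sum of $X$ copies) inserted throughout. Write $Q=\fA/\fK$ with the quotient topology; by Proposition~\ref{countable-base-kernel} the group $Q$ is separated and complete, so $Q[[X]]$ is defined, and the quotient map $p\:\fA\rarrow Q$ is continuous and open. The map induced by~$p$ on convergent formal linear combinations is $p[[X]]\:\fA[[X]]\rarrow Q[[X]]$, so the whole assertion (including that $\fK$ is strongly closed, in view of the description of strong closedness) will follow once we know that $p[[X]]$ is surjective for every set~$X$.

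Next I would set up a short exact sequence of projective systems just as in Proposition~\ref{countable-base-kernel}. Let $\Delta$ be the directed poset of open subgroups of $\fA$ ordered by reverse inclusion, and $\Gamma$ the analogous poset for~$\fK$. For each $\fU\in\Delta$ there is a short exact sequence of discrete abelian groups $0\rarrow\fK/(\fK\cap\fU)\rarrow\fA/\fU\rarrow\fA/(\fU+\fK)\rarrow0$, natural in~$\fU$; applying ${-}[X]$ yields a short exact sequence of $\Delta$\+indexed projective systems. Passing to $\varprojlim_{\fU\in\Delta}$, the middle term is $\varprojlim_{\fU\in\Delta}(\fA/\fU)[X]=\fA[[X]]$ by the definition of $\fA[[X]]$, and the right-hand term is
$$
 \varprojlim\nolimits_{\fU\in\Delta}(\fA/(\fU+\fK))[X]\simeq Q[[X]],
$$
since $\fU\longmapsto p(\fU)$ (using $Q/p(\fU)\simeq\fA/(\fU+\fK)$) is a cofinal map from $\Delta$ onto the poset of open subgroups of~$Q$. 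Hence the long exact sequence of derived projective limits shows that $p[[X]]$ is surjective provided $\varprojlim^1_{\fU\in\Delta}(\fK/(\fK\cap\fU))[X]=0$.

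The vanishing of this $\varprojlim^1$ is the only place where the countability hypothesis enters, and here the argument is identical to the one in Proposition~\ref{countable-base-kernel}. The map $\psi\:\Delta\rarrow\Gamma$, $\fU\longmapsto\fK\cap\fU$, is cofinal, and the system $(\,(\fK/(\fK\cap\fU))[X]\,)_{\fU\in\Delta}$ is the inverse image along~$\psi$ of the system $(\,(\fK/W)[X]\,)_{W\in\Gamma}$; since inverse image along a cofinal map of directed posets preserves $\varprojlim^n$ of projective systems of abelian groups (as recalled in the proof of Proposition~\ref{countable-base-kernel}, via weakly flabby systems, cf.~\cite[Th\'eor\`eme~1.8]{Jen}, \cite[Corollary~7.3.7]{Pros1}), I obtain $\varprojlim^1_{\fU\in\Delta}(\fK/(\fK\cap\fU))[X]\simeq\varprojlim^1_{W\in\Gamma}(\fK/W)[X]$. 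Because $\fK$ has a countable base of neighborhoods of zero, $\Gamma$ has a cofinal countable subposet, which can be taken to be a descending chain $\fK\cap\fU_0\supseteq\fK\cap\fU_1\supseteq\dotsb$; applying cofinality once more reduces the computation to $\varprojlim^1$ of the countable tower $\big((\fK/(\fK\cap\fU_n))[X]\big)_{n\ge0}$, whose transition maps are surjective because ${-}[X]$ carries surjections to surjections. The derived projective limit of a countable projective system of surjective maps of abelian groups vanishes, which finishes the argument.

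I do not expect a genuine obstacle: the substance is simply that the proof of Proposition~\ref{countable-base-kernel} survives the insertion of the exact coefficient functor ${-}[X]$. The two points that need care are the cofinality of $\fU\longmapsto p(\fU)$ (equivalently $\fU\longmapsto\fU+\fK$), which underlies the identification $\varprojlim_{\fU\in\Delta}(\fA/(\fU+\fK))[X]\simeq Q[[X]]$, and the exactness of ${-}[X]$ — in particular that it preserves surjections — which is what keeps the concluding Mittag-Leffler step applicable.
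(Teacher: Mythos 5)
Your proposal is correct and follows essentially the same route as the paper's proof: apply ${-}[X]$ to the projective system of short exact sequences $0\to\fK/(\fU\cap\fK)\to\fA/\fU\to\fA/(\fU+\fK)\to0$, identify the projective limits of the outer terms, and kill the $\varprojlim^1$ term using cofinality and the countable-base hypothesis exactly as in Proposition~\ref{countable-base-kernel}. The only difference is that you spell out the cofinality identification $\varprojlim_{\fU}(\fA/(\fU+\fK))[X]\simeq(\fA/\fK)[[X]]$ more explicitly than the paper does.
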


\begin{proof}
 Denote by $\fC$ the quotient group $\fC=\fA/\fK$ with the quotient
topology; by Proposition~\ref{countable-base-kernel}, the topological
abelian group $\fC$ is complete.
 Consider the projective system of short exact sequences of abelian
groups
$$
 0\lrarrow\fK/(\fU\cap\fK)\lrarrow\fA/\fU\lrarrow
 \fA/(\fU+\fK)\lrarrow0
$$
indexed by the directed poset of open subgroups $\fU\subset\fA$.
 Passing to the direct sum over the set~$X$, we obtain a projective
system of short exact sequences of abelian groups
$$
 0\lrarrow(\fK/(\fU\cap\fK))[X]\lrarrow(\fA/\fU)[X]\lrarrow
 (\fA/(\fU+\fK))[X]\lrarrow0.
$$
 The map $\fA[[X]]\rarrow\fC[[X]]$ which we are interested in
is obtained by taking the projective limit of the projective system
of surjective homomorphisms $(\fA/\fU)[X]\lrarrow
(\fA/(\fU+\fK))[X]$.
 Since the topological abelian group $\fK$ has a countable base of
neighborhoods of zero and the transition maps in the projective system
of abelian groups $(\fK/(\fU\cap\fK))[X]$ are surjective, the argument
from the proof of Proposition~\ref{countable-base-kernel} shows that
$\varprojlim^1_{\fU\subset\fA}((\fK/(\fU\cap\fK))[X])=0$.
 Hence the desired surjectivity of the map of projective limits.
\end{proof}

\Section{Beilinson's Yoga of Tensor Product Operations}
\label{tensor-product-yoga-secn}

 In this section we discuss uncompleted versions of the topological
tensor product operations defined in~\cite[Section~1.1]{Beil}.
 The discussion of completed tensor products is postponed until
the next Section~\ref{refined-exact-struct-secn}.

 Let $U$ and $V$ be topological vector spaces (with linear topology).
 We will define three linear topologies on the tensor product space
$U\ot_kV$.
 The resulting topological vector spaces will be denoted by
$U\ot^*V$, \ $U\ot^\la V$, and $U\ot^!V$.

 A vector subspace $E\subset U\ot_kV$ is said to be open in $U\ot^*V$
if the following conditions hold:
\begin{itemize}
\item there exist open subspaces $P\subset U$ and $Q\subset V$ such
that $P\ot_kQ\subset E$;
\item for every vector $u\in U$, there exists an open subspace
$Q_u\subset V$ such that $u\ot Q_u\subset E$; and
\item for for every vector $v\in V$, there exists an open subspace
$P_v\subset U$ such that $P_v\ot v\subset E$.
\end{itemize}

 A vector subspace $E\subset U\ot_kV$ is said to be open in $U\ot^\la V$
if the following conditions hold:
\begin{itemize}
\item there exists an open subspace $P\subset U$ such that
$P\ot_kV\subset E$; and
\item for every vector $u\in U$, there exists an open subspace
$Q_u\subset V$ such that $u\ot Q_u\subset E$.
\end{itemize}

 A vector subspace $E\subset U\ot_kV$ is said to be open in $U\ot^!V$
if there exist open subspaces $P\subset U$ and $Q\subset V$ such that
$P\ot_kV+U\ot_kQ\subset E$.
 In other words, the vector subspaces $P\ot_kV+U\ot_kQ$, where
$P\subset U$ and $Q\subset V$ are open subspaces, form a base of
neighborhoods of zero in $U\ot^!V$.

 Clearly, the identity maps $\mathrm{id}_{U\ot_kV}$ are continuous
homomorphisms of topological vector spaces $U\ot^*V\rarrow U\ot^\la V
\rarrow U\ot^!V$.
 So one can say that the $*$\+topology is the finest one of the three
topologies, while the $!$\+topology is the coarsest one of the three.
 The $*$-tensor product and the $!$\+tensor product are commutative
(symmetric) operations, while the $\la$\+tensor product is not.
 The opposite operation to the $\la$\+tensor product can be denoted by
$V\ot^\to U=U\ot^\la V$.

\begin{rem} \label{tensor-product-not-complete-remark}
 The reader should be warned that our notation for the three
tensor product operations is actually different from the notation
in~\cite{Beil}, in that the symbols $\ot^*$ and $\ot^!$ are used to
denote the \emph{completed} tensor products in~\cite{Beil}.
 We use them to denote the uncompleted tensor products.

 Thus, in our notation, the topological vector spaces $\fU\ot^*\fV$, \
$\fU\ot^\la\fV$, and $\fU\ot^!\fV$ need \emph{not} be complete even
for complete topological vector spaces $\fU$ and $\fV$.
 For example, when $\fU$ and $\fV$ are linearly compact
(profinite-dimensional) topological vector spaces, all the three
topologies on the tensor product $\fU\ot_k\fV$ coincide with each
other, and the vector space $\fU\ot_k\fV$ is \emph{incomplete} in
this topology (whenever both $\fU$ and $\fV$ are infinite-dimensional).
 The related completion is a linearly compact topological vector
space $\fU\cot\fV$ (the usual tensor product in the category
of linearly compact topological vector spaces).
\end{rem}

\begin{rem} \label{three-incomplete-topologies-motivation-remark}
 The following observations (essentially taken from~\cite[Remark~(ii)
in Section~1.1]{Beil} motivate the definitions of the three tensor
product topologies.
 Let $U$, $V$, and $W$ be three topological vector spaces, and let
$\phi\:U\times V\rarrow W$ be a bilinear map.
 Then the map~$\phi$ is continuous (as a function of two variables) if
and only if the related linear map $\phi^\ot\:U\ot_k V\rarrow W$ is
continuous in the $*$\+topology on $U\ot_kV$, i.~e., the linear map
$\phi^\ot\:U\ot^*V\rarrow W$ is continuous.

 A \emph{topological algebra} $R$ (over~$k$) is a topological vector
space (with linear topology) endowed with an associative $k$\+algebra
structure such that the multiplication map $\cdot\,\:R\times R\rarrow R$
is continuous.
 According to the previous paragraph, the multiplication in
a topological algebra $R$ can be described as a continuous linear
map $\mu\:R\ot^*R\rarrow R$.
 Now a topological algebra $R$ has a base of neighborhoods of zero
consisting of open right ideals if and only if its multiplication
map is continuous in the $\la$\+topology, i.~e., the linear map
$\mu\:R\ot^\la R\rarrow R$ is continuous.
 A topological algebra $R$ has a base of neighborhoods of zero
consisting of open two-sided ideals if and only if its multiplication
map is continuous in the $!$\+topology, i.~e., the linear map
$\mu\:R\ot^!R\rarrow R$ is continuous.
\end{rem}

\begin{lem} \label{tensor-continuous}
 All the three tensor product operations\/ $\ot^*$, \ $\ot^\la$, and\/
$\ot^!$ are functors of two arguments\/ $\Top_k\times\Top_k\rarrow
\Top_k$.
 In other words, for any continuous linear maps of topological vector
spaces $f\:U'\rarrow U''$ and $g\:V'\rarrow V''$, the linear map
$f\ot g\: U'\ot^*V'\rarrow U''\ot^*V''$ is continuous; and similarly for
the\/ $\la$\+topology and the\/ $!$\+topology on the tensor products.
\end{lem}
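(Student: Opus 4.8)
The plan is to observe first that for continuous linear maps $f\:U'\rarrow U''$ and $g\:V'\rarrow V''$ the map $f\ot g\:U'\ot_kV'\rarrow U''\ot_kV''$ is already a well-defined $k$\+linear map, this being the functoriality of the algebraic tensor product bifunctor on $\Vect_k$; hence the only thing to prove is that $f\ot g$ is continuous for each of the three topologies. Since all topologies in question are linear, continuity of $f\ot g$ amounts to the statement that the preimage $(f\ot g)^{-1}(E'')$ of every open subspace $E''\subset U''\ot_kV''$ is an open subspace of $U'\ot_kV'$; for the $!$\+topology it suffices to let $E''$ range over the base of open subspaces described in the definition, and for the other two topologies I would verify the defining openness conditions for $(f\ot g)^{-1}(E'')$ directly. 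Throughout, the only facts used are that $(f\ot g)(a\ot b)=f(a)\ot g(b)$ (so that $(f\ot g)(S\ot_kT)\subseteq f(S)\ot_kg(T)$ for subspaces $S\subset U'$, $T\subset V'$), that $f^{-1}(P'')$ and $g^{-1}(Q'')$ are open subspaces of $U'$ and $V'$ for open $P''\subset U''$, $Q''\subset V''$, and that $f\bigl(f^{-1}(P'')\bigr)\subseteq P''$, $g\bigl(g^{-1}(Q'')\bigr)\subseteq Q''$.

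For the $!$\+topology: given open subspaces $P''\subset U''$ and $Q''\subset V''$, put $P'=f^{-1}(P'')$ and $Q'=g^{-1}(Q'')$; then $(f\ot g)\bigl(P'\ot_kV'+U'\ot_kQ'\bigr)\subseteq P''\ot_kV''+U''\ot_kQ''$, so $(f\ot g)^{-1}(P''\ot_kV''+U''\ot_kQ'')$ contains the open subspace $P'\ot_kV'+U'\ot_kQ'$ of $U'\ot^!V'$, and since these subspaces run over a base of open subspaces of $U''\ot^!V''$, this proves continuity $U'\ot^!V'\rarrow U''\ot^!V''$. For the remaining two topologies I would write $E'=(f\ot g)^{-1}(E'')$ for an open subspace $E''$ of $U''\ot_kV''$ in the respective topology, and check each defining condition of openness of $E'$ by pulling back the corresponding datum for $E''$ along $f$ and $g$: a ``global'' inclusion such as $P''\ot_kV''\subseteq E''$ (resp.\ $P''\ot_kQ''\subseteq E''$) gives $f^{-1}(P'')\ot_kV'\subseteq E'$ (resp.\ $f^{-1}(P'')\ot_kg^{-1}(Q'')\subseteq E'$); the pointwise condition ``for every $u''\in U''$ there is an open $Q''_{u''}\subset V''$ with $u''\ot Q''_{u''}\subseteq E''$'' gives, at a vector $u'\in U'$, the open subspace $Q'_{u'}=g^{-1}\bigl(Q''_{f(u')}\bigr)$ with $u'\ot Q'_{u'}\subseteq E'$, because $(f\ot g)(u'\ot Q'_{u'})\subseteq f(u')\ot Q''_{f(u')}\subseteq E''$; and, for the $*$\+topology, the symmetric pointwise condition is handled dually with $P'_{v'}=f^{-1}\bigl(P''_{g(v')}\bigr)$. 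Thus $(f\ot g)^{-1}(E'')$ satisfies the defining conditions for openness in $U'\ot^\la V'$, resp.\ $U'\ot^*V'$.

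I expect no serious obstacle here; the only step requiring a little care is the bookkeeping for the pointwise clauses of openness in the $\la$\+ and $*$\+topologies, where one must remember that $f\ot g$ sends a decomposable tensor $u'\ot v'$ to $f(u')\ot g(v')$, so that a condition imposed at the vector $f(u')\in U''$ pulls back to a condition at the vector $u'\in U'$ (and dually at vectors of $V'$). Once continuity is established in all three cases, the functoriality axioms (compatibility with compositions and with identity maps) are inherited from the corresponding properties of the bifunctor $\ot_k$ on $\Vect_k$ and need no separate verification.
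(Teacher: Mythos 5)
Your argument is correct and coincides with the paper's own proof: both proceed by taking the preimage $E'=(f\ot g)^{-1}(E'')$ of an open subspace and verifying the defining openness conditions by pulling back the witnessing subspaces $P''$, $Q''$, $Q''_{u''}$, $P''_{v''}$ along $f$ and~$g$ (the paper writes this out only for the $*$\+topology and declares the other two cases similar, exactly as you handle them). No gaps.
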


\begin{proof}
 Let us sketch a proof for the $*$\+topology; the arguments for
the other two topologies are similar.
 Let $E''\subset U''\ot^*V''$ be an open subspace and
$E'=(f\ot\nobreak g)^{-1}(E'')$ be its preimage.
 We have to show that $E'\subset U'\ot^*V'$ is an open subspace.
 Let $P''\subset U''$ and $Q''\subset V''$ be open subspaces such
that $P''\ot_k Q''\subset E''$, and let $P'=f^{-1}(P'')$ and $Q'=
g^{-1}(Q'')$ be their preimages.
 Then $P'\subset U'$ and $P''\subset U''$ are open subspaces,
and $P'\ot_k Q'\subset E'$.
 Let $u'\in U'$ be a vector; put $u''=f(u')$.
 Let $Q_{u''}\subset V''$ be an open subspace such that
$u''\ot Q_{u''}\subset E''$.
 Put $Q_{u'}=g^{-1}(Q_{u''})$.
 Then $Q_{u'}\subset V'$ is an open subspace, and $u'\ot Q_{u'}
\subset E'$.
\end{proof}

\begin{lem} \label{tensor-preserves-open-maps}
 Let $p\:U\rarrow C$ and $q\:V\rarrow D$ be open surjective linear maps
of topological vector spaces.  Then \par
\textup{(a)} $f\ot g\: U\ot^*V\rarrow C\ot^*D$ is an open surjective
linear map; \par
\textup{(b)} $f\ot g\: U\ot^\la V\rarrow C\ot^\la D$ is an open
surjective linear map; \par
\textup{(c)} $f\ot g\: U\ot^!V\rarrow C\ot^!D$ is an open surjective
linear map.
\end{lem}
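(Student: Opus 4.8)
The plan is to verify the three asserted properties of $p\ot q$ one at a time, treating the three topologies in parallel. Surjectivity is purely algebraic: since $p$ and $q$ are surjective maps of abstract vector spaces, the image of $p\ot q\:U\ot_kV\rarrow C\ot_kD$ contains every elementary tensor $c\ot d=p(u)\ot q(v)$, hence a spanning set, hence all of $C\ot_kD$. Continuity is immediate from Lemma~\ref{tensor-continuous} applied to the continuous maps $p$ and~$q$.

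For openness I would use the standard reduction: a homomorphism of topological abelian groups is open as soon as the image of each member of a base of neighborhoods of zero in the source is open in the target, and, more crudely, it is enough to show that the image of an \emph{arbitrary} open subspace satisfies the defining conditions of the target topology. For the $!$\+topology (part~(c)) this is quickest, since $\ot^!$ has the explicit base $\{P\ot_kV+U\ot_kQ\}$: one computes $(p\ot q)(P\ot_kV+U\ot_kQ)=p(P)\ot_kD+C\ot_kq(Q)$ using surjectivity of $p$ and~$q$, and this is a basic open subspace of $C\ot^!D$ because $p(P)\subset C$ and $q(Q)\subset D$ are open — here is where the hypothesis that $p$ and $q$ are \emph{open} maps enters. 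For $\ot^*$ and $\ot^\la$ there is no equally clean base, so instead I would take an arbitrary open subspace $E\subset U\ot_kV$, put $E'=(p\ot q)(E)$, and check directly that $E'$ satisfies the defining conditions: the "global" condition by sending a witnessing $P\ot_kQ\subset E$ (resp.\ $P\ot_kV\subset E$) to $p(P)\ot_kq(Q)\subset E'$ (resp.\ to $p(P)\ot_kD\subset E'$, using $q$ surjective); and each "slicewise" condition by lifting a given $c\in C$ through~$p$ to some $u\in U$, choosing an open $Q_u\subset V$ with $u\ot Q_u\subset E$, and noting that $(p\ot q)(u\ot Q_u)=c\ot q(Q_u)\subset E'$ with $q(Q_u)\subset D$ open (the $*$\+topology has a third, symmetric, condition handled the same way).

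I do not expect a genuine obstacle; the statement is a routine consequence of the definitions. The only points that deserve a moment's attention are that a set such as $u\ot Q_u$ or $P\ot_kV$ is already a vector subspace, and that its image under $p\ot q$ is \emph{exactly} $c\ot q(Q_u)$, respectively $p(P)\ot_kD$ — the latter identity being the one place where surjectivity of~$q$ is used. Beyond this, the proof is just the definitions of $\ot^*$, $\ot^\la$, $\ot^!$, the two characteristic features of an open surjection (every vector in the target lifts, and images of open subspaces are open), and the functoriality already recorded in Lemma~\ref{tensor-continuous}.
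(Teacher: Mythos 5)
Your proposal is correct and follows essentially the same route as the paper: the paper also proves openness by taking an arbitrary open subspace $E$, pushing the witnessing subspaces $P\ot_kQ$ (resp.\ $P\ot_kV$, etc.) forward, and lifting each $c\in C$ to some $u\in U$ to handle the slicewise conditions, with surjectivity and continuity left as immediate. Your extra remark that part~(c) can be done directly on the explicit base $P\ot_kV+U\ot_kQ$ is a harmless simplification of what the paper dismisses as ``similar.''
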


\begin{proof}
 Let us explain part~(a); parts~(b) and~(c) are similar.
 Let $E\subset U\ot^*V$ be an open subspace; we have to show that
$(f\ot g)(E)\subset C\ot^*D$ is an open subspace.
 Let $P\subset U$ and $Q\subset V$ be open subspaces such that
$P\ot_kQ\subset E$.
 Then $f(P)\subset C$ and $g(Q)\subset D$ are open subspaces, and
$f(P)\ot_kg(Q)\subset(f\ot g)(E)$.
 Let $c\in C$ be a vector; choose a vector $u\in U$ such that $f(u)=c$.
 Let $Q_u\subset V$ be an open subspace such that $u\ot Q_u\subset E$.
 Then $g(Q_u)\subset D$ is an open subspace, and
$c\ot g(Q_u)\subset (f\ot g)(E)$.
\end{proof}

\begin{lem} \label{tensor-star-induced-topology}
 Let $U$ and $V$ be topological vector spaces, and let $K\subset U$ and
$L\subset V$ be vector subspaces endowed with the induced topologies.
 Then the topology of $K\ot^*L$ coincides with the induced topology on
$K\ot_kL$ as a vector subspace in $U\ot^*V$.
\end{lem}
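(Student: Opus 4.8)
The plan is to show that on the vector space $K\ot_kL$ the $\ot^*$\+topology and the topology induced from $U\ot^*V$ have exactly the same open subspaces, by proving the two inclusions separately.

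The inclusion of the induced topology into the $\ot^*$\+topology is immediate: for an open subspace $E'\subseteq U\ot^*V$, the subspace $(K\ot_kL)\cap E'$ satisfies the three conditions defining the $\ot^*$\+topology on $K\ot^*L$, because open $\hat P\subseteq U$, $\hat Q\subseteq V$ with $\hat P\ot_k\hat Q\subseteq E'$ give $(K\cap\hat P)\ot_k(L\cap\hat Q)\subseteq(K\ot_kL)\cap E'$, and an open $\hat Q_k\subseteq V$ with $k\ot\hat Q_k\subseteq E'$ gives the open $L\cap\hat Q_k\subseteq L$ with $k\ot(L\cap\hat Q_k)\subseteq(K\ot_kL)\cap E'$ (and symmetrically for the third condition); since such subspaces form a base of the induced topology, this settles one direction.

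For the reverse inclusion — every open $E\subseteq K\ot^*L$ contains $(K\ot_kL)\cap\tilde E$ for some open $\tilde E\subseteq U\ot^*V$ — I would first reduce to the case $K=U$, using the symmetry of $\ot^*$ and the transitivity of induced topologies: the topology of $K\ot^*L$ is induced from $K\ot^*V$ (the ``first factor is the whole space'' instance), which is in turn induced from $U\ot^*V$ (the ``second factor is the whole space'' instance, which after exchanging the two arguments is again the first case). So assume $K=U$ and let $E\subseteq U\ot^*L$ be open. After replacing $Q_0$ by a smaller open subspace of $L$ of the form $Q_0=L\cap\hat Q_0$ with $\hat Q_0\subseteq V$ open (keeping $P_0\ot Q_0\subseteq E$), I would fix a basis $(\bar\delta_j)$ of $U/P_0$ with lifts $\delta_j\in U$ and opens $\hat Q_j\subseteq\hat Q_0$ in $V$ with $\delta_j\ot(L\cap\hat Q_j)\subseteq E$, and a basis $(\bar b_\gamma)$ of $V/\hat Q_0$ extending a basis of the image of $L/Q_0$, with lifts $b_\gamma\in L$ of its elements carrying opens $P_{b_\gamma}\subseteq U$ with $P_{b_\gamma}\ot b_\gamma\subseteq E$. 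Forming the support-indexed ``submultiplicative'' families $\hat Q_{\bar u}=\hat Q_0\cap\bigcap_{j\in\mathrm{supp}\,\bar u}\hat Q_j$ ($\bar u\in U/P_0$) and $\hat P_{\bar v}=P_0\cap\bigcap_{\gamma\in\mathrm{supp}\,\bar v}\hat P_{\bar b_\gamma}$ ($\bar v\in V/\hat Q_0$, with $\hat P_{\bar b_\gamma}=P_0$ off the $L$\+part), I would set
\[
 \tilde E \;=\; E \;+\; \sum_{u\in U} u\ot_k\hat Q_{\bar u} \;+\; \sum_{v\in V}\hat P_{\bar v}\ot_k v .
\]
Openness of $\tilde E$ in $U\ot^*V$ is then clear from the construction ($P_0\ot\hat Q_0\subseteq\tilde E$ since $\hat Q_{\bar 0}=\hat Q_0$; $u\ot\hat Q_{\bar u}\subseteq\tilde E$; $\hat P_{\bar v}\ot v\subseteq\tilde E$).

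The heart of the argument — and the step I expect to be the only genuinely delicate one — is the inclusion $\tilde E\cap(U\ot_kL)\subseteq E$. Here I would take $e+y+z\in U\ot_kL$ with $e\in E$, $y=\sum_iu_i\ot q_i$ ($q_i\in\hat Q_{\bar u_i}$), $z=\sum_sp_s\ot v_s$ ($p_s\in\hat P_{\bar v_s}\subseteq P_0$), and expand the $u_i$ and $v_s$ over the two chosen bases to rewrite $y+z$ as $w_0+y_1+z_1$ with $w_0\in P_0\ot_k\hat Q_0$, $y_1=\sum_j\delta_j\ot\tilde q_j$ and $z_1=\sum_\gamma\tilde p_\gamma\ot b_\gamma$, where the definitions of the families force $\tilde q_j\in\hat Q_j$, $\tilde p_\gamma\in P_0$, and $\tilde p_\gamma\in P_{b_\gamma}$ for $\gamma$ in the $L$\+part. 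Using $y+z\in U\ot_kL$ together with the linear independence of the $\bar\delta_j$ one gets $\tilde q_j\in L$, hence $\tilde q_j\in L\cap\hat Q_j$ and $y_1\in E$, and $w_0+z_1\in P_0\ot_kL$. Finally, choosing a complement $L'$ of $L$ in $V$ adapted to $\hat Q_0$ — i.e.\ with $\mathrm{pr}_L(\hat Q_0)=Q_0$, which can be arranged precisely because $Q_0=L\cap\hat Q_0$ — and applying $\mathrm{id}\ot\mathrm{pr}_L$ to $w_0+z_1$ rewrites it as a sum of terms $p\ot q$ with $p\in P_0$, $q\in Q_0$ (from the $\hat Q_0$\+parts) plus terms $\tilde p_\gamma\ot b_\gamma$ with $b_\gamma\in L$ and $\tilde p_\gamma\in P_{b_\gamma}$, all of which lie in $E$; so $w_0+z_1\in E$ and therefore $e+y+z\in E$. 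Orchestrating the submultiplicative families and the adapted complement so that this verification closes is the technical core; everything else is formal.
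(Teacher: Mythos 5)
Your overall strategy coincides with the paper's: the easy containment comes from functoriality/continuity of $\ot^*$, and the hard containment is reduced to the one-factor case $K=U$ by symmetry of $\ot^*$ and transitivity of induced topologies, after which one constructs an explicit open subspace of $U\ot^*V$ cutting out $E$. Where you differ is in the construction of that subspace, and the paper's version is lighter: it chooses the open subspace $\hat Q_0\subset V$ so that, besides $L\cap\hat Q_0=Q_0$, one also has $\hat Q_0+L=V$ (always arrangeable by adding to $\hat Q_0$ a linear complement of $\hat Q_0+L$ in $V$). With that choice the subspace $\tilde E=E+P_0\ot_k\hat Q_0+\sum_j\delta_j\ot\hat Q_j$ already satisfies the third openness condition --- for $v=q+l$ with $q\in\hat Q_0$, $l\in L$ and $P_l\ot l\subset E$ one has $(P_0\cap P_l)\ot v\subset\tilde E$ --- so your entire second family $\sum_{v\in V}\hat P_{\bar v}\ot_k v$, the basis of $V/\hat Q_0$, and the adapted complement $L'$ become unnecessary, and the verification $\tilde E\cap(U\ot_kL)=E$ reduces to reading off components along the basis $\{\delta_j\}$ together with $(P_0\ot_k\hat Q_0)\cap(U\ot_kL)=P_0\ot_kQ_0$.

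Your version does work, but the final verification as written has one unclosed point. After applying $\mathrm{id}\ot\mathrm{pr}_L$ to $w_0+z_1$ you account only for the terms $\tilde p_\gamma\ot b_\gamma$ with $\gamma$ in the $L$-part; yet $z_1$ also contains terms $\tilde p_{\gamma'}\ot b_{\gamma'}$ for basis classes $\bar b_{\gamma'}$ of $V/\hat Q_0$ outside the image of $L$, and for those $\mathrm{pr}_L(b_{\gamma'})$ is just some vector of $L$, so nothing you have set up places $\tilde p_{\gamma'}\ot\mathrm{pr}_L(b_{\gamma'})$ in $E$. The gap is easily closed in either of two ways: (i) push $w_0+z_1\in P_0\ot_kL$ forward to $P_0\ot_k(V/\hat Q_0)$; the image of $L$ there is spanned by the $L$-part classes, and the linear independence of all the $\bar b_\gamma$ forces $\tilde p_{\gamma'}=0$ for every $\gamma'$ outside the $L$-part, so those terms are simply absent; or (ii) reorder the construction --- fix the adapted complement $L'$ first and choose the lifts $b_{\gamma'}$ inside $L'$, so that $\mathrm{pr}_L(b_{\gamma'})=0$. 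With either repair your argument is complete.
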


\begin{proof}
 We have $K\ot_kL\subset U\ot_kL\subset U\ot_kV$ and the functor
$\ot^*$ is commutative; so it suffices to show that the topology of
$U\ot^*L$ coincides with the induced topology on $U\ot_kL$ as
a vector subspace in $U\ot^*V$.
 By Lemma~\ref{tensor-continuous}, the inclusion $U\ot^*L\rarrow
U\ot^*V$ is a continuous map.

 Let $F\subset U\ot^*L$ be an open subspace.
 Then there exist open subspaces $P\subset U$ and $S\subset L$ such that
$P\ot_kS\subset F$.
 Choose an open subspace $Q\subset V$ such that $Q\cap L=S$ and
$Q+L=V$.
 Furthermore, choose a basis $\{u_i\}_{i\in I}$ in $U$ such that
a subset of $\{u_i\}$ is a basis in $P\subset U$.
 For every $i\in I$ such that $u_i\notin P$, there exists an open
subspace $S_i\subset L$ such that $u_i\ot S_i\subset F$.
 Choose an open subspace $Q_i\subset V$ such that $Q_i\cap L=S_i$.
 Then
$$
 E = F + P\ot_k Q + \sum\nolimits_{i\in I : u_i\notin P}
 u_i\ot Q_i \,\subset\, U\ot_kV
$$
is an open subspace in $U\ot^*V$ such that $E\cap(U\ot_kL)=F$.

 Indeed, let $v\in V$ be a vector.
 Choose vectors $q\in Q$ and $l\in L$ such that $q+l=v$.
 Then $P\ot q\subset E$, and there exists an open subspace
$P_l\subset U$ such that $P_l\ot l\subset F$.
 Hence $P\cap P_l$ is an open subspace in $U$ for which
$(P\cap P_l)\ot v\subset E$.
\end{proof}

\begin{lem} \label{tensor-arrow-induced-topology}
 Let $U$ and $V$ be topological vector spaces, and let $K\subset U$ and
$L\subset V$ be vector subspaces endowed with the induced topologies.
 Then the topology of $K\ot^\la L$ coincides with the induced topology
on $K\ot_kL$ as a vector subspace in $U\ot^\la V$.
\end{lem}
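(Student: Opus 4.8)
The plan is to imitate the proof of Lemma~\ref{tensor-star-induced-topology}, keeping in mind that $\ot^\la$ is not symmetric, so the two tensor factors cannot be interchanged and one must argue with them in their given roles. One comparison comes for free: by Lemma~\ref{tensor-continuous} the inclusions $K\rarrow U$ and $L\rarrow V$ induce a continuous map $K\ot^\la L\rarrow U\ot^\la V$, which on underlying vector spaces is the natural embedding $K\ot_kL\hookrightarrow U\ot_kV$; hence the topology of $K\ot^\la L$ is at least as fine as the topology induced from $U\ot^\la V$. It remains to prove the opposite comparison, i.e., that every open subspace $F\subset K\ot^\la L$ has the form $E\cap(K\ot_kL)$ for some open subspace $E\subset U\ot^\la V$.

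So fix an open subspace $F\subset K\ot^\la L$. By the definition of the $\ot^\la$-topology there is an open subspace $P_K\subset K$ with $P_K\ot_kL\subset F$, and for every vector $u\in K$ an open subspace $S_u\subset L$ with $u\ot S_u\subset F$. I would first choose an open subspace $P\subset U$ with $P\cap K=P_K$ (possible because, by definition of the induced topology, $P_K=K\cap P$ for some open $P\subset U$) and a basis $(u_i)_{i\in I}$ of $U$ together with subsets $I_P$, $I_K\subseteq I$ such that $(u_i)_{i\in I_P}$ is a basis of $P$ and $(u_i)_{i\in I_K}$ is a basis of $K$ — such a basis of $U$ is obtained by extending a basis of $P\cap K$ to bases of $P$ and of $K$ separately and then extending the (automatically linearly independent) union to a basis of $U$. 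Then $u_i\in P\iff i\in I_P$, \ $u_i\in K\iff i\in I_K$, and $(u_i)_{i\in I_P\cap I_K}$ is a basis of $P\cap K=P_K$. For each $i\in I_K\setminus I_P$ pick an open subspace $Q_i\subset V$ with $Q_i\cap L=S_{u_i}$, and set
\[
 E \;=\; F \;+\; P\ot_kV \;+\; \sum_{i\in I_K\setminus I_P} u_i\ot Q_i \;+\; \sum_{i\in I\setminus I_K} u_i\ot V \;\subset\; U\ot_kV .
\]

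Checking that $E$ is open in $U\ot^\la V$ is routine: $P$ witnesses the first defining condition since $P\ot_kV\subset E$, and for a vector $u=\sum_{i\in S}\lambda_iu_i$ with finite support $S\subset I$, the finite intersection $Q_u=\bigcap_{i\in S\cap(I_K\setminus I_P)}Q_i$ is an open subspace of $V$ with $u\ot Q_u\subset E$, because for $q\in Q_u$ each summand $\lambda_i(u_i\ot q)$ of $u\ot q$ lies in $P\ot_kV$ when $i\in I_P$, in $u_i\ot Q_i$ when $i\in I_K\setminus I_P$, and in $u_i\ot V$ when $i\in I\setminus I_K$. The last summand of $E$ is genuinely needed here: without it there would be no nonzero open $Q_u$ with $u_{i_0}\ot Q_u\subset E$ for $i_0\notin I_K\cup I_P$ unless $V$ is discrete.

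The one step with content is the identity $E\cap(K\ot_kL)=F$; the inclusion $\supseteq$ is clear since $F\subset E$ and $F\subset K\ot_kL$. For $\subseteq$, I would work in the direct sum decomposition $U\ot_kV=\bigoplus_{i\in I}u_i\ot V$, in which $K\ot_kL=\bigoplus_{i\in I_K}u_i\ot L$, \ $P\ot_kV=\bigoplus_{i\in I_P}u_i\ot V$, and $P_K\ot_kL=\bigoplus_{i\in I_P\cap I_K}u_i\ot L$. Any $x\in E$ can be written $x=f+g$ with $f\in F\subset K\ot_kL$ and $g=\sum_{i\in I_P}u_i\ot p_i+\sum_{i\in I_K\setminus I_P}u_i\ot a_i+\sum_{i\in I\setminus I_K}u_i\ot b_i$, where $a_i\in Q_i$ and $p_i$, $b_i\in V$ (almost all zero). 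Assume $x\in K\ot_kL$; then $g=x-f\in K\ot_kL=\bigoplus_{i\in I_K}u_i\ot L$, so the $u_i$-component of $g$ vanishes for every $i\notin I_K$ and lies in $L$ for every $i\in I_K$. The first fact leaves $g=\sum_{i\in I_P\cap I_K}u_i\ot p_i+\sum_{i\in I_K\setminus I_P}u_i\ot a_i$, and the second then gives $p_i\in L$ for $i\in I_P\cap I_K$ and $a_i\in Q_i\cap L=S_{u_i}$ for $i\in I_K\setminus I_P$. Hence $g\in P_K\ot_kL+\sum_{i\in I_K\setminus I_P}u_i\ot S_{u_i}\subset F$, so $x=f+g\in F$. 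The only difficulty in the whole argument is the bookkeeping forced by the asymmetry of $\ot^\la$ — in particular, the first summand of $E$ must be $P\ot_kV$, with the full space $V$, rather than $P\ot_kL$ — and no essential obstacle is expected.
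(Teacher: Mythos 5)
Your proof is correct, and its core construction is the same as the paper's, but the organization differs: the paper factors the inclusion $K\ot_kL\subset U\ot_kV$ through the intermediate space $U\ot_kL$ (or $K\ot_kV$) and, by transitivity of induced topologies, reduces to the two special cases $U\ot^\la L\subset U\ot^\la V$ and $K\ot^\la V\subset U\ot^\la V$. The first of these is handled by exactly your subspace $E=F+P\ot_kV+\sum_{u_i\notin P}u_i\ot Q_i$ (there $K=U$, so your index set $I\setminus I_K$ is empty and the extra summand disappears), and the second by the even simpler $E=F+P\ot_kV$, where $P\subset U$ is chosen so that $P\cap K=R$ \emph{and} $P+K=U$. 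You merge the two steps into a single construction, which is what forces the compatible-bases bookkeeping and the additional summand $\sum_{i\in I\setminus I_K}u_i\ot V$; had you also imposed $P+K=U$ on your lift $P$ of $P_K$ (always possible), you would have $I=I_P\cup I_K$ and that summand would become redundant. Your verification that $E$ is open in $U\ot^\la V$ and that $E\cap(K\ot_kL)=F$ (via the decomposition $U\ot_kV=\bigoplus_{i\in I}u_i\ot V$) is sound, as is the preliminary observation that the union of the two extended bases of $P$ and $K$ over a basis of $P\cap K$ is linearly independent. The paper's two-case reduction buys shorter individual verifications at the cost of an extra layer of transitivity; your version is a legitimate direct adaptation of the proof of Lemma~\ref{tensor-star-induced-topology} to the asymmetric setting.
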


\begin{proof}
 By Lemma~\ref{tensor-continuous}, the inclusion $K\ot^\la L\rarrow
U\ot^\la V$ is a continuous map.
 Using the inclusions $K\ot_kL\subset U\ot_kL\subset U\ot_kV$
(or $K\ot_kL\subset K\ot_kV\subset U\ot_kV$), it suffices to consider
two cases separately.

 Let us show that the topology of $U\ot^\la L$ coincides with
the induced topology on $U\ot_kL$ as a vector subspace in $U\ot^\la V$.
 Let $F\subset U\ot^\la L$ be an open subspace.
 Then there exists an open subspace $P\subset U$ such that
$P\ot_kL\subset F$.
 Choose a basis $\{u_i\}_{i\in I}$ in $U$ such that a subset of
$\{u_i\}$ is a basis in $P\subset U$.
 For every $i\in I$ such that $u_i\notin P$, there exists an open
subspace $S_i\subset L$ such that $u_i\ot S_i\subset F$.
 Choose an open subspace $Q_i\subset V$ such that $Q_i\cap L=S_i$.
 Then
$$
 E = F + P\ot_k V + \sum\nolimits_{i\in I : u_i\notin P}
 u_i\ot Q_i \,\subset\, U\ot_kV
$$
is an open subspace in $U\ot^\la V$ such that $E\cap(U\ot_kL)=F$.

 Let us show that the topology of $K\ot^\la V$ coincides with
the induced topology on $K\ot_kV$ as a vector subspace in $U\ot^\la V$.
 Let $F\subset K\ot^\la V$ be an open subspace.
 Then there exists an open subspace $R\subset K$ such that
$R\ot_kV\subset F$.
 Choose an open subspace $P\subset U$ such that $P\cap K=R$ and
$P+K=U$.
 Then $E=F+P\ot_kV\subset U\ot_kV$ is an open subspace in $U\ot^\la V$
such that $E\cap(K\ot_kV)=F$.
\end{proof}

\begin{lem} \label{tensor-shriek-induced-topology}
 Let $U$ and $V$ be topological vector spaces, and let $K\subset U$ and
$L\subset V$ be vector subspaces endowed with the induced topologies.
 Then the topology of $K\ot^!L$ coincides with the induced topology on
$K\ot_kL$ as a vector subspace in $U\ot^!V$.
\end{lem}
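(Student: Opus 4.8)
The plan is to follow the pattern of Lemmas~\ref{tensor-star-induced-topology} and~\ref{tensor-arrow-induced-topology}, but to take advantage of the fact that, unlike the $\la$\+topology, the $!$\+topology is symmetric, so that both subspaces can be handled simultaneously in one argument. First I would note that the inclusions $K\rarrow U$ and $L\rarrow V$ are continuous (this is exactly what ``induced topology'' means), so by Lemma~\ref{tensor-continuous} the natural injective linear map $K\ot^!L\rarrow U\ot^!V$ is continuous; equivalently, the topology induced on $K\ot_kL$ from $U\ot^!V$ is no finer than the intrinsic $!$\+topology of $K\ot^!L$. Hence the only thing left to prove is that every open subspace of $K\ot^!L$ has the form $E\cap(K\ot_kL)$ for some open subspace $E\subset U\ot^!V$.

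Next I would fix an open subspace $F\subset K\ot^!L$ and, by the definition of the $!$\+topology, choose open subspaces $R\subset K$ and $S\subset L$ with $R\ot_kL+K\ot_kS\subset F$. By the definition of the induced topologies on $K$ and $L$, choose open subspaces $P\subset U$ and $Q\subset V$ with $P\cap K=R$ and $Q\cap L=S$, and put
\[
 E\;=\;F+P\ot_kV+U\ot_kQ\;\subset\;U\ot_kV .
\]
This is a vector subspace, and it is open in $U\ot^!V$ since $P\ot_kV+U\ot_kQ\subset E$. The inclusion $F\subset E\cap(K\ot_kL)$ is clear; for the reverse inclusion it suffices to establish the purely linear\+algebraic identity
\[
 (P\ot_kV+U\ot_kQ)\cap(K\ot_kL)\;=\;R\ot_kL+K\ot_kS ,
\]
because then any $w\in E\cap(K\ot_kL)$, written as $w=f+z$ with $f\in F$ and $z\in P\ot_kV+U\ot_kQ$, satisfies $z=w-f\in K\ot_kL$, hence $z\in R\ot_kL+K\ot_kS\subset F$, whence $w\in F$.

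To prove that identity I would choose a basis of $U$ adapted to the configuration $R\subset P$, $R\subset K$: take a basis of $R=P\cap K$, extend it to a basis of $P$ by adjoining a batch $\beta_P$ of vectors and, separately, to a basis of $K$ by adjoining a batch $\beta_K$; the union of the three batches is a basis of $P+K$ (the key point, which I would verify explicitly, is that these vectors are jointly linearly independent, so that $\dim(P+K)=\dim P+\dim K-\dim R$), and one completes it to a basis of $U$. Choose a basis of $V$ adapted to $S\subset Q$, $S\subset L$ in the same fashion. Decomposing $U\ot_kV$ as the direct sum of the coordinate lines $k\,(u_i\ot v_j)$, each of the five subspaces appearing above is the span of an explicit set of such lines, and comparing the index sets on the two sides of the identity shows that they are spanned by exactly the same coordinate lines. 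This index bookkeeping is the only computational step, and it is entirely mechanical; I expect the sole point requiring care to be the verification that the adapted basis really does give a basis of $P+K$ (and likewise of $Q+L$). Once the identity is in hand, $F$ is open in the induced topology, which completes the proof.

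As an alternative, one could instead deduce the statement from the two special cases ``the topology of $U\ot^!L$ is induced from $U\ot^!V$'' and ``the topology of $K\ot^!V$ is induced from $U\ot^!V$'', chained through the intermediate subspace $K\ot_kV$ by transitivity of induced topologies; the two special cases are exchanged by the commutativity isomorphism $U\ot^!V\cong V\ot^!U$. I would still prove each special case by essentially the same adapted\+basis computation, so the direct two\+variable argument above seems preferable.
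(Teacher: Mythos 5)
Your proposal is correct and follows essentially the same route as the paper: both arguments take the witnesses $R\subset K$, $S\subset L$ for the open subspace $F$, lift them to open subspaces $P\subset U$, $Q\subset V$ with $P\cap K=R$ and $Q\cap L=S$, and set $E=F+P\ot_kV+U\ot_kQ$. The only difference is that the paper asserts $E\cap(K\ot_kL)=F$ without proof, whereas you supply the (correct) adapted-basis verification of the identity $(P\ot_kV+U\ot_kQ)\cap(K\ot_kL)=R\ot_kL+K\ot_kS$ from which it follows.
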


\begin{proof}
 By Lemma~\ref{tensor-continuous}, the inclusion $K\ot^!L\rarrow
U\ot^!V$ is a continuous map.
 Let $F\subset K\ot^!L$ be an open subspace.
 Then there exist open subspaces $R\subset K$ and $S\subset L$
such that $R\ot_kL+K\ot_kS\subset F$.
 Choose open subspaces $P\subset U$ and $Q\subset V$ such that
$P\cap K=R$ and $Q\cap L=S$.
 Put $E=F+P\ot_kV+U\ot_kQ\subset U\ot_kV$.
 Then $E$ is an open subspace in $U\ot^!V$ such that $E\cap(K\ot_kL)=F$.
\end{proof}

\begin{lem} \label{closed-subspaces-tensor-lemma}
 Let $U$ and $V$ be topological vector spaces, and let $K\subset U$
and $L\subset V$ be closed vector subspaces.
 Then the vector subspace $K\ot_kL\subset U\ot_kV$ is closed in
the topological vector spaces $U\ot^*V$, \ $U\ot^\la V$, and $U\ot^!V$.
\end{lem}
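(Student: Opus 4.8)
The plan is to prove the assertion for the topology $U\ot^!V$ and then obtain the other two cases for free. Indeed, $\ot^!$ is the coarsest of the three topologies: the identity maps $U\ot^*V\to U\ot^\la V\to U\ot^!V$ are continuous (as noted right after the definitions), so a subspace closed in $U\ot^!V$ is the preimage of a closed set under a continuous map, hence closed in $U\ot^\la V$ and in $U\ot^*V$ as well. For $U\ot^!V$ I would exhibit $K\ot_kL$ as an intersection of open subspaces. For open subspaces $P\subset U$ and $Q\subset V$, the subspace $K\ot_kL+P\ot_kV+U\ot_kQ$ contains the basic neighbourhood of zero $P\ot_kV+U\ot_kQ$ of $U\ot^!V$, hence is itself open; so it suffices to check
\[
 K\ot_kL=\bigcap_{P\subset U,\ Q\subset V\text{ open}}\bigl(K\ot_kL+P\ot_kV+U\ot_kQ\bigr),
\]
the inclusion ``$\subseteq$'' being obvious.

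For ``$\supseteq$'', suppose $z$ lies in the right-hand intersection but $z\notin K\ot_kL$. Over the field $k$ one has $K\ot_kL=(K\ot_kV)\cap(U\ot_kL)$ inside $U\ot_kV$, so $z\notin K\ot_kV$ or $z\notin U\ot_kL$; since the $!$\+topology is symmetric in $U$ and $V$, I may assume $z\notin K\ot_kV$. Fix a basis $(e_i)_{i\in I}$ of $U$ containing a basis $(e_i)_{i\in I_K}$ of $K$ and write $z=\sum_i e_i\ot g_i$ with $g_i\in V$, the sum finite; then $z\notin K\ot_kV$ means $g_{i_0}\ne0$ for some $i_0\notin I_K$. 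Now I would choose $P$ and $Q$ witnessing $z\notin K\ot_kL+P\ot_kV+U\ot_kQ$. The vectors $e_i$ with $i\notin I_K$ that occur in $z$ are finitely many and have linearly independent images in the separated space $U/K$ (separated because $K$ is closed). A finite-dimensional separated topological vector space with linear topology is discrete — in the filter of its open subspaces a member of least dimension is contained in every other member, hence is $0$ — so there is an open subspace $\bar P\subset U/K$ meeting the span of those images only in $0$; let $P\subset U$ be its preimage, an open subspace with $K\subset P$ for which the relevant $e_i$ are still linearly independent in $U/P$. Since $V$ is separated and $g_{i_0}\ne0$, there is also an open subspace $Q\subset V$ with $g_{i_0}\notin Q$. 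Because $K\subset P$, the subspace $K\ot_kL+P\ot_kV+U\ot_kQ$ equals $P\ot_kV+U\ot_kQ=\ker\bigl(U\ot_kV\to(U/P)\ot_k(V/Q)\bigr)$, by exactness of $\ot_k$. The image of $z$ in $(U/P)\ot_k(V/Q)$ is $\sum_{i\notin I_K}\bar e_i\ot\bar g_i$ (the terms with $i\in I_K$ vanish since $e_i\in K\subset P$); as the $\bar e_i$ are linearly independent and $\bar g_{i_0}\ne0$, this image is nonzero, so $z\notin P\ot_kV+U\ot_kQ$ — a contradiction.

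I expect the crux to be the step that keeps the algebraic witness alive under passage to a finite discrete quotient: it is not enough to arrange $e_{i_0}\notin P$, one must keep \emph{all} the basis vectors occurring in $z$ linearly independent modulo $P$, and this is precisely what the ``finite-dimensional separated $\Rightarrow$ discrete'' observation delivers (applied inside $U/K$, where it is legitimate because $K$ is closed). The remaining ingredients — the decomposition $K\ot_kL=(K\ot_kV)\cap(U\ot_kL)$ with the symmetry reduction to $K\ot_kV$, the identification of $K\ot_kL+P\ot_kV+U\ot_kQ$ with a kernel, and the passage from $\ot^!$ to $\ot^\la$ and $\ot^*$ — are formal.
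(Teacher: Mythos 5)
Your proof is correct. The overall strategy is the same as the paper's---reduce to the $!$\+topology as the coarsest of the three, then, given $z\notin K\ot_kL$, use the closedness of $K$ and separatedness to manufacture an open subspace witnessing that $z$ is not in the closure---but the execution differs, and in one respect yours is the more complete argument. The paper chooses $u\in U_w\setminus K$ (with $U_w$ the minimal left support of $w$), finds an open $P\subset U$ with $u\notin P+K$, and then treats $P\ot_kV$ as an open subspace of $U\ot^!V$; but $P\ot_kV$ contains a basic neighbourhood $P'\ot_kV+U\ot_kQ'$ only when $P=U$ or $V$ is discrete, so that step as written really establishes closedness in $U\ot^\la V$ and $U\ot^*V$ only. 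Your argument instead works with the genuine basic neighbourhoods $P\ot_kV+U\ot_kQ$ of the $!$\+topology and uses the separatedness of $V$ together with $g_{i_0}\ne0$ to choose $Q$ so that the image of $z$ in $(U/P)\ot_k(V/Q)$ is nonzero; this extra choice of $Q$ is exactly what is needed to close the argument for $\ot^!$. The supporting steps---the identity $K\ot_kL=(K\ot_kV)\cap(U\ot_kL)$ and the symmetry reduction, the observation that a finite-dimensional separated topological vector space with linear topology is discrete (applied inside $U/K$, which is legitimate because $K$ is closed), and the identification of $P\ot_kV+U\ot_kQ$ with $\ker\bigl(U\ot_kV\to(U/P)\ot_k(V/Q)\bigr)$---are all standard and correctly used.
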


\begin{proof}
 It suffices to consider the case of the $!$\+topology, as it is
the coarsest one of the three.
 Let $w\in U\ot_kV$ be a vector not belonging to $K\ot_kL$.
 Let $U_w\subset U$ and $V_w\subset V$ be the minimal
(finite-dimensional) vector subspaces such that $w\in U_w\ot_k V_w$.
 Then either $U_w\not\subset K$, or $V_w\not\subset L$ (or both).
 Suppose that $U_w\not\subset K$, and choose a vector $u\in U_w$
such that $u\notin K$.
 Since $K$ is a closed vector subspace in $U$, there exists an open
vector subspace $P\subset U$ such that the coset $u+P\subset U$ does
not intersect $K$ (or equivalently, $u\notin P+K$).
 Consider the open subspace $P\ot_k V\subset U\ot^!V$.
 Then we have $P\ot_kV+K\ot_kL\subset (P+K)\ot_kV$ and
$w\notin (P+K)\ot_kV$ (since $U_w\not\subset P+K$).
 Hence $w$~does not belong to the closure of $K\ot_kL$ in $U\ot_kV$.
\end{proof}

\begin{lem} \label{dense-subspaces-tensor-lemma}
 Let $U$ and $V$ be topological vector spaces, and let $K\subset U$
and $L\subset V$ be dense vector subspaces.
 Then the vector subspace $K\ot_kL\subset U\ot_kV$ is dense in
the topological vector spaces $U\ot^*V$, \ $U\ot^\la V$, and $U\ot^!V$.
\end{lem}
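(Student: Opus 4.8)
The plan is to reduce everything to the $*$-topology and then approximate the two tensor factors one at a time. Since the identity maps $U\ot^*V\rarrow U\ot^\la V\rarrow U\ot^!V$ are continuous, any subspace of $U\ot_kV$ that is closed in $U\ot^\la V$ or in $U\ot^!V$ is automatically closed in $U\ot^*V$; hence the closure of $K\ot_kL$ in $U\ot^*V$ is contained in its closure in either of the two coarser topologies. So it suffices to prove that $K\ot_kL$ is dense in $U\ot^*V$, i.e.\ that for every $w\in U\ot_kV$ and every open subspace $E\subset U\ot^*V$ there is some $w'\in K\ot_kL$ with $w-w'\in E$.

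To produce such a $w'$, I would write $w=\sum_{i=1}^n u_i\ot v_i$ and approximate each $v_i$ and each $u_i$ in turn. By the definition of the $*$-topology, for each $i$ there is an open subspace $Q_{u_i}\subset V$ with $u_i\ot Q_{u_i}\subset E$; using density of $L$ in $V$, I pick $l_i\in L$ with $v_i-l_i\in Q_{u_i}$. Now, with the $l_i$ fixed, the $*$-topology again provides, for each $i$, an open subspace $P_{l_i}\subset U$ with $P_{l_i}\ot l_i\subset E$; using density of $K$ in $U$, I pick $k_i\in K$ with $u_i-k_i\in P_{l_i}$. Setting $w'=\sum_i k_i\ot l_i\in K\ot_kL$, the difference decomposes as
\[
 w-w'=\sum_{i=1}^n u_i\ot(v_i-l_i)+\sum_{i=1}^n (u_i-k_i)\ot l_i ,
\]
whose first sum lies in $\sum_i u_i\ot Q_{u_i}\subset E$ and whose second lies in $\sum_i P_{l_i}\ot l_i\subset E$; since $E$ is a vector subspace, $w-w'\in E$, as required.

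The argument is essentially routine, so I do not expect a genuine obstacle. The one point I would be careful about is the order of the choices: the open subspaces $P_{l_i}$ witnessing $P_{l_i}\ot l_i\subset E$ depend on the already-chosen $l_i$, so density of $L$ must be invoked before density of $K$. Everything else is bookkeeping with finitely many open subspaces, and no separatedness or completeness hypotheses on $U$, $V$, $K$, $L$ enter. (Alternatively one could deduce the statement from Lemma~\ref{tensor-star-induced-topology} together with transitivity of density, treating one factor at a time, but the direct approach above is at least as short.)
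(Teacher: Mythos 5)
Your proof is correct and follows essentially the same route as the paper: reduce to the $*$\+topology (the finest of the three), then approximate first in the second factor using density of $L$ and only afterwards in the first factor using density of $K$, since the subspaces $P_{l}$ depend on the already-chosen $l$. The only cosmetic difference is that the paper works with a single decomposable tensor $u\ot v$ and concludes by linearity, whereas you carry the finite sum $\sum_i u_i\ot v_i$ through explicitly.
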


\begin{proof}
 It suffices to consider the case of the $*$\+topology, as it is
the finest one of the three.
 Let $u\in U$ and $v\in V$ be arbitrary vectors; let us show that
the element $u\ot v\in U\ot^*V$ belongs to the closure of
the subspace $K\ot_k L\subset U\ot^*V$.
 Let $E\subset U\ot^*V$ be an open subspace.
 Then there exists an open subspace $Q_u\subset V$ such that
$u\ot Q_u\subset E$.
 Since the subspace $L$ is dense in $V$, we have $Q_u+L=V$.
 Let $q\in Q_u$ and $l\in L$ be vectors such that $v=q+l$.
 There exists an open subspace $P_l\subset U$ such that
$P_l\ot l\subset E$.
 Since the subspace $K$ is dense in $U$, we have $P_l+K=U$.
 Let $p\in P_l$ and $k'\in K$ be vectors such that $u=p+k'$.
 Then $u\ot v=u\ot q+p\ot l+k'\ot l\in E+k'\ot l\subset E+K\ot_kL$.
 Thus $E+K\ot_kL=U\ot_kV$.
\end{proof}

\begin{cor} \label{tensor-preserves-separated-cor}
 Let $U$ and $V$ be separated topological vector spaces.
 Then the topological vector spaces $U\ot^*V$, \ $U\ot^\la V$, and
$U\ot^!V$ are separated, too.
\end{cor}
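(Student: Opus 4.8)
The plan is to deduce the corollary as the special case of Lemma~\ref{closed-subspaces-tensor-lemma} in which the two closed subspaces are taken to be zero. Recall the standard criterion (established for topological abelian groups in Section~\ref{top-abelian-secn} and valid verbatim for topological vector spaces): a topological vector space with linear topology is separated if and only if its zero subspace is closed, equivalently, if and only if the intersection of all its open subspaces is the zero subspace.

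First I would observe that, since $U$ and $V$ are separated, the subspaces $K=\{0\}\subset U$ and $L=\{0\}\subset V$ are closed. Applying Lemma~\ref{closed-subspaces-tensor-lemma} to these $K$ and $L$ shows that $K\ot_kL=\{0\}$ is a closed vector subspace of $U\ot_kV$ in each of the three topologies $U\ot^*V$, $U\ot^\la V$, and $U\ot^!V$. Since the zero subspace is closed, each of these three topological vector spaces is separated, which is exactly the assertion. Alternatively, one could first reduce to the coarsest topology: the identity maps $U\ot^*V\rarrow U\ot^\la V\rarrow U\ot^!V$ are continuous, so the $*$- and $\la$-topologies are finer than the $!$-topology, and a topology finer than a separated one is separated; then only $U\ot^!V$ needs to be treated, again by the $K=L=\{0\}$ instance of the argument in the proof of Lemma~\ref{closed-subspaces-tensor-lemma}.

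There is no real obstacle in this proof: the corollary is an immediate specialization of Lemma~\ref{closed-subspaces-tensor-lemma}, and the only point requiring a word of care is the elementary passage from ``the zero subspace is closed'' to ``the space is separated'', which is the criterion recalled above.
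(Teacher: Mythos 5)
Your proof is correct and is exactly the paper's argument: the paper proves the corollary by applying Lemma~\ref{closed-subspaces-tensor-lemma} with $K=0=L$ and invoking the criterion that a space is separated if and only if its zero subspace is closed. The additional remark about reducing to the coarsest topology is a harmless (and valid) simplification, but nothing beyond the paper's one-line proof is needed.
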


\begin{proof}
 Follows from Lemma~\ref{closed-subspaces-tensor-lemma} (take $K=0=L$).
\end{proof}

\begin{thm} \label{nonseparated-tensor-exactness-properties}
\textup{(a)} The functor\/ $\ot^*\:\Top_k\times\Top_k\rarrow\Top_k$
preserves the kernels and cokernels of morphisms (in each of its
arguments).
 Fixing a topological vector space in one of the arguments, it becomes
an exact endofunctor\/ $\Top_k\rarrow\Top_k$ in the other argument
(with respect to the quasi-abelian exact structure on\/ $\Top_k$). \par
\textup{(b)} The functor\/ $\ot^\la\:\Top_k\times\Top_k\rarrow\Top_k$
preserves the kernels and cokernels of morphisms (in each of its
arguments).
 Fixing a topological vector space in any one of the arguments, it
becomes an exact endofunctor\/ $\Top_k\rarrow\Top_k$ in the other
argument (with respect to the quasi-abelian exact structure on\/
$\Top_k$). \par
\textup{(c)} The functor\/ $\ot^!\:\Top_k\times\Top_k\rarrow\Top_k$
preserves the kernels and cokernels of morphisms (in each of its
arguments).
 Fixing a topological vector space in one of the arguments, it becomes
an exact endofunctor\/ $\Top_k\rarrow\Top_k$ in the other argument
(with respect to the quasi-abelian exact structure on\/ $\Top_k$). 
\end{thm}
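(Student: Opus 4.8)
The plan is to fix a topological vector space $V\in\Top_k$ and analyze the endofunctor ${-}\ot^\star V\:\Top_k\rarrow\Top_k$ for each $\star\in\{*,\la,\,!\,\}$; the second argument of $\ot^*$ and $\ot^!$ is then covered by symmetry, and the second argument of $\ot^\la$ is covered because Lemmas~\ref{tensor-preserves-open-maps}(b) and~\ref{tensor-arrow-induced-topology} already allow an open surjection, resp.\ a subspace with induced topology, in either tensor factor. By Theorem~\ref{nonseparated-top-groups-spaces-theorem}(b), the category $\Top_k$ is quasi-abelian, its kernels are exactly the injective continuous linear maps carrying the induced topology, and its cokernels are exactly the surjective continuous open maps. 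Hence it suffices to check that ${-}\ot^\star V$ sends kernels to kernels and cokernels to cokernels, and then that it sends short exact sequences to short exact sequences.

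The cokernel statement is the easy half. If $p\:U\rarrow C$ is a cokernel in $\Top_k$, i.e.\ a surjective continuous open map, then applying Lemma~\ref{tensor-preserves-open-maps} with $q=\mathrm{id}_V$ shows that $p\ot\mathrm{id}_V\:U\ot^\star V\rarrow C\ot^\star V$ is again a surjective continuous open map, hence a cokernel in $\Top_k$. Since the cokernel in $\Top_k$ of an arbitrary morphism $f\:U\rarrow U'$ is the open surjection of $U'$ onto $U'/f(U)$ with the quotient topology, and since the forgetful functor $\Top_k\rarrow\Vect_k$ preserves cokernels, it follows that $U'\ot^\star V\rarrow\coker(f)\ot^\star V$ is the cokernel of $f\ot\mathrm{id}_V$ in $\Top_k$.

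For the kernel statement, let $i\:K\rarrow U$ be a kernel in $\Top_k$, so $i$ identifies $K$, as a topological vector space, with a subspace $i(K)\subset U$ endowed with the induced topology. The map $i\ot\mathrm{id}_V$ is injective and has image $i(K)\ot_k V\subset U\ot_k V$, because ${-}\ot_k V$ is an exact functor on $\Vect_k$ (every $k$\+module is flat). By Lemma~\ref{tensor-star-induced-topology} (for $\star=*$), Lemma~\ref{tensor-arrow-induced-topology} (for $\star=\la$), or Lemma~\ref{tensor-shriek-induced-topology} (for $\star=\,!\,$), the topology of $i(K)\ot^\star V$ agrees with the topology induced from $U\ot^\star V$ on $i(K)\ot_k V$; composing with the topological isomorphism $K\ot^\star V\simeq i(K)\ot^\star V$ obtained from functoriality (Lemma~\ref{tensor-continuous}), we conclude that $i\ot\mathrm{id}_V$ is an injective continuous linear map carrying the induced topology, hence a kernel in $\Top_k$. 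The same computation applied to $i\:\ker(f)\rarrow U$ for an arbitrary morphism~$f$ shows that ${-}\ot^\star V$ preserves the kernels of all morphisms.

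Finally, given a short exact sequence $0\rarrow K\overset i\rarrow U\overset p\rarrow C\rarrow0$ in the quasi-abelian structure on $\Top_k$, we have just shown that $i\ot\mathrm{id}_V$ is a kernel and $p\ot\mathrm{id}_V$ is a cokernel in $\Top_k$; it remains to see that $i\ot\mathrm{id}_V=\ker(p\ot\mathrm{id}_V)$ and $p\ot\mathrm{id}_V=\coker(i\ot\mathrm{id}_V)$. On underlying vector spaces the sequence $0\rarrow K\ot_k V\rarrow U\ot_k V\rarrow C\ot_k V\rarrow0$ is exact by flatness, and kernels (resp.\ cokernels) in $\Top_k$ are computed on underlying vector spaces with the induced (resp.\ quotient) topology; so this follows from the two topological identifications above. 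The real content of the theorem is already packed into Lemmas~\ref{tensor-continuous}, \ref{tensor-preserves-open-maps}, and~\ref{tensor-star-induced-topology}--\ref{tensor-shriek-induced-topology}, and the only point to watch is to invoke the quasi-abelian descriptions of kernels and cokernels in $\Top_k$ from Theorem~\ref{nonseparated-top-groups-spaces-theorem}(b)---rather than descriptions involving closed subspaces or separated quotients---which is exactly why the statement is asserted for $\Top_k$ and not for $\Top^\s_k$ or $\Top^\scc_k$.
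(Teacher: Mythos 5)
Your proposal is correct and follows essentially the same route as the paper: the cokernel statement comes from Lemma~\ref{tensor-preserves-open-maps}, the kernel statement from Lemmas~\ref{tensor-star-induced-topology}--\ref{tensor-shriek-induced-topology}, and exactness then follows. The only cosmetic difference is that the paper concludes by citing the general fact that an additive functor between quasi-abelian categories preserving kernels and cokernels is exact, whereas you verify the preservation of short exact sequences by hand via flatness over~$k$ and the descriptions of kernels and cokernels in $\Top_k$ from Theorem~\ref{nonseparated-top-groups-spaces-theorem}(b).
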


\begin{proof}
 Let us explain part~(a); parts~(b) and~(c) are similar.
 The functor~$\ot^*$ is well-defined by Lemma~\ref{tensor-continuous}.
 For any fixed topological vector space $U$, the functor $U\ot^*{-}\,\:
\Top_k\rarrow\Top_k$ preserves the kernels of morphisms by
Lemma~\ref{tensor-star-induced-topology}, and it preserves
the cokernels of morphisms by Lemma~\ref{tensor-preserves-open-maps}(a).
 Any additive functor between quasi-abelian categories which preserves
the kernels and cokernels is exact with respect to the quasi-abelian
exact structures.
\end{proof}

\begin{thm} \label{incomplete-tensor-exactness-properties}
\textup{(a)} The functor\/ $\ot^*\:\Top^\s_k\times\Top^\s_k\rarrow
\Top^\s_k$ preserves the kernels and cokernels of morphisms (in each of
its arguments).
 Fixing a topological vector space in one of the arguments, it becomes
an exact endofunctor\/ $\Top^\s_k\rarrow\Top^\s_k$ in the other argument
(with respect to the quasi-abelian exact structure on\/ $\Top^\s_k$).
\par
\textup{(b)} The functor\/ $\ot^\la\:\Top^\s_k\times\Top^\s_k\rarrow
\Top^\s_k$ preserves the kernels and cokernels of morphisms (in each of
its arguments).
 Fixing a topological vector space in any one of the arguments, it
becomes an exact endofunctor\/ $\Top^\s_k\rarrow\Top^\s_k$ in the other
argument (with respect to the quasi-abelian exact structure on\/
$\Top^\s_k$). \par
\textup{(c)} The functor\/ $\ot^!\:\Top^\s_k\times\Top^\s_k\rarrow
\Top^\s_k$ preserves the kernels and cokernels of morphisms (in each of
its arguments).
 Fixing a topological vector space in one of the arguments, it becomes
an exact endofunctor\/ $\Top^\s_k\rarrow\Top^\s_k$ in the other
argument (with respect to the quasi-abelian exact structure on\/
$\Top^\s_k$). 
\end{thm}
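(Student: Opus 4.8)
The strategy is to imitate the proof of Theorem~\ref{nonseparated-tensor-exactness-properties}, keeping in mind that $\Top^\s_k$ is a quasi-abelian category (Theorem~\ref{incomplete-top-groups-spaces-theorem}(b)) in which, unlike in $\Top_k$, the cokernel of a morphism is formed by passing to the \emph{closure} of the image. It suffices to treat part~(a) with a fixed first argument $U\in\Top^\s_k$; parts~(b) and~(c) go through in the same way, replacing Lemma~\ref{tensor-star-induced-topology} by Lemma~\ref{tensor-arrow-induced-topology} or Lemma~\ref{tensor-shriek-induced-topology} and using the matching parts of Lemmas~\ref{tensor-preserves-open-maps}, \ref{closed-subspaces-tensor-lemma}, and~\ref{dense-subspaces-tensor-lemma}, and, for the non-symmetric operation $\ot^\la$, running the argument in each of the two arguments separately (both cases are covered by the cited lemmas; for $\ot^*$ and $\ot^!$ the two arguments are interchanged by commutativity). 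First I would note that $\ot^*$ is a well-defined functor $\Top_k\times\Top_k\rarrow\Top_k$ by Lemma~\ref{tensor-continuous}, and that it carries $\Top^\s_k\times\Top^\s_k$ into $\Top^\s_k$ by Corollary~\ref{tensor-preserves-separated-cor}, so that $U\ot^*{-}$ is a well-defined endofunctor of $\Top^\s_k$.

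For the preservation of kernels I would simply reduce to the already established case of $\Top_k$. Let $f\:V'\rarrow V''$ be a morphism in $\Top^\s_k$. Its kernel $\ker f=f^{-1}(0)\subset V'$, with the induced topology, is the same object whether computed in $\Top^\s_k$ or in $\Top_k$, because the inclusion $\Top^\s_k\rarrow\Top_k$ preserves all limits. By Theorem~\ref{nonseparated-tensor-exactness-properties}(a) the natural morphism $U\ot^*\ker f\rarrow\ker(U\ot^* f)$ is an isomorphism in $\Top_k$, and all three of $U\ot^*\ker f$, $U\ot^*V'$, $U\ot^*V''$ lie in $\Top^\s_k$; hence that morphism is an isomorphism in $\Top^\s_k$ as well. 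Thus $U\ot^*{-}$, and by symmetry ${-}\ot^* U$, preserves kernels.

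The preservation of cokernels is the step that genuinely requires extra input, since the cokernel of $f\:V'\rarrow V''$ in $\Top^\s_k$ is the quotient map $q\:V''\rarrow V''/\overline{f(V')}_{V''}$ with the quotient topology, which is \emph{not} the cokernel of $f$ in $\Top_k$. The hard part is to check that tensoring with $U$ commutes with taking closures of subspaces: the subspace $U\ot_k\overline{f(V')}_{V''}$ is closed in $U\ot^*V''$ by Lemma~\ref{closed-subspaces-tensor-lemma}, it carries the topology of $U\ot^*\overline{f(V')}_{V''}$ by Lemma~\ref{tensor-star-induced-topology}, and $\im(U\ot^* f)=U\ot_k f(V')$ is dense in it by Lemma~\ref{dense-subspaces-tensor-lemma} (because $f(V')$ is dense in $\overline{f(V')}_{V''}$); hence $\overline{\im(U\ot^* f)}=U\ot_k\overline{f(V')}_{V''}$ inside $U\ot^*V''$. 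On the other hand, $U\ot^* q\:U\ot^* V''\rarrow U\ot^*\bigl(V''/\overline{f(V')}_{V''}\bigr)$ is an open surjective map by Lemma~\ref{tensor-preserves-open-maps}(a), with kernel $U\ot_k\overline{f(V')}_{V''}$, so its target carries the quotient topology of $U\ot^*V''$ by that subspace. Combining these two observations, $U\ot^* q$ is exactly the cokernel of $U\ot^* f$ in $\Top^\s_k$.

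Once $U\ot^*{-}$ (and ${-}\ot^* U$) is known to preserve kernels and cokernels, exactness with respect to the quasi-abelian exact structure on $\Top^\s_k$ follows formally, a conflation in a quasi-abelian category being precisely a kernel--cokernel pair (this is the same formal step used at the end of the proof of Theorem~\ref{nonseparated-tensor-exactness-properties}). So the only real obstacle is the lemma-level fact that forming the $*$\+ (respectively $\la$\+, $!$\+) tensor product with a fixed space commutes with closures of subspaces, which is what forces one to combine Lemmas~\ref{closed-subspaces-tensor-lemma} and~\ref{dense-subspaces-tensor-lemma} with the induced-topology lemmas; the rest is bookkeeping parallel to the $\Top_k$ case. (I would also remark that for the bare statement that $U\ot^*{-}$ takes conflations to conflations one does not even need Lemma~\ref{dense-subspaces-tensor-lemma}, since the image occurring in a conflation of $\Top^\s_k$ is already closed and Lemma~\ref{closed-subspaces-tensor-lemma} then yields the same for its tensor product with $U$.)
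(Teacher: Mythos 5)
Your proposal is correct and follows essentially the same route as the paper's proof: the key step in both is the identification $\overline{U\ot_kf(V')}_{U\ot^*V''}=U\ot_k\overline{f(V')}_{V''}$ obtained by combining Lemmas~\ref{tensor-star-induced-topology}, \ref{closed-subspaces-tensor-lemma}, and~\ref{dense-subspaces-tensor-lemma}, followed by Lemma~\ref{tensor-preserves-open-maps}. The only cosmetic difference is that you obtain kernel preservation by reduction to Theorem~\ref{nonseparated-tensor-exactness-properties} rather than by citing Lemma~\ref{tensor-star-induced-topology} directly, which amounts to the same thing.
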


\begin{proof}
 Let us explain part~(a); parts~(b) and~(c) are similar.
 The functor~$\ot^*$ is well-defined by Lemma~\ref{tensor-continuous}
and Corollary~\ref{tensor-preserves-separated-cor}.
 For any fixed separated topological vector space $U$, the functor
$U\ot^*{-}\,\:\Top^\s_k\rarrow\Top^\s_k$ preserves the kernels by
Lemma~\ref{tensor-star-induced-topology}.
 Furthermore, let $f\:V'\rarrow V''$ be a continuous linear map of
separated topological vector spaces.
 Then the cokernel of the map~$f$ in the category $\Top^\s_k$ is
the quotient space $V''/\overline{f(V')}_{V''}$ endowed with
the quotient topology.
 By Lemmas~\ref{tensor-star-induced-topology},
\ref{closed-subspaces-tensor-lemma},
and~\ref{dense-subspaces-tensor-lemma}, the closure of the vector
subspace $U\ot_k f(V')$ in the topological vector space $U\ot^*V''$
is the subspace $\overline{U\ot_kf(V')}_{U\ot^*V''}=
U\ot_k\overline{f(V')}_{V''}\subset U\ot_kV''$.
 It remains to use Lemma~\ref{tensor-preserves-open-maps}(a) in order
to conclude that the functor $U\ot^*{-}$ preserves the cokernels in
$\Top^\s_k$.
 The second assertion in part~(a) follows from the first one.
\end{proof}

 The following result is an uncompleted version
of~\cite[Corollary in Section~1.1]{Beil}.

\begin{prop} \label{three-uncompleted-topologies-sequence-prop}
 Let $U$ and $V$ be topological vector spaces.
 Then the short exact sequence of (abstract, nontopological)
vector spaces
$$
 0\lrarrow U\ot_k V\lrarrow U\ot_k V\oplus U\ot_k V\lrarrow
 U\ot_k V\lrarrow0,
$$
where the left arrow is the diagonal map and the right one is
the difference of two projections, is exact as a short sequence
\begin{equation} \label{three-uncompleted-topologies-sequence-eqn}
 0\lrarrow U\ot^*V\lrarrow U\ot^\la V\oplus U\ot^\to V\lrarrow
 U\ot^!V\lrarrow0
\end{equation}
in the quasi-abelian exact structure on the category\/ $\Top_k$.
\end{prop}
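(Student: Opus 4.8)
The plan is to reduce everything to the description of kernels and cokernels in the quasi-abelian category $\Top_k$ provided by Theorem~\ref{nonseparated-top-groups-spaces-theorem}(b). Write $W=U\ot_kV$, let $\Delta\:W\rarrow W\oplus W$ be the diagonal map and $d\:W\oplus W\rarrow W$ the difference of the two projections; on underlying vector spaces the sequence $0\rarrow W\overset\Delta\rarrow W\oplus W\overset d\rarrow W\rarrow0$ is short exact, with $\im\Delta=\ker d$. By Lemma~\ref{tensor-continuous} the identity maps $U\ot^*V\rarrow U\ot^\la V$ and $U\ot^*V\rarrow U\ot^\to V$ are continuous, so $\Delta$ is continuous; and since $U\ot^\la V\rarrow U\ot^!V$ and $U\ot^\to V=V\ot^\la U\rarrow V\ot^!U=U\ot^!V$ are continuous (using symmetry of $\ot^!$), $d$ is continuous. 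In view of Theorem~\ref{nonseparated-top-groups-spaces-theorem}(b) it then suffices to prove two things: \emph{(i)} the topology of $U\ot^*V$ coincides with the topology induced on $W$ from the direct sum $U\ot^\la V\oplus U\ot^\to V$ via $\Delta$; and \emph{(ii)} the map $d\:U\ot^\la V\oplus U\ot^\to V\rarrow U\ot^!V$ is open. Granting these, $\Delta$ is an injective closed map and $d$ a surjective open map, so $\Delta$ is a kernel and $d$ is a cokernel in $\Top_k$; since $\im\Delta=\ker d$ and the cokernel of $\Delta$ in $\Top_k$ is the quotient with the quotient topology, on which the open surjection $d$ induces an isomorphism, the sequence satisfies Ex1, i.e.\ it is exact in the quasi-abelian exact structure.

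Step~(ii) is easy: a base of open subspaces of $U\ot^\la V\oplus U\ot^\to V$ is given by the subspaces $E_1\oplus E_2$ with $E_1$ open in $U\ot^\la V$ and $E_2$ open in $U\ot^\to V$, and $d(E_1\oplus E_2)=E_1+E_2$. Picking open $P\subset U$ with $P\ot_kV\subset E_1$ and open $Q\subset V$ with $U\ot_kQ\subset E_2$, we get $P\ot_kV+U\ot_kQ\subset E_1+E_2$, so $E_1+E_2$ is open in $U\ot^!V$ by the very definition of the $!$\+topology, and $d$ is open.

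For step~(i), the topology induced on $W$ via $\Delta$ has the subspaces $E_1\cap E_2$ (with $E_1$, $E_2$ as above) as a base, so one must show that these are exactly a base of the $*$\+topology. That every $E_1\cap E_2$ is $*$\+open is a direct verification of the three defining conditions: with $P$, $Q$ as above and, for each $u\in U$, $v\in V$, open $Q_u^{1}\subset V$, $P_v^{2}\subset U$ with $u\ot Q_u^{1}\subset E_1$ and $P_v^{2}\ot v\subset E_2$, one has $P\ot_kQ\subset E_1\cap E_2$, \ $u\ot(Q_u^{1}\cap Q)\subset E_1\cap E_2$, and $(P\cap P_v^{2})\ot v\subset E_1\cap E_2$. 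The substantial point is the converse: given a $*$\+open $E$, one must produce $E_1$, $E_2$ with $E_1\cap E_2\subset E$. Fix open $P_0\subset U$, $Q_0\subset V$ with $P_0\ot_kQ_0\subset E$, and families $(Q_u)_{u\in U}$, $(P_v)_{v\in V}$ as in the definition of the $*$\+topology; fix bases $\{u_i\}_{i\in I}$ of $U$ and $\{v_j\}_{j\in J}$ of $V$ extending bases of $P_0$ and $Q_0$, put $I'=\{i\:u_i\notin P_0\}$, $J'=\{j\:v_j\notin Q_0\}$, and set
\[
 E_1=P_0\ot_kV+\sum\nolimits_{i\in I'}u_i\ot(Q_{u_i}\cap Q_0),\qquad E_2=U\ot_kQ_0+\sum\nolimits_{j\in J'}(P_{v_j}\cap P_0)\ot v_j .
\]
A short computation shows $E_1$ is open in $U\ot^\la V$ and $E_2$ is open in $U\ot^\to V$. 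The role of the extra intersections with $Q_0$ and $P_0$ is the following: choosing vector-space complements $P_0^c$ of $P_0$ in $U$ and $Q_0^c$ of $Q_0$ in $V$ and using the resulting splitting $W=(P_0\ot Q_0)\oplus(P_0\ot Q_0^c)\oplus(P_0^c\ot Q_0)\oplus(P_0^c\ot Q_0^c)$, membership of $x$ in $E_1$ forces the $(P_0^c\ot Q_0^c)$\+component of $x$ to vanish and its $(P_0^c\ot Q_0)$\+component to have the form $\sum_{i\in I'}u_i\ot w_i$ with $w_i\in Q_{u_i}$, while membership of $x$ in $E_2$ forces its $(P_0\ot Q_0^c)$\+component to have the form $\sum_{j\in J'}s_j\ot v_j$ with $s_j\in P_{v_j}$. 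Hence for $x\in E_1\cap E_2$ one has $x\in P_0\ot Q_0+\sum_{i\in I'}u_i\ot Q_{u_i}+\sum_{j\in J'}P_{v_j}\ot v_j\subset E$.

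I expect step~(i), and specifically this last inclusion $E_1\cap E_2\subset E$, to be the main obstacle: with the naïve choices $E_1=P_0\ot_kV+\sum_{i\in I'}u_i\ot Q_{u_i}$ and $E_2=U\ot_kQ_0+\sum_{j\in J'}P_{v_j}\ot v_j$, the $(P_0^c\ot Q_0^c)$\+component of an element of $E_1\cap E_2$ is uncontrolled and the inclusion breaks down; the refinement by $Q_0$ and $P_0$ is precisely what forces that component to vanish. Once (i) and (ii) are in place, the rest is bookkeeping with Theorem~\ref{nonseparated-top-groups-spaces-theorem}(b).
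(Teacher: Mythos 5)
Your proposal is correct and follows essentially the same route as the paper: the same reduction to openness of the difference map and to identifying the $*$\+topology with the topology induced on the diagonal, and the same construction of the open subspaces $E_1=P_0\ot_kV+\sum_i u_i\ot(Q_{u_i}\cap Q_0)$ and $E_2=U\ot_kQ_0+\sum_j(P_{v_j}\cap P_0)\ot v_j$, including the crucial intersections with $Q_0$ and $P_0$. Your component-by-component verification of $E_1\cap E_2\subset E$ via the splitting of $U\ot_kV$ is a correct (and slightly more detailed) justification of the inclusion the paper states directly.
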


\begin{proof}
 We recall the notation $U\ot^\to V=V\ot^\la U$.
 The maps involved in the desired short exact
sequence~\eqref{three-uncompleted-topologies-sequence-eqn}
are continuous, since the $*$\+topology is the finest of the three
topologies on $U\ot_kV$, while the $!$\+topology is the coarsest one.
 On top of this observation, the proposition essentially claims two
assertions:
\begin{enumerate}
\renewcommand{\theenumi}{\alph{enumi}}
\item the surjective linear map $U\ot^\la V\oplus U\ot^\to V
\overset p\rarrow U\ot^!V$ is open;
\item the topology of $U\ot^*V$ coincides with the induced topology
on $U\ot_kV$ as the diagonal subspace in
$U\ot^\la V\oplus U\ot^\to V$.
\end{enumerate}

 Proof of~(a): let $F\subset U\ot^\la V$ and $G\subset U\ot^\to V$
be open subspaces.
 Then there exists an open subspace $P\subset U$ such that $P\ot_kV
\subset F$.
 Similarly, there exists an open subspace $Q\subset V$ such that
$U\ot_kQ\subset G$.
 Now $P\ot_kV+U\ot_kQ\subset F+G=p(F\oplus G)$, hence $p(F\oplus G)
\subset U\ot_kV$ is an open subspace in $U\ot^!V$.

 Proof of~(b): let $E\subset U\ot^*V$ be an open subspace.
 It suffices to find two open subspaces $F\subset U\ot^\la V$ and
$G\subset U\ot^\to V$ such that $F\cap G\subset E$ in $U\ot_kV$.

 By the definition of the $*$\+topology, there exist two open
subspaces $P\subset U$ and $Q\subset V$ such that $P\ot_k Q\subset E$.
 Choose a basis $\{u_i\}_{i\in I}$ in $U$ such that a subset of
$\{u_i\}$ is a basis in $P\subset U$.
 Similarly, choose a basis $\{v_j\}_{j\in J}$ in $V$ such that
a subset of $\{v_j\}$ is a basis in $Q\subset V$.

 For every $i\in I$ such that $u_i\notin P$, there exists an open
subspace $Q_i\subset V$ such that $u_i\ot Q_i\subset E$.
 Similarly, for every $j\in J$ such that $v_j\notin Q$, there exists
an open subspace $P_j\subset U$ such that $P_j\ot v_j\subset E$.
 Put
$$
 F = P\ot_kV + \sum\nolimits_{i\in I:u_i\notin P} u_i\ot (Q_i\cap Q)
$$
and
$$
 G = U\ot_kQ + \sum\nolimits_{j\in J:v_j\notin Q} (P_j\cap P)\ot v_j.
$$
 Then $F\subset U\ot^\la V$ and $G\subset U\ot^\to V$ are open
subspaces, and
$$
 F\cap G = P\ot_kQ +
 \sum\nolimits_{i\in I:u_i\notin P} u_i\ot (Q_i\cap Q) +
 \sum\nolimits_{j\in J:v_j\notin Q} (P_j\cap P)\ot v_j
 \,\subset\,E,
$$
as desired.
\end{proof}

\Section{Refined Exact Category Structures on VSLTs}
\label{refined-exact-struct-secn}

 It is claimed in~\cite[Section~1.1, Exercise on page~2]{Beil} that
the completed tensor product operations are exact in the category
of complete, separated topological vector spaces.
 As we will see, the validity of these assertions depends on what is
understood by exactness; but they do not hold if one presumes
exactness with respect to the maximal exact category structure on
$\Top^\scc_k$.

 Let $\fU$ and $\fV$ be two complete, separated topological vector
spaces (with linear topology).
 Let us introduce notation for three completed topological tensor
products of $\fU$ and $\fV$:
\begin{itemize}
\item $\fU\cot^*\fV=(\fU\ot^*\fV)\sphat\,$;
\item $\fU\cot^\la\fV=(\fU\ot^\la\fV)\sphat\,$;
\item $\fU\cot^!\fV=(\fU\ot^!\fV)\sphat\,$.
\end{itemize}

 The simplest examples of complete, separated topological vector
spaces for which the completed tensor products differ from
the uncompleted ones were mentioned in
Remark~\ref{tensor-product-not-complete-remark}.
 Here are some slightly more sophisticated examples. 

\begin{exs} \label{tensor-with-discrete-examples}
 (1)~Let $U$ be a discrete vector space and $V$ be a topological
vector space.
 Then, by the definition, a vector subspace $E\subset U\ot_kV$ is
open in $U\ot^*V$ if and only if it is open in $U\ot^\la V$, and if
and only if for every vector $u\in U$ there exists an open subspace
$Q_u\subset V$ such that $u\ot Q_u\subset E$.
 
 Let $\{u_i\}_{i\in I}$ be a basis in~$U$.
 Identifying the tensor product $U\ot_kV$ with the direct sum
$\bigoplus_{i\in I}V$, one observes that the topology of
$U\ot^*V=U\ot^\la V$ coincides with the coproduct topology of
$\bigoplus_{i\in I}V$.

 By Lemma~\ref{coproduct-topology-lemma}, the direct sum of a family
of complete, separated topological vector spaces is separated and
complete in the coproduct topology.
 Thus, for any discrete vector space $\fU$ and any complete, separated
topological vector space $\fV$, one has $\fU\cot^*\fV=\fU\ot^*\fV=
\bigoplus_{i\in I}\fV=\fU\ot^\la\fV=\fU\cot^\la\fV$, where $I$ is
a set indexing a basis in~$\fU$.

\smallskip
 (2)~Let $U$ be a topological vector space and $V$ be a discrete
vector space.
 Then, by the definition, a vector subspace $E\subset U\ot_kV$ is
open in $U\ot^\la V$ if and only if it is open in $U\ot^!V$, and if
and only if there exists an open subspace $P\subset U$ such that
$P\ot_kV\subset E$.

 Let $\{v_x\}_{x\in X}$ be a basis in~$V$.
 Identifying the tensor product $U\ot_kV$ with the direct sum
$\bigoplus_{x\in X}U=U^{(X)}=U[X]$, one observes that a base of
open subspaces in $U\ot^\la V=U\ot^!V$ is formed by the subspaces
$P[X]$, where $P$ ranges over the open vector subspaces in~$U$.
 This is the topology relevant in the context of the construction
in the beginning of Section~\ref{strong-exact-structure-secn}.

 For a complete, separated topological vector space $\fU$ and
a set $X$, the completion of the vector space $\fU^{(X)}=\fU[X]$
in the above topology is the topological vector space denoted
by $\fU[[X]]$ in Section~\ref{strong-exact-structure-secn}.
 Thus, for any complete, separated topological vector space $\fU$
and a discrete vector space $\fV$, one has $\fU\cot^\la\fV=\fU[[X]]=
\fU\cot^!\fV$, where $X$ is a set indexing a basis in~$\fV$.

\smallskip
 (3)~Let $\fU$ and $\fV$ be complete, separated topological vector
spaces.
 For any open subspace $\fP\subset\fU$, let us view the quotient
space $\fU/\fP$ as a discrete vector space and endow the tensor product
$(\fU/\fP)\ot_k\fV$ with the (complete, separated) topology
described in~(1).
 Then the preimages in $\fU\ot_k\fV$ of open subspaces in
$(\fU/\fP)\ot_k\fV$ form a base of neighborhoods of zero in
$\fU\ot^\la\fV$.
 Hence one has $\fU\cot^\la\fV=\varprojlim_{\fP\subset\fU}
(\fU/\fP\ot_k\fV)$, where the projective limit over the poset of open
subspaces $\fP\subset\fU$ is taken in the category $\Top^\scc_k$,
or equivalently, in any one of the categories $\Top^\s_k$ or $\Top_k$.

 As the projective limit in the categories of topological vector
spaces agrees with the one in $\Vect_k$, it follows that the underlying
vector space of the topological vector space $\fU\cot^\la\fV$ only
depends on the underlying vector space of the topological vector
space $\fV$, and does not depend on a (complete, separated) topology
on~$\fV$.
 So there is a well-defined functor of completed tensor product
$$
\cot^\la\:\Top^\scc_k\times\Vect_k\lrarrow\Vect_k,
$$
defined by the rule $\fU\cot^\la V=\varprojlim_{\fP\subset\fU}
(\fU/\fP\ot_k V)$ for all $\fU\in\Top^\scc_k$, \ $V\in\Vect_k$,
and agreeing with the functor
$\cot^\la\:\Top^\scc_k\times\Top^\scc_k\rarrow\Top^\scc_k$.
\end{exs}

 The following lemma holds for topological abelian groups just as well,
but we will only use it for topological vector spaces.

\begin{lem} \label{completion-exactness-properties-lemma}
\textup{(a)} The completion functors $V\longmapsto V\sphat\,\:
\Top_k\rarrow\Top^\scc_k$ and $V\longmapsto V\sphat\,\:
\Top^\s_k\rarrow\Top^\scc_k$ preserve the cokernels of morphisms. \par
\textup{(b)} For any injective morphism of topological vector spaces\/
$\iota\:U\rarrow V$ such that the subspace\/ $\iota(U)$ is dense in $V$
and the topology of $U$ is induced from the topology of $V$ via~$\iota$,
the induced morphism of the completions
$\iota\sphat\,\:U\sphat\,\rarrow V\sphat\,$ is an isomorphism of
topological vector spaces. \par
\textup{(c)} The completion functors $V\longmapsto V\sphat\,\:
\Top_k\rarrow\Top^\scc_k$ and $V\longmapsto V\sphat\,\:
\Top^\s_k\rarrow\Top^\scc_k$ take short sequences satisfying Ex1 to
short sequences satisfying Ex1.
\end{lem}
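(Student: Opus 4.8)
I would prove the three parts in the order (a), (b), (c), since (c) uses both of the others.

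\emph{Part~(a).} As recalled in Sections~\ref{top-abelian-secn} and~\ref{top-vector-secn}, the completion functor $V\longmapsto V\sphat\,$ is the reflector (left adjoint) of the inclusion $\Top^\scc_k\rarrow\Top_k$, and likewise of the inclusion $\Top^\scc_k\rarrow\Top^\s_k$. A left adjoint preserves all colimits that exist in its source; since cokernels exist in both $\Top_k$ and $\Top^\s_k$, both completion functors preserve them. So there is nothing more to do here.

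\emph{Part~(b).} I would first reduce to the case of separated $U$ and $V$ by passing to the maximal separated quotients $U/\overline{\{0\}}_U$ and $V/\overline{\{0\}}_V$: one checks routinely that the induced map between these is again injective, still has dense image, and still carries the induced topology (using the hypothesis on the topology of~$U$), while the completions $U\sphat\,$ and $V\sphat\,$ are unchanged. When $U$ and $V$ are separated I identify $U$ with $\iota(U)$, so that $U\subset V\subset V\sphat\,$ with each topology induced from the next --- the topology of $V$ is induced from that of $V\sphat\,$ because $V$ is separated, and induced topologies compose. Then Lemma~\ref{closure-completion-lemma}, applied with $V\sphat\,$ in the role of the complete separated group and $U$ in the role of the subgroup, identifies $U\sphat\,$ with the closure of $U$ inside $V\sphat\,$. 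But $U$ is dense in $V$ and $V$ is dense in $V\sphat\,$, so $U$ is dense in $V\sphat\,$; hence its closure is all of $V\sphat\,$, and $\iota\sphat\,$ is an isomorphism of topological vector spaces.

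\emph{Part~(c).} I would treat the functor $\Top_k\rarrow\Top^\scc_k$; the case of $\Top^\s_k\rarrow\Top^\scc_k$ is the sub-case in which the reductions below are vacuous. Let $0\rarrow K\overset i\rarrow V\overset p\rarrow C\rarrow0$ be a short sequence satisfying Ex1 in $\Top_k$; by Theorem~\ref{nonseparated-top-groups-spaces-theorem}, the map $i$~is injective with the induced topology and $p$~is a surjective open map onto $C=V/i(K)$. Using part~(b), I first replace $K$ by its closure $\overline{K}_V$ in~$V$ (this has the same completion by~(b), while $C$ gets replaced by its maximal separated quotient, which again has the same completion), and then pass to maximal separated quotients everywhere; this reduces the problem to the case where $V$ is separated and $K=i(K)$ is a closed subspace of~$V$. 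In that situation $V\subset V\sphat\,$ and $K\subset V\subset V\sphat\,$ carries the induced topology, so by Lemma~\ref{closure-completion-lemma} the map $i\sphat\,\:K\sphat\,\rarrow V\sphat\,$ is the inclusion of the closure of $K$ in $V\sphat\,$ --- a closed subspace with the induced topology --- and hence $i\sphat\,$ is a kernel in $\Top^\scc_k$. On the other hand $p$~is the cokernel of~$i$ in $\Top_k$, so by part~(a) the map $p\sphat\,$ is the cokernel of $i\sphat\,$ in $\Top^\scc_k$. Since $i\sphat\,$ is a kernel and has a cokernel, it is the kernel of that cokernel, namely of $p\sphat\,$ (the general fact recalled in Section~\ref{maximal-exact-struct-secn}); therefore the completed short sequence satisfies Ex1. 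The one genuine obstacle is arranging these two reductions so that an arbitrary Ex1 sequence in $\Top_k$ takes the ``closed subspace of a separated space'' form; after that, the entire content is that completion --- although neither exact nor even left exact --- does preserve cokernels, and a kernel that has a cokernel is automatically the kernel of that cokernel.
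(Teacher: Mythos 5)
Your proposal is correct and follows essentially the same route as the paper: part~(a) by the reflector/left-adjoint argument, part~(b) by reducing to the separated case and applying Lemma~\ref{closure-completion-lemma} together with the transitivity of density and of induced topologies, and part~(c) by combining part~(a) for the cokernel with Lemma~\ref{closure-completion-lemma} (via the reduction to separated quotients) to see that $i\sphat\,$ is an injective closed map, hence a kernel, hence the kernel of its cokernel. The only cosmetic difference is your intermediate replacement of $K$ by $\overline{K}_V$, which the paper's argument bypasses since Lemma~\ref{closure-completion-lemma} already identifies $K\sphat\,$ with the closure of $K$ inside $V\sphat\,$.
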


\begin{proof}
 Part~(a): the completion functors are the reflectors, i.~e., they are
left adjoint to the inclusion functors $\Top^\scc_k\rarrow\Top_k$ and
$\Top^\scc_k\rarrow\Top^\s_k$, respectively (cf.\
Section~\ref{top-abelian-secn}).
 All left adjoint functors preserve all colimits, and in particular,
cokernels.

 Part~(b): first let us assume that $V$ is separated; then $U$ is
separated, too.
 For any separated topological vector space (or abelian group)
$V$ and its completion $V\sphat\,$, the topology of $V$ is induced from
the topology of $V\sphat\,$ via the injective completion map
$V\rarrow V\sphat\,$, which makes $V$ a dense subspace/subgroup
in~$V\sphat\,$.
 In the situation at hand, $U$ is dense in $V$ and $V$ is dense in
$V\sphat\,$, hence $U$ is dense in~$V\sphat\,$.
 The topology of $U$ is induced from the topology of $V$ and
the topology of $V$ is induced from the topology of $V\sphat\,$, hence
the topology of $U$ is induced from the topology of~$V\sphat\,$.
 It remains to apply Lemma~\ref{closure-completion-lemma} to
the embedding $U\rarrow V\sphat\,$.

 The general case is reduced to the separated case by the passage to
the maximal separated quotient spaces $U/\overline{\{0\}}_U$ and
$V/\overline{\{0\}}_V$ of the topological vector spaces $U$ and $V$
(see the proof of part~(c) below for some additional details).

 Part~(c): let $0\rarrow K\overset i\rarrow V\overset p\rarrow C
\rarrow0$ be a short sequence satisfying Ex1 in $\Top^\s_k$ (i.~e.,
a short exact sequence in the quasi-abelian exact structure
on $\Top^\s_k$).
 By part~(a), the morphism $p\sphat\,\:V\sphat\,\rarrow C\sphat\,$
is the cokernel of the morphism $i\sphat\,\:K\sphat\,\rarrow
V\sphat\,$.
 Applying Lemma~\ref{closure-completion-lemma} to the embedding
$K\rarrow V\rarrow V\sphat\,$, one can see that the morphism
$i\sphat\,=K\sphat\,\rarrow V\sphat\,$ is an injective closed map.
 By Proposition~\ref{top-groups-spaces-kernels-cokernels-prop}(b),
this means that $i\sphat\,$~is a kernel in $\Top^\scc_k$; hence
$i\sphat\,$ is a kernel of its cokernel~$p\sphat\,$.

 Similarly, let $0\rarrow K\overset i\rarrow V\overset p\rarrow C
\rarrow0$ be a short sequence satisfying Ex1 in $\Top_k$ (i.~e.,
a short exact sequence in the quasi-abelian exact structure
on $\Top_k$).
 By part~(a), the morphism $p\sphat\,\:V\sphat\,\rarrow C\sphat\,$
is the cokernel of the morphism $i\sphat\,\:K\sphat\,\rarrow
V\sphat\,$.
 Let $K_0=\overline{\{0\}}_K\subset K$ and 
$V_0=\overline{\{0\}}_V\subset V$ be the closures of the zero subgroup
in $K$ and in~$V$.
 Then one has $K_0=V_0\cap K$, so the induced map $i'\:K/K_0\rarrow
V/V_0$ is injective.
 Moreover, the quotient topology on $K/K_0$ is induced from
the quotient topology of $V/V_0$ via~$i'$.
 Applying Lemma~\ref{closure-completion-lemma} to the embedding
$K/K_0\rarrow V/V_0\rarrow (V/V_0)\sphat\,=V\sphat\,$, one can see
that the morphism $i\sphat\,=i'{}\sphat\,\:(K/K_0)\sphat\,=K\sphat\,
\rarrow V\sphat\,$ is an injective closed map; and the argument
finishes similarly to the previous case.
\end{proof}

\begin{ex} \label{kernel-created-by-completion}
 Very simple counterexamples show that the completion functor does
\emph{not} preserve the kernels of morphisms of topological vector
spaces.
 In fact, it can transform an injective continuous linear map of
separated topological vector spaces into a noninjective map of
the completions.
 For example, let $C$ be an incomplete separated topological vector
space and $\fC=C\sphat\,$ be its completion.
 Choose a vector $x\in\fC\setminus C$; then $x\:k\rarrow\fC$ is
a split monomorphism of topological vector spaces (where
the one-dimensional vector space~$k$ is endowed with the discrete
topology) by Corollary~\ref{countable-base-complete-subspace-splits}.
 Hence the quotient space $\fC/kx$ is a direct summand in $\fC$,
so it is separated and complete.
 Thus the completion functor transforms the injective map $C\rarrow
\fC/kx$ into the split epimorphism $\fC\rarrow\fC/kx$ with a nonzero
kernel~$kx$.
\end{ex}

\begin{prop} \label{topological-tensor-associativity}
 The three topological tensor products\/ $\cot^*$, \ $\cot^\la$,
and\/ $\cot^!$ are associative: for any complete, separated topological
vector spaces $\fU$, $\fV$, and $\fW$ there are natural isomorphisms
of (complete, separated) topological vector spaces \par
\textup{(a)} $(\fU\cot^*\fV)\cot^*\fW\simeq\fU\cot^*(\fV\cot^*\fW$);
\par
\textup{(b)} $(\fU\cot^\la\fV)\cot^\la\fW\simeq
\fU\cot^\la(\fV\cot^\la\fW$); \par
\textup{(c)} $(\fU\cot^!\fV)\cot^!\fW\simeq\fU\cot^!(\fV\cot^!\fW)$.
\end{prop}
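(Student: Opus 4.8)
The plan is to reduce the assertion to the associativity of the \emph{uncompleted} tensor product operations $\ot^*$, $\ot^\la$, $\ot^!$ on the category $\Top_k$ of all topological vector spaces, and then to establish the latter by a direct inspection of bases of open subspaces. For the reduction, fix $\bullet\in\{*,\la,!\}$ and complete separated $\fU$, $\fV$, $\fW$. By Corollary~\ref{tensor-preserves-separated-cor} the space $\fU\ot^\bullet\fV$ is separated, so its completion map is injective and $\fU\ot_k\fV$ lies inside $\fU\cot^\bullet\fV$ as a dense subspace carrying the induced topology. Applying Lemma~\ref{dense-subspaces-tensor-lemma} once more, $(\fU\ot_k\fV)\ot_k\fW$ is dense in $(\fU\cot^\bullet\fV)\ot^\bullet\fW$, hence in its completion $(\fU\cot^\bullet\fV)\cot^\bullet\fW$; and by whichever of Lemmas~\ref{tensor-star-induced-topology}, \ref{tensor-arrow-induced-topology}, \ref{tensor-shriek-induced-topology} applies to $\bullet$ (together with the fact that the completion map of a separated space induces the original topology on its dense image), the topology induced on $\fU\ot_k\fV\ot_k\fW$ from $(\fU\cot^\bullet\fV)\cot^\bullet\fW$ coincides with the topology of $(\fU\ot^\bullet\fV)\ot^\bullet\fW$. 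Applying Lemma~\ref{completion-exactness-properties-lemma}(b) to the dense embedding $(\fU\ot^\bullet\fV)\ot^\bullet\fW\rarrow(\fU\cot^\bullet\fV)\cot^\bullet\fW$ gives a natural isomorphism $(\fU\cot^\bullet\fV)\cot^\bullet\fW\simeq\bigl((\fU\ot^\bullet\fV)\ot^\bullet\fW\bigr)\sphat$, and symmetrically $\fU\cot^\bullet(\fV\cot^\bullet\fW)\simeq\bigl(\fU\ot^\bullet(\fV\ot^\bullet\fW)\bigr)\sphat$. So it suffices to produce, for arbitrary topological vector spaces $U$, $V$, $W$, an isomorphism $(U\ot^\bullet V)\ot^\bullet W\simeq U\ot^\bullet(V\ot^\bullet W)$ of topological vector spaces inducing the canonical identification of the underlying vector spaces $U\ot_kV\ot_kW$.

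For the uncompleted statement I would describe an intrinsic ``triple $\bullet$-topology'' on $U\ot_kV\ot_kW$ and check that both bracketings realize it. For $\ot^!$ this is immediate: unwinding the definitions, a base of neighborhoods of zero in either $(U\ot^!V)\ot^!W$ or $U\ot^!(V\ot^!W)$ is given by the subspaces $P\ot_kV\ot_kW+U\ot_kQ\ot_kW+U\ot_kV\ot_kR$ with $P\subseteq U$, $Q\subseteq V$, $R\subseteq W$ open, which is manifestly symmetric in the three factors. (One could alternatively argue after completion: by Examples~\ref{tensor-with-discrete-examples}, $\fU\cot^!\fV\simeq\varprojlim_{\fP,\fQ}(\fU/\fP)\ot_k(\fV/\fQ)$ is a projective limit of discrete spaces, and both iterated $\cot^!$-products become $\varprojlim_{\fP,\fQ,\fR}(\fU/\fP)\ot_k(\fV/\fQ)\ot_k(\fW/\fR)$.) For $\ot^\la$ a similar but slightly longer unwinding shows that a base of neighborhoods of zero in either $(U\ot^\la V)\ot^\la W$ or $U\ot^\la(V\ot^\la W)$ consists of the subspaces $E\subseteq U\ot_kV\ot_kW$ admitting an open $P\subseteq U$ with $P\ot_kV\ot_kW\subseteq E$, an open $Q_u\subseteq V$ for each $u\in U$ with $u\ot Q_u\ot_kW\subseteq E$, and an open $R_{u,v}\subseteq W$ for each $u\in U$, $v\in V$ with $u\ot v\ot R_{u,v}\subseteq E$.

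For $\ot^*$ one shows that both $(U\ot^*V)\ot^*W$ and $U\ot^*(V\ot^*W)$ coincide with the symmetric topology whose open subspaces are those $E\subseteq U\ot_kV\ot_kW$ such that $P\ot_kQ\ot_kR\subseteq E$ for some open $P$, $Q$, $R$, and such that the slice $\{t\in V\ot_kW:u\ot t\in E\}$ is open in $V\ot^*W$ for every $u\in U$, the corresponding slice is open in $U\ot^*W$ for every $v\in V$, and the slice $\{r\in U\ot_kV:r\ot w\in E\}$ is open in $U\ot^*V$ for every $w\in W$. This is the delicate part and the main obstacle: unlike the $!$- and $\la$-topologies, the $*$-topology mixes a single ``joint'' corner condition with infinitely many ``pointwise'' conditions, so when one unwinds openness in $(U\ot^*V)\ot^*W$, the pointwise conditions — which a priori only speak of elements of $U\ot_kV$ and of $W$ — must be reconciled with the slicewise conditions that speak separately of elements of $U$, $V$, $W$; the crucial point is that openness of a subspace of $U\ot_kV$ is itself detected slicewise, and one must keep track of which open subspace of $W$ (resp.\ $V$) is being used in the corner so as not to be forced into an infinite intersection. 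As in the proofs of Lemmas~\ref{tensor-star-induced-topology} and~\ref{three-uncompleted-topologies-sequence-prop}, the technical device is to fix bases of $U$, $V$, $W$ adapted to the relevant open subspaces and to assemble the required open subspace as a corner plus a sum of pointwise corrections indexed by the basis vectors outside that corner, taking only finite intersections when passing between an element and the finitely many basis vectors it involves. Once both bracketings are identified with this triple $*$-topology (and the analogous identifications are made for $\la$ and $!$), naturality in $\fU$, $\fV$, $\fW$ follows from Lemma~\ref{tensor-continuous} and the functoriality of completion, and combining this with the reduction above completes the proof.
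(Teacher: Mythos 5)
Your proposal follows essentially the same route as the paper: the same reduction of the completed statement to associativity of the uncompleted products via Lemmas~\ref{tensor-star-induced-topology}--\ref{tensor-shriek-induced-topology}, \ref{dense-subspaces-tensor-lemma}, and~\ref{completion-exactness-properties-lemma}(b), followed by identifying both bracketings of the uncompleted product with an explicitly described triple $\bullet$\+topology. The only difference is that the paper delegates the explicit descriptions of the multiple tensor product topologies to~\cite[Section~1.1]{Beil}, whereas you spell them out (and correctly flag the $*$\+case as the delicate one); your argument is correct.
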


\begin{proof}
 First one needs to establish associativity of each one of
the three uncompleted tensor products $\ot^*$, \ $\ot^\la$, and $\ot^!$
(as functors $\Top_k\times\Top_k\rarrow\Top_k$).
 The best way to do it is to describe the triple (and multiple)
tensor products, i.~e., to formulate the definitions of
$*$\+-topology, the $\la$\+topology, and the $!$\+topology on
the tensor product of several topological vector spaces.
 Such explicit definitions are given in~\cite[Section~1.1]{Beil}.

 Having convinced oneself that the uncompleted tensor products are
associative, the associativity of the completed tensor products becomes
a formal corollary based on
Lemmas~\ref{tensor-star-induced-topology}\+-%
\ref{tensor-shriek-induced-topology},
\ref{dense-subspaces-tensor-lemma},
and~\ref{completion-exactness-properties-lemma}(b).
 Let us discuss part~(a).
 We want to show that both the iterated completed tensor products in
question are naturally isomorphic to the completion of the uncompleted 
triple tensor product, $(\fU\ot^*\fV\ot^*\fW)\sphat\,$.

 Indeed, following the discussion in the proof of
Lemma~\ref{completion-exactness-properties-lemma}(b),
the uncompleted tensor product $\fU\ot^*\fV$ is a dense
subspace in the completed tensor product $\fU\cot^*\fV$, and
the topology of $\fU\ot^*\fV$ is induced from the topology of
$\fU\cot^*\fV$.
 By Lemmas~\ref{tensor-star-induced-topology}
and~\ref{dense-subspaces-tensor-lemma}, it follows that
$(\fU\ot^*\fV)\ot^*\fW$ is a dense subspace in $(\fU\cot^*\fV)\ot^*\fW$,
and the topology of $(\fU\ot^*\fV)\ot^*\fW$ is induced from
the topology of $(\fU\cot^*\fV)\ot^*\fW$.
 Applying Lemma~\ref{completion-exactness-properties-lemma}(b),
one can conclude that the embedding
$(\fU\ot^*\fV)\ot^*\fW\rarrow(\fU\cot^*\fV)\ot^*\fW$
becomes an isomorphism after the passage to the completions.
\end{proof}

\begin{qst}
 Let $\fU$ and $\fV$ be complete, separated topological vector
spaces.
 By Proposition~\ref{three-uncompleted-topologies-sequence-prop},
there is a short exact sequence of uncompleted tensor products
in the category $\Top_k$, and consequently also $\Top^\s_k$
$$
 0\lrarrow\fU\ot^*\fV\lrarrow\fU\ot^\la\fV\oplus\fU\ot^\to\fV
 \lrarrow\fU\ot^!\fV\lrarrow0.
$$
 Applying the completion functor, we obtain a short sequence
of complete, separated topological vector spaces
\begin{equation} \label{three-completed-topologies-sequence-eqn}
 0\lrarrow\fU\cot^*\fV\lrarrow\fU\cot^\la\fV\oplus\fU\cot^\to\fV
 \lrarrow\fU\cot^!\fV\lrarrow0,
\end{equation}
which satisfies Ex1 in $\Top^\scc_k$ by
Lemma~\ref{completion-exactness-properties-lemma}(c).
 (Here $\fU\cot^\to\fV$ is an alternative notation for
$\fV\cot^\la\fU$.)
 
 Is the map $\fU\cot^\la\fV\oplus\fU\cot^\to\fV\rarrow\fU\cot^!\fV$
surjective?
 In other words, is~\eqref{three-completed-topologies-sequence-eqn}
a short exact sequence in the maximal exact structure
on $\Top^\scc_k$\,? 
\end{qst}

\begin{rem}
 Let $\fU$, $\fV$, and $\fW$ be three complete, separated topological
vector spaces, and let $\phi\:\fU\times\fV\rarrow\fW$ be a continuous
bilinear map.
 Then it is clear from
Remark~\ref{three-incomplete-topologies-motivation-remark} that
the linear map $\phi^\ot\:\fU\ot_k\fV\rarrow\fW$ extends uniquely to
a continuous linear map $\fU\cot^*\fV\rarrow\fW$.
 So continuous bilinear pairings $\fU\times\fV\rarrow\fW$ correspond
bijectively to continuous linear maps $\fU\cot^*\fV\rarrow\fW$.

 Let $\fR$ be a complete, separated topological vector space with
a topological (associative) algebra structure.
 Following the same remark, if open two-sided ideals form a base of
neighborhoods of zero in $\fR$, then the multiplication in $\fR$
gives rise to a continuous linear map $\fR\cot^!\fR\rarrow\fR$.
 If open right ideals form a base of neighborhoods of zero in $\fR$,
then the multiplication in $\fR$ can be described as a continuous
linear map $\fR\cot^\la\fR\rarrow\fR$.
 Using Proposition~\ref{topological-tensor-associativity},
the associativity of multiplication in $\fR$ can be formulated in
terms of the topological tensor products.

 Assume from now on that open right ideals form a base of neighborhoods
of zero in~$\fR$.
 Let $N$ be a discrete $k$\+vector space.
 Then discrete right $\fR$\+module structures on $N$ (in the sense
of~\cite[Section~VI.4]{St}, \cite[Sections~2.3\+-2.4]{Pcoun},
\cite[Section~2.4]{Pproperf}) can be described as continuous linear
maps $N\cot^\la\fR\rarrow N$ (satisfying the appropriate associativity
equation).
 See~\cite[Section~1.4]{Beil} for a discussion.
 
 Furthermore, assume that the algebra $\fR$ has a unit.
 Let $B$ be an abstract (nontopological) vector space.
 Following Example~\ref{tensor-with-discrete-examples}(3),
an abstract (nontopological) vector space $\fR\cot^\la B$ is
well-defined.
 According to~\cite[Section~1.10]{Pweak} or~\cite[Section~2.3]{Prev},
left $\fR$\+contramodule structures on $B$ (in the sense
of~\cite{PR,PS,Pcoun,Pproperf,PS3}) can be described as
linear maps $\fR\cot^\la B\rarrow B$ (satisfying
suitable contraassociativity and contraunitality equations).
 This definition of $\fR$\+contramodules for a topological algebra $\fR$
over a field~$k$ appeared already in~\cite[Section~D.5.2]{Psemi}.
\end{rem}

\begin{prop} \label{complete-tensor-exactness-properties}
\textup{(a)} The functor\/ $\cot^*\:\Top^\scc_k\times\Top^\scc_k\rarrow
\Top^\scc_k$ preserves the cokernels of morphisms (in each
of its arguments).
 Fixing a topological vector space in one of the arguments, it becomes
an endofunctor\/ $\Top^\scc_k\rarrow\Top^\scc_k$ taking short
sequences satisfying Ex1 to short sequences satisfying Ex1. \par
\textup{(b)} The functor\/ $\cot^\la\:\Top^\scc_k\times\Top^\scc_k
\rarrow\Top^\scc_k$ preserves the cokernels of morphisms
(in each of its arguments). 
 Fixing a topological vector space in any one of the arguments,
it becomes an endofunctor\/ $\Top^\scc_k\rarrow\Top^\scc_k$ taking short
sequences satisfying Ex1 to short sequences satisfying Ex1. \par
\textup{(c)} The functor\/ $\cot^!\:\Top^\scc_k\times\Top^\scc_k\rarrow
\Top^\scc_k$ preserves the cokernels of morphisms (in each
of its arguments).
 Fixing a topological vector space in one of the arguments, it becomes
an endofunctor\/ $\Top^\scc_k\rarrow\Top^\scc_k$ taking short
sequences satisfying Ex1 to short sequences satisfying Ex1.
\end{prop}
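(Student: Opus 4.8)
The plan is to factor each of the functors in question as the corresponding \emph{uncompleted} tensor product functor on the quasi-abelian category $\Top^\s_k$, followed by the completion reflector $V\longmapsto V\sphat\,\:\Top^\s_k\rarrow\Top^\scc_k$, and then to feed in the exactness properties of the uncompleted functors from Theorem~\ref{incomplete-tensor-exactness-properties} together with Lemma~\ref{completion-exactness-properties-lemma}. I would discuss part~(a) with a topological vector space $\fU$ fixed in the first argument; parts~(b) and~(c), and the case of a fixed vector space in the second argument, are entirely analogous, using Lemma~\ref{tensor-arrow-induced-topology} or Lemma~\ref{tensor-shriek-induced-topology} in place of Lemma~\ref{tensor-star-induced-topology} and the corresponding part of Theorem~\ref{incomplete-tensor-exactness-properties}. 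The preliminary point I would record is a natural isomorphism $(\fU\ot^*V)\sphat\simeq\fU\cot^*(V\sphat)$ in $\Top^\scc_k$, valid for every separated topological vector space $V$: since $V$ embeds into $V\sphat$ as a dense subspace carrying the induced topology, Lemma~\ref{tensor-star-induced-topology} shows that the topology of $\fU\ot^*V$ is induced from that of $\fU\ot^*(V\sphat)$, while Lemma~\ref{dense-subspaces-tensor-lemma} shows that $\fU\ot_kV$ is dense in $\fU\ot^*(V\sphat)$; hence Lemma~\ref{completion-exactness-properties-lemma}(b) applies to the inclusion $\fU\ot^*V\rarrow\fU\ot^*(V\sphat)$, and this inclusion becomes an isomorphism after completion.

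For preservation of cokernels, let $g\:\fV'\rarrow\fV''$ be a morphism in $\Top^\scc_k$. Its cokernel $\fC$ in $\Top^\scc_k$ is the completion of the cokernel $C=\fV''/\overline{g(\fV')}_{\fV''}$ computed in $\Top^\s_k$ (with the quotient topology), because completion is the reflector onto $\Top^\scc_k$. By Theorem~\ref{incomplete-tensor-exactness-properties}(a), the functor $\fU\ot^*{-}\:\Top^\s_k\rarrow\Top^\s_k$ carries the cokernel presentation of $g$ to the cokernel presentation of $\fU\ot^*g$ in $\Top^\s_k$; by Lemma~\ref{completion-exactness-properties-lemma}(a), the completion functor $\Top^\s_k\rarrow\Top^\scc_k$ then carries the latter to a cokernel presentation of $\fU\cot^*g$ in $\Top^\scc_k$, with cokernel object $(\fU\ot^*C)\sphat$, which by the preliminary isomorphism is naturally $\fU\cot^*\fC$. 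Here I would flag the one subtlety: the inclusion $\Top^\scc_k\hookrightarrow\Top^\s_k$ does \emph{not} preserve cokernels, so $\fU\cot^*{-}$ is not literally a composite of cokernel-preserving functors; the point is precisely that applying $(\fU\ot^*{-})\sphat$ to the $\Top^\s_k$-cokernel and invoking the preliminary isomorphism lands one back on the $\Top^\scc_k$-cokernel.

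For the assertion about short sequences satisfying Ex1, I would use Proposition~\ref{top-groups-spaces-kernels-cokernels-prop}(b): such a sequence in $\Top^\scc_k$ may be taken to be $0\rarrow\fK\rarrow\fV\rarrow(\fV/\fK)\sphat\rarrow0$, where $\fK\subset\fV$ is a closed subspace with the induced topology and $\fV/\fK$ carries the quotient topology, so that $\fC=(\fV/\fK)\sphat$. Then $0\rarrow\fK\rarrow\fV\rarrow\fV/\fK\rarrow0$ satisfies Ex1 in $\Top^\s_k$, hence by Theorem~\ref{incomplete-tensor-exactness-properties}(a) so does $0\rarrow\fU\ot^*\fK\rarrow\fU\ot^*\fV\rarrow\fU\ot^*(\fV/\fK)\rarrow0$; applying Lemma~\ref{completion-exactness-properties-lemma}(c) yields a short sequence satisfying Ex1 in $\Top^\scc_k$, and by the preliminary isomorphism (applied to $\fV$, $\fK$, and $\fV/\fK$) this sequence is $0\rarrow\fU\cot^*\fK\rarrow\fU\cot^*\fV\rarrow\fU\cot^*\fC\rarrow0$.

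The main obstacle in the whole argument is establishing the preliminary isomorphism $(\fU\ot^*V)\sphat\simeq\fU\cot^*(V\sphat)$ — that completing the tensor product with a dense subspace agrees with completing both factors — but, as indicated, this reduces cleanly to the already-proven Lemmas~\ref{tensor-star-induced-topology}, \ref{tensor-arrow-induced-topology}, \ref{tensor-shriek-induced-topology}, \ref{dense-subspaces-tensor-lemma}, and~\ref{completion-exactness-properties-lemma}(b). I would also note that the proposition deliberately claims nothing about kernels: by Example~\ref{kernel-created-by-completion} the completion functor does not preserve kernels, so neither do the completed tensor products, and this is why the conclusion is only "preserves cokernels and sends Ex1 sequences to Ex1 sequences" rather than "exact for the maximal structure."
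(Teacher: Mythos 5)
Your proposal is correct and follows essentially the same route as the paper's own proof: reduce to the uncompleted tensor product on $\Top^\s_k$ via Theorem~\ref{incomplete-tensor-exactness-properties}, pass to completions via Lemma~\ref{completion-exactness-properties-lemma}(a) and~(c), and identify $(\fU\ot^*C)\sphat\,$ with $\fU\cot^*(C\sphat\,)$ using Lemmas~\ref{tensor-star-induced-topology}, \ref{dense-subspaces-tensor-lemma}, and~\ref{completion-exactness-properties-lemma}(b). The only cosmetic difference is that you isolate this last identification as a preliminary isomorphism, whereas the paper derives it inside the cokernel argument; your flagging of the failure of the inclusion $\Top^\scc_k\rarrow\Top^\s_k$ to preserve cokernels is an accurate reading of the same subtlety.
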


\begin{proof}
 Let us explain part~(a); parts~(b) and~(c) are similar.
 Let $f\:\fU'\rarrow\fU''$ be a morphism in $\Top^\scc_k$, and
$C=\fU''/\overline{f(\fU')}_{\fU''}$ be the cokernel of~$f$
in the category $\Top^\s_k$.
 Let $\fV$ be a complete, separated topological vector space.
 By Theorem~\ref{incomplete-tensor-exactness-properties}(a),
the uncompleted tensor product $C\ot^*\fV$ is the cokernel of
the morphism $f\ot^*\fV\:\allowbreak
\fU'\ot^*\nobreak\fV\rarrow\fU''\ot^*\fV$ in the category $\Top^\s_k$.
 According to Lemma~\ref{completion-exactness-properties-lemma}(a),
it follows that the completion $(C\ot^*\fV)\sphat\,$ is
the cokernel of the morphism of completed tensor products
$f\cot^*\fV\:\fU'\cot^*\fV\rarrow\fU''\cot^*\fV$ in the category
$\Top^\scc_k$.
 Finally, $\fC=C\sphat\,$ is the cokernel of the morphism~$f$
in $\Top^\scc_k$; and by Lemmas~\ref{tensor-star-induced-topology}
and~\ref{dense-subspaces-tensor-lemma}, the topological vector
space $C\ot^*\fV$ is a dense subspace in $\fC\ot^*\fV$ with
the topology of $C\ot^*\fV$ induced from the topology of
$\fC\ot^*\fV$.
 By Lemma~\ref{completion-exactness-properties-lemma}(b), we can
conclude that $(C\ot^*\fV)\sphat\,\simeq(\fC\ot^*\fV)\sphat\,=
\fC\cot^*\fV$.

 Let $0\rarrow\fK\rarrow\fU\rarrow\fC\rarrow0$ be a short sequence
satisfying Ex1 in $\Top^\scc_k$, and let $\fV$ be a complete, separated
topological vector space.
 Then $\fC=C\sphat\,$, where $0\rarrow\fK\rarrow\fU\rarrow C\rarrow0$
is a short exact sequence in $\Top^\s_k$.
 By Theorem~\ref{incomplete-tensor-exactness-properties}(a),
$0\rarrow\fK\ot^*\fV\rarrow\fU\ot^*\fV\rarrow C\ot^*\fV\rarrow0$
is a short exact sequence in $\Top^\s_k$, too.
 Passing to the completions, by
Lemma~\ref{completion-exactness-properties-lemma}(c) we get
a short sequence $0\rarrow\fK\cot^*\fV\rarrow\fU\cot^*\fV\rarrow
(C\ot^*\fV)\sphat\,\rarrow0$ satisfying Ex1 in $\Top^\scc_k$.
 Following the argument in the previous paragraph,
$(C\ot^*\fV)\sphat\,\simeq\fC\cot^*\fV$, and we are done.
\end{proof}

\begin{qst}
 Do the functors $\cot^*$, \ $\cot^\la$, and/or
$\cot^!\:\Top^\scc_k\times\Top^\scc_k\rarrow\Top^\scc_k$ preserve
the kernels of morphisms (in any one of their arguments, with
the other argument fixed) in the category $\Top^\scc_k$\,?
 Notice that the completion, generally speaking, does not preserve
kernels; see Example~\ref{kernel-created-by-completion}.
\end{qst}

\begin{cor} \label{tensor-not-exact-in-the-maximal}
\textup{(a)} For any infinite-dimensional discrete vector space\/ $\fV$,
the functor\/ ${-}\ot^\la\nobreak\fV\:\allowbreak
\Top^\scc_k\rarrow\Top^\scc_k$ is \emph{not} exact in the maximal exact
structure on\/ $\Top^\scc_k$. \par
\textup{(b)} For any infinite-dimensional discrete vector space\/ $\fV$,
the functor\/ ${-}\ot^!\nobreak\fV\:\allowbreak
\Top^\scc_k\rarrow\Top^\scc_k$ is \emph{not} exact in the maximal exact
structure on\/ $\Top^\scc_k$.
\end{cor}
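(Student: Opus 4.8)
The plan is to deduce both parts directly from Corollary~\ref{converging-combinations-not-exact-for-maximal}(b) by identifying the tensor‑product functors in question with the functor ${-}[[X]]$ of Section~\ref{strong-exact-structure-secn}. One should note first that, since $\ot^\la$ and $\ot^!$ of complete topological vector spaces need not be complete, the endofunctors of $\Top^\scc_k$ named in the statement are to be read as the completed tensor products $\cot^\la$ and $\cot^!$, i.e.\ composed with the completion functor, exactly as in Proposition~\ref{complete-tensor-exactness-properties}.

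First I would fix a basis $(v_x)_{x\in X}$ of the discrete vector space $\fV$; since $\fV$ is infinite‑dimensional, the index set $X$ is infinite. By Examples~\ref{tensor-with-discrete-examples}(2), for every $\fU\in\Top^\scc_k$ there are identifications $\fU\cot^\la\fV\simeq\fU[[X]]\simeq\fU\cot^!\fV$ of complete, separated topological vector spaces. The one point deserving a little care is that these identifications are natural in $\fU$: this is essentially built into the construction, because for $\fU$ complete the base of open subspaces of $\fU\ot^\la\fV=\fU\ot^!\fV$ consists of the subspaces $\fP[X]\subset\fU[X]$ with $\fP\subset\fU$ open, and this base transforms functorially under continuous linear maps $\fU'\rarrow\fU''$, compatibly with the functor ${-}[[X]]$ of Section~\ref{strong-exact-structure-secn}. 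Hence the endofunctors ${-}\cot^\la\fV$ and ${-}\cot^!\fV$ of $\Top^\scc_k$ are naturally isomorphic to ${-}[[X]]$.

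Finally, Corollary~\ref{converging-combinations-not-exact-for-maximal}(b) says that for the infinite set $X$ the functor ${-}[[X]]\:\Top^\scc_k\rarrow\Top^\scc_k$ is not exact with respect to the maximal exact structure on $\Top^\scc_k$, and exactness of an additive functor between exact categories is invariant under natural isomorphism of functors; therefore neither ${-}\cot^\la\fV$ nor ${-}\cot^!\fV$ is exact in the maximal exact structure, proving (a) and (b). If a concrete witness is wanted, one can take the stable short exact sequence $0\rarrow\fK\rarrow\fA_I(\fC)\rarrow\fC\rarrow0$ of Example~\ref{strong-is-not-maximal} (with $\fC=k^\omega$, an infinite set $I$, and $\fK=\ker\Sigma$): it is exact in the maximal exact structure on $\Top^\scc_k$, yet applying ${-}[[X]]\simeq{-}\cot^\la\fV\simeq{-}\cot^!\fV$ produces a short sequence whose right‑hand map $\fA_I(\fC)[[X]]\rarrow\fC[[X]]$ fails to be surjective, hence fails to be an admissible epimorphism. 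There is no genuinely hard step here beyond the naturality check of Examples~\ref{tensor-with-discrete-examples}(2) and the invariance of exactness under isomorphism of functors.
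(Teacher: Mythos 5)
Your proposal is correct and follows essentially the same route as the paper: identify ${-}\cot^\la\fV\simeq{-}[[X]]\simeq{-}\cot^!\fV$ for a discrete $\fV$ with basis indexed by the infinite set $X$ via Examples~\ref{tensor-with-discrete-examples}(2), then invoke Corollary~\ref{converging-combinations-not-exact-for-maximal}(b). The extra remarks on naturality and the explicit witness from Example~\ref{strong-is-not-maximal} only make explicit what the paper leaves implicit.
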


\begin{proof}
 In fact, the functor in parts~(a) and~(b) is one and the same,
according to Example~\ref{tensor-with-discrete-examples}(2).
 It is not exact with respect to the maximal exact structure on
$\Top^\scc_k$ by
Corollary~\ref{converging-combinations-not-exact-for-maximal}(b).
\end{proof}

\begin{prop}
 There exists a unique exact category structure on the category\/
$\Top^\scc_k$ which is maximal among all the exact structures having
the property that the functors\/ $\cot^*$, \ $\cot^\la$, and\/
$\cot^!\:\Top^\scc_k\times\Top^\scc_k\rarrow\Top^\scc_k$ are exact
in it (as functors of any one argument with the topological vector
space in the other argument fixed).
\end{prop}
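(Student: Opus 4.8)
The plan is to reduce the self-referential requirement ``$\Psi$ is exact as a functor from $(\Top^\scc_k,\sE)$ to $(\Top^\scc_k,\sE)$'' to a condition formulated purely in terms of the \emph{maximal} exact category structure $\sE_{\max}$ on $\Top^\scc_k$ (which exists by Theorem~\ref{maximal-exact-theorem}), and then to produce the desired structure as an intersection of exact structures of the type supplied by Example~\ref{psi-exact-structure}. Let $\mathcal F$ denote the collection of all endofunctors of $\Top^\scc_k$ of the form ${-}\cot^\bullet\fW$ or $\fW\cot^\bullet{-}$ with $\bullet\in\{*,\la,!\}$ and $\fW\in\Top^\scc_k$; by Proposition~\ref{complete-tensor-exactness-properties} every $\Psi\in\mathcal F$ preserves the cokernels of arbitrary morphisms. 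For each finite word $w=\Psi_n\circ\dots\circ\Psi_1$ with letters in $\mathcal F$ (including the empty word, which is the identity functor), the composite $w$ again preserves the cokernels of all morphisms, hence in particular the cokernels of admissible monomorphisms for the exact structure $\sE_{\max}$. Therefore Example~\ref{psi-exact-structure}, applied with $\sE=\sG=(\Top^\scc_k,\sE_{\max})$ and $\Psi=w$, yields an exact category structure $\sE_w\subseteq\sE_{\max}$ on $\Top^\scc_k$ whose short exact sequences are precisely the short sequences $\sigma$ for which both $\sigma$ and $w(\sigma)$ are exact in $\sE_{\max}$ (cf.\ Corollary~\ref{vslt-stable-kernels-cor} for the description of the latter).

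I would then set
\[
 \sE_\otimes\;=\;\bigcap_{w}\sE_w,
\]
the intersection running over all finite words $w$ in $\mathcal F$. The empty word gives $\sE_{\emptyset}=\sE_{\max}$, so the family is nonempty, and the intersection of any nonempty family of exact category structures on a fixed additive category is again an exact category structure: the pushouts and pullbacks required by axiom~Ex2 are computed once and for all inside the cocomplete/complete category $\Top^\scc_k$, and the resulting inflations/deflations are admissible in every member of the family, hence in the intersection (this argument is insensitive to whether the index class is small). Thus $\sE_\otimes$ is an exact structure on $\Top^\scc_k$, contained in $\sE_{\max}$.

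Next I would check that $\sE_\otimes$ solves the problem and is the largest such. By definition a short sequence $\sigma$ lies in $\sE_\otimes$ if and only if $w(\sigma)$ is exact in $\sE_{\max}$ for every word $w$. If $\sigma\in\sE_\otimes$ and $\Psi\in\mathcal F$, then for every word $w$ the sequence $w(\Psi(\sigma))$ equals $v(\sigma)$ for the (again finite) word $v$ obtained by appending $\Psi$ to $w$, so it is exact in $\sE_{\max}$; hence $\Psi(\sigma)\in\sE_\otimes$. Thus each $\Psi\in\mathcal F$ carries $\sE_\otimes$-conflations to $\sE_\otimes$-conflations, i.e.\ is an exact functor $(\Top^\scc_k,\sE_\otimes)\to(\Top^\scc_k,\sE_\otimes)$. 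Conversely, let $\sE'$ be any exact structure on $\Top^\scc_k$ in which all functors in $\mathcal F$ are exact, and let $\sigma$ be a short exact sequence of $\sE'$. By maximality of $\sE_{\max}$ (Theorem~\ref{maximal-exact-theorem}) $\sigma$ is exact in $\sE_{\max}$; and for any word $w=\Psi_n\circ\dots\circ\Psi_1$, successive application of the exactness of $\Psi_1,\dots,\Psi_n$ in $\sE'$ shows $w(\sigma)$ is again a short exact sequence of $\sE'$, hence exact in $\sE_{\max}$. Therefore $\sigma\in\sE_w$ for all $w$, i.e.\ $\sigma\in\sE_\otimes$, so $\sE'\subseteq\sE_\otimes$. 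Consequently $\sE_\otimes$ contains every exact structure with the stated property and has the property itself, which gives existence and uniqueness of the maximal one.

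The one genuinely delicate point, which I would flag explicitly, is the passage from the ``moving target'' version of exactness (source and target both carrying the variable structure $\sE$) to the ``fixed target'' version needed to invoke Example~\ref{psi-exact-structure} with a fixed ambient structure $\sE_{\max}$: this is exactly what forces one to build $\sE_\otimes$ from \emph{all finite composites} of the generators, not just the generators themselves, and to verify that these composites still satisfy the hypotheses of Example~\ref{psi-exact-structure}. That verification is painless here, since by Proposition~\ref{complete-tensor-exactness-properties} each generator preserves all cokernels, and cokernel-preservation is stable under composition of functors; the associativity isomorphisms of Proposition~\ref{topological-tensor-associativity} are not logically needed, though they reassure one that the composites of generators are still honest topological tensor-product functors. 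Everything else is routine bookkeeping with intersections of exact structures.
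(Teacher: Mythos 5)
Your proposal is correct and follows essentially the same route as the paper: the tensor-refined structure is defined there as the class of stable short exact sequences remaining exact under all finite iterates of the completed tensor product functors, i.e.\ exactly your intersection $\bigcap_w\sE_w$ of the $\Psi_w$-exact structures supplied by Example~\ref{psi-exact-structure}, with the hypotheses of that example verified via Proposition~\ref{complete-tensor-exactness-properties}. You merely make explicit the maximality/uniqueness bookkeeping that the paper leaves implicit.
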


\begin{proof}
 A short sequence $0\rarrow\fK\overset i\rarrow\fU\overset p\rarrow
\fC\rarrow0$ is said to be exact in the desired exact category
structure (which can be called the \emph{tensor-refined exact
structure} on $\Top^\scc_k$) if it is exact in the maximal exact
category structure and remains exact in the maximal exact category
structure after any finite number of iterated applications of
the functors ${-}\cot^*\fV$, \ ${-}\cot^\la\fV$, \ $\fV\cot^\la{-}$,
and ${-}\cot^!\fV$.

 This means that the maps $i$ and~$p$ are each other's kernel and
cokernel in $\Top^\scc_k$, the map~$p$ is surjective, and the map~$p$
stays surjective after any repeated application of the completed
tensor product functors.
 Any short exact sequence in the tensor-refined exact structure is
also exact in the strong exact structure of
Theorem~\ref{strong-exact-structure-theorem}
(by Example~\ref{tensor-with-discrete-examples}(2)), but we do
\emph{not} know whether the converse holds.

 To convince oneself that the above definition actually produces
an exact category structure on $\Top^\scc_k$, one can use
the construction of Example~\ref{psi-exact-structure} (cf.\
the proof of Theorem~\ref{strong-exact-structure-theorem}).
 One only needs to observe that the completed tensor product functors
(viewed as functors of any one argument with the other argument fixed)
preserve both the cokernels of all morphisms and the kernels of
admissible epimorphisms in the maximal exact structure on
$\Top^\scc_k$ by Proposition~\ref{complete-tensor-exactness-properties}.
 We omit the straightforward details.
\end{proof}

\begin{conc} \label{final-conclusion}
 The uncompleted tensor product functors $\ot^*$, \ $\ot^\la$,
and $\ot^!$, acting in the categories of incomplete topological
vector spaces $\Top_k$ and $\Top^\s_k$, have nice exactness properties.
 They preserve the kernels and cokernels of all morphisms, and are exact
in the quasi-abelian exact structures of these additive categories
(by Theorems~\ref{nonseparated-tensor-exactness-properties}
and~\ref{incomplete-tensor-exactness-properties}).

 The completed tensor product functors $\cot^*$, \ $\cot^\la$,
and $\cot^!$, acting in the category of complete, separated
topological vector spaces $\Top^\scc_k$, have seemingly paradoxical
properties.
 They preserve the cokernels of all morphisms, and take short sequences
satisfying Ex1 (the ``kernel-cokernel pairs'') to short sequences
satisfying~Ex1; see
Proposition~\ref{complete-tensor-exactness-properties}.
 But the category $\Top^\scc_k$ is \emph{not} quasi-abelian, and
the class of all short sequences satisfying Ex1 is not well-behaved
in it.
 More specifically, the class of all cokernels in $\Top^\scc_k$ is not
well-behaved, because it includes nonsurjective cokernels (see
Corollary~\ref{not-left-quasi-abelian-cor}
and Conclusion~\ref{maximal-exact-conclusion}).

 The class of all stable short exact sequences, forming the maximal
exact structure on $\Top^\scc_k$, is better behaved.
 But the functors $\cot^\la$ and $\cot^!$ do \emph{not} preserve
the semi-stability (\,$=$~stability) of cokernels.
 Accordingly, these functors are \emph{not} exact in the maximal exact
structure on $\Top^\scc_k$ (see
Corollary~\ref{tensor-not-exact-in-the-maximal}).

 One can construct an exact structure on $\Top^\scc_k$ by imposing
the condition of preservation of short exact sequences by the tensor
product operations.
 But it is not clear how to describe this exact structure more
explicitly.

 No exactness problems arise in connection with the category
$\Top^{\omega,\scc}_k$ of complete, separated topological vector spaces
\emph{with a countable base of neighborhoods of zero}, as this category
is quasi-abelian and all the short exact sequences in its quasi-abelian
exact structure are split.
 But the tensor product operations $\cot^*$ and $\cot^\la$ are
\emph{not} defined on the full subcategory $\Top^{\omega,\scc}_k
\subset\Top^\scc_k$, as they do not preserve the countable base of
neighborhoods of zero
(compare Example~\ref{tensor-with-discrete-examples}(1)
with Lemma~\ref{countable-coproducts-not-preserve-countable-base}).
\end{conc}

\bigskip

\end{document}